\documentclass{article}
\usepackage{amsmath}
\usepackage{amsfonts}
\usepackage{amsthm}
\usepackage{amssymb}
\usepackage{mathabx}
\usepackage{enumerate}

\newtheorem{thm}{Theorem}[section]
\newtheorem{prop}[thm]{Proposition}
\newtheorem{lem}[thm]{Lemma}
\newtheorem{cor}[thm]{Corollary}

\newtheorem*{thmnn}{Theorem}

\theoremstyle{definition}
\newtheorem{defn}[thm]{Definition}
\newtheorem{rmk}[thm]{Remark}
\newtheorem{notn}[thm]{Notation}
\newtheorem{ex}[thm]{Example}

\begin{document}

\title{The Antipodes of $q$-Quasi-Symmetric Functions and Non-Commutative Quasi-Symmetric Functions}

\author{Shaul Zemel}

\maketitle

\begin{abstract}

\end{abstract}

\section*{Introduction}

It is a classical result that the ring $\mathtt{Sym}$ of symmetric functions is a commutative and co-commutative Hopf algebra, which is also self-dual. This Hopf algebra was generalized in many directions. An important Hopf algebra that contains $\mathtt{Sym}$ is the algebra $\mathtt{QSym}$ of quasi-symmetric functions, defined initially by \cite{[Ge]} in relation to P-partitions. The Hopf algebra that is dual to $\mathtt{QSym}$ is the algebra $\mathtt{NSym}$ of non-commutative symmetric functions, first defined in \cite{[GKLLRT]}. The latter is identified with the Hopf algebra of the direct sum of the Solomon descent algebras, as described in the seminal work \cite{[MR]}. This paper also constructs two additional Hopf algebras which are dual to one another, one containing $\mathtt{NSym}$ and the other projecting onto $\mathtt{QSym}$. We also mention the related Hopf algebras from \cite{[PR]}, admitting bases that are parameterized by Young tableaux.

Recall that the shuffle algebra, as defined initially in \cite{[R]}, is the dual to the natural Hopf algebra structure on the free algebra on a vector space. The algebra $\mathtt{QSym}$ is an example of a the more general concept of a quasi-shuffle algebra, sometimes called a stuffle algebra, as in \cite{[Ho1]} (see also \cite{[Ho2]} and others for the relations between such structures and multiple zeta values). A more general quasi-shuffle algebra is its commutative $q$-deformation, considered in \cite{[BDM]} and others, which we shall denote here by $\mathtt{QSym}_{q}$.

There is another, non-commutative type of $q$-deformation, which is sometimes referred to with the adjective ``quantum''. It was defined in \cite{[DKKT]} for the shuffle algebra, and \cite{[TU]} considered the non-commutative $q$-deformation $\mathtt{QSym}^{(q)}$ of the quasi-shuffle algebra $\mathtt{QSym}$. We remark that the latter reference used $\mathtt{QSym}_{q}$ for that algebra, but as we will write bases for it whose indices will be in subscripts, the indication that they come from this algebra will be with the superscript $(q)$, whence our choice of notation for the algebra. Note that this algebra is not a Hopf algebra in the usual sense, but rather a deformed structure called a $q$-Hopf algebra (see Remark \ref{qHopf} below), which is related to braided monoidal categories (see, e.g., \cite{[AgM]} for the precise relation and its properties). For more on quasi-shuffle algebras and related objects see, e.g., \cite{[DEMT]}.

\medskip

In another direction, the paper \cite{[RS]} constructed a Hopf algebra $\mathtt{NCSym}$ of another type of non-commutative symmetric functions, which contains $\mathtt{NSym}$ and is based on some constructions from the much older references \cite{[W]} and \cite{[BC]} (see \cite{[BRRZ]} for some connections between $\mathtt{NSym}$ and $\mathtt{NCSym}$). It was generalized in \cite{[BZ]} to the algebra $\mathtt{NCQSym}$ of non-commutative quasi-symmetric functions, also known as the algebra of word quasi-symmetric functions and denoted by $\mathtt{WQSym}$. Recall that \cite{[MR]} defined two algebras, one containing $\mathtt{NSym}$ and the other admitting $\mathtt{QSym}$ as a quotient. The algebra $\mathtt{NCQSym}$ plays both roles at once, as it contains $\mathtt{NCSym}$ and hence also $\mathtt{NSym}$, and dividing it by the relation making the variables commute produces $\mathtt{QSym}$.

For the relations between this larger Hopf algebra and iterated integrals, mould calculus, and polyhedral cones see, \cite{[MNT]}. For more on many of these Hopf algebras and these properties, see the book \cite{[LMvW]}, the lecture notes \cite{[GR]}, or the bachelor thesis \cite{[D]} (though this list of references is far from being comprehensive). 

\medskip

All of these algebras are connected graded bi-algebras, from which the Hopf algebra property readily follows (they are also combinatorial Hopf algebras, in the terminology of \cite{[ABS]}, \cite{[BHRZ]}, and others). However, an explicit formula for the antipode in a basis for these algebras is not always easy to describe. There are simple formulae for the algebras $\mathtt{Sym}$, $\mathtt{NSym}$, and $\mathtt{QSym}$, but a general one is not known for many combinatorial Hopf algebras. For some other algebras there are expressions in the literature, but they are harder to construct, like those from \cite{[AgS]} for the algebra from \cite{[MR]}. There is a general mechanism for obtaining antipode formulae as described in \cite{[T]}, but applying this formula typically involves a great deal of cancelation. The paper \cite{[BS]} gathers some of the antipode formulae for combinatorial Hopf algebras, and establishes them using a cancelation-free version of the formula from \cite{[T]} (see also the book \cite{[AgM]} for related constructions).

As another type of generalization of several of these algebras, we also mention the K-theoretical version of $\mathtt{Sym}$, $\mathtt{NSym}$, $\mathtt{QSym}$, and the algebras from \cite{[MR]} that were defined in \cite{[LP]}, for which formulae for the antipode are given in \cite{[P]}.

The goal of this paper is to apply another way of determining antipode formulae for connected graded Hopf algebras for establishing such formulae for bases of $\mathtt{QSym}_{q}$ and $\mathtt{QSym}^{(q)}$, as well as for a part of a basis for $\mathtt{NCQSym}$ (or equivalently $\mathtt{WQSym}$). This gives a proof, which as far as I know is new, for the known antipode formula for $\mathtt{QSym}$ (as the case $q=1$ of the former two algebras). We also work over a general commutative ring, so we included the basic fact determining the co-unit in a connected graded co-algebra over any such ring in Lemma \ref{propgr} below.

\medskip

We now describe the construction and the resulting formulae in more detail. Consider the ring of power series of bounded degree in infinitely many variables, say over $\mathbb{Q}$, inside which the ring of symmetric functions is the invariants under the natural action of the infinite symmetric group. There is another action of that group on the former ring, which is described and investigated in Section 3 of \cite{[Hi2]}, and whose invariant subspace there is the ring of quasi-symmetric functions.

Like symmetric functions, the ring of quasi-symmetric functions admits a monomial basis $\{M_{\alpha}\}_{\alpha}$, here indexed by all compositions of all integers. We denote the assertion that another composition $\beta$ of the same number $n$ is a refinement of $\alpha$ by the symbol $\beta\preceq\alpha$. Then the fundamental quasi-symmetric function $F_{\alpha}$, considered first in \cite{[Ge]}, is defined to be $\sum_{\beta\preceq\alpha}M_{\beta}$.

The structure of bi-algebra on that ring, which produces the Hopf algebra $\mathtt{QSym}$ (and contains $\mathtt{Sym}$ as a Hopf subalgebra), is described in \cite{[MR]}. We will give the formula for the comultiplication below (the counit is simple for every connected graded bi-algebra), but for relating to the main result of this paper, we state here the formula for the antipode. We recall that for a composition $\alpha$ of $n$, reversing it and then taking the composition associated with the complement of the corresponding set produces the transpose composition $\alpha^{t}$ of $n$. Then the following result of \cite{[MR]}, proven independently in \cite{[E]} (and then later as Theorem 5.1 of \cite{[BS]}, and see also \cite{[Gr]}), states the following.
\begin{thmnn}
The antipode on $\mathtt{QSym}$ sends the fundamental quasi-symmetric function $F_{\alpha}$ associated with a composition $\alpha$ of $n$ to $(-1)^{n}F_{\alpha^{t}}$.
\end{thmnn}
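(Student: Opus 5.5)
We plan to prove the statement through the defining recursion of the antipode in a connected graded bialgebra, rather than through the known formula for $S(M_\alpha)$. Write $\Delta F_\alpha=\sum F_{\beta}\otimes F_{\gamma}$. Here the sum runs over all ways of cutting the word $1\cdots n$, with descent set $D(\alpha)$, into a prefix of length $k$ and a suffix of length $n-k$. The descents of $\alpha$ are then inherited, and when the cut falls in the middle of a part of $\alpha$ that part is split into two pieces. The antipode is the unique linear map $S$ with $S(1)=1$ and $\sum S(F_\beta)F_\gamma=\epsilon(F_\alpha)=0$ for $n\geq1$. The co-unit here is given by Lemma \ref{propgr}, which also makes the argument valid over any commutative ring. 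It therefore suffices to check that $T(F_\alpha):=(-1)^nF_{\alpha^t}$ satisfies
\[\sum_{k=0}^{n}(-1)^{k}F_{\beta_k^{t}}F_{\gamma_k}=0,\]
where $(\beta_k,\gamma_k)$ is the cut of $\alpha$ after position $k$. Uniqueness of the antipode then forces $S=T$.

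To evaluate the products, we use the shuffle rule for fundamentals: $F_{\beta}F_{\gamma}=\sum_{w}F_{D(w)}$, where $w$ runs over the shuffles of a word with descent set $D(\beta)$ and one with descent set $D(\gamma)$ on disjoint value sets. We realize $F_{\beta_k^t}$ by a permutation word $u$ on the values $\{1,\dots,k\}$, and $F_{\gamma_k}$ by one on $\{k+1,\dots,n\}$. Equivalently, we think in terms of the inverse descent class. The $k$-th term of the sum then becomes $(-1)^k$ times a signed count of permutations $\sigma\in S_n$, recorded by $F_{D(\sigma)}$, such that the letters in positions carrying the values $1..k$ have the reversed-complement pattern and the rest have the pattern of $\gamma_k$.

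The cancellation is to be achieved by a sign-reversing involution on pairs $(k,\sigma)$, toggling $k\leftrightarrow k\pm1$. Consider the value $k$ or $k+1$, i.e.\ the boundary value between the two blocks, and move it across the boundary. Moving the value $k$ from the ``transposed'' block into the ``straight'' block, or vice versa, does not change $\sigma$ itself, and hence does not change $D(\sigma)$. The descent condition that the moved letter imposes relative to its neighbour in the same block flips exactly because of the transpose (reverse-and-complement) on one side. Whether to push $k$ down or $k+1$ up is decided by comparing the positions of $k$ and $k+1$ in $\sigma$: one move is admissible precisely when the other is not, given whether $k$ lies in $D(\alpha)$. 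The fixed-point-free property for $n\geq1$ then gives the identity.

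The main obstacle is getting the bookkeeping of this involution exactly right. This covers the cases where the cut is at a descent of $\alpha$ versus inside a part, the edge cases $k=0$ and $k=n$, and checking that the map is well-defined and an involution. As a fallback if this fails, we would instead verify the statement on the $M$-basis. We would use the known $S(M_\alpha)=(-1)^{\ell(\alpha)}\sum_{\beta\succeq\mathrm{rev}(\alpha)}M_\beta$, which follows from the same recursion by a simpler coarsening-cancellation. Then we would sum over $\beta\preceq\alpha$ and use inclusion–exclusion on subsets of $[n-1]$: the sign $(-1)^{\ell(\beta)}$ summed over the intervals of subsets collapses to the complement of the reversed set, which is $(-1)^n F_{\alpha^t}$.
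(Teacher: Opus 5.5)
Your main route has a genuine gap. With the two words taken on the value sets $\{1,\dots,k\}$ and $\{k+1,\dots,n\}$, no sign-reversing involution that keeps $\sigma$ fixed can exist. In particular, your claim that exactly one of the two moves (pushing $k$ down or pulling $k+1$ up) is admissible is false.

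Take $\alpha=(2,1)$, so $n=3$ and $D(\alpha)=\{2\}$.
\begin{itemize}
\item At $k=1$ the lifts are forced to be $u=1$ and $v=32$, giving the shuffles $132,312,321$.
\item At $k=2$ they are forced to be $u=21$ and $v=3$, giving $213,231,321$.
\item At $k=0$ and $k=3$ the single word must have descent set $\{2\}$, so it is $132$ or $231$.
\end{itemize}
Hence the word $312$ occurs exactly once in the whole signed expansion, and $(1,312)$ has no partner. Pushing the value $1$ down would require $D(312)=\{2\}$, and pulling the value $2$ up would require the subword $12$ to have a descent. The cancellation $-F_{12}+F_{12}$ is realized by two different words, $312$ and $213$. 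It holds only at the level of descent sets, so no refinement of your bookkeeping can repair this. Summing instead over all lifts with the prescribed descent sets does not help either: that multiplies the $k$th term by a $k$-dependent number of lifts and changes the identity.

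The missing idea is to use the freedom you state but then discard: the two words may live on \emph{arbitrary} disjoint alphabets. Let these alphabets depend on $k$ through one fixed $\tau\in S_n$ with $\operatorname{Des}(\tau)=D(\alpha)$. Take $u_k=\tau_k\tau_{k-1}\cdots\tau_1$, whose descent set is $D(\alpha_{|k})^{t}$ by Lemma \ref{opersDes}, and $v_k=\tau_{k+1}\cdots\tau_n$.

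Since $\tau_{k+1}$ is the first letter of both $v_k$ and $u_{k+1}$, consider a shuffle of $(u_k,v_k)$ whose first letter comes from $v_k$. It is literally the same word as a shuffle of $(u_{k+1},v_{k+1})$ whose first letter comes from $u_{k+1}$. Toggling the origin of the first letter of the shuffle is therefore a fixed-point-free, sign-reversing, word-preserving involution, including at $k=0$ and $k=n$. For $\tau=132$ this gives $u_2=31$ and $v_2=2$, and pairs $(1,312)$ with $(2,312)$. Note that the letter that moves is $\tau_k$ or $\tau_{k+1}$, a boundary in the positions of $\tau$, not the value $k$ or $k+1$.

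This is exactly the paper's proof. It works in the convention of Remark \ref{firstS} with $\tau^{(r)}=\tau_{\le r}\tau_{>r}^{t}$ and toggles the last letter (Lemma \ref{forcanc}, Corollary \ref{comppars}); Remark \ref{altproof} describes your left-sided version.

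As for your fallback, the inclusion--exclusion from $S(M_\beta)=(-1)^{\ell(\beta)}E_{\beta^r}$ to $S(F_\alpha)=(-1)^nF_{\alpha^t}$ is correct; it is Proposition \ref{SQSymM} run backwards. But the $M$-basis formula is only asserted. ``A simpler coarsening-cancellation'' would have to be an actual involution on the quasi-shuffle terms. Moreover, in this paper that formula is deduced from the $F$-formula, so it cannot be borrowed from here.
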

This describes the antipode on all of $\mathtt{QSym}$, since $\{F_{\alpha}\}_{\alpha}$ is also a basis for this Hopf algebra. One easily deduces the formula for the antipode of $M_{\alpha}$ as $(-1)^{\ell(\alpha)}\sum_{\beta\succeq\alpha^{r}}M_{\beta}$, where $\alpha^{r}$ and $\ell(\alpha)$ are the reversion and length of $\alpha$ respectively---see, e.g., Theorem 4.1 of \cite{[BS]}.

\medskip

Our first result, which is very simple, is that the commutative $q$-deformation $\mathtt{QSym}_{q}$ admits, in a similar basis, the exact same antipode formula. This is not surprising, since after inverting $q$, this Hopf algebra becomes isomorphic to $\mathtt{QSym}$, under an isomorphism taking such a basis to the corresponding one.

We thus turn to the non-commutative (or quantum) $q$-deformation $\mathtt{QSym}^{(q)}$ of $\mathtt{QSym}$. Consider power series, again of bounded degree, in infinitely many variables, but which no longer commute but rather satisfy $x_{j}x_{i}=qx_{i}x_{j}$ wherever $j>i$. The action from \cite{[Hi2]} again defines the invariant subspaces, which are closed under multiplication and spanned by the basis of monomial $q$-quasi-symmetric functions $\{M_{\alpha}^{(q)}\}_{\alpha}$, indexed by compositions of integers. The multiplication there, which is described in \cite{[TU]}, with the same co-multiplication as in $\mathtt{QSym}$, does not yield a Hopf algebra per se, but rather a more general structure, described in Remark \ref{qHopf} below. Such objects are referred to as a $q$-Hopf algebra in \cite{[EK]}, \cite{[L]}, and others (which is a special case of the $\mathbb{Z}$-graded colored Hopf algebras from \cite{[Mo]}, and another categorical terminology in \cite{[AgM]}), and are related to quantum quasi-shuffles via \cite{[JRZ]}. We are interested in the formula for the antipode.

We define the fundamental $q$-quasi-symmetric function $F_{\alpha}^{(q)}:=\sum_{\beta\preceq\alpha}M_{\beta}^{(q)}$ in a similar manner to those from $\mathtt{QSym}$, and $\{F_{\alpha}^{(q)}\}_{\alpha}$ is another basis for $\mathtt{QSym}^{(q)}$. Moreover, every composition $\alpha$ has an inversion number $\operatorname{inv}(\alpha)$ (see Definition \ref{fundQSymq}  below). This number satisfies $\operatorname{inv}(\beta)\geq\operatorname{inv}(\alpha)$ wherever $\beta\preceq\alpha$, and we set $\widetilde{F}_{\alpha}^{(q)}:=\sum_{\beta\preceq\alpha}q^{\operatorname{inv}(\beta)-\operatorname{inv}(\alpha)}M_{\beta}^{(q)}$, so that $\{F_{\alpha}^{(q)}\}_{\alpha}$ is also a basis for $\mathtt{QSym}^{(q)}$ and we have $q^{\operatorname{inv}(\alpha)}\widetilde{F}_{\alpha}^{(q)}:=\sum_{\beta\preceq\alpha}q^{\operatorname{inv}(\beta)}M_{\beta}^{(q)}$. Our antipode formula here, which again describes the antipode for the full Hopf algebra $\mathtt{QSym}^{(q)}$, is as follows.
\begin{thmnn}
If $\alpha$ is a composition of $n$ then the antipode on $\mathtt{QSym}^{(q)}$ takes the fundamental $q$-quasi-symmetric function $F_{\alpha}^{(q)}$ to $(-1)^{n}q^{\operatorname{inv}(\alpha^{t})}\widetilde{F}_{\alpha^{t}}^{(q)}$.
\end{thmnn}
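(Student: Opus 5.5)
The plan is to use the standard recursive characterization of the antipode in a connected graded (q-)bialgebra: $S$ is determined by $S(1)=1$ and, for elements $x$ of positive degree,
\[
\sum S(x_{(1)})\,x_{(2)}=\varepsilon(x)=0 .
\]
Here the co-unit is the one given in Lemma \ref{propgr}. This identity determines $S$ uniquely by induction on degree. It therefore suffices to verify that the proposed map $F_{\alpha}^{(q)}\mapsto(-1)^{n}q^{\operatorname{inv}(\alpha^{t})}\widetilde{F}_{\alpha^{t}}^{(q)}$ satisfies it.

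In the $q$-Hopf setting of Remark \ref{qHopf}, the only point to check is which side carries the convolution identity, since the multiplication is twisted but the co-multiplication is the ordinary deconcatenation. I will use the version of the identity that Remark \ref{qHopf} guarantees characterizes $S$. I will also check that the twist does not enter the coproduct side at all.

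First I rewrite everything in the monomial basis. Recall
\[
\Delta M_{\beta}^{(q)}=\sum_{\beta=\beta'\cdot\beta''}M_{\beta'}^{(q)}\otimes M_{\beta''}^{(q)}
\]
(deconcatenation). For fundamentals this gives $\Delta F_{\alpha}^{(q)}=\sum F_{\gamma}^{(q)}\otimes F_{\delta}^{(q)}$, where the sum runs over splittings of $\alpha$ into $\gamma\cdot\delta$ and into $\gamma\odot\delta$ (the splitting where a part is cut in two). This is the same as in $\mathtt{QSym}$.

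It is cleaner to prove the equivalent monomial statement. Inverting the triangular relation $F=\sum_{\beta\preceq\alpha}M_\beta$ by Möbius inversion, the claim becomes
\[
S(M_{\alpha}^{(q)})=(-1)^{\ell(\alpha)}\sum_{\beta\succeq\alpha^{r}}c_{\alpha,\beta}(q)\,M_{\beta}^{(q)},
\]
with $q$-weights $c_{\alpha,\beta}(q)$ computed from the inversion numbers. I will guess these weights from small cases: for $\alpha=(a,b)$, the identity $S(M_{a,b})+S(M_a)M_b+M_{a,b}=0$ together with the $q$-quasi-shuffle $M_aM_b=M_{a,b}+q^{ab}M_{b,a}+M_{a+b}$ gives $S(M_{a,b})=q^{ab}M_{b,a}+M_{a+b}$. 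This suggests $c=q^{\operatorname{inv}(\alpha^{r})}$ or a similar weight on coarsenings. I will then translate this back into the $\widetilde{F}$ form of the statement.

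The main step is the inductive verification. Writing $\alpha=\gamma\cdot\delta$, I need
\[
S(M_{\alpha}^{(q)})=-\sum_{\delta\neq\emptyset}S(M_{\gamma}^{(q)})\,M_{\delta}^{(q)},
\]
where the products are the $q$-quasi-shuffles of \cite{[TU]}. Each quasi-shuffle term carries the power $q^{\sum a_ib_j}$ over the pairs where a part of the right factor overtakes a part of the left.

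I expect the obstacle to be the sign-reversing involution showing the cancellation. The classical involution, which toggles whether the first part of $\delta$ is merged with or separated from the adjacent part coming from $\gamma$, must be shown to preserve the $q$-weight. The key lemma I would isolate is that $\operatorname{inv}$ is additive under concatenation up to exactly the cross term $\sum a_ib_j$ produced by the quasi-shuffle. I would also need that merging two adjacent parts changes the total weight consistently; this is essentially the statement $\operatorname{inv}(\beta)\ge\operatorname{inv}(\alpha)$ for $\beta\preceq\alpha$, made quantitative.

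Given that lemma, the surviving terms are exactly the fixed points, namely the full reversals and coarsenings of $\alpha$. Their weights should assemble into $q^{\operatorname{inv}(\alpha^{t})}\widetilde{F}_{\alpha^{t}}^{(q)}$ after summing over refinements. Finally, setting $q=1$ recovers the classical theorem, which gives a consistency check.
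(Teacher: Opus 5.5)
Your route works and differs from the paper's, but two points must be corrected first.

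\emph{The weights.} They are not $q^{\operatorname{inv}(\alpha^{r})}$. Your own computation $S(M^{(q)}_{a,b})=q^{ab}M^{(q)}_{b,a}+M^{(q)}_{a+b}$ shows that the weight depends on the target: $c_{\alpha,\beta}=q^{\operatorname{inv}(\beta)}$. So the monomial claim to prove is $S(M^{(q)}_{\alpha})=(-1)^{\ell(\alpha)}E^{(q)}_{\alpha^{r}}$ (Proposition \ref{SQSymqM}). This is exactly what M\"obius-inverting the target formula gives, so no guessing is needed.

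\emph{The involution.} As you describe it, a merge/separate toggle at a fixed split $\alpha=\gamma\delta$ cannot be sign-reversing. Every term with $\ell(\gamma)=k$ carries the same sign $(-1)^{k}$, and the toggle also changes the output monomial. What works is to move the part $d=\alpha_{k+1}$ across the split.
\begin{itemize}
\item Index the terms of $\sum_{k=0}^{\ell}(-1)^{k}E^{(q)}_{(\alpha_{k},\ldots,\alpha_{1})}M^{(q)}_{(\alpha_{k+1},\ldots,\alpha_{\ell})}$ by triples $(k,\beta,\omega)$, with $\beta\succeq(\alpha_{k},\ldots,\alpha_{1})$ and $\omega\in P_{\ell(\beta),\ell-k}$ as in Corollary \ref{prodMqs}.
\item If $\omega$ begins with r, prepend $d$ to $\beta$ as a new part; if it begins with b, add $d$ to $\beta_{1}$. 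In both cases change that first letter to l.
\item If $\omega$ begins with l, split $\alpha_{k}$ off $\beta_{1}$ in the reverse way.
\end{itemize}
This map is defined on every term, including $k=0$ and $k=\ell$, and has no fixed points. It changes $k$ by one and leaves $\gamma_{\omega}$ unchanged. It preserves the $q$-power because $\operatorname{inv}(d,\beta_{1},\beta_{2},\ldots)=\operatorname{inv}(\beta)+d|\beta|$ and $\operatorname{inv}(d+\beta_{1},\beta_{2},\ldots)=\operatorname{inv}(\beta)+d(|\beta|-\beta_{1})$. These are exactly the $\omega$-inversion contributions of a leading r, respectively b (Lemmas \ref{monQSymq} and \ref{invprop}). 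Hence the full sum vanishes. Passing back to $F^{(q)}_{\alpha}$ is an inclusion--exclusion as in the proof of Proposition \ref{SQSymM}. It goes through because the factor $q^{\operatorname{inv}(\gamma)}$ is attached to the target $M^{(q)}_{\gamma}$.

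\emph{Comparison with the paper.} The paper goes the other way and never uses the monomial basis for this theorem. It proves the fundamental formula directly, in the right-convolution form of Remark \ref{firstS}:
\begin{itemize}
\item It lifts the needed identity to $\mathtt{NCQSym}$ as $\sum_{r=0}^{n}(-1)^{n-r}\mathrm{F}^{\operatorname{Id}_{r}}_{\alpha_{|r}}\mathrm{F}^{w^{0}_{n-r}}_{\alpha_{r|}^{t}}=0$.
\item It expands each product by the $\tau$-dependent shuffle rule of Proposition \ref{prodFtau}, with $\tau^{(r)}$.
\item It cancels $\sigma$ against $\tilde{\sigma}$ via Lemma \ref{forcanc}.
\item It projects by the algebra map of Proposition \ref{projQSymq}.
\end{itemize}
The monomial formula is then a corollary. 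Your argument is more elementary and stays inside $\mathtt{QSym}^{(q)}$, using only the $q$-quasi-shuffle rule and Lemma \ref{invprop}, but you must track the $q$-powers by hand. The paper never tracks them: they come for free from $\mathrm{F}^{w^{0}_{m}}_{\beta}\mapsto q^{\operatorname{inv}(\beta)}\widetilde{F}^{(q)}_{\beta}$. Its single cancellation also gives the $\mathtt{QSym}$, $\mathtt{QSym}^{(q)}$ and $\mathtt{NCQSym}$ theorems at once, and Theorem \ref{SFalphaw0} from the extreme compositions.
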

This is given in Theorem \ref{SQSymqF} below, and Proposition \ref{SQSymqM} below determines the image of the monomial $q$-quasi-symmetric function $M_{\alpha}^{(q)}$ as the expression $(-1)^{\ell(\alpha)}\sum_{\beta\succeq\alpha^{r}}q^{\operatorname{inv}(\beta)}M_{\beta}^{(q)}$. Note that unlike the antipode on $\mathtt{QSym}$, which is an involution (as is the case for any commutative, as well as co-commutative, Hopf algebra), the antipode on $\mathtt{QSym}^{(q)}$ is not an involution, but rather has infinite order (for generic $q$)---see Corollary \ref{deg2QSymq} below.

\medskip

There is a larger Hopf algebra, which is a real one (rather than a quantum one as in Remark \ref{qHopf}), and we now define (following \cite{[BZ]} and others). Some authors call it the algebra of non-commutative quasi-symmetric functions, denoted by $\mathtt{NCQSym}$, while others, like \cite{[Ml1]} and \cite{[Ml2]}, use the terminology of word quasi-symmetric functions, and use the notation $\mathtt{WQSym}$ (and Chapter 6 of \cite{[Hi1]} refers to it as the algebra of free quasi-symmetric functions, denoted $\mathtt{FQSym}$, and viewed as a Hopf sub-algebra of a larger Hopf algebra called the matrix quasi-symmetric functions $\mathtt{FQSym}$). This paper will use the former notation and terminology.

The algebra $\mathtt{NCQSym}$ was recognized in \cite{[NT]} to be a bidendriform algebra, which implies that it isomorphic to its graded dual. An explicit isomorphism was later described in \cite{[Ml2]}, after the prequel \cite{[Ml1]} investigated the primitive elements there. The main explicit reference for this algebra remains \cite{[BZ]}, which considers several bases for it and their properties, and also produces set of algebraically independent generators for it and for its dual. In fact, it does the same for the subalgebra $\mathtt{NCSym}$, whose primitive elements are determined in \cite{[LM]}, a reference which also establishes finds the antipode formula for that algebra.

Now, the bases for $\mathtt{NCQSym}$ are indexed by set compositions, namely the monomial basis for the part of degree $n$ is $\mathrm{M}_{\mathbf{A}}$, where $\mathbf{A}$ runs over the set compositions of the set $\mathbb{N}_{n}$ of integers between 1 and $n$ (for $\mathtt{NCSym}$ one works with set partitions). We define another set of linear generators for $\mathtt{NCQSym}$, which life the fundamental quasi-symmetric functions from \cite{[Ge]}. The elements of this set, for degree $n$, are denoted by $\mathrm{F}_{\alpha}^{\rho}$, where $\alpha$ is a composition of $n$ and $\rho$ is a permutation on $n$ elements. For a set composition $\mathbf{A}$ of $\mathbb{N}_{n}$, there are explicit $\alpha$ and $\rho$ such that the associated fundamental non-commutative quasi-symmetric functions $\mathrm{F}_{\mathbf{A}}$ is $\mathrm{F}_{\alpha}^{\rho}$ (see Definition \ref{fundNCQSym} below). This set of fundamental non-commutative quasi-symmetric functions constitute a new basis for the degree $n$ part of $\mathtt{NCQSym}$ (see Proposition \ref{basdim} below).

The elements $\mathrm{F}_{\alpha}^{\operatorname{Id}_{n}}$, for which $\rho$ is the trivial permutation, all belong to that basis. Our main result here, Theorem \ref{SNCQSymF} below, is a partial antipode formula for $\mathtt{NCQSym}$, that considers only this part of the fundamental basis there. Denoting the longest permutation on $n$ letters by $w^{0}_{n}$, it reads as follows.
\begin{thmnn}
For any given composition $\alpha$ of $n$, the antipode of $\mathtt{NCQSym}$ maps $\mathrm{F}_{\alpha}^{\operatorname{Id}_{n}}$ to $(-1)^{n}\mathrm{F}_{\alpha^{t}}^{w^{0}_{n}}$.
\end{thmnn}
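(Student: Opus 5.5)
The plan is to use the characterization of the antipode in a connected graded bialgebra as the unique convolution inverse of the identity. Concretely, I will verify the recursive identity
\[ \sum m\circ(S\otimes\operatorname{Id})\circ\Delta=\eta\circ\varepsilon \]
on the elements $\mathrm{F}_{\alpha}^{\operatorname{Id}_{n}}$, arguing by induction on $n$. In degree $n>0$ this reads $S(x)=-\sum S(x_{(1)})x_{(2)}$, where the sum runs over the terms of $\Delta(x)$ with $x_{(2)}$ of positive degree. (The counit here is given by Lemma \ref{propgr}.)

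The first key step is to compute the coproduct of $\mathrm{F}_{\alpha}^{\operatorname{Id}_{n}}$. I expect it to be closed on this family, in the same way that the coproduct of $F_{\alpha}$ in $\mathtt{QSym}$ deconcatenates $\alpha$ at any cut point. That is, cutting $\mathbb{N}_{n}$ at a position $k$ should give
\[ \Delta\mathrm{F}_{\alpha}^{\operatorname{Id}_{n}}=\sum_{k=0}^{n}\mathrm{F}_{\alpha_{\leq k}}^{\operatorname{Id}_{k}}\otimes\mathrm{F}_{\alpha_{>k}}^{\operatorname{Id}_{n-k}}, \]
where $\alpha_{\leq k}$ and $\alpha_{>k}$ are the compositions obtained by splitting the part of $\alpha$ containing $k$. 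I will derive this directly from the monomial expansion $\mathrm{F}_{\alpha}^{\operatorname{Id}_{n}}=\sum\mathrm{M}_{\mathbf{B}}$ over the appropriate refinements $\mathbf{B}$, together with the standardization rule for $\Delta\mathrm{M}_{\mathbf{B}}$. By the induction hypothesis, $S(\mathrm{F}_{\alpha_{\leq k}}^{\operatorname{Id}_{k}})=(-1)^{k}\mathrm{F}_{\alpha_{\leq k}^{t}}^{w^{0}_{k}}$.

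The second step, which I expect to be the main obstacle, is to evaluate the products $\mathrm{F}_{\beta}^{w^{0}_{k}}\cdot\mathrm{F}_{\gamma}^{\operatorname{Id}_{n-k}}$ in $\mathtt{NCQSym}$. The product there is shifted quasi-shuffle of set compositions in the monomial basis: the first factor lives on $\mathbb{N}_{k}$ and the second on $\{k+1,\dots,n\}$. To handle this, I will expand each factor in monomials, so that the product becomes a signed sum over quasi-shuffles of pairs of set compositions. Then I will build a sign-reversing involution on the pairs $(k,\text{term})$ that cancels everything except the terms making up $-(-1)^{n}\mathrm{F}_{\alpha^{t}}^{w^{0}_{n}}$. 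The involution I have in mind mirrors the proof for $\mathtt{QSym}$: take the block containing $n$, or more precisely the interaction of the element $k$ with $k+1$, and toggle whether this element belongs to the last block of the left factor or to the first block of the right factor. This changes $k$ by one and hence flips the sign.

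The fixed points should be exactly the set compositions $\mathbf{B}$ obtained with $k=n$ and all other contributions killed. To identify them, I need to check that the surviving monomials are precisely those refining the pattern given by $\alpha^{t}$ read through $w^{0}_{n}$, which would match Definition \ref{fundNCQSym}. The reversal $w^{0}_{n}$ is what makes this work: the antipode reverses the order of blocks, which after standardization reverses the labels.

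As a consistency check, I will verify that projecting to $\mathtt{QSym}$, by letting the variables commute, recovers the theorem of \cite{[MR]} stated above. Under this projection $\mathrm{F}_{\alpha}^{\rho}$ maps to $F_{\alpha}$. I will also check small cases, $n\le 3$, by hand before writing the general involution.
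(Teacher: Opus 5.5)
Your route is correct, but it differs from the paper's.

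\textbf{The paper's route.} The paper puts $S$ on the right tensor factor (Remark \ref{firstS}) and cancels among fundamental elements. Each product $\mathrm{F}_{\alpha_{|r}}^{\operatorname{Id}_{r}}\mathrm{F}_{(\alpha_{r|})^{t}}^{w^{0}_{n-r}}$ is expanded by the shuffle rule of Proposition \ref{prodFtau}, using the auxiliary permutation $\tau^{(r)}$ of Definition \ref{taurdef}. That permutation is chosen so that, by Lemma \ref{forcanc} and Corollary \ref{comppars}, the term of $\sigma\in\mathcal{S}_{r,n-r}$ ending in the letter r equals the term of $\tilde{\sigma}$ at level $r+1$.

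\textbf{Your route.} You put $S$ on the left (the convention of Remark \ref{altproof}) and cancel monomial by monomial. This is shorter, but you need to make explicit the fact that drives it.
\begin{itemize}
\item Set $T=\operatorname{comp}_{n}^{-1}\alpha$. By Lemma \ref{sumFnTNC}, the $k$th product $\mathrm{F}_{(\alpha_{|k})^{t}}^{w^{0}_{k}}\mathrm{F}_{\alpha_{k|}}^{\operatorname{Id}_{n-k}}$ is the sum of $x^{\theta}$ over words $\theta$ of length $n$ satisfying two conditions. Every position $i<k$ is a \emph{descent position}: $\theta_{i}>\theta_{i+1}$ if $i\notin T$, and $\theta_{i}\geq\theta_{i+1}$ if $i\in T$. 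Every position $i>k$ is an \emph{ascent position}: $\theta_{i}\leq\theta_{i+1}$ if $i\notin T$, and $\theta_{i}<\theta_{i+1}$ if $i\in T$.
\item These two conditions are exact negations of each other; this is precisely where $\alpha^{t}$ and $w^{0}$ enter. So if some ascent position precedes a descent position, the word occurs for no $k$. Otherwise its first $p$ positions are descent positions and the rest are ascent positions, and it occurs exactly for $k=p$ and $k=p+1$.
\item Your toggle is therefore $(k,\theta)\mapsto(k+1,\theta)$ when position $k$ is a descent position, and $(k-1,\theta)$ otherwise, with the evident conventions at $k=0$ and $k=n$. It depends only on the cut between positions $k$ and $k+1$, not on any block containing $n$.
\item Restricted to $0\leq k\leq n-1$, the unmatched terms are the all-descent words at $k=n-1$. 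These are exactly the monomials of $\mathrm{F}_{\alpha^{t}}^{w^{0}_{n}}$, carrying sign $(-1)^{n-1}$, which gives the theorem.
\end{itemize}
Working with words, where the product is plain concatenation, is equivalent to tracking quasi-shuffles of set compositions but much lighter.

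\textbf{Comparison.} Your argument needs neither the product rule for fundamental elements nor the freedom in choosing $\tau$. The paper's argument buys that product rule, which is of independent interest. It also gives a cancellation among fundamental elements themselves, which frames the paper's discussion of symmetries (Proposition \ref{symeq}) and of why such formulae fail for other $\rho$ in degree 3.
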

Using this result, one can obtain, in the usual manner, the antipode formula for some of the monomial elements $\mathrm{M}_{\mathbf{A}}$, namely those in which the elements of the sets inside $\mathbf{A}$ increase---this is proved in Proposition \ref{SNCQSymM} below. In fact, for the two extreme compositions, we establish in Theorem \ref{SFalphaw0} below also the antipode images of $\mathrm{F}_{n}^{w^{0}_{n}}$ and $\mathrm{F}_{1^{n}}^{w^{0}_{n}}$ as $(-1)^{n}\mathrm{F}_{1^{n}}^{\operatorname{Id}_{n}}$ and $(-1)^{n}\mathrm{F}_{n}^{\operatorname{Id}_{n}}$ respectively.

Note that the square of the antipode of $\mathtt{NCQSym}$ is trivial in degree 2, where our theorems determine it entirely. However, we evaluate it completely in degree 3 in Proposition \ref{antipode3} and Corollary \ref{antiM3} below, and show in Corollary \ref{deg2QSymq} below that the matrix representing this square in degree 3 has some non-trivial Jordan blocks, so that it is again of infinite order (at least in characteristic 0).

\medskip

The idea of the proof is as follows. The proof that every connected graded bi-algebra is a Hopf algebra is based on the fact that the antipode can be constructed recursively (this known result is included as Lemma \ref{grconS} below). We rephrase this result as the statement that under an assumption on how the co-multiplication acts on a linearly independent set of elements, the images of this set of elements under the antipode can be determined via the vanishing of an appropriate combination (see Corollary \ref{detS} and Remark \ref{firstS} below).

To state it more precisely, assume that we are given a linearly independent set $\{g_{\alpha}\}_{\alpha}$ of homogeneous elements in a connected graded Hopf algebra, such that the co-multiplication takes each $g_{\alpha}$ to a sum of tensors $g_{\beta_{k}^{l}(\alpha)} \otimes g_{\beta_{n-k}^{r}(\alpha)}$ of elements of the same sort. Assume further that there are elements $\{\tilde{g}_{\alpha}\}_{\alpha}$ for which we have, away from the identity element, the equality $\sum_{k=0}^{n(\alpha)}\tilde{g}_{\beta_{k}^{l}(\alpha)}g_{\beta_{n-k}^{r}(\alpha)}=0$, or alternatively $\sum_{k=0}^{n(\alpha)}g_{\beta_{k}^{l}(\alpha)}\tilde{g}_{\beta_{n-k}^{r}(\alpha)}=0$. This determines the image of each $g_{\alpha}$ under the antipode as $\tilde{g}_{\alpha}$.

Now, the required co-multiplication rule holds for all the fundamental quasi-symmetric functions. This is well-known for $\mathtt{QSym}$ (and cited as Proposition \ref{coprodQSym} below), holds in a similar manner in $\mathtt{QSym}^{(q)}$ (see Proposition \ref{coprodQSymq} below), and we prove it also for $\mathtt{NCQSym}$ (in general) in Proposition \ref{coprodNCQSym} below. We divert from the classical formulation of this property, and phrase it using parameters that are more convenient for our calculations, using Definition \ref{cutatr} and Lemma \ref{concexp} below.

\medskip

The vanishing of the desired combinations is the technical heart of the proof. The issue is, there is a degree of freedom in writing the expression for the product of two fundamental quasi-symmetric functions (see, e.g., Corollary 5.12 of \cite{[AsS]}). While for the classical case of $\mathtt{QSym}$ one can obtain the vanishing also using the ``trivial'' way to do it (in the sense of, e.g., Remark \ref{trivtau} below), for the non-commutative Hopf algebras it is easier to take advantage of this degree of freedom. We establish the similar formulae also for $\mathtt{NCQSym}$ in Propositions \ref{prodF} and \ref{prodFtau} below.

Now, there are homomorphisms of algebras from $\mathtt{NCQSym}$ onto $\mathtt{QSym}^{(q)}$ and onto $\mathtt{QSym}$, with only the latter being a homomorphism of Hopf algebras. Moreover, these homomorphisms takes $\mathrm{F}_{\alpha}^{\operatorname{Id}_{n}}$ to $F_{\alpha}^{(q)}$ and $F_{\alpha}$ and $\mathrm{F}_{\alpha}^{w^{0}_{n}}$ to $q^{\operatorname{inv}(\alpha)}\widetilde{F}_{\alpha}^{(q)}$ and to $F_{\alpha}$, so that verifying the vanishing for $\mathtt{NCQSym}$ produces it also for the other two algebras. As the co-multiplication property is already established, one needs only to consider $\mathtt{NCQSym}$.

We will obtain this vanishing by partitioning the terms appearing in the total expression into pairs of identical terms with opposite signs. This can be seen, in the spirit of \cite{[BS]}, as the determination of an involution on the terms showing up, where two terms that are related by this involution are identical and have opposite signs. However, the details of our involution are very different from those carried out in that reference. Finally, the equalities arising from the compositions $1^{n}$ and $n$ turn out to produce the same argument for the antipodes of $\mathrm{F}_{n}^{w^{0}_{n}}$ and $\mathrm{F}_{1^{n}}^{w^{0}_{n}}$ respectively.

We conclude by evaluating the antipode on all the part of degree 3 of $\mathtt{NCQSym}$, exemplifying why our theorems cannot be extended to any other part of the fundamental basis for that algebra. It would be interesting to see whether some more general results can be obtained, perhaps using a modification of our basis.

\medskip

The paper is divided into 5 sections. Section \ref{Hopf} provides the basic definitions involving Hopf algebras, including the perspective that will be used later. Section \ref{QSym} introduces our notation for working with compositions of integers, as well as the known results about $\mathtt{QSym}$. Section \ref{qDefs} considers the two $q$-deformations, the commutative $\mathtt{QSym}_{q}$ and the non-commutative $\mathtt{QSym}^{(q)}$. Section \ref{NCQSym} defines the notions involving set compositions, and the algebra $\mathtt{NCQSym}$ with its relevant bases. Finally, Section \ref{Proofs} establishes the product formula for fundamental non-commutative quasi-symmetric functions, and proves the main results.

\medskip

I am grateful to D. Grinberg for suggesting to search for generalizations of the original proof.

\textbf{Data Availability Statement:} Data sharing not applicable to this article as no datasets were generated or analysed during the current study.

\section{Hopf Algebras \label{Hopf}}

In this section we recall the definitions required for considering Hopf algebras, and present some of the results in a way that will be most appropriate for our applications. In addition to the classical books \cite{[S]} and \cite{[Mo]} on the subject, or the more modern book \cite{[U]}, and the lecture notes \cite{[GR]}, we note that Section 2 of \cite{[P]} and the first few sections of \cite{[D]} contain this material in a manner that is convenient to follow (see also Section 3.1 of \cite{[LMvW]}).

\medskip

Let $R$ be a commutative ring, over which all the different types of algebras will be. We will express all the notions using maps of $R$-modules, and it will be useful for us to have notations for maps that are typically identifications.
\begin{notn}
For this we write $m_{R}$ for the natural isomorphism $R\otimes_{R}R \to R$ (since it represents the multiplication in $R$), and given an $R$-module $M$ we denote by $l_{M}$ the identification $R\otimes_{R}M \to M$ (left multiplication by scalars from $R$), and similarly $r_{M}:M\otimes_{R}R \to M$ (for the right multiplication). For any $R$-module $M$ we write $\tau_{M}$ for the interchanging map $M\otimes_{R}M \to M\otimes_{R}M$ which is determined by $\tau_{M}(x \otimes y)=y \otimes x$ for every $x$ and $y$ in $M$. \label{notiso}
\end{notn}
We recall that an algebra (over $R$) is defined, in the terminology using maps and arrows, as follows.
\begin{defn}
Let $A$ be an $R$-module.
\begin{enumerate}[$(i)$]
\item We say that $A$ is an \emph{$R$-algebra} if it is endowed with a multiplication map $m:A\otimes_{R}A \to A$, which is associative, and a unit map $\eta:R \to A$ of $R$-modules, such that multiplication with its image, from both sides, corresponds to the action of $R$. In terms of the maps, these properties correspond to the equalities \[m\circ(m\otimes\operatorname{Id}_{A})=m\circ(\operatorname{Id}_{A} \otimes m),\quad m\circ(\eta\otimes\operatorname{Id}_{A})=l_{A},\quad m\circ(\operatorname{Id}_{A}\otimes\eta)=r_{A}\] as maps $A\otimes_{R}A\otimes_{R}A \to A$, $R\otimes_{R}A \to A$, and $A\otimes_{R}R \to A$, using the isomorphisms from Notation \ref{notiso}.
\item The algebra is commutative if $m$ commutes with interchanging map $\tau_{A}$, namely we have the equality $m\circ\tau_{A}=m$ as maps $A\otimes_{R}A \to A$.
\item A homomorphism of algebras from $A$ into another $R$-algebra $B$, with multiplication $\mu:B\otimes_{R}B \to B$ and unit $u:R \to B$, is a map $f:A \to B$ of $R$-modules that respects multiplications and units, in the sense that $f \circ m=\mu\circ(f \otimes f)$ as maps $A\otimes_{R}A \to B$ and $f\circ\eta=u$ as maps $R \to B$.
\item For such $A$ and $B$, the module $A \otimes B$ is an algebra via the multiplication \[(m\otimes\mu)\circ(\operatorname{Id}_{A}\otimes\tau_{23}\otimes\operatorname{Id}_{B}):A\otimes_{R}B\otimes_{R}A\otimes_{R}B \to A\otimes_{R}B\] in which $\tau_{23}:B\otimes_{R}A \to A\otimes_{R}B$ is the natural map interchanging the intermediate $A$ and $B$ in the quadruple tensor product, and the unit defined by $(\eta \otimes u) \circ m_{R}^{-1}:R \to A \otimes B$.
\end{enumerate} \label{alg}
\end{defn}
Co-algebras (over $R$) are obtained by dualizing Definition \ref{alg}.
\begin{defn}
Take some $R$-module $C$.
\begin{enumerate}[$(i)$]
\item The module $C$ becomes an \emph{co-algebra} by defining a co-multiplication map $\Delta:C \to C\otimes_{R}C$, which has to be co-associative, and a co-unit map $\varepsilon:C \to R$, such that multiplication with its image, from both sides, corresponds to the action of $R$. The equalities here are \[(\Delta\otimes\operatorname{Id}_{C})\circ\Delta=(\operatorname{Id}_{C}\otimes\Delta)\circ\Delta,\quad l_{C}\circ(\varepsilon\otimes\operatorname{Id}_{C})\circ\Delta=r_{C}\circ(\operatorname{Id}_{C}\otimes\varepsilon)\circ\Delta=\operatorname{Id}_{C},\] the first as maps $C \to C\otimes_{R}C\otimes_{R}C$, and the other two going $C \to C$, where we again used the isomorphisms from Notation \ref{notiso}.
\item The algebra $C$ is co-commutative if $\Delta$ commutes with $\tau_{C}$ in the sense that the equality $\tau_{C}\circ\Delta=\Delta$ as maps $C \to C\otimes_{R}C$.
\item Let $D$ be another co-algebra, with co-multiplication $\Upsilon:D \to D\otimes_{R}D$ and co-unit $\epsilon:D \to R$. Then a map $f:C \to D$ is a homomorphism of co-algebras if respects the operations, namely the equality $(f \otimes f)\circ\Delta=\Upsilon \circ f$ of maps $C \to D\otimes_{R}D$ and $\varepsilon=\epsilon \circ f$ of maps $C \to R$ hold.
\item Let such $C$ and $D$ be given. Then with $\tau_{23}:C\otimes_{R}D \to D\otimes_{R}C$ inside the quadruple tensor product, the map \[(\operatorname{Id}_{C}\otimes\tau_{23}\otimes\operatorname{Id}_{D})\circ(\Delta\otimes\Upsilon):C\otimes_{R}D \to C\otimes_{R}D\otimes_{R}C\otimes_{R}D\] produces a co-multiplication on $C\otimes_{R}D$, which makes it into a co-algebra with the co-unit $m_{R}\circ(\varepsilon\circ\epsilon):C\otimes_{R}D \to R$.
\end{enumerate} \label{coalg}
\end{defn}
Using Definitions \ref{alg} and \ref{coalg}, one constructs the following ring.
\begin{prop}
Let $A$ be an algebra with multiplication $m$ and unit $\eta$, and let $C$ be a co-algebra with co-multiplication $\Delta$ and co-unit $\varepsilon$. Then the $R$-module $\operatorname{Hom}_{R}(C,A)$ becomes an associative ring under the operation taking morphisms $f$ and $g$ to the \emph{convolution} $f*g:=m\circ(f \otimes g)\circ\Delta$. The map $\eta\circ\varepsilon$ is the unit. \label{conv}
\end{prop}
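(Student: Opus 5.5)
The plan is to verify the ring axioms directly from the axioms of Definitions \ref{alg} and \ref{coalg}. First, the convolution is well defined and $R$-bilinear: $f \otimes g$ is a map $C\otimes_{R}C \to A\otimes_{R}A$ of $R$-modules, and the assignment $(f,g)\mapsto f\otimes g$ is $R$-bilinear. Composing with the $R$-linear maps $\Delta$ and $m$ preserves this, so $*$ is an $R$-bilinear operation on $\operatorname{Hom}_{R}(C,A)$. In particular, distributivity over addition holds on both sides, and it remains to check associativity and the unit property.

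For associativity, I will compute both triple products as composites. Using the functoriality of the tensor product, $(f\otimes g)\circ\Delta$ followed by $m$ and then tensored with $h$ gives
\[(f*g)*h=m\circ(m\otimes\operatorname{Id}_{A})\circ(f\otimes g\otimes h)\circ(\Delta\otimes\operatorname{Id}_{C})\circ\Delta.\]
Here I use $(m\circ(f\otimes g)\circ\Delta)\otimes h=(m\otimes\operatorname{Id}_{A})\circ(f\otimes g\otimes h)\circ(\Delta\otimes\operatorname{Id}_{C})$. Similarly,
\[f*(g*h)=m\circ(\operatorname{Id}_{A}\otimes m)\circ(f\otimes g\otimes h)\circ(\operatorname{Id}_{C}\otimes\Delta)\circ\Delta.\]
Associativity of $m$ and co-associativity of $\Delta$ then identify the two expressions, after the standard identification of the two bracketings of the triple tensor product.

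For the unit, I will compute $f*(\eta\circ\varepsilon)=m\circ(f\otimes(\eta\circ\varepsilon))\circ\Delta$. This factors as $m\circ(\operatorname{Id}_{A}\otimes\eta)\circ(f\otimes\operatorname{Id}_{R})\circ(\operatorname{Id}_{C}\otimes\varepsilon)\circ\Delta$. By the unit axiom this equals $r_{A}\circ(f\otimes\operatorname{Id}_{R})\circ(\operatorname{Id}_{C}\otimes\varepsilon)\circ\Delta$. Naturality of $r$ gives $r_{A}\circ(f\otimes\operatorname{Id}_{R})=f\circ r_{C}$, so the expression is $f\circ r_{C}\circ(\operatorname{Id}_{C}\otimes\varepsilon)\circ\Delta=f$ by the co-unit axiom. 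The left-unit case is symmetric, using $l_{A}$, $l_{C}$ and the other halves of the axioms.

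The only real obstacle is bookkeeping: the naturality of $l$ and $r$, and the identifications $(X\otimes Y)\otimes Z\cong X\otimes(Y\otimes Z)$, which are used implicitly in the definitions. As a sanity check, I will also verify everything on elements, writing $\Delta(c)=\sum c_{(1)}\otimes c_{(2)}$. This is legitimate because every element of $C\otimes_{R}C$ is a finite sum of pure tensors. In that form, associativity reads $\sum f(c_{(1)})g(c_{(2)})h(c_{(3)})$ for both bracketings, and the unit check reads $\sum f(c_{(1)})\varepsilon(c_{(2)})=f(c)$.
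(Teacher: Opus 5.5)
Your proposal is correct. The factorizations via functoriality of $\otimes$, associativity and co-associativity combined with naturality of the associator, and the unit computation through $m\circ(\operatorname{Id}_{A}\otimes\eta)=r_{A}$, naturality of $r$, and the co-unit axiom are exactly the standard verification. The paper gives no proof of this proposition: it treats it as a standard fact from the references listed at the start of Section~\ref{Hopf}, so your argument supplies the routine check the paper leaves implicit.
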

It is clear that the if $A$ is commutative, or $C$ is co-commutative, then ring from Proposition \ref{conv} is commutative (but not in general).

\medskip

We view $R$ as an algebra with $m=m_{R}$ and $\eta=\operatorname{Id}_{R}$, as well as a co-algebra via $\Delta=m_{R}^{-1}$ and $\varepsilon=\operatorname{Id}_{R}$. Then the notion of a bi-algebra is obtained by merging the notions from Definitions \ref{alg} and \ref{coalg} in a compatible way.
\begin{defn}
Consider an $R$-module $B$, carrying the structure of a multiplication $m:B\otimes_{R}B \to B$ and unit $\eta:R \to B$ making it into an algebra, as well as a co-multiplication $\Delta:B \to B\otimes_{R}B$ and a co-unit $\varepsilon:B \to R$ through which it is a co-algebra.
\begin{enumerate}[$(i)$]
\item The module $B$ is a \emph{bi-algebra} if $\Delta$ and $\varepsilon$ are maps of algebras, or equivalently $m$ and $\eta$ are maps of co-algebras. The equivalence is due to both conditions being represented by the same four equalities, namely \[\varepsilon\circ\eta=\operatorname{Id}_{R}\quad,\varepsilon \circ m=m_{R}\circ(\varepsilon\otimes\varepsilon),\quad\Delta\circ\eta=(\eta\otimes\eta) \circ m_{R}^{-1},\] the latter two as maps $B\otimes_{R}B \to R$ and $R \to B\otimes_{R}B$, as well as \[\Delta \circ m=(m \otimes m)\circ(\operatorname{Id}_{B}\otimes\tau_{23}\otimes\operatorname{Id}_{B})\circ(\Delta\otimes\Delta)\] as maps $B\otimes_{R}B \to B\otimes_{R}B$, where the right hand side passes through the quadruple tensor product $B\otimes_{R}B\otimes_{R}B\otimes_{R}B$ (in which $\tau_{23}$ takes place).
\item We say that $B$ is commutative if it is so as an algebra, and co-commutative is its co-algebra structure is such.
\item An $R$-module homomorphism between two bi-algebras is a bi-algebra homomorphism if is it a homomorphism of both algebras and co-algebras.
\item The tensor product of bi-algebras is a bi-algebra with its structure as an algebra and co-algebra.
\end{enumerate} \label{bialg}
\end{defn}
Proposition \ref{conv} shows that when $B$ is a bi-algebra (and even when it is just an algebra and a co-algebra, as in Definition \ref{bialg}), the module $\operatorname{End}_{R}(B)$ is a unital ring with the convolution operation. In this case there is the special element $\operatorname{Id}_{B}$ in that ring, using which one makes the following definition.
\begin{defn}
A bi-algebra $B$ is called a \emph{Hopf algebra} if the element $\operatorname{Id}_{B}$ has a two-sided inverse $S$ in the unital ring $\operatorname{End}_{R}(B)$, namely if there is a map $S:B \to B$ such that $S*\operatorname{Id}_{B}=\operatorname{Id}_{B}*S=\eta\circ\varepsilon$. Equivalently, $S$ has to satisfy \[m\circ(S\otimes\operatorname{Id}_{B})\circ\Delta=\eta\circ\varepsilon=m\circ(\operatorname{Id}_{B} \otimes S)\circ\Delta.\] A map $S$ with this property is called the \emph{antipode} of the Hopf algebra $B$. \label{Hopfdef}
\end{defn}
Since two-sided inverses are unique in unital rings, it is clear that the antipode from Definition \ref{Hopfdef} is unique when it exists. Thus being a Hopf algebra is not an extra structure to a bi-algebra, but rather a special class of a bi-algebra.

The antipode of a Hopf algebra $H$, as a map $S:H \to H$, has the following properties.
\begin{prop}
The map $S$ is an anti-homomorphism of $H$ both as an algebra and as a bi-algebra. In case $H$ is either commutative or co-commutative, it is an anti-involution. \label{Sprop}
\end{prop}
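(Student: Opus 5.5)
The plan is the standard convolution-ring argument for the anti-homomorphism properties, followed by a short deduction for the involution statement. Work in the convolution rings of Proposition \ref{conv}. For the algebra part, consider $\operatorname{Hom}_{R}(H\otimes_{R}H,H)$, where $H\otimes_{R}H$ is a co-algebra by Definition \ref{coalg}$(iv)$. In it, compare the two maps $\nu:=S\circ m$ and $\rho:=m\circ(S\otimes S)\circ\tau_{H}$. I will show that $m$ itself is convolution-invertible, with $\nu*m=\eta\circ\varepsilon_{H\otimes H}$ and $m*\rho=\eta\circ\varepsilon_{H\otimes H}$. Associativity of convolution then gives $\nu=\nu*(m*\rho)=(\nu*m)*\rho=\rho$, which is the statement $S(xy)=S(y)S(x)$.

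The first identity, $\nu*m=\eta\circ\varepsilon_{H\otimes H}$, follows from the bi-algebra compatibility $\Delta\circ m=(m\otimes m)\circ(\operatorname{Id}\otimes\tau_{23}\otimes\operatorname{Id})\circ(\Delta\otimes\Delta)$. With it, $\nu*m$ becomes $(S*\operatorname{Id}_{H})\circ m=\eta\circ\varepsilon\circ m$. The second identity, $m*\rho$, is a Sweedler-notation computation: it reduces to $\sum x_{(1)}y_{(1)}S(y_{(2)})S(x_{(2)})$. Applying $\operatorname{Id}*S=\eta\circ\varepsilon$ first to $y$ and then to $x$ collapses this to $\varepsilon(x)\varepsilon(y)1$.

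For the unit, I apply the antipode identity to $1$, using $\Delta(1)=1\otimes1$. This gives $S(1)\cdot1=\eta(\varepsilon(1))=1$, so $S(1)=1$.

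The co-algebra part is dual, and I would carry it out in $\operatorname{Hom}_{R}(H,H\otimes_{R}H)$. There I compare $\Delta\circ S$ with $\tau_{H}\circ(S\otimes S)\circ\Delta$, and show that both are convolution inverses of $\Delta$ (one on the left, one on the right). Finally, $\varepsilon\circ S=\varepsilon$ follows by applying $\varepsilon$ to $\sum S(x_{(1)})x_{(2)}=\varepsilon(x)1$ and using multiplicativity of $\varepsilon$ together with the counit axiom.

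For the involution claim, assume $H$ is commutative. Then $S$ is a homomorphism, by the anti-homomorphism property just proved. I would show that $S^{2}$ is a left convolution inverse of $S$, namely $S^{2}*S=\eta\circ\varepsilon$. Indeed, $\sum S(S(x_{(1)}))S(x_{(2)})=S\bigl(\sum x_{(2)}S(x_{(1)})\bigr)$, where commutativity is used once to reorder the product and once to rewrite $\sum x_{(2)}S(x_{(1)})$ as $\sum S(x_{(1)})x_{(2)}=\varepsilon(x)1$. Since $\operatorname{Id}_{H}$ is the inverse of $S$, uniqueness of inverses gives $S^{2}=\operatorname{Id}_{H}$. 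The co-commutative case is the dual argument, using $\tau\circ\Delta=\Delta$ and the anti-co-homomorphism property.

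I expect the main obstacle to be only bookkeeping rather than any conceptual step. The convolution on $H\otimes_{R}H$ must be set up correctly, with the $\tau_{23}$ in the co-multiplication of Definition \ref{coalg}$(iv)$. The identity $m*\rho=\eta\circ\varepsilon$ has to be verified without hidden use of commutativity. Writing it in Sweedler notation, and inserting the identifications $l_{H}$, $r_{H}$ of Notation \ref{notiso} where needed, should make it routine.
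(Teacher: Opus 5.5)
Your proposal is correct and is the standard convolution-inverse argument; the paper gives no proof of its own here and simply cites Propositions 13--15 of \cite{[D]} for this standard result. One small nit: in the commutative case the step $\sum S(S(x_{(1)}))S(x_{(2)})=S\bigl(\sum x_{(2)}S(x_{(1)})\bigr)$ already follows from the anti-homomorphism property alone, so commutativity is needed only once, to rewrite $\sum x_{(2)}S(x_{(1)})$ as $\sum S(x_{(1)})x_{(2)}$.
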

Explicitly the anti-homomorphism property from Proposition \ref{Sprop} means that $S$ satisfies $S \circ m=m\circ\tau\circ(S \otimes S)$ and $\Delta \circ S=(S \otimes S)\circ\tau\circ\Delta$, in addition to the usual properties $S\circ\eta=\eta$ and $\varepsilon \circ S=\varepsilon$ (these equalities are as maps $H\otimes_{R}H \to H$, $H \to H\otimes_{R}H$, $R \to H$, and $H \to R$ respectively). Thus $S^{2}:H \to H$ is a homomorphism of bi-algebras, and the anti-involution property is the statement that $S^{2}=\operatorname{Id}_{H}$. For the proofs see, e.g., Propositions 13, 14, and 15 of \cite{[D]}.

\medskip

The Hopf algebras that this paper considers have the following property.
\begin{defn}
Let $M$ be a module over $R$.
\begin{enumerate}[$(i)$]
\item We say that $M$ is \emph{graded} if it is a direct sum $\bigoplus_{n=0}^{\infty}M_{n}$, where each $M_{n}$ is a finitely generated $R$-module.
\item An algebra $A$ is a \emph{graded algebra} if its module structure is graded as $\bigoplus_{n=0}^{\infty}A_{n}$ and the multiplication takes $A_{n} \otimes A_{m}$ into $A_{n+m}$ for every two natural numbers $n$ and $m$.
\item A co-algebra $C$ is called a \emph{graded co-algebra} when it is a graded module $\bigoplus_{n=0}^{\infty}C_{n}$ and the image of $C_{n}$ under the co-multiplication is contained in $\bigoplus_{k=0}^{n}(C_{k} \otimes C_{n-k})$ for any integer $n\geq0$.
\item A bi-algebra $B$ is a \emph{graded bi-algebra} in case its algebra structure is a graded algebra and its co-algebra structure makes it a graded co-algebra with the same grading.
\item We call a Hopf algebra $H$ a \emph{graded Hopf algebra} when its bi-algebra structure is graded.
\end{enumerate} \label{graded}
\end{defn}
The following remark will be relevant for the quantum generalization below.
\begin{rmk}
Let $x$ and $y$ be homogeneous elements of a graded bi-algebra $B$, say with $x \in B_{n}$ and $y \in B_{m}$. Definition \ref{graded} implies that we can write $\Delta(x)=\sum_{r=0}^{n}\Delta_{r}^{(n)}(x)$ with $\Delta_{r}^{(n)}(x) \in B_{r} \otimes B_{n-r}$ (some of which may possibly vanish), and similarly $\Delta(y)=\sum_{s=0}^{m}\Delta_{s}^{(m)}(y)$ where $\Delta_{s}^{(m)}(y) \in B_{s} \otimes B_{m-s}$. Then the compatibility condition between $m$ and $\Delta$ in Definition \ref{bialg} is the equality $\Delta(xy)=\sum_{r=0}^{n}\sum_{s=0}^{m}\Delta_{r}^{(n)}(x)\Delta_{s}^{(m)}(y)$ inside $B\otimes_{R}B$, which can also be decomposed into the equality $\Delta_{t}^{(m+n)}(xy)=\sum_{r+s=t}\Delta_{r}^{(n)}(x)\Delta_{s}^{(m)}(y)$ for every $t$ between 0 and the homogeneity degree $m+n$ of $xy$ (with $\Delta_{r}^{(n)}(x)$ vanishing when $r>n$, and similarly $\Delta_{s}^{(m)}(y)=0$ if $s>m$). \label{mDeltagr}
\end{rmk}

\medskip

These objects have the following properties.
\begin{lem}
If $A$ is a graded algebra then the unit $\eta$ has image in $A_{0}$. When $C$ is a graded co-algebra then $\varepsilon(C_{n})=0$ for every $n>0$. \label{propgr}
\end{lem}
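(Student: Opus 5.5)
For the first assertion I would use the unit axiom $m\circ(\eta\otimes\operatorname{Id}_{A})=l_{A}$. Write $e=\eta(1)$ and decompose it into homogeneous parts as $e=\sum_{n}e_{n}$ with $e_{n}\in A_{n}$. The axiom says $ex=x$ for every $x\in A$; similarly $xe=x$.

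Take a homogeneous $x\in A_{k}$. Then $ex=\sum_{n}e_{n}x$, where $e_{n}x\in A_{n+k}$. Comparing homogeneous components gives $e_{0}x=x$ and $e_{n}x=0$ for $n>0$. By linearity $e_{0}$ is therefore a left unit for all of $A$, and by the same argument on the other side $e_{0}$ is also a right unit. Since a two-sided unit is unique, $e=e\cdot e_{0}=e_{0}$, and we get $e_{0}=e$. It follows that $e_{n}=0$ for $n>0$, so $e\in A_{0}$. Since $\eta$ is $R$-linear, $\eta(R)=Re\subseteq A_{0}$.

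For the second assertion I would use the counit axiom $l_{C}\circ(\varepsilon\otimes\operatorname{Id}_{C})\circ\Delta=\operatorname{Id}_{C}$. Take $x\in C_{n}$ with $n>0$. By Definition \ref{graded}, as in Remark \ref{mDeltagr}, we can write $\Delta(x)=\sum_{k=0}^{n}\Delta_{k}^{(n)}(x)$ with $\Delta_{k}^{(n)}(x)\in C_{k}\otimes C_{n-k}$. Applying $l_{C}\circ(\varepsilon\otimes\operatorname{Id})$ sends the $k$-th term into $C_{n-k}$. Comparing components in the direct sum, the term $k=0$ gives back $x$, and the terms with $k>0$ give $0$. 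The symmetric axiom likewise shows that $r_{C}\circ(\operatorname{Id}\otimes\varepsilon)$ returns $x$ from the $k=n$ term and $0$ from the terms with $k<n$.

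To conclude, I apply $\varepsilon$ to the identity $x=l_{C}(\varepsilon\otimes\operatorname{Id})\Delta_{0}^{(n)}(x)$. This gives
\[\varepsilon(x)=m_{R}\circ(\varepsilon\otimes\varepsilon)\bigl(\Delta_{0}^{(n)}(x)\bigr).\]
I then need to know $\varepsilon$ on $C_{n}$, which is circular, so I would instead use the second identity, $x=r_{C}(\operatorname{Id}\otimes\varepsilon)\Delta_{n}^{(n)}(x)$. Applying $l_{C}(\varepsilon\otimes\operatorname{Id})$ to $\Delta(x)$ and taking the component in $C_{n}$ forces only $k=0$ to contribute. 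Computing $(\varepsilon\otimes\varepsilon)\circ\Delta$ in two ways then gives
\[\varepsilon(x)=\sum_{k}(\varepsilon\otimes\varepsilon)\bigl(\Delta_{k}^{(n)}(x)\bigr).\]
Using both counit identities and projecting, $(\varepsilon\otimes\varepsilon)\Delta_{k}^{(n)}(x)$ equals $\varepsilon$ applied to the $C_{n-k}$-component of $x$ when computed one way, and $\varepsilon$ applied to the $C_{k}$-component when computed the other way. For $0<k<n$ both components vanish. For $k=0$, the second computation gives $\varepsilon$ of the $C_{0}$-component of $x$, which is $0$ since $n>0$. For $k=n$, the first computation likewise gives $\varepsilon$ of the $C_{0}$-component, again $0$. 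Hence $\varepsilon(x)=0$.

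The main obstacle is this final bookkeeping step over a general ring: one must check that the projections onto graded components commute with the identifications $l_{C}$ and $r_{C}$. This holds because $R\otimes_{R}C_{j}\cong C_{j}$ compatibly with the direct sum decomposition.
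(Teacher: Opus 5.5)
Your proof is correct. The first assertion is essentially the paper's argument: the paper gets $a_{n}A=0$ and then $a_{n}=a_{n}\eta(1)=0$, while you use uniqueness of a two-sided unit, and the two are interchangeable.

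For the counit, however, you take a genuinely different and much shorter route. The paper works as follows:
\begin{itemize}
\item it sets $I:=\varepsilon(C_{0})$, observes $IC_{0}=C_{0}$, and invokes Nakayama's Lemma (which needs $C_{0}$ finitely generated) to find $t\in C_{0}$ with $\varepsilon(t)y=y$ on $C_{0}$;
\item it splits $C_{0}=Rt\oplus C_{0}^{0}$ and writes $\Delta(x)$ as in \eqref{Deltadecom};
\item it proves $\varepsilon(t)x=x$ and $x_{l}=x_{r}=x$, extracts $\varepsilon(x)t=0$, and concludes $\varepsilon(x)=\varepsilon(t)\varepsilon(x)=0$.
\end{itemize}

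You instead use only the identities $\varepsilon\circ l_{C}\circ(\varepsilon\otimes\operatorname{Id}_{C})=m_{R}\circ(\varepsilon\otimes\varepsilon)=\varepsilon\circ r_{C}\circ(\operatorname{Id}_{C}\otimes\varepsilon)$. You combine them with the observation that the two counit axioms identify $l_{C}(\varepsilon\otimes\operatorname{Id})\Delta_{k}^{(n)}(x)$ with the $C_{n-k}$-component of $x$, and $r_{C}(\operatorname{Id}\otimes\varepsilon)\Delta_{k}^{(n)}(x)$ with the $C_{k}$-component. The summand $m_{R}(\varepsilon\otimes\varepsilon)\Delta_{k}^{(n)}(x)$ equals $\varepsilon$ of either component, and at least one of them is $0$ unless $k=0=n$. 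Hence every summand of $\varepsilon(x)=m_{R}(\varepsilon\otimes\varepsilon)\Delta(x)$ vanishes when $n>0$.

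This needs no finiteness hypothesis and no structure theory of $C_{0}$, so it is more elementary and more general. What the paper's detour buys is by-products it later cites ``from the proof'' of the lemma: the splitting $B_{0}=R\eta(1)\oplus B_{0}^{0}$ and the form \eqref{redDelta} of $\Delta(x)$. In the connected case these also follow at once from your setup. For example, $\Delta_{0}^{(n)}(x)=1\otimes x'$, and the left counit axiom forces $x'=x$.

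One expository remark: your worry about circularity after the first display is unfounded. The identity $m_{R}(\varepsilon\otimes\varepsilon)\Delta_{0}^{(n)}(x)=\varepsilon(r_{C}(\operatorname{Id}\otimes\varepsilon)\Delta_{0}^{(n)}(x))=\varepsilon(0)=0$ already finishes the argument at that point, because the $C_{0}$-component of $x$ is $0$.
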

For completeness, we give the proof over an arbitrary commutative ring $R$.
\begin{proof}
For the first assertion, write $\eta(1) \in A$ as $\sum_{n}a_{n}$ with $a_{n} \in A_{n}$. Since multiplying it by $x \in A_{m}$ produces $\sum_{n}a_{n}x$ with $a_{n}x \in A_{n+m}$, but the total result has to be $x$, we deduce that $a_{n}x=0$ for every $n>0$ and every $x \in A_{m}$. Thus $a_{n}A=0$ for all $n>0$, which means that $a_{n}=a_{n}\eta(1)=0$ for every such $n$, and $\eta(1)=a_{0} \in A_{0}$ as desired.

We now turn to the second assertion, and we set $I:=\varepsilon(C_{0}) \subseteq R$, which is a submodule hence an ideal. Take $y \in C_{0}$, express it as $l_{C}\circ(\varepsilon\otimes\operatorname{Id}_{C})\circ\Delta(y)$, with $\Delta(y) \in C_{0} \otimes C_{0}$ by assumption, and we get that $y \in IC_{0}$. Thus $C_{0}$ is a finitely generated $R$-module satisfying $IC_{0}=C_{0}$. It thus follows from Nakayama's Lemma that there is an element of $I$, which we can write as $\varepsilon(t)$ for some $t \in C_{0}$, such that $\varepsilon(t)y=y$ for all $y \in C_{0}$.

This implies that for every $y \in C_{0}$ we have $\varepsilon(y)=\varepsilon\big(\varepsilon(t)y\big)=\varepsilon(t)\varepsilon(y)$, so that $\varepsilon(t)$ generates the ideal $I$. It follows that if $C_{0}^{0}$ the kernel of $\varepsilon|_{C_{0}}$ then $C_{0}=Rt+C_{0}^{0}$. We claim that this sum is direct. Indeed, we have $\varepsilon(t)t=t$ (as $t \in C_{0}$), so that if $r\varepsilon(t)=0$ for $r \in R$ then $rt=r\varepsilon(t)t=0$, and indeed the intersection between $Rt$ and $C_{0}^{0}$ is trivial.

We now take $x \in C_{n}$ for some $n>0$, and consider $\Delta(x)$. The graded assumption and our decomposition of $C_{0}$ allows us to write
\begin{equation}
\Delta(x) \in x_{l} \otimes t+t \otimes x_{r}+C_{n} \otimes C_{0}^{0}+C_{0}^{0} \otimes C_{n}+\bigoplus_{k=1}^{n-1}(C_{k} \otimes C_{n-k}) \label{Deltadecom}
\end{equation}
for some $x_{l}$ and $x_{r}$ from $C_{n}$, in a unique manner. After applying $l_{C}\circ(\varepsilon\otimes\operatorname{Id}_{C})$, the left hand side of Equation \eqref{Deltadecom} becomes $x$, while the right hand side produces an element of $\varepsilon(t)x_{r}+\bigoplus_{k=0}^{n-1}C_{k}$, with the explicit contribution coming from the second term.

Let us multiply both sides of Equation \eqref{Deltadecom} by $\varepsilon(t) \in R$, and let it act on the left tensor multiplier in each term on the right hand side. As $\varepsilon(t)t=t$, the second term remains unchanged, so that after the action of $l_{C}\circ(\varepsilon\otimes\operatorname{Id}_{C})$ yet again, we get that $\varepsilon(t)x$ is also an element of $\varepsilon(t)x_{r}+\bigoplus_{k=0}^{n-1}C_{k}$. Since both $x$ and $\varepsilon(t)x$ are in the summand $C_{n}$, we get $\varepsilon(t)x=\varepsilon(t)x_{r}=x$ for all $x \in C_{n}$.

But then $x=\varepsilon(t)x_{r}=x_{r}$ (since $x_{r} \in C_{n}$ as well). Working in a similar manner with the operation $r_{C}\circ(\operatorname{Id}_{C}\otimes\varepsilon)$, we deduce that $x_{l}=x$ as well. Thus when we let $l_{C}\circ(\varepsilon\otimes\operatorname{Id}_{C})$ act on the right hand side of Equation \eqref{Deltadecom}, the first term yields $\varepsilon(x)t$, and the other terms land in $C_{0}^{0}$ or in $C_{k}$ for $k>0$. As the left hand side becomes $x$, with no component from $Rt \subseteq C_{0}$, this yields $\varepsilon(x)t=0$.

But from this it follows that $\varepsilon(x)\varepsilon(t)=0$. As this value is the same as $\varepsilon\big(\varepsilon(t)x\big)$, and we saw that $\varepsilon(t)x=x$, we deduce that $\varepsilon(x)=0$ as desired. As $n>0$ and $x \in C_{n}$ was arbitrary, this proves the lemma.
\end{proof}
Note that the element $\varepsilon(t) \in R$ need not necessarily be 1 (or invertible). For example, if $R$ is the product $\tilde{R}\times\hat{R}$ of two non-trivial rings, and $C$ is a co-algebra over $\tilde{R}$ thus over $R$ when we view $\tilde{R}$ as a quotient of $R$, then we can get, for example, $\varepsilon(t)$ to be the element $(1,0)$ of $R$, which is not a unit.

\medskip

Lemma \ref{propgr} implies that in $A_{0}$ is a sub-algebra $A$ in the graded case, and $C_{0}$ is, in a natural sense, a sub-co-algebra of $C$. Note that for a graded bi-algebra $B=\bigoplus_{n=0}^{\infty}B_{n}$, we get that the element $t:=\eta(1)$, which lies in $B_{0}$ as we saw, satisfies $\varepsilon(t)=1$ by definition, so that the ideal $I$ from the proof is the full ring $R$ and many calculations are simplified because the work done to cancel $\varepsilon(t)$ from some objects becomes trivial. It is not hard to show that if the graded bi-algebra is a Hopf algebra, then the antipode preserves degrees, but in our case of interest, this will be clear from the explicit formula for it below.

We thus make the following definition.
\begin{defn}
A graded bi-algebra $B$ is called \emph{connected} if $\eta:R \to B_{0}$ is an isomorphism, or equivalently $\varepsilon:B_{0} \to R$ is an isomorphism. \label{condef}
\end{defn}
It is clear from the proof of Lemma \ref{propgr} and the fact that $\varepsilon\circ\eta(1)=1$ that $B_{0}$ decomposes as $R\eta(1) \oplus B_{0}^{0}$, with $B_{0}^{0}:=\ker\varepsilon|_{B_{0}}$. As both conditions in Definition \ref{condef} are equivalent to the vanishing of $B_{0}^{0}$, they are indeed equivalent to one another. In case $R$ is a field (where the proof of Lemma \ref{propgr} is again simplified significantly because $\varepsilon(t)\neq0$ implies its invertibility), that definition is equivalent to the classical dimension (or rank) 1 condition.

We will henceforth identify, for a connected graded bi-algebra, the subspace $B_{0}$ with $R$ via $\eta$ and $\varepsilon$, so that in particular we will write just 1 for $\eta(1) \in B_{0}$. Thus $B=R\oplus\bigoplus_{n=1}^{\infty}B_{n}$, the map $\eta$ is the embedding of $R$ inside the direct sum, and $\varepsilon$ is the projection onto it, annihilating the other components, and these maps no longer need to be specified. Moreover, the proof of Lemma \ref{propgr} implies that in this case for every $x \in C_{n}$ with $n>0$ we have
\begin{equation}
\Delta(x)=x\otimes1+1 \otimes x+\overline{\Delta}(x)\qquad\mathrm{for}\quad\overline{\Delta}(x)\in\bigoplus_{k=1}^{n-1}(B_{k} \otimes B_{n-k}), \label{redDelta}
\end{equation}
where $\overline{\Delta}:C_{n}\to\bigoplus_{k=1}^{n-1}(B_{k} \otimes B_{n-k})$ is called the \emph{reduced co-multiplication}.

The axioms for $\varepsilon$ in this case are either clear or follow from Equation \eqref{redDelta}, and those for $\eta$ are again either clear or are expressed as the convention that the multiplication, from either side, by elements of $R=B_{0}$ corresponds to the $R$-module structure on $B$.

\medskip

Connected graded bi-algebras are always Hopf algebras, as follows.
\begin{lem}
Let $B=R\oplus\bigoplus_{n=1}^{\infty}B_{n}$ be a connected graded bi-algebra as above. Define a map $S:B \to B$ by induction as follows. $S|_{R}=\operatorname{Id}_{R}$, and assume that $S$ is defined on $R\oplus\bigoplus_{k=1}^{n-1}B_{k}$. Given $x \in B_{n}$, write $\overline{\Delta}(x)$ from Equation \eqref{redDelta} as $\sum_{i}y_{i} \otimes z_{i}$, with each $y_{i}$ in some $B_{k}$ for $1 \leq k<n$, with $z_{i} \in B_{n-k}$. Then the expressions $-x-\sum_{i}S(y_{i})z_{i}$ and $-x-\sum_{i}y_{i}S(z_{i})$ and coincide, and by defining $S(x)$ to be the joint value we obtain a map $S:B \to B$ that respects the grading and produces the antipode on $B$, making it a Hopf algebra.
\label{grconS}
\end{lem}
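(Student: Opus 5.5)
We construct the two candidate maps degree by degree and then show they agree, using the standard argument that a left inverse and a right inverse in a unital ring coincide.

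\emph{Two one-sided inverses.} Let $S_{L}$ be the map defined inductively by $S_{L}|_{R}=\operatorname{Id}_{R}$ and $S_{L}(x)=-x-\sum_{i}S_{L}(y_{i})z_{i}$ for $x\in B_{n}$ with $n\geq1$. Let $S_{R}$ be defined in the same way by $S_{R}(x)=-x-\sum_{i}y_{i}S_{R}(z_{i})$. These definitions do not depend on the choice of the representation $\sum_{i}y_{i}\otimes z_{i}$ of $\overline{\Delta}(x)$. To see this, view $\sum_{i}S_{L}(y_{i})z_{i}$ as the image of $\overline{\Delta}(x)$ under the $R$-linear map $m\circ(S_{L}\otimes\operatorname{Id}_{B})$, restricted to $\bigoplus_{k=1}^{n-1}B_{k}\otimes B_{n-k}$, where $S_{L}$ is already defined by induction. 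Linearity of $S_{L}$ and $S_{R}$ follows by induction, and so does preservation of degrees, since $S_{L}(y_{i})z_{i}\in B_{k}\cdot B_{n-k}\subseteq B_{n}$. Extend both maps additively to all of $B$.

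\emph{Checking the convolution identities.} By Equation \eqref{redDelta}, for $x\in B_{n}$ with $n>0$ we have
\[(S_{L}*\operatorname{Id}_{B})(x)=S_{L}(x)\cdot1+S_{L}(1)x+\sum_{i}S_{L}(y_{i})z_{i}=S_{L}(x)+x+\sum_{i}S_{L}(y_{i})z_{i}=0.\]
This equals $\eta\circ\varepsilon(x)$, because Lemma \ref{propgr} gives $\varepsilon(x)=0$. On $B_{0}=R$ the identity holds because $\Delta(1)=1\otimes1$ and $\varepsilon(1)=1$. Hence $S_{L}*\operatorname{Id}_{B}=\eta\circ\varepsilon$. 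The symmetric computation gives $\operatorname{Id}_{B}*S_{R}=\eta\circ\varepsilon$.

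\emph{Agreement of the two maps.} By Proposition \ref{conv}, $\operatorname{End}_{R}(B)$ with the convolution product is an associative ring with unit $\eta\circ\varepsilon$. Therefore
\[S_{L}=S_{L}*(\operatorname{Id}_{B}*S_{R})=(S_{L}*\operatorname{Id}_{B})*S_{R}=S_{R}.\]
So the two recursive expressions coincide, and their common value $S$ is a two-sided convolution inverse of $\operatorname{Id}_{B}$. By Definition \ref{Hopfdef}, $S$ is the antipode.

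There is one subtlety in the statement. It asserts that the two expressions coincide at each inductive step, using the same $S$ on lower degrees, whereas the argument above defines $S_{L}$ and $S_{R}$ separately and compares them globally. Since $S_{L}=S_{R}$ on all degrees, the two inductive formulae do agree at every stage, which is exactly what the statement claims. The only point that needs care is well-definedness and linearity of each recursion, and factoring through $m\circ(S\otimes\operatorname{Id}_{B})$ handles that.
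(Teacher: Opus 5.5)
Your proposal is correct and takes the same route as the paper. The paper also builds the left and right recursions $S_{l}$ and $S_{r}$, checks $S_{l}*\operatorname{Id}_{B}=\operatorname{Id}_{B}*S_{r}=\eta\circ\varepsilon$ using Equation \eqref{redDelta} and $\varepsilon(B_{n})=0$ for $n>0$, and then concludes $S_{l}=S_{r}$ because one-sided inverses in the convolution ring coincide. Your extra argument that each recursion is independent of how $\overline{\Delta}(x)$ is written, obtained by factoring through $m\circ(S\otimes\operatorname{Id}_{B})$, is a small addition that the paper leaves implicit.
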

Lemma \ref{grconS} appears in many references, like \cite{[E]}, \cite{[LMvW]}, and others. For the sake of completeness, we provide the proof, as it is short.
\begin{proof}
Let $S_{l}:B \to B$ be defined using $x\mapsto-x-\sum_{i}S(y_{i})z_{i}$, and write $S_{r}:B \to B$ for the map given by $x\mapsto-x-\sum_{i}y_{i}S(z_{i})$. As we get, by induction, that $S_{l}(y_{i}) \in B_{k}$ and $S_{r}(z_{i}) \in B_{n-k}$ (since $1 \leq k<n$), the graded property of the multiplication shows that $S_{l}(x)$ and $S_{r}(x)$ are in $B_{n}$, hence both preserve the grading.

Now, the fact that $\Delta(1)=1\otimes1$ and $S_{l}(1)=S_{r}(1)=1$ shows that both maps satisfy the required relation to be the antipode on $R=B_{0}$. But the defining property of $S_{l}$ and the fact that $\varepsilon(x)=0$ for $x \in B_{n}$ when $n>0$ implies, via Equation \eqref{redDelta}, that $m\circ(S_{l}\otimes\operatorname{Id}_{B})\circ\Delta=0=\eta\circ\varepsilon$ also on $\bigoplus_{n=1}^{\infty}B_{n}$. Similarly we obtain the equality $m\circ(\operatorname{Id}_{B} \otimes S_{r})\circ\Delta=\eta\circ\varepsilon$.

But this means that $S_{l}*\operatorname{Id}_{B}=\operatorname{Id}_{B}*S_{r}=\eta\circ\varepsilon$ inside the ring structure that Proposition \ref{conv} endows on $\operatorname{End}_{R}(B)$. Hence $\operatorname{Id}_{B}$ has both a left inverse and a right inverse. Since in this case the inverses from both sides coincide, and produce the inverse of $\operatorname{Id}_{B}$ in that ring, the equality $S_{l}=S_{r}$ and the Hopf property follow, with our joint $S$ as the antipode. This proves the lemma.
\end{proof}
Using Lemma \ref{grconS}, one can obtain the general formula for the antipode of a connected graded Hopf algebra as in \cite{[T]}, but the number of calculations required in order to produce an explicit value using this formula grows with the degree. This formula also involves, typically, a lot of cancelations, and \cite{[BS]} presents a way for evaluating the antipode of a connected graded Hopf algebra in a cancelation-free manner in many cases.

We will use an alternative way for determining antipodes in this setting, using the following consequence of Lemma \ref{grconS}. Since we work over an arbitrary ring, we say that a collection $\{g_{\alpha}\}_{\alpha}$ of an $R$-module $M$ is called \emph{linearly independent} if the natural map from the formal free module $\bigoplus_{\alpha}Rg_{\alpha}$ into $M$ is injective (which reduces to the usual definition when $R$ is a field).
\begin{cor}
Let $\{g_{\alpha}\}_{\alpha}$ be a linearly independent collection of elements in a connected graded Hopf algebra $H$, in which each $g_{\alpha}$ lies in a graded component $H_{n(\alpha)}$. Assume that for every $\alpha$ and every $0 \leq k \leq n(\alpha)$, the part of $\Delta(g_{\alpha})$ that lies in $H_{k} \otimes H_{n-k}$ is a tensor product $g_{\beta_{k}^{l}(\alpha)} \otimes g_{\beta_{n-k}^{r}(\alpha)}$ of two elements from that set. Then if $\{\tilde{g}_{\alpha}\}_{\alpha}$ is another collection of such linearly independent elements, with $\tilde{g}_{\alpha} \in H_{n(\alpha)}$ as well, such that for every $\alpha$ the sum $\sum_{k=0}^{n(\alpha)}\tilde{g}_{\beta_{k}^{l}(\alpha)}g_{\beta_{n-k}^{r}(\alpha)}$ equals $\delta_{n(\alpha),0}$, then we have $S(g_{\alpha})=\tilde{g}_{\alpha}$ for every $\alpha$. \label{detS}
\end{cor}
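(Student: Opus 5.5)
We argue by strong induction on the degree $n(\alpha)$, using the recursive description of the antipode from Lemma \ref{grconS}. Since $H$ is a connected graded Hopf algebra, Lemma \ref{grconS} and the uniqueness of the antipode show that for $x\in H_{n}$ with $n>0$ we have $S(x)=-x-\sum_{i}S(y_{i})z_{i}$, where $\overline{\Delta}(x)=\sum_{i}y_{i}\otimes z_{i}$ as in Equation \eqref{redDelta}. Equivalently, $m\circ(S\otimes\operatorname{Id}_{H})\circ\Delta(x)=\varepsilon(x)$, which we will use in the form
\[\sum_{k=0}^{n}S\big(g_{\beta_{k}^{l}(\alpha)}\big)g_{\beta_{n-k}^{r}(\alpha)}=\delta_{n(\alpha),0}\]
for $x=g_{\alpha}$, by the hypothesis on the shape of $\Delta(g_{\alpha})$.

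First I will pin down the extreme terms $k=0$ and $k=n$. For $n(\alpha)>0$, Equation \eqref{redDelta} shows that the $H_{0}\otimes H_{n}$ part of $\Delta(g_{\alpha})$ is $1\otimes g_{\alpha}$, and the $H_{n}\otimes H_{0}$ part is $g_{\alpha}\otimes1$. By hypothesis these parts are $g_{\beta_{0}^{l}(\alpha)}\otimes g_{\beta_{n}^{r}(\alpha)}$ and $g_{\beta_{n}^{l}(\alpha)}\otimes g_{\beta_{0}^{r}(\alpha)}$ respectively. Some care is needed here, since tensors are being identified rather than individual factors. Under $H_{0}=R$, the tensor $g_{\beta_{0}^{l}}\otimes g_{\beta_{n}^{r}}$ equals $1\otimes c\,g_{\beta_{n}^{r}}$ with $c=g_{\beta_{0}^{l}}\in R$. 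It is cleanest to read the statement as saying that the degree-$0$ element of the collection is $1$, and that $\beta_{0}^{l}(\alpha)$ indexes it while $\beta_{n}^{l}(\alpha)=\alpha$. I will make that reading explicit, and justify it by linear independence whenever the collection contains a degree-$0$ element: a degree-$0$ $g$ is $c\cdot1$, and then $\Delta(g)=c\,1\otimes1$ forces $c^{2}=c$ in the appropriate sense. For the base case $n(\alpha)=0$ the hypothesis gives $\tilde g_{\alpha}g_{\alpha}=1$, and we need $S(g_{\alpha})=g_{\alpha}$, that is $\tilde g_{\alpha}=g_{\alpha}$. With $g_{\alpha}=1$ both $S(1)=1$ and $\tilde g_{\alpha}=1$ hold.

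For the inductive step, assume $S(g_{\beta})=\tilde g_{\beta}$ for all $\beta$ with $n(\beta)<n(\alpha)$, where $n(\alpha)>0$. We subtract the identity displayed above from the hypothesis $\sum_{k}\tilde g_{\beta_{k}^{l}(\alpha)}g_{\beta_{n-k}^{r}(\alpha)}=0$. Every term with $k<n$ involves $\beta_{k}^{l}(\alpha)$ of degree $k<n$, so by induction these terms cancel. What remains is $(\tilde g_{\alpha}-S(g_{\alpha}))\cdot g_{\beta_{0}^{r}(\alpha)}=0$, and since $g_{\beta_{0}^{r}(\alpha)}=1$ this gives $S(g_{\alpha})=\tilde g_{\alpha}$. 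Linear independence of $\{\tilde g_{\alpha}\}$ is not actually needed; it merely reflects that $S$ is injective on the span.

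The main obstacle is the bookkeeping in the degree-$0$ components: making rigorous, over a general ring $R$, that the tensor factors in degrees $0$ and $n$ can be taken to be $1$ and $g_{\alpha}$ themselves, rather than just tensors equal to $1\otimes g_{\alpha}$. I will handle this by fixing the convention that $g_{\beta}=1$ whenever $n(\beta)=0$, which is what the intended applications use. Everything else is a direct consequence of Lemma \ref{grconS}.
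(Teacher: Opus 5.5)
Your argument is correct and is essentially the paper's proof. Both normalize the degree-$0$ member of the collection to be $1$ (whence $\beta^{l}_{n(\alpha)}(\alpha)=\alpha$ by linear independence), then run strong induction on $n(\alpha)$ using the recursion $m\circ(S\otimes\operatorname{Id}_{H})\circ\Delta=\eta\circ\varepsilon$ from Lemma \ref{grconS}, replacing $S(g_{\beta_{k}^{l}(\alpha)})$ by $\tilde{g}_{\beta_{k}^{l}(\alpha)}$ for $k<n(\alpha)$. The only difference is cosmetic: you subtract the two full identities, while the paper substitutes into $S(g_{\alpha})=-g_{\alpha}-\sum_{k=1}^{n-1}S(g_{\beta_{k}^{l}(\alpha)})g_{\beta_{n-k}^{r}(\alpha)}$ and cancels the $k=0$ term against $-g_{\alpha}$.
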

The symbol $\delta_{n(\alpha),0}$ is, as usual, the Kronecker delta symbol, which equals 1 in case $n(\alpha)=0$ and 0 otherwise.
\begin{proof}
The form of $\Delta(g_{\alpha})$ as given in Equation \eqref{redDelta} implies that we may assume, without loss of generality, that one of the $g_{\alpha}$'s is $1 \in R=H_{0}$. Hence for every $\alpha$ we have $g_{\beta_{0}^{l}(\alpha)}=g_{\beta_{0}^{r}(\alpha)}=1$ and thus $g_{\beta_{n(\alpha)}^{l}(\alpha)}=g_{\beta_{n(\alpha)}^{r}(\alpha)}=g_{\alpha}$, so that $\beta_{n(\alpha)}^{l}(\alpha)=\beta_{n(\alpha)}^{r}(\alpha)=\alpha$ (by linear independence). The condition for $g_{\alpha}=1$ yields $\tilde{g}_{\alpha}=1=S(g_{\alpha})$, so that we may consider $\alpha$ with $n(\alpha)>0$.

We apply induction on $n(\alpha)$, namely take some $\alpha$ with $n(\alpha)=n>0$, and assume that the result holds for every $\beta$ with $n(\beta)<n$ (which we already verified for $n(\beta)=0$ by connectedness). Lemma \ref{grconS} implies, via the formula for $\Delta(g_{\alpha})$, that $S(g_{\alpha})=-g_{\alpha}-\sum_{k=1}^{n(\alpha)-1}S(g_{\beta_{k}^{l}(\alpha)})g_{\beta_{n-k}^{r}(\alpha)}$, and the induction hypothesis allows us to write $\tilde{g}_{\beta_{k}^{l}(\alpha)}$ as the left multiplier for every $1 \leq k<n(\alpha)$.

But we recall that $\sum_{k=0}^{n(\alpha)}\tilde{g}_{\beta_{k}^{l}(\alpha)}g_{\beta_{n-k}^{r}(\alpha)}=0$ under our assumptions. We may thus replace the sum over $k$ in the expression for $S(g_{\alpha})$, with its negative sign, by the remaining two terms with $k=0$ and $k=n(\alpha)$ (appearing with a positive sign). But since we saw that $\tilde{g}_{\beta_{0}^{l}(\alpha)}=1$ and $g_{\beta_{n-k}^{r}(\alpha)}=g_{\alpha}$, the summand with $k=0$ cancels with the extra term $-g_{\alpha}$. Recalling that $g_{\beta_{0}^{r}(\alpha)}=1$ as well, and we have $\beta_{n(\alpha)}^{r}(\alpha)=\alpha$, the remaining summand, with $k=n(\alpha)$, which thus produces the full value of $S(g_{\alpha})$, is $\tilde{g}_{\alpha}$ as desired. This proves the corollary.
\end{proof}
It is possible that the linear independence assumption in Corollary \ref{detS} can be relaxed or removed altogether, but the collections for which we will apply that corollary for determining antipodes below will satisfy this property.

\begin{rmk}
Replacing the last assumption in Corollary \ref{detS} by the assumption that $\sum_{k=0}^{n(\alpha)}g_{\beta_{k}^{l}(\alpha)}\tilde{g}_{\beta_{n-k}^{r}(\alpha)}=\delta_{n(\alpha),0}$ yields the same conclusion. Indeed, putting the antipode $S$ on the second multiplier in each of the expressions in its proof, which we may do via Lemma \ref{grconS}, yields the result for this case as well. \label{firstS}
\end{rmk}

\medskip

We conclude this section by discussing duality. Recall that for an $R$-module $M$, the \emph{dual module} is $M^{*}:=\operatorname{Hom}_{R}(M,R)$, and if $N$ is another $R$-module and $f:M \to N$ is a homomorphism of $R$-modules then there is the \emph{dual homomorphism} $f^{*}:N^{*} \to M^{*}$. We also have $R^{*} \cong R$, and hence the natural map $\iota_{M}:M \to M^{**}$, which is injective if $M$ is projective. The $R$-module $M$ is called \emph{reflexive} in case $\iota_{M}$ is an isomorphism (which is always the case when $M$ is finitely generated and projective).

We also note that there is an injective map from $M^{*}\otimes_{R}N^{*}$ into $(M\otimes_{R}N)^{*}$ for every two $R$-modules $M$ and $N$, which is an isomorphism if either $M$ or $N$ are finitely generated, but not otherwise.

The following well-known results are given in, e.g., Propositions 4, 5, 6, 7, and 8 of \cite{[D]}.
\begin{prop}
The following assertions hold:
\begin{enumerate}[$(i)$]
\item If $(C,\varepsilon,\Delta)$ is a co-algebra then $C^{*}$ becomes an algebra with the maps $\eta:=\varepsilon^{*}:R=R^{*} \to M^{*}$ and $m:=\Delta^{*}|_{C^{*}\otimes_{R}C^{*}}:C^{*}\otimes_{R}C^{*} \to C^{*}$. It is commutative if and only if $C$ is co-commutative.
\item Given an algebra $(A,\eta,m)$ over $R$, with $A$ finitely generated as an $R$-module, the $R$-module $A^{*}$ becomes a co-algebra, which is co-commutative precisely when $A$ is commutative, once we endow it with the two maps $\varepsilon:=\eta^{*}:A^{*} \to R^{*}=R$ and $\Delta:=m^{*}:A^{*}\to(A\otimes_{R}A)^{*}=A^{*}\otimes_{R}A^{*}$.
\item Let $B$ be a bi-algebra, with the $R$-module $B$ being finitely generated. Then the two constructions from part $(i)$ and $(ii)$ make $B^{*}$ into a bi-algebra, as well, with the corresponding commutativity and co-commutativity conditions.
\item If $H$ is a Hopf algebra and finitely generated as an $R$-module, then the bi-algebra $H^{*}$ from part $(iii)$ is also a Hopf algebra, with $S^{*}:H^{*} \to H^{*}$ being the antipode.
\end{enumerate} \label{dual}
\end{prop}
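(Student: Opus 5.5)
The plan is to check each part directly from the definitions, dualizing the commutative diagrams of Definitions \ref{alg}, \ref{coalg}, \ref{bialg}, and \ref{Hopfdef}. The basic tool is functoriality of $(\cdot)^{*}$: $(f\circ g)^{*}=g^{*}\circ f^{*}$ and $\operatorname{Id}^{*}=\operatorname{Id}$. We also need the natural map $\phi:M^{*}\otimes_{R}N^{*}\to(M\otimes_{R}N)^{*}$, given by $\phi(\mu\otimes\nu)(x\otimes y)=\mu(x)\nu(y)$. We will use that $\phi$ is compatible with tensor products of maps, namely $(f\otimes g)^{*}\circ\phi=\phi\circ(f^{*}\otimes g^{*})$, and that it intertwines the interchange maps $\tau$. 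Throughout, the identifications $R^{*}\cong R$, $m_{R}^{*}$, $l_{M}^{*}$, and $r_{M}^{*}$ are handled by evaluating on elements.

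For $(i)$, we set $m:=\Delta^{*}\circ\phi$ and evaluate on elements. For $\mu,\nu,\xi\in C^{*}$ and $x\in C$, we have $m(m(\mu\otimes\nu)\otimes\xi)(x)=(\mu\otimes\nu\otimes\xi)\big((\Delta\otimes\operatorname{Id})\Delta(x)\big)$, where a triple tensor of functionals is evaluated factorwise. The analogous expression for the other bracketing uses $(\operatorname{Id}\otimes\Delta)\Delta(x)$, so co-associativity gives associativity. The unit axiom is $m(\varepsilon\otimes\nu)(x)=\sum\varepsilon(x_{(1)})\nu(x_{(2)})=\nu(x)$, which is exactly the co-unit axiom after applying $\nu$. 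For commutativity, $\tau_{C}\circ\Delta=\Delta$ gives $m\circ\tau_{C^{*}}=m$. Conversely, if $m$ is commutative, then $(\mu\otimes\nu)(\Delta x-\tau\Delta x)=0$ for all $\mu,\nu$. Upgrading this to $\Delta=\tau\circ\Delta$ needs the elements of $C\otimes_{R}C$ to be separated by the functionals $\mu\otimes\nu$. I will state this caveat explicitly and use it in the setting of the paper, where the modules are finitely generated projective, e.g.\ free.

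For $(ii)$, finite generation is what makes $\phi$ an isomorphism $A^{*}\otimes A^{*}\cong(A\otimes A)^{*}$, so $\Delta:=\phi^{-1}\circ m^{*}$ is well defined. Co-associativity then follows by dualizing associativity and applying compatibility of $\phi$ with tensor products of maps. The co-unit axioms are the duals of the unit axioms, and the co-commutativity equivalence is handled exactly as in $(i)$, with reflexivity giving the converse.

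For $(iii)$, the four bi-algebra equalities of Definition \ref{bialg} are each dualized. Dualizing $\Delta\circ m=(m\otimes m)\circ(\operatorname{Id}\otimes\tau_{23}\otimes\operatorname{Id})\circ(\Delta\otimes\Delta)$ yields the same equation for $B^{*}$, with the roles of $m$ and $\Delta$ swapped. This uses that $\phi$ commutes with $\tau_{23}$ and with tensor products of maps. The three equalities involving $\eta$ and $\varepsilon$ dualize into one another in the same way.

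For $(iv)$, we dualize $m\circ(S\otimes\operatorname{Id})\circ\Delta=\eta\circ\varepsilon$. This gives $\Delta^{*}\circ(S\otimes\operatorname{Id})^{*}\circ m^{*}=\varepsilon^{*}\circ\eta^{*}$. After inserting $\phi$ and using $(S\otimes\operatorname{Id})^{*}\circ\phi=\phi\circ(S^{*}\otimes\operatorname{Id})$, this reads $m_{H^{*}}\circ(S^{*}\otimes\operatorname{Id})\circ\Delta_{H^{*}}=\eta_{H^{*}}\circ\varepsilon_{H^{*}}$. The same computation applied to the other equality shows that $S^{*}$ is a two-sided convolution inverse of $\operatorname{Id}_{H^{*}}$.

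The main obstacle is the bookkeeping around $\phi$ and the identification of $(H\otimes H)^{*}$ with $H^{*}\otimes H^{*}$. In particular, one must check that $\phi$ intertwines the interchange maps and tensor products of maps, and one must decide how the converse commutativity claims in $(i)$ and $(ii)$ are to be read when $C$ is not projective.
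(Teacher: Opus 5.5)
You are right to dualize each axiom and to pass between $M^{*}\otimes_{R}N^{*}$ and $(M\otimes_{R}N)^{*}$ through the natural map $\phi$; that is the standard route. The paper itself gives no proof and only refers to the cited thesis. The genuine gap is your claim in $(ii)$ that finite generation is what makes $\phi:A^{*}\otimes_{R}A^{*}\to(A\otimes_{R}A)^{*}$ an isomorphism, and your constructions in $(ii)$--$(iv)$ all rest on it. Over an arbitrary commutative ring this is false. The paper's remark just before the proposition has the same flaw: it asserts that $\phi$ is always injective and is an isomorphism once one factor is finitely generated.

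For $R=\mathbb{Z}/4\mathbb{Z}$ and $M=N=R/2R$, both $M^{*}\otimes_{R}N^{*}$ and $(M\otimes_{R}N)^{*}$ are isomorphic to $R/2R$. But every functional on $M$ takes values in $2R$, so $\phi(\mu\otimes\nu)(\bar{1}\otimes\bar{1})\in4R=0$ and hence $\phi=0$.

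Worse, part $(ii)$ itself fails under the stated hypothesis, so no argument can close the gap as written. Take $A=R[X]/(X^{2}-2,2X)=R\cdot1\oplus(R/2R)\bar{X}$, with $\bar{X}^{2}=2$; it is finitely generated. Every functional sends $\bar{X}$ into $2R$, so every element of $\phi(A^{*}\otimes_{R}A^{*})$ vanishes on $\bar{X}\otimes\bar{X}$. Yet the functional $\varepsilon_{1}$ with $\varepsilon_{1}(1)=1$ and $\varepsilon_{1}(\bar{X})=0$ satisfies $m^{*}(\varepsilon_{1})(\bar{X}\otimes\bar{X})=\varepsilon_{1}(2)=2\neq0$. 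Hence there is no $\Delta:A^{*}\to A^{*}\otimes_{R}A^{*}$ with $\phi\circ\Delta=m^{*}$.

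The missing hypothesis is projectivity. If $M$ is finitely generated projective, the following hold:
\begin{itemize}
\item $\phi:M^{*}\otimes_{R}N^{*}\to(M\otimes_{R}N)^{*}$ is an isomorphism for every $N$. For $M=R^{k}$ both sides are $(N^{*})^{k}$, and direct summands follow by naturality.
\item All tensor powers of $M$ are again finitely generated projective.
\item These tensor powers are reflexive, hence separated by functionals of the form $\phi(\mu\otimes\nu)$.
\end{itemize}
Once ``finitely generated'' is strengthened to ``finitely generated projective'' in $(ii)$--$(iv)$, your definition $\Delta:=\phi^{-1}\circ m^{*}$ makes sense. Your dualizations in $(iii)$ then go through as written; they also need $\phi$ to be an isomorphism on threefold and fourfold tensor powers, which the second and first points supply. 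Your computation in $(iv)$ also goes through unchanged. The converse implications in $(i)$ and $(ii)$ follow from the separation property. Nothing is lost for the paper, since the duality is only applied to graded pieces that are free of finite rank.

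Your caveat about the converse in $(i)$ is genuinely necessary. Over $\mathbb{Z}$, take $C=\mathbb{Z}g\oplus(\mathbb{Z}/2\mathbb{Z})x\oplus(\mathbb{Z}/2\mathbb{Z})y\oplus(\mathbb{Z}/2\mathbb{Z})z$, with
$\Delta(g)=g\otimes g$, $\Delta(x)=x\otimes g+g\otimes x$, $\Delta(y)=y\otimes g+g\otimes y$, $\Delta(z)=z\otimes g+g\otimes z+x\otimes y$ and $\varepsilon(g)=1$.
This co-algebra is not co-commutative, since $x\otimes y\neq y\otimes x$ in $C\otimes_{\mathbb{Z}}C$. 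Its dual, however, is the commutative ring $\mathbb{Z}$.
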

The reason for part $(ii)$ of Proposition \ref{dual} (hence parts $(iii)$ and $(iv)$ as well) not working for general algebras is that the image of $m^{*}:A^{*}\to(A\otimes_{R}A)^{*}$ is not always contained in $A^{*}\otimes_{R}A^{*}$ (see, e.g., Lemma 9.1.1 of \cite{[Mo]} for when this containment is satisfied). If $C$, $A$, $B$, and $H$ are reflexive there, with $C$ finitely generated in part $(i)$ there, then applying this proposition twice yields a structure on $C^{**}$, $A^{**}$, $B^{**}$, and $H^{**}$ respectively. It is clear that this structure coincides with the original structure via the respective isomorphism $\iota_{C}$, $\iota_{A}$, $\iota_{B}$, and $\iota_{H}$.

\medskip

Graded modules, as in Definition \ref{graded}, are typically not finitely generated, so the construction from Proposition \ref{dual} does not apply to them directly. However, we define the \emph{graded dual} $M^{*,\mathrm{gr}}$ of every such $R$-module $M=\bigoplus_{n=0}^{\infty}M_{n}$ to be $\bigoplus_{n=0}^{\infty}M_{n}^{*}\subseteq\prod_{n=0}^{\infty}M_{n}^{*}=M^{*}$. For graded modules $M$ and $N$, and an $R$-module homomorphism that is \emph{graded}, namely preserves the grading (namely takes $M_{n}$ to $N_{n}$ for every $n$), we have the graded map $f^{*}:N^{*,\mathrm{gr}} \to M^{*,\mathrm{gr}}$.

The tensor product $M \otimes N$ has the two gradings, from $M$ and from $N$, but the natural grading to put on it is to define the elements of degree $n$ to be those of $\bigoplus_{k=0}^{n}(M_{k} \otimes N_{n-k})$. We also consider $R$ to be graded and localized in degree 0. Then the maps from Definition \ref{graded} are all graded, with Lemma \ref{propgr} verifying it for the unit and the co-unit.

Analogously to Proposition \ref{dual}, we have the following result.
\begin{prop}
Consider the maps as defined as in Proposition \ref{dual}.
\begin{enumerate}[$(i)$]
\item When $C$ is a graded co-algebra, the graded dual $C^{*,\mathrm{gr}}$ is a graded algebra. Then $C$ is co-commutative if and only if $C^{*}$ is commutative.
\item If $A$ is a graded algebra, then $A^{*,\mathrm{gr}}$ is a graded co-algebra, and it is co-commutative if and only if $A$ is commutative.
\item Given a graded bi-algebra $B$, so is $B^{*,\mathrm{gr}}$, with the respective conditions for commutativity and co-commutativity.
\item In case $H$ is a graded Hopf algebra, the graded dual $H^{*,\mathrm{gr}}$ is also such.
\end{enumerate} \label{grdual}
\end{prop}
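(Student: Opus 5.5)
The plan is to reduce each part of Proposition \ref{grdual} to the finitely generated statements of Proposition \ref{dual}, applied degree by degree. The basic observation is that each $M_{n}$ is finitely generated. Hence for graded modules $M$ and $N$ the natural map $M_{k}^{*}\otimes_{R}N_{n-k}^{*}\to(M_{k}\otimes_{R}N_{n-k})^{*}$ is an isomorphism. Summing over $k$, the degree $n$ part of $(M\otimes_{R}N)^{*,\mathrm{gr}}$, namely $\big(\bigoplus_{k=0}^{n}M_{k}\otimes_{R}N_{n-k}\big)^{*}$, is identified with the degree $n$ part of $M^{*,\mathrm{gr}}\otimes_{R}N^{*,\mathrm{gr}}$. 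Since this holds in every degree, we get $(M\otimes_{R}N)^{*,\mathrm{gr}}\cong M^{*,\mathrm{gr}}\otimes_{R}N^{*,\mathrm{gr}}$. This isomorphism is exactly what removes the obstruction described after Proposition \ref{dual}. I will establish it first.

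For part $(i)$, I restrict $\Delta^{*}$ to $C^{*,\mathrm{gr}}\otimes C^{*,\mathrm{gr}}$. Because $\Delta$ is graded, the component $\Delta|_{C_{n}}:C_{n}\to\bigoplus_{k}C_{k}\otimes C_{n-k}$ dualizes to a map $\bigoplus_{k}C_{k}^{*}\otimes C_{n-k}^{*}\to C_{n}^{*}$. This shows that $m$ sends degree $k$ times degree $l$ into degree $k+l$. Associativity, the unit axioms and the (co)commutativity equivalence are dual equalities of graded maps. They can be checked componentwise, as in part $(i)$ of Proposition \ref{dual}. The unit $\varepsilon^{*}$ lands in $C_{0}^{*}$ by Lemma \ref{propgr}.

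For part $(ii)$, I define $\Delta$ on $A_{n}^{*}$ as the dual of $m|_{\bigoplus_{k}A_{k}\otimes A_{n-k}}$. By the tensor identification this lands in $\bigoplus_{k}A_{k}^{*}\otimes A_{n-k}^{*}$. This is the step where finite generation per degree replaces the global finiteness hypothesis of Proposition \ref{dual}. The counit is $\eta^{*}$, which kills positive degrees since $\eta(R)\subseteq A_{0}$, again by Lemma \ref{propgr}. Co-associativity and the counit axioms are dual to the algebra axioms, degree by degree.

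For part $(iii)$, the four compatibility equalities of Definition \ref{bialg} are equalities of graded maps. Dualizing each one gives the corresponding equality for $B^{*,\mathrm{gr}}$, using the identification of graded duals of tensor products. One must also check that $\tau_{23}$ dualizes to $\tau_{23}$, which is immediate.

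For part $(iv)$, I first need the antipode $S$ to be graded. Then $S^{*}$ is defined on $H^{*,\mathrm{gr}}$, and dualizing $m\circ(S\otimes\operatorname{Id})\circ\Delta=\eta\circ\varepsilon$ gives the antipode identity for $S^{*}$.

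I expect the main obstacle to be exactly this gradedness of $S$ in general, which the paper only remarked on. In the connected case it follows from Lemma \ref{grconS}. For the general case I would first try the following. Let $p_{n}$ be the projections onto the graded components, and let $S'$ be the graded part $\sum_{n}p_{n}\circ S\circ p_{n}$. Using that $m$ and $\Delta$ are graded, one checks that $S'$ is also a two-sided convolution inverse of $\operatorname{Id}$. Uniqueness of inverses then gives $S=S'$.
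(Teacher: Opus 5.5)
Your route (dualize degree by degree and use $(M\otimes_{R}N)^{*,\mathrm{gr}}\cong M^{*,\mathrm{gr}}\otimes_{R}N^{*,\mathrm{gr}}$ to reduce to Proposition \ref{dual}) is what the paper means by ``analogously to Proposition \ref{dual}''; it gives no further proof. Your argument that the antipode is graded is correct and supplies a point the paper only asserts: take degree-$n$ components of $S*\operatorname{Id}=\eta\circ\varepsilon$ on $H_{n}$, note that $\eta\circ\varepsilon$ is concentrated in degree $0$ by Lemma \ref{propgr}, deduce that $\sum_{n}p_{n}\circ S\circ p_{n}$ is still a two-sided convolution inverse, and conclude by uniqueness.

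The gap is your ``basic observation''. Over an arbitrary commutative ring, finite generation of $M_{k}$ does \emph{not} make $M_{k}^{*}\otimes_{R}N_{l}^{*}\to(M_{k}\otimes_{R}N_{l})^{*}$ an isomorphism; you need finitely generated \emph{projective}. For $R=\mathbb{Z}/4\mathbb{Z}$ and $M=N=R/2R$, the dual $M^{*}\cong R/2R$ is generated by $\xi:1\mapsto2$, and $\xi\otimes\xi$ goes to $a\otimes b\mapsto4ab=0$. So the natural map between these two copies of $R/2R$ is zero. This defeats part $(ii)$ itself, not only your proof of it. Take $A=R[x]/(2x)$ graded by degree, so $A_{n}\cong R/2R$ for $n\geq1$. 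The generator $\zeta:x^{2}\mapsto2$ of $A_{2}^{*}$ has $m^{*}(\zeta)$ restricting to the non-zero functional $x\otimes x\mapsto2$ on $A_{1}\otimes_{R}A_{1}$. The image of $A_{1}^{*}\otimes_{R}A_{1}^{*}$ there is zero, so $m^{*}$ does not land in $A^{*,\mathrm{gr}}\otimes_{R}A^{*,\mathrm{gr}}$. The paper's remark just before Proposition \ref{dual} makes the same overstatement, so you inherited this gap from the source.

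The converse directions of the ``if and only if'' claims fail for a related reason. Commutativity of $C^{*,\mathrm{gr}}$ only says that $\tau\circ\Delta-\Delta$ is killed by the functionals coming from $C_{k}^{*}\otimes_{R}C_{l}^{*}$. Concluding that it vanishes needs these functionals to separate points of $C_{k}\otimes_{R}C_{l}$, and torsion is invisible to duals. For example, over $R=\mathbb{Z}$ put $C_{0}=\mathbb{Z}$, and for $n\geq1$ let $C_{n}$ be spanned over $\mathbb{Z}/2\mathbb{Z}$ by the words of length $n$ in two letters, with deconcatenation. This is a non-co-commutative graded co-algebra with $C^{*,\mathrm{gr}}=\mathbb{Z}$, which is commutative.

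The repair is to require every graded piece to be finitely generated projective. This holds for all the algebras the paper actually uses, since they are free of finite rank in each degree over $R$ or $R[q]$. Then the natural maps are isomorphisms, all pieces and their tensor products are reflexive, and your degree-by-degree argument goes through as written, including the equivalences and part $(iv)$.
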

For every graded module $M$ we have the graded map $\iota_{M}^{\mathrm{gr}}:M\to(M^{*,\mathrm{gr}})^{*,\mathrm{gr}}$ which is the direct sum of $\iota_{M_{n}}:M_{n} \to M_{n}^{**}$ for every $n$. We call $M$ \emph{reflexive} (as a graded $R$-module) if so are all the $M_{n}$'s, which is equivalent to all the $M_{n}$'s being reflexive. Also in Proposition \ref{grdual}, if $C$, $A$, $B$, and $H$ carry the structure from the respective parts, then so do $(C^{*,\mathrm{gr}})^{*,\mathrm{gr}}$, $(A^{*,\mathrm{gr}})^{*,\mathrm{gr}}$, $(B^{*,\mathrm{gr}})^{*,\mathrm{gr}}$, and $(H^{*,\mathrm{gr}})^{*,\mathrm{gr}}$, and then the former is reflexive the induced structure on the latter coincides with the original one via $\iota_{C}^{\mathrm{gr}}$, $\iota_{A}^{\mathrm{gr}}$, $\iota_{B}^{\mathrm{gr}}$, and $\iota_{H}^{\mathrm{gr}}$ respectively.

\section{Quasi-Symmetric Functions \label{QSym}}

In this section we state the known results about the Hopf algebra $\mathtt{QSym}$, but formulated in an alternative way that will be more appropriate for establishing our results later.

\subsection{Compositions and Operations on Them}

Quasi-symmetric functions are parameterized by compositions of integers. We thus begin by setting up the relevant notations and relations between them.

\begin{defn}
Let $n$ be a non-negative integer.
\begin{enumerate}[$(i)$]
\item A \emph{composition} of $n$ is an ordered sequence $\alpha=(\alpha_{1},\ldots,\alpha_{\ell})$ of positive integers summing to $n$. We write this as $\alpha \vDash n$.
\item The number $\ell$ is called the \emph{length} of $\alpha$, and is denoted by $\ell(\alpha)$.
\item A \emph{partition} of $n$, written as $\alpha \vdash n$, is a composition whose entries are non-increasing.
\end{enumerate} \label{defcomp}
\end{defn}
We will sometimes write explicit compositions while omitting the parentheses and commas from Definition \ref{defcomp}, in case no confusion may arise.

For $n\geq1$, denote by $\mathbb{N}_{n-1}$ the set of integers between 1 and $n-1$. The following bijection between the sets $\{\alpha \vDash n\}$ and $\{T\subseteq\mathbb{N}_{n-1}\}$ is well-known.
\begin{prop}
Let $T\subseteq\mathbb{N}_{n-1}$ be a subset of size $\ell-1$, which we write in increasing order as $\{t_{i}\;|\;1 \leq i<\ell\}$. Then by adding $t_{0}:=0$ and $t_{\ell}:=n$, let $\operatorname{comp}_{n}T$ be the composition of $n$ whose length is $\ell$ and whose $j$th entry is $t_{j}-t_{j-1}$. Then $\operatorname{comp}_{n}$ is a bijection, whose inverse map $\operatorname{comp}_{n}^{-1}$ sends $\alpha \vDash n$ of length $\ell$ to the set $\big\{\sum_{j=1}^{i}\alpha_{j}\;|\;1 \leq i<\ell\big\}\subseteq\mathbb{N}_{n-1}$. \label{compn}
\end{prop}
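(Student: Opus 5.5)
We show that the two maps are mutually inverse: $\operatorname{comp}_{n}$ takes subsets to compositions of $n$, the proposed inverse takes compositions back to subsets, and both composites are identities.

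First, $\operatorname{comp}_{n}$ is well defined. Take $T\subseteq\mathbb{N}_{n-1}$ of size $\ell-1$, listed increasingly and padded with $t_{0}=0$ and $t_{\ell}=n$. Since $T\subseteq\mathbb{N}_{n-1}$, we get $0=t_{0}<t_{1}<\cdots<t_{\ell-1}<t_{\ell}=n$. Hence each difference $t_{j}-t_{j-1}$ is a positive integer. The sum telescopes to $t_{\ell}-t_{0}=n$, so $\operatorname{comp}_{n}T$ is a composition of $n$ of length $\ell$. (When $n\geq1$ and $T=\emptyset$ this gives the one-part composition $(n)$.)

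Second, the proposed inverse is well defined. Given $\alpha\vDash n$ of length $\ell$, set $s_{i}:=\sum_{j=1}^{i}\alpha_{j}$ for $0\leq i\leq\ell$. Positivity of the entries gives $0=s_{0}<s_{1}<\cdots<s_{\ell}=n$. So $\{s_{i}\;|\;1\leq i<\ell\}$ is a subset of $\mathbb{N}_{n-1}$ with exactly $\ell-1$ distinct elements, already in increasing order.

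Next, composing in one order: apply $\operatorname{comp}_{n}$ to this set. Its increasing enumeration is $s_{1},\ldots,s_{\ell-1}$, and the padding values are $s_{0}=0$ and $s_{\ell}=n$. The $j$th entry is therefore $s_{j}-s_{j-1}=\alpha_{j}$, so we recover $\alpha$.

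Composing in the other order: for $\operatorname{comp}_{n}T=(t_{j}-t_{j-1})_{j}$, the partial sums telescope to $\sum_{j=1}^{i}(t_{j}-t_{j-1})=t_{i}$. So the set of partial sums for $1\leq i<\ell$ is exactly $T$.

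Since both composites are identities, $\operatorname{comp}_{n}$ is a bijection with the stated inverse. There is no real obstacle. The only point needing care is the strictness of the inequalities: $T\subseteq\mathbb{N}_{n-1}$ ensures positive entries, and positive entries ensure distinct partial sums lying in $\mathbb{N}_{n-1}$, which is what makes the cardinalities and lengths match.
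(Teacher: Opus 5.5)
Your proof is correct: strictness of $0<t_{1}<\cdots<t_{\ell-1}<n$ and positivity of the entries make both maps well defined with matching lengths, and telescoping shows both composites are identities. The paper gives no proof at all, simply calling the bijection well-known, so your argument supplies the standard verification it leaves implicit.
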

We denote the sets from Proposition \ref{compn} by $T$ rather than $S$, since $S$ will be the antipode in this paper.

\medskip

There are two useful operations on pairs of compositions.
\begin{defn}
Take $\beta=(\beta_{1},\ldots,\beta_{\ell}) \vDash r$ and $\gamma=(\gamma_{1},\ldots,\gamma_{k}) \vDash s$.
\begin{enumerate}[$(i)$]
\item The \emph{concatenation} $\beta\gamma$ is $(\beta_{1},\ldots,\beta_{\ell},\gamma_{1},\ldots,\gamma_{k})$ whose is a composition of $r+s$, of length $\ell+k$.
\item If $r>0$ and $s>0$, or equivalently $\ell>0$ and $k>0$, the we define the \emph{near concatenation} $\beta\odot\gamma:=(\beta_{1},\ldots,\beta_{\ell-1},\beta_{\ell}+\gamma_{1},\gamma_{2},\ldots,\gamma_{k})$. It is also a composition of $r+s$, and its length is $\ell+k-1$.
\end{enumerate} \label{defconc}
\end{defn}
Note that $\beta$ or $\gamma$ can be empty for concatenations in Definition \ref{defconc}, and then the concatenation simply produces the other composition, with the same length. However, for near concatenations both compositions need to be non-empty, both for $\beta_{\ell}$ and $\gamma_{1}$ to be defined, and since the length $\ell+k-1$ cannot be smaller than those of the original compositions

We will need expressions, for a given composition $\alpha \vDash n$, as a concatenation $\beta\gamma$ or as a near concatenation $\beta\odot\gamma$ via Definition \ref{defconc}. It will be useful for us to make this explicit, which we do using the following notions.
\begin{defn}
Take a composition $\alpha \vDash n$, which is $\operatorname{comp}_{n}T$ for some subset $T\subseteq\mathbb{N}_{n-1}$, as well as an integer $1 \leq r<n$. Write $T_{|r}$ for $T\cap\mathbb{N}_{r-1}$, set $T_{r|}:=\{t-r\;|\;r<t \in T\}\subseteq\mathbb{N}_{n-r-1}$, and we define $\alpha_{|r}:=\operatorname{comp}_{r}T_{|r} \vDash r$ and $\alpha_{r|}:=\operatorname{comp}_{n-r}T_{r|} \vDash n-r$ to be the corresponding compositions. We extend the latter part to $r=0$ and to $r=n$ by setting $\alpha_{|0}=\alpha_{n|}=\emptyset$, as well as $\alpha_{n|}=\alpha_{0|}=\alpha$. \label{cutatr}
\end{defn}
The idea behind that notation, as well as the one from Definition \ref{setwithr} below, is related to the notation $u_{[i]}$ and $u^{[i]}$ from \cite{[LM]}.

The notions from Definition \ref{cutatr} are useful for the following result.
\begin{lem}
Let $n>0$, $T\subseteq\mathbb{N}_{n-1}$, and $\alpha:=\operatorname{comp}_{n}T \vDash n$ as above, set $\tilde{T}:=T\cup\{0,n\}$, and take an integer $0 \leq r \leq n$. Then there exists precisely one pair of compositions $\beta \vDash r$ and $\gamma \vDash n-r$ such that either $\beta\gamma=\alpha$ or $\beta\odot\gamma=\alpha$. This pair consists of $\beta=\alpha_{|r}$ and $\gamma=\alpha_{r|}$, and we have $\alpha=\beta\gamma$ when $r\in\tilde{T}$ and $\alpha=\beta\odot\gamma$ in case $r\not\in\tilde{T}$. \label{concexp}
\end{lem}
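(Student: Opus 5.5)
The plan is to work entirely through the bijection $\operatorname{comp}$ of Proposition \ref{compn}, translating both concatenation and near concatenation into statements about subsets. First I will record how the two operations of Definition \ref{defconc} look on sets. Take $\beta=\operatorname{comp}_{r}U$ and $\gamma=\operatorname{comp}_{n-r}V$ with $r>0$ and $n-r>0$, so that $U\subseteq\mathbb{N}_{r-1}$ and $V\subseteq\mathbb{N}_{n-r-1}$. The partial sums of $\beta\gamma$ are those of $\beta$, then $r$, then $r$ plus those of $\gamma$. Hence I expect
\[\operatorname{comp}_{n}^{-1}(\beta\gamma)=U\cup\{r\}\cup(V+r),\qquad\operatorname{comp}_{n}^{-1}(\beta\odot\gamma)=U\cup(V+r).\]
The second identity holds because merging $\beta_{\ell}$ with $\gamma_{1}$ deletes exactly the partial sum $r$. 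Both identities follow by directly listing the partial sums via Proposition \ref{compn}.

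Next comes uniqueness. Suppose $\beta\vDash r$ and $\gamma\vDash n-r$ satisfy $\beta\gamma=\alpha$ or $\beta\odot\gamma=\alpha$, with $0<r<n$. By the displayed identities, $T$ equals $U\cup(V+r)$, possibly together with $r$. Intersecting with $\mathbb{N}_{r-1}$ gives $U=T_{|r}$. Taking the elements above $r$ and shifting down by $r$ gives $V=T_{r|}$. Since $\operatorname{comp}$ is a bijection, this forces $\beta=\alpha_{|r}$ and $\gamma=\alpha_{r|}$. Which operation occurs is then decided by whether $r\in T$: concatenation puts $r$ into the set and near concatenation does not. Thus exactly one of the two equalities can hold, which gives uniqueness of the pair.

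For existence with $0<r<n$, I set $U=T_{|r}$ and $V=T_{r|}$. Then $T=U\cup(V+r)$ when $r\notin T$, and $T=U\cup\{r\}\cup(V+r)$ when $r\in T$. So $\alpha=\alpha_{|r}\odot\alpha_{r|}$ or $\alpha=\alpha_{|r}\alpha_{r|}$ respectively, matching the claimed dichotomy, since $r\in\tilde{T}$ is equivalent to $r\in T$ for such $r$.

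The endpoints $r=0$ and $r=n$ need separate treatment, because there near concatenation is undefined. For $r=0$, the composition $\beta\vDash0$ must be empty, so $\beta\gamma=\gamma$, which forces $\gamma=\alpha$. Symmetrically, for $r=n$ we get $\beta=\alpha$ and $\gamma=\emptyset$. This agrees with the conventions of Definition \ref{cutatr}, where the intended statement is $\alpha_{|0}=\alpha_{n|}=\emptyset$ and $\alpha_{0|}=\alpha_{|n}=\alpha$; the printed convention contains a typo. It also agrees with $0,n\in\tilde{T}$.

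The only mildly delicate step is the set-level description of $\odot$. Even that is a routine partial-sum computation, so I expect no genuine obstacle beyond careful bookkeeping of the endpoints.
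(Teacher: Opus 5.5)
Your proposal is correct and follows essentially the same route as the paper: you translate both operations into subsets via $\operatorname{comp}$, observe that $T$ must be $U\cup(V+r)$ with or without $r$, read off $U=T_{|r}$ and $V=T_{r|}$ to get uniqueness, existence and the dichotomy, and treat $r=0$ and $r=n$ separately. You are also right that the convention at the end of Definition~\ref{cutatr} should read $\alpha_{|n}=\alpha_{0|}=\alpha$.
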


\begin{proof}
Assume that $\alpha=\beta\gamma$ or $\alpha=\beta\odot\gamma$ for some $\beta \vDash r$ and $\gamma \vDash n-r$. If $r=0$ or $r=n$ then either $\beta$ or $\gamma$ are empty, so that the other composition must be $\alpha$, and we are in the setting from the end of Definition \ref{cutatr}, where we have a concatenation and $r\in\tilde{T}$.

We thus assume that $0<r<n$, and write $\beta=\operatorname{comp}_{r}X$ with $X\subseteq\mathbb{N}_{r-1}$ and $\gamma=\operatorname{comp}_{n-r}Y$ for some $Y\subseteq\mathbb{N}_{n-r-1}$. The description of $\operatorname{comp}_{n}^{-1}$ implies that $T$ must contain precisely the integers from $X$, and then $r$ in case $\alpha=\beta\gamma$ (but not if $\alpha=\beta\odot\gamma$), followed by the integers $\{r+y\;|\;y \in Y\}$.

But this determines $X$ and $Y$ as $T_{|r}$ and $T_{r|}$ from Definition \ref{cutatr}, so that only $\beta=\alpha_{|r}$ and $\gamma=\alpha_{r|}$ need to be considered. This produces the uniqueness, and the fact that $r \in T$ when $\alpha=\beta\gamma$ and $r \not\in T$ in case $\alpha=\beta\odot\gamma$ establishes also the existence and the last assertion. This proves the lemma.
\end{proof}
Lemma \ref{concexp} will allow us to write the known formulae from the theory of quasi-symmetric functions in a different manner, which will be more adapted to our applications later. This lemma also clearly holds for $n=0$, where $r=0$, $\alpha=\emptyset$, $\tilde{T}=\{0\}$, and $\alpha_{|0}=\alpha_{0|}=\emptyset$.

\begin{ex}
Take $\alpha$ to be the composition $213\vDash6$, which is $\operatorname{comp}_{6}T$ for $T:=\{2,3\}$. Then the sets $T_{1|}$ and $T_{2|}$ from Definition \ref{cutatr} are empty, and we have $T_{3|}=\{2\}$, and $T_{4|}=T_{5|}=T$ (but viewed as subsets of $\mathbb{N}_{3}$ and $\mathbb{N}_{4}$ respectively). It follows that $\alpha_{1|}=1$, $\alpha_{2|}=2$, $\alpha_{3|}=21$, $\alpha_{4|}=211$, and $\alpha_{5|}=212$. The sets $T_{|5}$, $T_{|4}$, and $T_{|3}$ are empty, while $T_{|2}=\{1\}\subseteq\mathbb{N}_{3}$ and $T_{|1}=\{1,2\}\subseteq\mathbb{N}_{4}$, and the corresponding compositions are $\alpha_{|1}=113$, $\alpha_{|2}=13$, $\alpha_{|3}=3$, $\alpha_{|4}=2$, and $\alpha_{|5}=1$. This expresses our $\alpha$ as the concatenations $\alpha_{2|}\alpha_{|2}$ and $\alpha_{3|}\alpha_{|3}$ and the near concatenations $\alpha_{1|}\odot\alpha_{|1}$, $\alpha_{4|}\odot\alpha_{|4}$, and $\alpha_{5|}\odot\alpha_{|5}$, in correspondence, via Lemma \ref{concexp}, with the elements of the set $T$. \label{excomp}
\end{ex}
Note that as 1 has a single composition as well, which corresponds to the emptiness of $\mathbb{N}_{0}$, we have $\alpha_{|1}=\alpha_{n-1|}=(1)$ for every $\alpha \vDash n\geq1$, as we saw in particular in Example \ref{excomp}.
\begin{rmk}
In most of the constructions in this paper, the object more related to a composition $\alpha \vDash n$ is not the subset $T:=\operatorname{comp}_{n}^{-1}\alpha\subseteq\mathbb{N}_{n-1}$, but rather $\tilde{T}:=T\cup\{0,n\}$ from Lemma \ref{concexp}. In fact, the correspondece $\alpha\leftrightarrow\tilde{T}$ produces a bijection between the set of finite subsets of $\mathbb{N}\cup\{0\}$ which contain 0 (or equivalently the finite subsets of $\mathbb{N}$, without 0) and the set of all compositions of all integers. \label{gencomp}
\end{rmk}
Recall that $\operatorname{comp}_{n}$ is defined in Proposition \ref{compn} only when $n>0$. The bijection from Remark \ref{gencomp} does extend to the empty composition $\emptyset\vDash0$, and relates it to $\{0\}$ (or to $\emptyset\subseteq\mathbb{N}$). However, we will stick with $\operatorname{comp}_{n}$ and its inverse as they are the ones appearing in the literature.

\medskip

Here are the three involutions on compositions and subsets of $\mathbb{N}_{n-1}$.
\begin{defn}
Take some natural number $n\geq1$.
\begin{enumerate}[$(i)$]
\item The \emph{reversal} of $T\subseteq\mathbb{N}_{n-1}$ is $T^{r}:=\{n-t\;|\;t \in T\}$. This is also an involution on subsets of $\mathbb{N}_{n-1}$.
\item The \emph{complement} of $T\subseteq\mathbb{N}_{n-1}$ is $T^{c}:=\mathbb{N}_{n-1} \setminus T$. It defines an involution on subsets of $\mathbb{N}_{n-1}$.
\item These two involutions commute, and their composition (in either order) produces a third involution on subsets of $\mathbb{N}_{n-1}$, which we call \emph{transposition}, and write as $T \mapsto T^{t}:=\{n-t\;|\;t \in T^{c}\}=\{n-t\;|\;t \in T\}^{c}$.
\item If $\alpha:=\operatorname{comp}_{n}T \vDash n$ then set $\alpha^{r}:=\operatorname{comp}_{n}T^{r} \vDash n$, $\alpha^{c}:=\operatorname{comp}_{n}T^{c} \vDash n$, and $\alpha^{t}:=\operatorname{comp}_{n}T^{t} \vDash n$ and we call them the \emph{reverse}, \emph{complement}, and {transpose} composition of $\alpha$ respectively.
\item For $\alpha=\emptyset\vDash0$, we set $\alpha^{r}:=\alpha^{c}:=\alpha^{t}:=\emptyset$ as well.
\end{enumerate} \label{invdef}
\end{defn}
It is clear, via the relation between the sizes of sets and lengths of compositions in Proposition \ref{compn}, that if $\ell(\alpha)=\ell$ for $\alpha \vDash n\geq1$ then $\ell(\alpha^{r})=\ell$ and $\ell(\alpha^{c})=\ell(\alpha^{t})=n+1-\ell$. Note that this holds for reversion also when $\alpha=\emptyset\vDash0$, but not for the other two involutions (since we cannot work with the sets anymore, and indeed $n+1-\ell=1>0=n$ when $n=\ell=0$).

The reversal of sets as in Definition \ref{invdef} was used extensively in \cite{[Z1]}, \cite{[Z2]}, and \cite{[Z3]}, through the relation with ascent and descent sets for standard Young tableaux and Sch\"{u}tzenbeger's evacuation involution. The reversal of $\alpha$ simply means writing the entries of $\alpha$ in the reverse order.
\begin{ex}
If $\alpha$ and $T$ are as in Example \ref{excomp}, then we have $T^{r}:=\{3,4\}$, $T^{c}:=\{1,4,5\}$, and $T^{t}:=\{1,2,5\}$, so that $\alpha^{r}=312$ (indeed of the same length 3 as $\alpha$) and $\alpha^{c}=1311$ and $\alpha^{t}=1131$ have length $6+1-3=4$. \label{invex}
\end{ex}

Here are the relations between the notions from Definitions \ref{defconc}, \ref{cutatr}, and \ref{invdef}.
\begin{lem}
Assume that $\alpha=\operatorname{comp}_{n}T \vDash n\geq1$ for $T\subseteq\mathbb{N}_{n-1}$, $1 \leq r<n$, $\beta \vDash r>0$ and $\gamma \vDash s>0$.
\begin{enumerate}[$(i)$]
\item We have the equalities $(T^{c})_{|r}=(T_{|r})^{c}$, $(T^{c})_{r|}=(T_{r|})^{c}$, $(T^{r})_{|r}=(T_{n-r|})^{r}$, $(T^{r})_{r|}=(T_{|n-r})^{r}$, $(T^{t})_{|r}=(T_{n-r|})^{t}$, and $(T^{t})_{r|}=(T_{|n-r})^{t}$ as sets.
\item We also have the equalities of compositions given by $(\alpha^{c})_{|r}=(\alpha_{|r})^{c}$, $(\alpha^{c})_{r|}=(\alpha_{r|})^{c}$, $(\alpha^{r})_{|r}=(\alpha_{n-r|})^{r}$, $(\alpha^{r})_{r|}=(\alpha_{|n-r})^{r}$, $(\alpha^{t})_{|r}=(\alpha_{n-r|})^{t}$, and $(\alpha^{t})_{r|}=(\alpha_{|n-r})^{t}$. These are valid also in case $\alpha=\emptyset\vDash0$.
\item The six equalities $(\beta\gamma)^{c}=\beta^{c}\odot\gamma^{c}$, $(\beta\odot\gamma)^{c}=\beta^{c}\gamma^{c}$, $(\beta\gamma)^{r}=\gamma^{r}\beta^{r}$, $(\beta\odot\gamma)^{r}=\gamma^{r}\odot\beta^{r}$, $(\beta\gamma)^{t}=\gamma^{t}\odot\beta^{t}$, and $(\beta\odot\gamma)^{t}=\gamma^{t}\beta^{t}$ hold as well. If either $\beta$ or $\gamma$ are $\emptyset$ then $(\beta\gamma)^{c}=\beta^{c}\gamma^{c}$, $(\beta\gamma)^{r}=\gamma^{r}\beta^{r}$, and $(\beta\gamma)^{t}=\gamma^{t}\beta^{t}$.
\end{enumerate} \label{relsinv}
\end{lem}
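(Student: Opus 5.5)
The plan is to prove part $(i)$ directly from the definitions of $T_{|r}$, $T_{r|}$, and the three involutions, then deduce part $(ii)$ by applying $\operatorname{comp}$, and finally get part $(iii)$ from part $(ii)$ combined with the uniqueness statement of Lemma \ref{concexp}.

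For part $(i)$, I compare elements. For complements: an integer $t\in\mathbb{N}_{r-1}$ lies in $(T^{c})_{|r}$ if and only if $t\notin T$, which is exactly membership in $\mathbb{N}_{r-1}\setminus T_{|r}=(T_{|r})^{c}$, where the complement is taken in $\mathbb{N}_{r-1}$. Similarly, $u\in\mathbb{N}_{n-r-1}$ lies in $(T^{c})_{r|}$ if and only if $r+u\notin T$, i.e.\ $u\notin T_{r|}$.

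For reversal: $t\in\mathbb{N}_{r-1}$ lies in $(T^{r})_{|r}$ if and only if $n-t\in T$. Since $n-t>n-r$, write $n-t=(n-r)+(r-t)$; this says $r-t\in T_{n-r|}\subseteq\mathbb{N}_{r-1}$, i.e.\ $t\in(T_{n-r|})^{r}$, with the reversal taken inside $\mathbb{N}_{r-1}$. The identity $(T^{r})_{r|}=(T_{|n-r})^{r}$ follows by the same computation, or by applying the previous one to $T^{r}$ with $n-r$ in place of $r$ and using that reversal is an involution. The two transposition identities then follow by composing the complement and reversal identities, since complement commutes with both cutting operations.

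Part $(ii)$ is immediate from part $(i)$: $\alpha_{|r}$, $\alpha_{r|}$, and the involutions are all defined by applying $\operatorname{comp}$ to the corresponding sets. The boundary cases $r=0,n$ and $\alpha=\emptyset$ are covered by the conventions in Definitions \ref{cutatr} and \ref{invdef}: one side is $\emptyset$, and the other side is $\alpha$ or its image.

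For part $(iii)$, set $\alpha=\beta\gamma$ or $\alpha=\beta\odot\gamma$ with $n=r+s$ and $T=\operatorname{comp}_{n}^{-1}\alpha$. Lemma \ref{concexp} gives $\beta=\alpha_{|r}$ and $\gamma=\alpha_{r|}$, and tells us that $r\in T$ exactly in the concatenation case.
\begin{itemize}
\item \emph{Complement.} By part $(ii)$, $(\alpha^{c})_{|r}=\beta^{c}$ and $(\alpha^{c})_{r|}=\gamma^{c}$. Since $r\in T^{c}$ if and only if $r\notin T$, Lemma \ref{concexp} applied to $\alpha^{c}$ gives $\alpha^{c}=\beta^{c}\odot\gamma^{c}$ when $\alpha=\beta\gamma$, and $\alpha^{c}=\beta^{c}\gamma^{c}$ when $\alpha=\beta\odot\gamma$.
\item \emph{Reversal.} Cut $\alpha^{r}$ at $s=n-r$. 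Part $(ii)$ gives $(\alpha^{r})_{|s}=(\alpha_{r|})^{r}=\gamma^{r}$ and $(\alpha^{r})_{s|}=(\alpha_{|r})^{r}=\beta^{r}$. Moreover $s\in T^{r}$ if and only if $r\in T$, so the concatenation type is preserved.
\item \emph{Transposition.} Compose the reversal and complement cases: the order of the factors is swapped and the concatenation type is toggled.
\end{itemize}
If $\beta$ or $\gamma$ is empty, all three identities are trivial.

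The only delicate point is bookkeeping. Each reversal and complement must be taken in the correct ambient set $\mathbb{N}_{r-1}$ or $\mathbb{N}_{n-r-1}$, and the index flips from $r$ to $n-r$ under reversal. I expect no real obstacle beyond checking these indices carefully.
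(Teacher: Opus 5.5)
Your proposal is correct and follows essentially the same route as the paper. You verify $(i)$ elementwise: complement commutes with cutting, reversal swaps the two cut pieces with $r\leftrightarrow n-r$, and transposition follows by composing the two. You then get $(ii)$ by applying $\operatorname{comp}$, and $(iii)$ from $(ii)$ together with the dichotomy between $r\in T$ and $r\notin T$ in Lemma \ref{concexp}. The only difference is that you spell out the index bookkeeping the paper leaves implicit, such as $s\in T^{r}$ exactly when $r\in T$, and $r\in T^{c}$ exactly when $r\notin T$.
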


\begin{proof}
It is clear that taking complements commutes with the intersection with $\mathbb{N}_{r-1}$ and the subtraction in Definition \ref{cutatr}, yielding the first two equalities in part $(i)$. Reversing sends the largest $r-1$ elements of $\mathbb{N}_{n-1}$ onto $\mathbb{N}_{r-1}$ in the opposite order as well as $\mathbb{N}_{n-r-1}$ onto the $n-r-1$ largest elements of $\mathbb{N}_{n-1}$ in the opposite order, producing the following two equalities there. The remaining equalities in part $(i)$ follow by combining both facts.

Part $(ii)$ follows immediately from part $(i)$ via Definition \ref{cutatr}, and its extension to the empty composition case is trivial. We now write $n$ for $r+s$ and $\alpha$ for either $\beta\gamma$ or $\beta\odot\gamma$, so that $\beta=\alpha_{|r}$ and $\gamma=\alpha_{r|}$. Then part $(iii)$ follows from part $(ii)$ via Lemma \ref{concexp} (including the dichotomy between the cases $r \in T$ and $r \not\in T$), with the empty composition case being immediate as well, and takes this form because $\emptyset$ cannot be nearly concatenated. This proves the lemma.
\end{proof}
Parts $(ii)$ and $(iii)$ of Lemma \ref{relsinv} can also be proven directly.

\medskip

The set of compositions of a fixed integer $n$ carries an order, which is very useful for our purposes.
\begin{defn}
Let $\alpha=(\alpha_{1},\ldots,\alpha_{\ell})$ and $\beta$ be compositions of the same number $n$. We say that $\beta$ is a \emph{refinement} of $\alpha$, denoted by $\beta\preceq\alpha$ (or equivalently $\alpha\succeq\beta$), if it is the concatenations of compositions of the $\alpha_{j}$'s, in the right order. We also write $\beta\prec\alpha$ (or $\alpha\succ\beta$) to indicate that $\beta\preceq\alpha$ and $\beta\neq\alpha$. \label{refine}
\end{defn}
An example for the refinement from Definition \ref{refine} is given in Definition \ref{defconc}, where we have $\beta\gamma\prec\beta\odot\gamma$ for every $\beta \vDash r\geq1$ and $\gamma \vDash s\geq1$. In fact, this is a \emph{covering} in that order (namely if $\beta\gamma\preceq\delta\preceq\beta\odot\gamma$ then $\delta$ equals either $\beta\gamma$ or $\beta\odot\gamma$), and every covering is obtained in this manner.

When $n=0$ there is only the empty composition (as is the case when $n=1$). When $n\geq1$, the refinement $\beta\preceq\alpha$ from Definition \ref{refine} is equivalent to the assertion that if $\alpha=\operatorname{comp}_{n}T$ and $\beta=\operatorname{comp}_{n}X$ then $T \subseteq X$. It thus follows, in relation to the involutions from Definition \ref{invdef}, that if $\beta\preceq\alpha$ then $\beta^{r}\preceq\alpha^{r}$, $\beta^{c}\succeq\alpha^{c}$, and $\beta^{t}\succeq\alpha^{t}$, as well as $\beta_{|r}\preceq\alpha_{|r}$ and $\beta_{r|}\preceq\alpha_{r|}$ for every $0 \leq r \leq n$.

\subsection{Quasi-Symmetric Functions}

We write $R\ldbrack\mathbf{x}_{\infty}\rdbrack$ for the ring of power series in the infinitely many variables $\{x_{i}\}_{i=1}^{\infty}$ over our ring $R$. If $I:=\{i_{j}\}_{j=1}^{\ell}$ is a set of $\ell$ integers (which we express as $|I|=\ell$) written in increasing order, and $\alpha$ is a composition of length $\ell$, then we write $x_{I}^{\alpha}$ for the monomial $\prod_{j=1}^{\ell}x_{i_{j}}^{\alpha_{j}}$. Thus every element of $R\ldbrack\mathbf{x}_{\infty}\rdbrack$ can be expressed as $\sum_{I,\alpha}c_{I,\alpha}x_{I}^{\alpha}$, where $I$ runs over all finite subsets of integers and $\alpha$ goes over all the compositions of length $|I|$, and $c_{I,\alpha} \in R$.

Let $S_{\mathbb{N}}$ be the group of permutations of the set $\mathbb{N}$ of positive integers. For every $\ell$ it operates on sets of integers of size $\ell$, which we write as $\rho:I \mapsto \rho I$, and we can write each such set in increasing order. We recall from \cite{[Hi2]} that this produces an action of $\rho \in S_{\mathbb{N}}$ on the additive group of $R\ldbrack\mathbf{x}_{\infty}\rdbrack$ (not respecting multiplication) by taking each power series $\sum_{I,\alpha}c_{I,\alpha}x_{I}^{\alpha}$ to $\sum_{I,\alpha}c_{I,\alpha}x_{\rho I}^{\alpha}$.
\begin{defn}
An element $F \in R\ldbrack\mathbf{x}_{\infty}\rdbrack$ is called a \emph{quasi-symmetric function} (over $R$) if it has finite degree and it is invariant under the action from \cite{[Hi2]}. The latter condition is equivalent to saying that if $I$ and $J$ are of the same size $\ell$ and $\alpha$ has length $\ell$ then $x_{I}^{\alpha}$ and $x_{I}^{\alpha}$ appear in $F$ with the same coefficient. We denote the set of quasi-symmetric function by $\mathtt{QSym}$. \label{QSymdef}
\end{defn}
We are omitting the ring $R$ (which can be assumed to be $\mathbb{Q}$ throughout) from the notation. The finite degree condition in Definition \ref{QSymdef} implies that $\mathtt{QSym}$ is contained in the direct sum $\bigoplus_{d=0}^{\infty}R\ldbrack\mathbf{x}_{\infty}\rdbrack_{d}$, where the summand associated with $d$ is the homogeneous part of degree $d$ inside $R\ldbrack\mathbf{x}_{\infty}\rdbrack$.

For an integer $a$ and a linear combination of expressions parameterized by compositions, say $f=\sum_{\gamma}c_{\gamma}f_{\gamma}$ with $c_{\gamma} \in R$, we write $l_{a}f$ for $\sum_{\gamma}c_{\gamma}f_{a,\gamma}$, where $a,\gamma$ is the concatenation of the length 1 composition $a$ with $\gamma$. Definition \ref{QSymdef} and an easy verification yield the following result.
\begin{lem}
For every $\alpha \vDash d$ of length $\ell$, let $M_{\alpha}:=\sum_{|I|=\ell}x_{I}^{\alpha}$ be the \emph{monomial quasi-symmetric function} associated with the composition $\alpha$, including $M_{\emptyset}=1$. Then $\mathtt{QSym}$ is spanned, in a linearly independent manner, by $\{M_{\alpha}\}_{\alpha}$ where $\alpha$ runs over all compositions. Moreover, $\mathtt{QSym}$ is closed under the product from $R\ldbrack\mathbf{x}_{\infty}\rdbrack$, via the \emph{quasi-shuffle rule} $M_{\alpha}M_{\beta}=l_{\alpha_{1}}M_{\bar{\alpha}}M_{\beta}+l_{\beta_{1}}M_{\alpha}M_{\bar{\beta}}+l_{\alpha_{1}+\beta_{1}}M_{\bar{\alpha}}M_{\bar{\beta}}$ for any pair $\alpha=(\alpha_{1},\bar{\alpha})$ and $\beta=(\beta_{1},\bar{\beta})$ of non-trivial compositions. \label{monQSym}
\end{lem}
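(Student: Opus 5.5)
The plan is to handle the three assertions of Lemma \ref{monQSym} in the order stated: spanning, linear independence, and then closure under multiplication via the quasi-shuffle rule.

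For spanning, take a quasi-symmetric function $F=\sum_{I,\alpha}c_{I,\alpha}x_{I}^{\alpha}$. Definition \ref{QSymdef} says that $c_{I,\alpha}$ depends only on $\alpha$, since $|I|=\ell(\alpha)$ is forced. Write $c_{\alpha}$ for this common value. Then grouping the terms by $\alpha$ gives $F=\sum_{\alpha}c_{\alpha}M_{\alpha}$. This is a finite sum, because $F$ has finite degree and there are finitely many compositions of each integer.

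For linear independence, note that the monomials $x_{I}^{\alpha}$ are distinct for distinct pairs $(I,\alpha)$, and distinct $M_{\alpha}$ involve disjoint sets of monomials. Hence if $\sum_{\alpha}c_{\alpha}M_{\alpha}=0$, reading off the coefficient of $x_{\{1,\ldots,\ell(\alpha)\}}^{\alpha}$ gives $c_{\alpha}=0$. Conversely, each $M_{\alpha}$ is itself invariant under the action from \cite{[Hi2]}, since any $\rho\in S_{\mathbb{N}}$ permutes the sets of size $\ell$. So each $M_{\alpha}$ lies in $\mathtt{QSym}$.

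For closure and the quasi-shuffle rule, the plan is to expand $M_{\alpha}M_{\beta}$ as the sum over $I$ with $|I|=\ell(\alpha)$ and $J$ with $|J|=\ell(\beta)$ of $x_{I}^{\alpha}x_{J}^{\beta}$. Then I would split according to how the smallest index of $I\cup J$ arises. Write $i=\min I$ and $j=\min J$. There are three cases.
\begin{enumerate}[$(a)$]
\item If $i<j$, the variable $x_{i}$ appears with exponent $\alpha_{1}$. The rest is a term of $M_{\bar{\alpha}}M_{\beta}$ in which all variables have index greater than $i$.
\item If $j<i$, the variable $x_{j}$ appears with exponent $\beta_{1}$. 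The rest is a term of $M_{\alpha}M_{\bar{\beta}}$ with all indices greater than $j$.
\item If $i=j$, the variable $x_{i}$ appears with exponent $\alpha_{1}+\beta_{1}$. The rest is a term of $M_{\bar{\alpha}}M_{\bar{\beta}}$ with all indices greater than $i$.
\end{enumerate}
The key observation is that for any linear combination $f=\sum_{\gamma}c_{\gamma}M_{\gamma}$, the operator $l_{a}$ has a concrete meaning. The sum over $i$ of $x_{i}^{a}$ times the part of $f$ involving only indices greater than $i$ is exactly $l_{a}f=\sum_{\gamma}c_{\gamma}M_{a,\gamma}$. This holds by the definition of $M_{a,\gamma}$ through increasingly ordered index sets.

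Applying this observation to the three cases gives the recursion of Lemma \ref{monQSym} directly. An induction on $\ell(\alpha)+\ell(\beta)$ then shows that $M_{\alpha}M_{\beta}$ is a finite $\mathbb{Z}$-combination of the $M_{\gamma}$. The base case is $M_{\emptyset}=1$. This shows that $\mathtt{QSym}$ is closed under multiplication.

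The only delicate point I expect is justifying the $l_{a}$ interpretation, because the inductive hypothesis must already express the smaller products in the $M$-basis before $l_{a}$ can be applied. So the induction has to be arranged to establish closure and the formula simultaneously. Everything else is routine bookkeeping.
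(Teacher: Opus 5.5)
Your proof is correct and is essentially the verification the paper has in mind: the paper gives no argument beyond saying that the lemma follows from Definition \ref{QSymdef} and an easy verification. Your steps (grouping coefficients by $\alpha$, the disjoint monomial supports for linear independence, and splitting on the minimal index with a simultaneous induction so that $l_{a}$ is applied to expressions already known to be in the span) are exactly that check.
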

As the number of compositions of $d$ is finite (it is 1 for $d=0$ and $2^{d-1}$ wherever $d>0$, via Proposition \ref{compn}), Lemma \ref{monQSym} implies that $\mathtt{QSym}$ is a graded commutative algebra in the sense of Definition \ref{graded} (this is in correspondence with the action from \cite{[Hi2]} preserving degrees). The unit is the isomorphism from $R$ onto $R\ldbrack\mathbf{x}_{\infty}\rdbrack_{0}$, which also the part $RM_{\emptyset}$ of degree 0 in $\mathtt{QSym}$.

\medskip

The explicit formula for the product from Lemma \ref{monQSym} is given in Subsection 3.3.1 of \cite{[LMvW]}, among others. We replace the path terminology from that reference by that of sequences, but the equivalence of the two notions is immediate.
\begin{defn}
Let $\ell$ and $k$ be two non-negative integers.
\begin{enumerate}[$(i)$]
\item For every $0 \leq h\leq\min\{\ell,k\}$, let $P_{\ell,k}^{h}$ be the set of all sequences $\omega$ consisting of the letters l, r, and b, in which b shows up $h$ times, there are $\ell-h$ instances of l, and r appears $k-h$ times. There are $\binom{\ell+k-h}{h,\ell-h,k-h}$ such sequences.
\item We set $P_{\ell,k}:=\bigcup_{h=0}^{\min\{\ell,k\}}P_{\ell,k}^{h}$. Its size is thus $\sum_{h=0}^{\min\{\ell,k\}}\binom{\ell+k-h}{h,\ell-h,k-h}$.
\item Take two compositions $\alpha \vDash n$ and $\beta \vDash m$ such that $\ell(\alpha)=\ell$ and $\ell(\beta)=k$, and take $\omega \in P_{\ell,k}^{h}$. Then we define a composition $\gamma_{\omega}(\alpha,\beta) \vDash n+m$, of length $\ell+k-h$, as follows. Put the entries of $\alpha$ in the locations of the l's and the b's according to the order they show up in $\omega$. Do the same with those of $\beta$ in those of the r's and the b's, and for b in $\omega$, add the entries from $\alpha$ and from $\beta$ that show up there.
\end{enumerate} \label{pathsmult}
\end{defn}
In fact, $P_{\ell,k}$ from Definition \ref{pathsmult} can be described directly as the set of sequences satisfying the restriction that number of l's and b's together is $\ell$, and the number of r's and b's together is $k$. But as the length of $\gamma_{\omega}(\alpha,\beta)$ depends on $h$, we stated the definition as it is. The letters l, r, and b there stand for the left composition, the right one, and both.

The result is now as follows.
\begin{cor}
For two compositions $\alpha$ and $\beta$, of lengths $\ell$ and $k$, the product $M_{\alpha}M_{\beta}$ is given by $\sum_{\omega \in P_{\ell,k}}M_{\gamma_{\omega}(\alpha,\beta)}$. \label{prodMs}
\end{cor}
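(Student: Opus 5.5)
The plan is to deduce the corollary directly from the quasi-shuffle rule in Lemma \ref{monQSym}, arguing by induction on $\ell+k$. If $\ell=0$ or $k=0$, one of the compositions is empty and $M_{\emptyset}=1$. In that case $P_{\ell,k}$ consists of the single sequence made only of l's or only of r's, and $\gamma_{\omega}(\alpha,\beta)$ is the other composition. So the formula holds trivially.

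For $\ell,k\geq1$, I will write $\alpha=(\alpha_{1},\bar{\alpha})$ and $\beta=(\beta_{1},\bar{\beta})$. Lemma \ref{monQSym} gives
\[M_{\alpha}M_{\beta}=l_{\alpha_{1}}M_{\bar{\alpha}}M_{\beta}+l_{\beta_{1}}M_{\alpha}M_{\bar{\beta}}+l_{\alpha_{1}+\beta_{1}}M_{\bar{\alpha}}M_{\bar{\beta}}.\]
Each of the three products on the right has total length smaller than $\ell+k$, so the induction hypothesis expresses them as $\sum_{\omega'}M_{\gamma_{\omega'}(\cdot,\cdot)}$. The sums run over $P_{\ell-1,k}$, $P_{\ell,k-1}$, and $P_{\ell-1,k-1}$ respectively. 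Since $l_{a}$ is linear, applying $l_{a}$ to such a sum just prepends the entry $a$ to every composition appearing in it.

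The key step is a bijection. I will partition $P_{\ell,k}$ according to the first letter of $\omega$. Sequences starting with l correspond bijectively to $P_{\ell-1,k}$ by deleting that first letter, and likewise sequences starting with r correspond to $P_{\ell,k-1}$ and sequences starting with b correspond to $P_{\ell-1,k-1}$. These bijections respect the letter counts in Definition \ref{pathsmult}: for example, deleting a leading b lowers $h$ by one and keeps the numbers of l's and r's.

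I then need to check the compatibility $\gamma_{\mathrm{l}\omega'}(\alpha,\beta)=(\alpha_{1},\gamma_{\omega'}(\bar{\alpha},\beta))$. This holds because the first letter receives $\alpha_{1}$, and the remaining entries of $\alpha$ are placed in order along $\omega'$. Analogously, $\gamma_{\mathrm{r}\omega'}(\alpha,\beta)=(\beta_{1},\gamma_{\omega'}(\alpha,\bar{\beta}))$ and $\gamma_{\mathrm{b}\omega'}(\alpha,\beta)=(\alpha_{1}+\beta_{1},\gamma_{\omega'}(\bar{\alpha},\bar{\beta}))$. Summing the three pieces over the partition of $P_{\ell,k}$ yields exactly $\sum_{\omega\in P_{\ell,k}}M_{\gamma_{\omega}(\alpha,\beta)}$.

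There is no real obstacle here. The only point requiring care is the bookkeeping in Definition \ref{pathsmult}: entries must be placed in the order in which the letters occur, so that removing the first letter commutes with the placement procedure.
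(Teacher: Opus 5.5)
Your proof is correct, and it is the argument the paper relies on: the paper labels this statement a corollary of the quasi-shuffle rule in Lemma \ref{monQSym} and otherwise only cites Subsection 3.3.1 of \cite{[LMvW]}, with sequences in place of lattice paths. Your induction on $\ell+k$, splitting $P_{\ell,k}$ by first letter into pieces in bijection with $P_{\ell-1,k}$, $P_{\ell,k-1}$ and $P_{\ell-1,k-1}$, and checking that $\gamma_{\omega}$ commutes with deleting the first letter, supplies exactly the details the paper leaves implicit.
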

It is clear that if $\ell$ of $k$ vanishes in Definition \ref{pathsmult} then $|P_{\ell,k}|=1$, and the unique sequence $\omega$ there produces the initial composition of the other length. This corresponds, via Corollary \ref{prodMs}, to the fact that $M_{\emptyset}=1$. The simplest non-trivial examples are as follows.
\begin{ex}
The set $P_{1,1}$ consists of the sequences lr and rl (with $h=0$) and b (for $h=1$). Using these, Corollary \ref{prodMs} yields $M_{a}M_{b}=M_{a,b}+M_{b,a}+M_{a+b}$. In $P_{2,1}^{0}$ we have llr, lrl, and rll, while $P_{2,1}^{1}$ consists of bl and lb. Thus $M_{a,b}M_{c}$ equals $M_{a,b,c}+M_{a,b+c}+M_{a,c,b}+M_{a+c,b}+M_{c,a,b}$. Similarly, $P_{1,2}^{0}$ contains lrr, rlr, and rrl, and in $P_{1,2}^{1}$ we find br and rb, so that the product $M_{c}M_{a,b}$ indeed equals $M_{c,a,b}+M_{c+a,b}+M_{a,c,b}+M_{a,c+b}+M_{a,b,c}$. \label{pathex}
\end{ex}
It is not hard to describe the bijection $\omega\mapsto\tilde{\omega}$ between $P_{\ell,k}^{h}$ and $P_{k,\ell}^{h}$ for every $\ell$, $k$, and $h$, such that $\gamma_{\omega}(\alpha,\beta)=\gamma_{\tilde{\omega}}(\beta,\alpha)$ for every pair of compositions $\alpha$ of length $\ell$ and $\beta$ of length $k$. This is in correspondence with the commutativity of $\mathtt{QSym}$, and is visible in Example \ref{pathex}. However, later we will have non-commutative algebras whose multiplications uses similar descriptions.
\begin{ex}
For $\ell+k=4$, the set $P_{3,1}^{0}$ contains lllr, llrl, lrll, and rlll, and in $P_{3,1}^{1}$ we find llb, lbl, and bll. Similarly, the elements of $P_{1,3}^{0}$ are rrrl, rrlr, rlrr, and lrrr, while those of $P_{1,3}^{1}$ are rrb, rbr, and brr (written in the order to the involution $\omega\mapsto\tilde{\omega}$. In the remaining case where $\ell+k=4$, the elements of $P_{2,2}^{0}$ are llrr, rrll, lrlr, rlrl, lrrl, and rllr (the involution interchanges each pair in the order written), the set $P_{2,2}^{1}$ consists of lrb, rlb, lbr, rbl, blr, and brl (again ordered well for the involution), and $P_{2,2}^{2}$ contains the single element bb. \label{expath}
\end{ex}

\medskip

Another basis for $\mathtt{QSym}$, which consists of the original quasi-symmetric functions defined in \cite{[Ge]}, is the following one, for which we recall the refinement relation from Definition \ref{refine}.
\begin{defn}
Let $\alpha \vDash n$ be a set composition.
\begin{enumerate}[$(i)$]
\item The \emph{fundamental quasi-symmetric function}, which we write as either $F_{\alpha}$ or as $F_{n,T}$ (Remark \ref{gencomp} also suggests $F_{\tilde{T}}$), is defined to be $\sum_{\beta\preceq\alpha}M_{\beta}$.
\item We also set $E_{\alpha}:=\sum_{\beta\succeq\alpha}M_{\beta}$.
\end{enumerate} \label{fundQSym}
\end{defn}
It is clear that when $n=0$, Definition \ref{fundQSym} produces $F_{\emptyset}=E_{\emptyset}=M_{\emptyset}=1$.

We shall be using the original formulation of $F_{\alpha}=F_{n,T}$ from \cite{[Ge]}, which we now express in the form that will be convenient for our applications.
\begin{defn}
A \emph{multiset} of size $n$ is a non-decreasing sequence of $n$ integers. For any such multiset $J$ we write $x^{J}$ for the product $\prod_{k=1}^{n}x_{j_{k}}$. Given a subset $T\subseteq\mathbb{N}_{n-1}$ we write $\mathcal{M}_{n,T}$ for the set of multisets $J=(j_{1},\ldots,j_{n})$ of size $n$ which satisfy $j_{k}<j_{k+1}$ wherever $k \in T$. \label{multisets}
\end{defn}
It is clear from Definition \ref{multisets} that the set of all multisets of size $n$ is $\mathcal{M}_{n,\emptyset}$, while the set of sets of size $n$, when written each in increasing order, is $\mathcal{M}_{n,\mathbb{N}_{n-1}}$.

The formula from \cite{[Ge]} is now given, in terms of Definition \ref{multisets}, as follows.
\begin{lem}
For every $n>0$ and $T\subseteq\mathbb{N}_{n-1}$ we have $F_{n,T}=\sum_{J\in\mathcal{M}_{n,T}}x^{J}$. \label{sumFnT}
\end{lem}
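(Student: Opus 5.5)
The plan is to group the multisets in $\mathcal{M}_{n,T}$ according to the pattern of equalities among consecutive entries, and then compare with the definition $F_{n,T}=\sum_{\beta\preceq\alpha}M_{\beta}$ for $\alpha=\operatorname{comp}_{n}T$. To a multiset $J=(j_{1},\ldots,j_{n})$ of size $n$, we attach the set $X(J):=\{k\in\mathbb{N}_{n-1}\;|\;j_{k}<j_{k+1}\}$ of strict ascent positions. By Definition \ref{multisets}, $J\in\mathcal{M}_{n,T}$ holds precisely when $T\subseteq X(J)$. We therefore split
\[\sum_{J\in\mathcal{M}_{n,T}}x^{J}=\sum_{T\subseteq X\subseteq\mathbb{N}_{n-1}}\ \sum_{J:\,X(J)=X}x^{J}.\]

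Next we identify each inner sum with a monomial quasi-symmetric function. Fix $X$ and set $\beta:=\operatorname{comp}_{n}X$, of length $\ell=|X|+1$. A multiset $J$ with $X(J)=X$ is constant on the blocks of consecutive positions cut out by $X$, and strictly increases from one block to the next. The block sizes are exactly the entries $\beta_{1},\ldots,\beta_{\ell}$, by Proposition \ref{compn}. Recording the distinct values $i_{1}<\cdots<i_{\ell}$ taken on the blocks gives a bijection between such $J$ and the subsets $I\subseteq\mathbb{N}$ with $|I|=\ell$. Under this bijection $x^{J}=x_{I}^{\beta}$. Hence the inner sum equals $\sum_{|I|=\ell}x_{I}^{\beta}=M_{\beta}$ by Lemma \ref{monQSym}.

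Finally, we convert the range of summation. As noted after Definition \ref{refine}, for $n\geq1$ the condition $T\subseteq X$ is equivalent to $\operatorname{comp}_{n}X\preceq\operatorname{comp}_{n}T=\alpha$, and $\operatorname{comp}_{n}$ is a bijection. So the outer sum becomes $\sum_{\beta\preceq\alpha}M_{\beta}=F_{\alpha}=F_{n,T}$, as required.

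There is no real obstacle here. The only point needing care is the bijection in the middle step: we must check that it is well defined in both directions and that the exponents match, so that each variable $x_{i_{m}}$ occurs exactly $\beta_{m}$ times. Both facts are immediate from the block description.
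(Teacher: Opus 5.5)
Your proof is correct and follows the route the paper intends. The paper gives no written-out argument, only calling the equivalence with Definition \ref{fundQSym} straightforward, and your steps (grouping $\mathcal{M}_{n,T}$ by the strict-ascent set $X(J)$, identifying each class with $M_{\operatorname{comp}_{n}X}$, and translating $T\subseteq X$ into $\operatorname{comp}_{n}X\preceq\operatorname{comp}_{n}T$) are exactly that standard verification.
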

The equivalence of the expressions from Definition \ref{fundQSym} and Lemma \ref{sumFnT} is straightforward and easy.

\medskip

The co-unit on $\mathtt{QSym}$ is the direct sum of the isomorphism from the degree 0 part $RM_{\emptyset}$ onto $R$ and the map taking all the homogeneous quasi-symmetric functions of positive degree to 0 (as expected via Lemma \ref{propgr}). For the co-product, which is not co-commutative, we use the following definition.
\begin{defn}
Given $f\in\mathtt{QSym}$, consider it as a function in two ordered sets of variables $\{x_{i}\}_{i=1}^{\infty}$ and $\{y_{i}\}_{i=1}^{\infty}$, in which we view $y_{1}$ as having a larger index than any $x_{i}$. Then the expression $f(\mathbf{x}_{\infty},\mathbf{y}_{\infty})$ is quasi-symmetric in $\{x_{i}\}_{i=1}^{\infty}$ and $\{y_{i}\}_{i=1}^{\infty}$ separately, so that it can be written as $\sum_{j=1}^{m}g_{j}(\mathbf{x}_{\infty})h_{j}(\mathbf{y}_{\infty})$. Then we set $\Delta(f):=\sum_{j=1}^{m}g_{j} \otimes h_{j}$. \label{DeltaQSym}
\end{defn}
For the two bases $\{M_{\alpha}\}_{\alpha}$ and $\{F_{\alpha}\}_{\alpha}$ of $\mathtt{QSym}$, the co-multiplication from Definition \ref{DeltaQSym} takes, in the terminology from Definition \ref{cutatr}, the following form.
\begin{prop}
Let $\alpha \vDash n>0$ be the composition that is associated with $T\subseteq\mathbb{N}_{n-1}$ via Proposition \ref{compn}, and write $\tilde{T}$ for $T\cup\{0,n\}$. Then we have the equalities $\Delta(M_{\alpha})=\sum_{r\in\tilde{T}}M_{\alpha_{|r}} \otimes M_{\alpha_{r|}}$ and $\Delta(F_{\alpha})=\sum_{r=0}^{n}F_{\alpha_{|r}} \otimes F_{\alpha_{r|}}$. \label{coprodQSym}
\end{prop}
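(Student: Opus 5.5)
Both formulae come from Definition \ref{DeltaQSym}, by splitting every monomial in the doubled alphabet $x_{1}<x_{2}<\cdots<y_{1}<y_{2}<\cdots$ at the boundary between the $x$'s and the $y$'s. I will first treat the monomial basis, and then the fundamental basis via the multiset description of Lemma \ref{sumFnT}. In both cases the bookkeeping is carried out with the cut compositions $\alpha_{|r}$ and $\alpha_{r|}$ of Definition \ref{cutatr}, using Lemma \ref{concexp}.

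For $M_{\alpha}$, with $\ell=\ell(\alpha)$, the expression $M_{\alpha}(\mathbf{x}_{\infty},\mathbf{y}_{\infty})$ is the sum of the monomials obtained by choosing $\ell$ strictly increasing indices in the combined ordered alphabet and assigning the exponents $\alpha_{1},\ldots,\alpha_{\ell}$ to them in order. Such a choice is determined by three pieces of data: the number $j\in\{0,\ldots,\ell\}$ of chosen indices among the $x$'s, a set of $j$ indices of $x$'s, and a set of $\ell-j$ indices of $y$'s. Summing over the index sets for fixed $j$ gives $M_{(\alpha_{1},\ldots,\alpha_{j})}(\mathbf{x})M_{(\alpha_{j+1},\ldots,\alpha_{\ell})}(\mathbf{y})$. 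By Proposition \ref{compn} the partial sums $r=\sum_{i\leq j}\alpha_{i}$ run exactly over $\tilde{T}$. For $r\in\tilde{T}$, Lemma \ref{concexp} identifies $\alpha=\alpha_{|r}\alpha_{r|}$ as an honest concatenation, which is precisely this splitting. This gives the first formula.

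For $F_{\alpha}=F_{n,T}$ I will use Lemma \ref{sumFnT}. A multiset $J\in\mathcal{M}_{n,T}$ in the combined alphabet (non-decreasing in the total order) splits uniquely as an initial block of $r$ entries from the $x$'s followed by $n-r$ entries from the $y$'s, where $0\leq r\leq n$.

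1. The strictness conditions at positions $k\in T$ with $k<r$ concern only the $x$-block. After reindexing they say that this block lies in $\mathcal{M}_{r,T_{|r}}$.
2. The conditions at positions $k\in T$ with $k>r$ concern only the $y$-block. Shifting by $r$, they say that this block lies in $\mathcal{M}_{n-r,T_{r|}}$.
3. The condition at $k=r$, when $r\in T$, compares an $x$ entry with a $y$ entry. It holds automatically, since every $y$ index exceeds every $x$ index.

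Conversely, any pair of multisets from these two sets concatenates to an element of $\mathcal{M}_{n,T}$ split at $r$. Hence, for each $r$, the monomials of $F_{n,T}(\mathbf{x},\mathbf{y})$ with split point $r$ sum to $F_{r,T_{|r}}(\mathbf{x})F_{n-r,T_{r|}}(\mathbf{y})$. This equals $F_{\alpha_{|r}}\otimes F_{\alpha_{r|}}$ by Definition \ref{cutatr}, with $r=0$ and $r=n$ giving the terms $1\otimes F_{\alpha}$ and $F_{\alpha}\otimes1$.

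The only delicate point is the boundary condition in step 3, which is why $F_{\alpha}$ gets a term for every $r$ while $M_{\alpha}$ only gets the cuts $r\in\tilde{T}$. There is also minor care needed with the degenerate cases $r\in\{0,n\}$, where the conventions $\alpha_{|0}=\alpha_{n|}=\emptyset$ must be matched with $M_{\emptyset}=F_{\emptyset}=1$. Alternatively, the $F$ formula can be derived from the $M$ formula by summing over refinements $\beta\preceq\alpha$. This uses that $\beta\mapsto(\beta_{|r},\beta_{r|})$, for the cuts $r$ of $\beta$, together with the remark that refinement is preserved by cutting, gives a bijection onto pairs of refinements of $\alpha_{|r}$ and $\alpha_{r|}$. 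The multiset argument is cleaner, though, and I would present that one.
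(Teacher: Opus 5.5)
Your argument is correct, but it takes a different route from the paper, which does not derive these formulas at all. The paper quotes the standard deconcatenation formulas from the literature: $\Delta(M_{\alpha})=\sum M_{\beta}\otimes M_{\gamma}$ over pairs with $\alpha=\beta\gamma$, and $\Delta(F_{\alpha})=\sum F_{\beta}\otimes F_{\gamma}$ over pairs with $\alpha=\beta\gamma$ or $\alpha=\beta\odot\gamma$. It then uses Lemma \ref{concexp} only to reindex these sums as sums over $r\in\tilde{T}$, respectively over $0\leq r\leq n$.

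You instead prove both formulas directly from Definition \ref{DeltaQSym} by splitting the doubled alphabet. In the $F$ case you work with the set-level cuts $T_{|r}$ and $T_{r|}$ through Lemma \ref{sumFnT}, so the concatenation/near-concatenation dichotomy never appears. Lemma \ref{concexp} is needed only to name the two factors in the $M$ case.

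The paper's version is a one-line translation resting on external sources. Yours is self-contained, and your step 3 shows exactly why $F_{\alpha}$ receives a term for every $r$ while $M_{\alpha}$ only receives the cuts $r\in\tilde{T}$. Your argument is also the natural template behind the later extensions. The paper says ``the argument proving Proposition \ref{coprodQSym}'' gives Proposition \ref{coprodQSymq}, and it states Proposition \ref{coprodNCQSym} as an analogue. With Lemmas \ref{sumFnTq} and \ref{sumFnTNC} in place of Lemma \ref{sumFnT}, your splitting adapts directly. Commuting the $x$-variables of $x^{\tilde{J}}$ past its $y$-variables produces exactly the factor $q^{r(n-r)}$. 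The $x$-subword of $\rho J$ is $\rho_{|r}$ applied to the first $r$ entries of $J$.

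The only point you use implicitly is that Lemma \ref{sumFnT} holds over the ordered alphabet $x_{1}<x_{2}<\cdots<y_{1}<y_{2}<\cdots$. This is harmless, since its proof only groups equal entries of $J$ and never uses the order type of the alphabet.
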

The well-known formulae for $\Delta(M_{\alpha})$ and $\Delta(F_{\alpha})$ in the literature involves all the pairs of compositions $\beta$ and $\gamma$ with $\alpha=\beta\gamma$, or with $\alpha=\beta\odot\gamma$ in the second sum, and Lemma \ref{concexp} transfers these expressions to those from Proposition \ref{coprodQSym}. The values $r=0$ and $r=n$, which are always in $\tilde{T}$, correspond the fact that $M_{\emptyset}=F_{\emptyset}=1$ to the parts in Equation \eqref{redDelta} that distinguish the value under $\Delta$ from those under the reduced co-multiplication $\overline{\Delta}$. Note that this proposition also holds for $n=0$ and $\alpha=\emptyset$, by the known value that $\Delta$ assigns to $M_{\emptyset}=F_{\emptyset}=1$.

\begin{ex}
When $T=\emptyset$ and $\alpha=(n)$ with $\ell(\alpha)=1$, Proposition \ref{coprodQSym} gives $\Delta(M_{n})=M_{n}\otimes1+1 \otimes M_{n}$ (so that $\overline{\Delta}(M_{n})=0$, namely $M_{n}$ is \emph{primitive}) and $\Delta(F_{n})=\sum_{r=0}^{n}F_{r} \otimes F_{n-r}$. In the opposite extreme, where $T\subseteq\mathbb{N}_{n-1}$ and $\alpha$ is the longest composition $1^{n}$, we have $F_{1^{n}}=M_{1^{n}}$, and their $\Delta$-image is $\sum_{r=0}^{n}F_{1^{r}} \otimes F_{1^{n-r}}=\sum_{r=0}^{n}M_{1^{r}} \otimes M_{1^{n-r}}$. \label{coprodQSymext}
\end{ex}

\begin{ex}
For the composition $\alpha=213\vDash6$ from Example \ref{excomp}, Proposition \ref{coprodQSym} shows that $\overline{\Delta}(M_{\alpha})=M_{2} \otimes M_{13}+M_{21} \otimes M_{3}$, while $\overline{\Delta}(F_{\alpha})$ is $F_{1} \otimes F_{113}+F_{2} \otimes F_{13}+F_{21} \otimes F_{3}+F_{211} \otimes F_{2}+F_{212} \otimes F_{1}$. \label{DeltaQSymex}
\end{ex}

With Lemma \ref{monQSym} and Definition \ref{DeltaQSym}, $\mathtt{QSym}$ becomes a connected graded bi-algebra via Definitions \ref{graded} and Definition \ref{condef}. It therefore admits an antipode via Lemma \ref{grconS}. Its explicit formula are given in the following result, proved initially in \cite{[MR]} and \cite{[E]} but also later in \cite{[BS]}, \cite{[Gr]}, and others. This paper will also provide a proof, which as far as I know is a novel one, for this formula later.
\begin{thm}
The antipode $S$ on $\mathtt{QSym}$ sends, for every $n$ and $\alpha \vDash n$, the fundamental quasi-symmetric function $F_{\alpha}$ to $(-1)^{n}F_{\alpha^{t}}$. \label{SQSymF}
\end{thm}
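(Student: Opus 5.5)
The plan is to apply Corollary \ref{detS}, via its variant Remark \ref{firstS}, to the linearly independent family $g_{\alpha}:=F_{\alpha}$ with the candidate images $\tilde{g}_{\alpha}:=(-1)^{n}F_{\alpha^{t}}$. Proposition \ref{coprodQSym} shows that the part of $\Delta(F_{\alpha})$ in $H_{r}\otimes H_{n-r}$ is the single tensor $F_{\alpha_{|r}}\otimes F_{\alpha_{r|}}$, so the hypothesis of Corollary \ref{detS} holds with $\beta_{r}^{l}(\alpha)=\alpha_{|r}$ and $\beta_{n-r}^{r}(\alpha)=\alpha_{r|}$. The family $\{(-1)^{n}F_{\alpha^{t}}\}$ is again linearly independent, since transposition is a bijection on compositions of $n$. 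Everything therefore reduces to the identity
\[\sum_{r=0}^{n}(-1)^{r}F_{(\alpha_{|r})^{t}}F_{\alpha_{r|}}=0\qquad(n\geq1).\]
Using Lemma \ref{relsinv}, $(\alpha_{|r})^{t}=(\alpha^{t})_{n-r|}$, so this can equivalently be phrased in terms of $\alpha^{t}$ alone, whichever is more convenient.

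To prove the vanishing, I will use Gessel's description in Lemma \ref{sumFnT} rather than the $M$-expansion. Write $\alpha=\operatorname{comp}_{n}T$. The product $F_{\alpha_{|r}^{t}}F_{\alpha_{r|}}$ is a sum over pairs $(J,K)$: here $J$ is a multiset of size $r$ with strict ascents at the positions of $(T_{|r})^{t}$, that is, strict at $k$ exactly when $r-k\notin T$, and $K$ is a multiset of size $n-r$ with strict ascents at the positions of $T_{r|}$. I will encode each term as a word $w=(w_{1},\ldots,w_{n})$ of variable indices, with $x^{w}$ commutative, built as follows. The $J$-part is written in reverse order in positions $1,\ldots,r$, so that it becomes weakly decreasing, strictly so at the positions $k\in T$ with $k<r$. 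The $K$-part then fills positions $r+1,\ldots,n$, weakly increasing and strictly so at $T$. In this form each term of the $r$-th summand is a word that is weakly decreasing on $[1,r]$ and weakly increasing on $[r+1,n]$, with the conditions at positions of $T$ on each side given by a single uniform rule: a strict inequality between $w_{k}$ and $w_{k+1}$ exactly when $k\in T$.

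The sign-reversing involution then moves the cut point $r$ by one, from $r$ to $r\pm1$, according to the comparison of $w_{r}$ and $w_{r+1}$. The natural rule is to look at the boundary pair $(w_{r},w_{r+1})$ and transfer the letter $w_{r}$ to the increasing part, or $w_{r+1}$ to the decreasing part. Which move is chosen depends on whether $w_{r}\le w_{r+1}$ or $w_{r}>w_{r+1}$, together with the strictness demanded at position $r$ when $r\in T$. The monomial is preserved and the sign flips. I will need to check that exactly one of the two moves is legal, so that the map is a well-defined fixed-point-free involution on all pairs $(r,w)$, including the endpoint summands $r=0$ and $r=n$.

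The main obstacle is exactly this case analysis at the boundary: verifying that the weak and strict conventions match up so that every word admits precisely one neighbor move. If the naive rule fails in some case with $r\in T$, I would first try fixing the cut position at the valley, the minimal letter, with ties broken to the leftmost occurrence. In that setup the two summands pairing with a given word are "valley at the last letter of the decreasing part" and "valley at the first letter of the increasing part", and the $T$-conditions decide which of these exist. Once the identity holds, Corollary \ref{detS} and Remark \ref{firstS} give $S(F_{\alpha})=(-1)^{n}F_{\alpha^{t}}$.
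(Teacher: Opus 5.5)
Your reduction to the identity is fine. One small mislabel: with the candidate antipode on the left factor, $\sum_{r}(-1)^{r}F_{(\alpha_{|r})^{t}}F_{\alpha_{r|}}=0$ is the hypothesis of Corollary \ref{detS} itself, not of Remark \ref{firstS}; either one suffices.

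The gap is in the step that carries the whole proof. Your encoding of the summands is wrong. If $J$ is strict at $k$ exactly when $r-k\notin T$, then after reversal $w_{i}>w_{i+1}$ is forced exactly when $i\notin T$. For $i\in T$ only $w_{i}\geq w_{i+1}$ is imposed. Your ``single uniform rule'' (strict exactly at $i\in T$ on both sides) drops the complementation in the transpose, so you have effectively reversed $\alpha_{|r}$ instead of transposing it. With that rule the identity is false: for $\alpha=(2)$ it reads $F_{2}-F_{1}^{2}+F_{2}=M_{2}\neq0$, because the constant words lie in all three summands.

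The involution itself is only announced, and your fallback does not work either. Even with the correct encoding, take $\alpha=(1,1)$: a constant word $w_{1}=w_{2}$ occurs only in the summands $r=1$ and $r=2$. So the position governing it must be $2$, not the leftmost minimal letter, and ties have to be broken according to $T$. Also, contrary to ``the $T$-conditions decide which of these exist'', a fixed-point-free involution needs both neighbouring cuts to exist for every word.

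The repair is short. For $1\leq i<n$ let $P_{i}(w)$ mean $w_{i}<w_{i+1}$ if $i\in T$, and $w_{i}\leq w_{i+1}$ if $i\notin T$.
\begin{enumerate}
\item The words of the $r$-th summand are exactly those with $P_{i}$ false for $i<r$ and true for $r<i<n$. The conditions on the two sides of the cut are complementary position by position, and this is exactly what the complement in $\alpha^{t}$ provides.
\item Each such word therefore has a unique $v\in\{1,\ldots,n\}$ with $P_{i}$ false for $i<v$ and true for $v\leq i<n$.
\item The word occurs in the $r'$-th summand if and only if $r'\in\{v-1,v\}$. Pairing these two occurrences gives a fixed-point-free sign-reversing involution, with no separate endpoint cases.
\end{enumerate}
Equivalently, compare the pairs $(w_{i},\tau_{i})$ lexicographically for some $\tau$ with $\operatorname{Des}(\tau)=T$, and let $v$ be the position of the minimum. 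This is the same tie-breaking device as $\sigma_{\tau}$ in Definition \ref{tausigma}.

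With this correction you have a valid proof by a genuinely different route. Yours is a monomial-level involution that moves the cut point. The paper instead lifts to $\mathtt{NCQSym}$, expands each product with the $\tau^{(r)}$-twisted shuffle rule of Proposition \ref{prodFtau}, and cancels fundamental terms in pairs via Lemma \ref{forcanc}. Your argument is shorter for $\mathtt{QSym}$. Moreover, your word $w$ is literally the noncommutative monomial appearing in $\mathrm{F}^{w^{0}_{r}}_{(\alpha_{|r})^{t}}\mathrm{F}^{\operatorname{Id}_{n-r}}_{\alpha_{r|}}$. So the same involution, applied in $\mathtt{NCQSym}$, also yields Theorem \ref{SNCQSymF} through Corollary \ref{detS}.
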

From Theorem \ref{SQSymF} one deduces the following expression for $S$ using the monomial quasi-symmetric basis, for which we recall the expressions $E_{\alpha}$ from Definition \ref{fundQSym}. We include the proof, because we will later apply the same argument in other Hopf algebras.
\begin{prop}
If $\alpha \vDash n$ then we have $S(M_{\alpha})=(-1)^{\ell(\alpha)}E_{\alpha^{r}}$. \label{SQSymM}
\end{prop}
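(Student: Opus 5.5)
We deduce the formula from Theorem \ref{SQSymF} by Möbius inversion on the refinement order, together with linearity of $S$. The plan is to express $M_{\alpha}$ in the fundamental basis, apply $S$ term by term, and then convert back to the monomial basis.

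We work with sets. Write $\alpha=\operatorname{comp}_{n}T$ and assume $n\geq1$; the case $n=0$ is trivial, since $S(1)=1=E_{\emptyset}$. Definition \ref{fundQSym} gives $F_{n,X}=\sum_{Y\supseteq X}M_{n,Y}$, where $X,Y$ range over subsets of $\mathbb{N}_{n-1}$. Inclusion--exclusion on the Boolean lattice inverts this as
\[M_{n,T}=\sum_{X\supseteq T}(-1)^{|X\setminus T|}F_{n,X}.\]
Applying Theorem \ref{SQSymF} yields
\[S(M_{n,T})=(-1)^{n}\sum_{X\supseteq T}(-1)^{|X\setminus T|}F_{n,X^{t}}.\]
Transposition is an involution that reverses inclusion, so $X\supseteq T$ exactly when $X^{t}\subseteq T^{t}$. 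Expanding $F_{n,X^{t}}=\sum_{Z\supseteq X^{t}}M_{n,Z}$ and exchanging the order of summation, the coefficient of $M_{n,Z}$ becomes
\[(-1)^{n}\sum_{U}(-1)^{|X\setminus T|},\qquad U:=X^{t}\ \text{ranging over}\ U\subseteq T^{t}\cap Z.\]

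The main obstacle is the sign bookkeeping in this coefficient. Since $|X^{t}|=n-1-|X|$, we have $|X\setminus T|=|X|-|T|=n-1-|U|-|T|$. The coefficient is therefore
\[(-1)^{n}(-1)^{n-1-|T|}\sum_{U\subseteq T^{t}\cap Z}(-1)^{|U|}.\]
This alternating sum vanishes unless $T^{t}\cap Z=\emptyset$, in which case it equals $1$. The surviving coefficient is $(-1)^{1+|T|}=(-1)^{\ell(\alpha)}$, using $\ell(\alpha)=|T|+1$ from Proposition \ref{compn}.

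It remains to identify the condition $T^{t}\cap Z=\emptyset$. We have $T^{t}=(T^{r})^{c}$, so the condition says $Z\subseteq T^{r}$, i.e.\ $\operatorname{comp}_{n}Z\succeq\alpha^{r}$ by the set description of refinement. Hence
\[S(M_{\alpha})=(-1)^{\ell(\alpha)}\sum_{\beta\succeq\alpha^{r}}M_{\beta}=(-1)^{\ell(\alpha)}E_{\alpha^{r}}.\]
Nothing here uses commutativity of $\mathtt{QSym}$, only the change of basis, linearity of $S$ and the form of the antipode on the $F$ basis. That is why the same argument can be rerun later with $q$-weights inserted.
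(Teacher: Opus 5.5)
Your proposal is correct and is essentially the paper's proof: invert $F_{\beta}=\sum_{\gamma\preceq\beta}M_{\gamma}$ by inclusion--exclusion, apply Theorem \ref{SQSymF} termwise, re-expand in the monomial basis, and collapse each coefficient to an alternating sum $(1-1)^{k}$. The only difference is cosmetic: you index by $U=X^{t}\subseteq T^{t}\cap Z$, while the paper indexes by $X\supseteq T\cup Z^{t}$ (its $Y$ is your $Z$); these correspond under transposition and give the same survival condition $Z\subseteq T^{r}$ and the same sign $(-1)^{\ell(\alpha)}$.
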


\begin{proof}
The assertion is clear for $\alpha=\emptyset$, so that we may assume that $n>0$ and $\alpha=\operatorname{comp}_{n}T$ for $T\subseteq\mathbb{N}_{n-1}$. We claim that $M_{\alpha}=\sum_{\beta\preceq\alpha}(-1)^{\ell(\beta)-\ell(\alpha)}F_{\beta}$. To see this, we recall that for every $\beta \vDash n$ we have $F_{\beta}=\sum_{\gamma\preceq\beta}M_{\gamma}$, which we plug into the right hand side, and consider, for every $\gamma\preceq\alpha$, the coefficient of $M_{\gamma}$ in the resulting expression.

We write $\gamma$ as $\operatorname{comp}_{n}Y$ for some subset $Y\subseteq\mathbb{N}_{n-1}$, which contains $T$ because $\gamma\preceq\alpha$. The total expression is $(-1)^{\ell(\beta)-\ell(\alpha)}$ for every $\gamma\preceq\beta\preceq\alpha$. But these compositions $\beta$ are precisely those that are $\operatorname{comp}_{n}X$ for $T \subseteq X \subseteq Y$, and the exponent in the sign is the difference $|X|-|T|$ of the sizes of these sets.

But each element of $Y \setminus T$ can appear in $X$ in an independent manner, and adding it inverts the sign. Hence the total coefficient is $(1-1)^{|Y|-|T|}$, which vanishes wherever $T \subsetneq Y$, or equivalently $\gamma\prec\alpha$ (which means, of course, $\gamma\preceq\alpha$ but $\gamma\neq\alpha$). As for $\gamma=\alpha$ (and $Y=T$) we only get one summand $\beta=\gamma=\alpha$, with the positive sign, our expression indeed produces $M_{\alpha}$.

It follows that $S(M_{\alpha})=\sum_{\beta\preceq\alpha}(-1)^{\ell(\beta)-\ell(\alpha)}S(F_{\beta})$, and Theorem \ref{SQSymF} allows us to replace each $S(F_{\beta})$ by $(-1)^{n}F_{\beta^{t}}$. We expand each $F_{\beta^{t}}$ as $\sum_{\gamma\preceq\beta^{t}}M_{\gamma}$, and we again take some $\gamma \vDash n$ and see the coefficient of $M_{\gamma}$ in this expression.

We write again $\alpha=\operatorname{comp}_{n}T$ and $\gamma=\operatorname{comp}_{n}Y$ for subsets $T$ and $Y$ of $\mathbb{N}_{n-1}$, and each summand $\beta$ will be $\operatorname{comp}_{n}X$ for a subset $X$. We now note that $\gamma\preceq\beta^{t}$ is equivalent to $Y$ containing the complement $X^{t}$ of $X^{r}$. But this is the same as saying that $Y \cup X^{r}=\mathbb{N}_{n-1}$, or equivalently $X \cup Y^{r}=\mathbb{N}_{n-1}$, and hence to $X$ containing the complement $Y^{t}$ of $Y^{r}$.

This means that for every $Y$, the collection of sets $X$ that contribute to the coefficient of $M_{\gamma}$ in $S(M_{\alpha})$ are those that contain both $T$ and $Y^{t}$, namely their union. As they are contained in $\mathbb{N}_{n-1}$, the coefficient is a sign times $(1-1)^{n-1-|T \cup Y^{t}|}$, so that only terms where $T \cup Y^{t}=\mathbb{N}_{n-1}$ survive. Moreover, for every such $Y$, the only set $X$ in the sum is $\mathbb{N}_{n-1}$, corresponding to the longest partition $\beta$, of length $n$.

Now, the sign is $(-1)^{n}$ times $(-1)^{\ell(\beta)-\ell(\alpha)}$ with $\ell(\beta)=n$, which reduces to the asserted sign $(-1)^{\ell(\alpha)}$. Moreover, the condition that $T \cup Y^{t}=\mathbb{N}_{n-1}$ is equivalent, as above, to $T$ containing the complement $Y^{r}$ of $Y^{t}$, which after reversion corresponds to the inclusion $Y \subseteq T^{r}$. As this means that the compositions $\gamma=\operatorname{comp}_{n}Y$ appearing there are those satisfying $\gamma\succeq\operatorname{comp}_{n}T^{r}=\alpha^{r}$, this sum is $E_{\alpha^{r}}$ via Definition \ref{fundQSym}. This completes the proof of the proposition.
\end{proof}

\begin{ex}
In the extreme cases $\alpha=n \vDash n$ and $\alpha=1^{n}$, Theorem \ref{SQSymF} produces $S(F_{n})=(-1)^{n}F_{1^{n}}$ and $S(F_{1^{n}})=(-1)^{n}F_{n}$. As we have $F_{1^{n}}=M_{1^{n}}$ and $E_{1^{n}}=F_{n}$ via Definition \ref{fundQSym}, the latter antipode equality also reads as $S(M_{1^{n}})=(-1)^{n}E_{1^{n}}$, in correspondence with Proposition \ref{SQSymM} via the fact that $\alpha=1^{n}$ satisfies $\ell(\alpha)=n$ and $\alpha^{r}=\alpha$. For $M_{n}$, again associated with a composition that is invariant under reversal, the fact that $E_{n}=M_{n}$ and that composition has length 1 shows, via Proposition \ref{SQSymM}, that $S(M_{n})=-M_{n}$. \label{antQSymext}
\end{ex}
The value of $S(M_{n})$ in Example \ref{antQSymext} corresponds to the fact that $M_{n}$ was seen to be primitive in Example \ref{coprodQSymext}, and thus must be taken to its additive inverse via Lemma \ref{grconS}.
\begin{ex}
If we once again set $\alpha=213\vDash6$ as in Examples \ref{excomp} and \ref{DeltaQSymex}, with $\ell(\alpha)=3$, then Theorem \ref{SQSymF} and Proposition \ref{SQSymM} produce, via the values from Example \ref{invex}, the expressions $S(F_{\alpha})=F_{1131}$ and $S(M_{\alpha})=-E_{312}$. \label{SQSymex}
\end{ex}

\medskip

We comment briefly on how $\mathtt{QSym}$ contains the Hopf algebra $\mathtt{Sym}$ of symmetric functions. Recall that the natural action of $S_{\mathbb{N}}$ on $R\ldbrack\mathbf{x}_{\infty}\rdbrack$ coincides with that from \cite{[Hi2]} on $R\ldbrack\mathbf{x}_{\infty}\rdbrack_{1}$ (and on $R\ldbrack\mathbf{x}_{\infty}\rdbrack_{0}=R$ trivially) and respects the ring operations, and $\mathtt{Sym}$ consists of those elements of $R\ldbrack\mathbf{x}_{\infty}\rdbrack$ that have finite degree and are invariant under this operation.

The unit and co-unit are the identification of $R$ with $R\ldbrack\mathbf{x}_{\infty}\rdbrack_{0}$, the multiplication is the one induced from $R\ldbrack\mathbf{x}_{\infty}\rdbrack$, and the co-multiplication is the restriction of the one from Definition \ref{DeltaQSym}. The details of the containment are as follows.
\begin{rmk}
A partition $\lambda \vdash n$ of length $\ell$ represents an orbit of compositions $\alpha \vDash n$ of length $\ell$ under the action of the symmetric group $S_{\ell}$ interchanging the order of the entries. The monomial symmetric function $m_{\lambda}\in\mathtt{Sym}$ associated $\lambda$ is expressed, inside $\mathtt{QSym}$, as $\sum_{\alpha \in S_{\ell}\lambda}M_{\alpha}$. Then $\mathtt{Sym}$ is a commutative and co-commutative Hopf sub-algebra of $\mathtt{QSym}$ (which is commutative but not co-commutative). The elementary symmetric function $e_{n}$ is $m_{1^{n}}=M_{1^{n}}=F_{1^{n}}$ (where $1^{n}$ is the longest partition of $n$), and the complete homogeneous symmetric function $h_{n}$ is $F_{n}$. \label{SymQSym}
\end{rmk}
We recall that the antipode on $\mathtt{Sym}$ is the involution $\omega$ but with elements of degree $n$ multiplied by $(-1)^{n}$. Since $\omega$ interchanges $e_{n}$ and $h_{n}$, this is in correspondence with Theorem \ref{SQSymF} for these compositions, and extended by the algebra (and even bi-algebra) homomorphism property (as follows from the commutativity and co-commutativity of $\mathtt{Sym}$ via Proposition \ref{Sprop}).

\section{Deformations of Quasi-Symmetric Functions \label{qDefs}}

There are two types of $q$-deformations of $\mathtt{QSym}$, one which is commutative, and the other is not (and produces a more general notion than a Hopf algebra).

\subsection{The Commutative $q$-Deformation}

Recall from Lemma \ref{monQSym} that in the monomial basis $\{M_{\alpha}\}_{\alpha}$, the Hopf algebra $\mathtt{QSym}$ is a quasi-shuffle algebra. For $q$-deformation we consider modules over the ring of polynomials $R[q]$ in a variable $q$ over $R$. The commutative $q$-deformation, which is the $q$-shuffle algebra,
\begin{defn}
Let $\mathtt{QSym}_{q}$ be the algebra spanned freely over $R[q]$ by basis elements $M_{\alpha,q}$ where $\alpha$ runs over all compositions, where $M_{\alpha,q}$ is of degree $n$ when $\alpha \vDash n$. We define the unit and co-unit of $\mathtt{QSym}_{q}$ to be the isomorphisms between $R$ and the degree 0 part $R[q]M_{\emptyset,q}$ (namely $M_{\emptyset,q}=1$), and non-trivial compositions $\alpha=(\alpha_{1},\bar{\alpha})$ and $\beta=(\beta_{1},\bar{\beta})$ the product is defined by the \emph{$q$-shuffle rule} $M_{\alpha,q}M_{\beta,q}=l_{\alpha_{1}}M_{\bar{\alpha},q}M_{\beta,q}+l_{\beta_{1}}M_{\alpha,q}M_{\bar{\beta},q}+ql_{\alpha_{1}+\beta_{1}}M_{\bar{\alpha},q}M_{\bar{\beta},q}$ The co-multiplication is again the one from Definition \ref{DeltaQSym}. \label{comqdef}
\end{defn}
Also in $\mathtt{QSym}_{q}$ from Definition \ref{comqdef} there are analogues of Definition \ref{fundQSym}.
\begin{defn}
For every composition $\alpha \vDash n$ set $F_{\alpha,q}:=\sum_{\beta\preceq\alpha}q^{n-\ell(\beta)}M_{\beta,q}$, as well as $E_{\alpha,q}:=\sum_{\beta\succeq\alpha}q^{\ell(\alpha)-\ell(\beta)}M_{\beta,q}$, so that in particular for the empty composition we have $F_{\emptyset,q}=E_{\emptyset,q}=M_{\emptyset,q}=1$. \label{fundqQSym}
\end{defn}
It is clear from Definition \ref{refine} that if $\beta\succeq\alpha$ then we have $\ell(\alpha)\geq\ell(\beta)$, and the equality $\ell(\beta) \leq n$ is evident when $\alpha \vDash n$. Hence the exponents of $q$ in Definition \ref{fundqQSym} are all non-negative.

Substituting $q=1$ in Definitions \ref{comqdef} and \ref{fundqQSym} produces Definitions \ref{QSymdef} and \ref{fundQSym} and Lemma \ref{monQSym} back again. For $q=0$ we obtain a shuffle algebra with an alternative grading, as described in Remark \ref{shdual} below.

\medskip

As $\mathtt{QSym}_{q}$ from Definition \ref{comqdef} is a connected graded bi-algebra, Lemma \ref{grconS} implies that it is a Hopf algebra. The formula for the antipode is given by the following analogue of Theorem \ref{SQSymF} and Proposition \ref{SQSymM}.
\begin{thm}
The antipode on $\mathtt{QSym}_{q}$ takes $F_{\alpha,q}$ for $\alpha \vDash n$ to $(-1)^{n}F_{\alpha^{t},q}$, and $M_{\alpha,q}$ to $(-1)^{\ell(\alpha)}E_{\alpha^{r},q}$. \label{SqQSym}
\end{thm}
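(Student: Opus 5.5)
The plan is to reduce Theorem \ref{SqQSym} to Theorem \ref{SQSymF} and Proposition \ref{SQSymM} through a rescaling of the monomial basis, as the introduction suggests. Define the $R[q]$-linear map $\phi$ sending $M_{\alpha}$ (now viewed over $R[q]$) to $q^{-(n-\ell(\alpha))}M_{\alpha,q}$ for $\alpha \vDash n$. This requires inverting $q$, so we work in $\mathtt{QSym}_{q}\otimes_{R[q]}R[q,q^{-1}]$; since $\mathtt{QSym}_{q}$ is free over $R[q]$, it embeds there. Equivalently, set $N_{\alpha}:=q^{\ell(\alpha)-n}M_{\alpha,q}$ and compare the structure constants of the $N_{\alpha}$'s with those of the $M_{\alpha}$'s.

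\textbf{Step 1: multiplication.} The $q$-shuffle rule gives the product $M_{\alpha,q}M_{\beta,q}$ as a sum over $\omega \in P_{\ell,k}$ of $q^{h}M_{\gamma_{\omega}(\alpha,\beta),q}$, where $h$ is the number of b's. This is the analogue of Corollary \ref{prodMs}, obtained by the same induction, with one factor $q$ for each use of the third term. Since $\ell(\gamma_{\omega})=\ell+k-h$, we have $n+m-\ell(\gamma_{\omega})=(n-\ell)+(m-k)+h$. Hence in the $N$'s the product becomes $N_{\alpha}N_{\beta}=\sum_{\omega}N_{\gamma_{\omega}(\alpha,\beta)}$, exactly the $\mathtt{QSym}$ rule.

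\textbf{Step 2: comultiplication.} By Proposition \ref{coprodQSym}, only cuts at $r\in\tilde{T}$ occur in $\Delta(M_{\alpha})$. At such a cut $\alpha=\alpha_{|r}\alpha_{r|}$, so the lengths add, and the exponent $n-\ell$ is additive. Thus $\Delta(N_{\alpha})=\sum_{r\in\tilde{T}}N_{\alpha_{|r}}\otimes N_{\alpha_{r|}}$. The unit and counit obviously match.

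Hence $\phi$ is an isomorphism of Hopf algebras after extending scalars to $R[q,q^{-1}]$, because the antipode is determined by the bialgebra structure (Lemma \ref{grconS}). Under $\phi$, $F_{\alpha}=\sum_{\beta\preceq\alpha}M_{\beta}$ maps to $\sum_{\beta\preceq\alpha}q^{\ell(\beta)-n}M_{\beta,q}$, which is not $F_{\alpha,q}$ as defined, so the rescaling must be chosen more carefully. Replacing $\phi$ by the choice $M_{\beta}\mapsto q^{n-\ell(\beta)}M_{\beta,q}$ does not respect the product: the exponent gains $-h$ rather than $+h$. I would fix this by composing with the grading automorphism. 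Since every degree-$n$ element is multiplied by $q^{n}$ and the grading is compatible with both operations, the map $x\mapsto q^{\deg x}x$ is a Hopf automorphism, but it does not help either, since $n$ is additive. The real fix is to check the definitions directly: $q^{n-\ell}$ behaves as $q^{h}$ in products, so $\psi:M_{\beta}\mapsto q^{n-\ell(\beta)}M_{\beta,q}$ matches $q^{\text{(number of b's)}}$ exactly when the rule carries $q^{h}$. I expect this bookkeeping to be the main obstacle, namely deciding which normalization, $q^{\pm(n-\ell)}$, intertwines the products; I would settle it on $M_{1}M_{1}$. Whichever normalization works, the conclusion for $F$ follows if it matches Definition \ref{fundqQSym}. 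If the mismatch persists, the fallback is to redo the proof of Theorem \ref{SQSymF} (via Corollary \ref{detS}) directly in $\mathtt{QSym}_{q}$.

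\textbf{Step 3: transport.} Once the correct isomorphism $\psi$ with $\psi(M_{\beta})=q^{n-\ell(\beta)}M_{\beta,q}$ is established, we get $\psi(F_{\alpha})=F_{\alpha,q}$ and $\psi(E_{\alpha})=q^{\ell(\alpha)}\cdot$ (the sum $\sum_{\beta\succeq\alpha}q^{-\ell(\beta)}M_{\beta,q}$) $=q^{n}E_{\alpha,q}$ after adjusting, using the fact that $\psi(M_{\alpha})=q^{n-\ell(\alpha)}M_{\alpha,q}$. Applying $\psi$ to Theorem \ref{SQSymF} gives the $F$ formula. Applying it to Proposition \ref{SQSymM} gives $q^{n-\ell(\alpha)}S(M_{\alpha,q})=(-1)^{\ell(\alpha)}q^{n-\ell(\alpha)}E_{\alpha^{r},q}$, using $\ell(\alpha^{r})=\ell(\alpha)$. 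Both identities hold over $R[q,q^{-1}]$, and since all elements involved lie in the free $R[q]$-module, cancelling the power of $q$ yields the identities in $\mathtt{QSym}_{q}$ itself.
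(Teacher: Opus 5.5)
Your strategy is the paper's: rescale the monomial basis by a power of $q$, check that this gives a morphism of connected graded bi-algebras, and transport Theorem~\ref{SQSymF} and Proposition~\ref{SQSymM}. The paper uses the injective map $M_{\alpha,q}\mapsto q^{\ell(\alpha)}M_{\alpha}$ from $\mathtt{QSym}_{q}$ into $\mathtt{QSym}\otimes_{R}R[q]$, under which $F_{\alpha,q}\mapsto q^{n}F_{\alpha}$. However, the one step that carries the content, namely that your rescaling respects products, is wrong as written.

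In Step~1, use the correct rule $M_{\alpha,q}M_{\beta,q}=\sum_{\omega}q^{h(\omega)}M_{\gamma_{\omega}(\alpha,\beta),q}$ with $N_{\alpha}=q^{\ell(\alpha)-n}M_{\alpha,q}$. You get $N_{\alpha}N_{\beta}=q^{(\ell-n)+(k-m)}\sum_{\omega}q^{h}M_{\gamma_{\omega},q}$, whereas $N_{\gamma_{\omega}}=q^{(\ell-n)+(k-m)-h}M_{\gamma_{\omega},q}$. Hence $N_{\alpha}N_{\beta}=\sum_{\omega}q^{2h}N_{\gamma_{\omega}}$, so $\phi$ is not an algebra map, and the claimed Hopf isomorphism over $R[q,q^{-1}]$ is false. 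Already $M_{1}M_{1}$ shows this: $\phi(M_{1}M_{1})=2M_{11,q}+q^{-1}M_{2,q}\neq M_{1,q}^{2}$.

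Conversely, your claim that $\psi\colon M_{\beta}\mapsto q^{n-\ell(\beta)}M_{\beta,q}$ fails to respect products is also false. The identity $n+m-\ell(\gamma_{\omega})=(n-\ell)+(m-k)+h$ that you wrote says precisely that $\psi(M_{\alpha})\psi(M_{\beta})=\sum_{\omega}q^{(n-\ell)+(m-k)+h}M_{\gamma_{\omega},q}=\sum_{\omega}\psi(M_{\gamma_{\omega}(\alpha,\beta)})$. So the proposal asserts multiplicativity for the wrong map, denies it for the right one, and defers the decision. Step~3 therefore rests on a claim that you never verify and in fact contradict.

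The repair is just that computation, and with it the argument is complete. $\psi$ is multiplicative as above. It respects $\Delta$, because $n-\ell$ is additive at the cuts $r\in\tilde{T}$, so your Step~2 argument applies verbatim. It also respects units and co-units, so it commutes with the antipodes.

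Since $\psi$ has non-negative exponents, it maps $\mathtt{QSym}\otimes_{R}R[q]$ into $\mathtt{QSym}_{q}$ without inverting $q$. Then $\psi(F_{\alpha})=F_{\alpha,q}$ gives $S(F_{\alpha,q})=\psi\big((-1)^{n}F_{\alpha^{t}}\big)=(-1)^{n}F_{\alpha^{t},q}$ directly.

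For the monomials, $\psi(E_{\alpha})=\sum_{\beta\succeq\alpha}q^{n-\ell(\beta)}M_{\beta,q}=q^{n-\ell(\alpha)}E_{\alpha,q}$. Your intermediate ``$q^{n}E_{\alpha,q}$'' is wrong, though your final identity $q^{n-\ell(\alpha)}S(M_{\alpha,q})=(-1)^{\ell(\alpha)}q^{n-\ell(\alpha)}E_{\alpha^{r},q}$ is right. Cancelling the power of $q$ is legitimate because $\mathtt{QSym}_{q}$ is free over $R[q]$.

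Composing $\psi$ with the paper's map gives the grading endomorphism $x\mapsto q^{\deg x}x$, so the corrected argument is the paper's run in the opposite direction. The paper needs injectivity of its map for both formulas; with $\psi$ the $F$-formula is immediate and only the $M$-formula needs a cancellation.
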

One can prove Theorem \ref{SqQSym} by the same method applied for Theorem \ref{SQSymF} and Proposition \ref{SQSymM} (in this paper or others), but we will obtain it as a direct consequence of these results as follows.
\begin{proof}
We claim that $\mathtt{QSym}_{q}$ embeds as a Hopf subalgebra inside the scalar extension of $\mathtt{QSym}$ from $R$ to $R[q]$. Consider the map sending $M_{\alpha,q}$ to $q^{\ell(\alpha)}M_{\alpha}$, which respects the gradings. As with $\alpha=\emptyset$ it takes $M_{\emptyset,q}=1$ to $q^{\ell(\emptyset)}M_{\emptyset}=1$, this map commutes with the units and co-units.

Recall that if $\beta\gamma=\alpha$ (or equivalently $\beta=\alpha_{|r}$ and $\gamma=\alpha_{r|}$ for $r \in T\cup\{0,n\}$ when $\alpha=\operatorname{comp}_{n}T$) then $\ell(\alpha)=\ell(\beta)+\ell(\gamma)$. Thus, each summand $M_{\beta,q} \otimes M_{\gamma,q}$ appearing in $\Delta(M_{\alpha,q})$ is sent by our map to $q^{\ell(\beta)}M_{\beta} \otimes q^{\ell(\gamma)}M_{\gamma}=q^{\ell(\alpha)}M_{\beta} \otimes M_{\gamma}$, and the sum is indeed the $\Delta$-image of $q^{\ell(\alpha)}M_{\alpha}$.

For the product, we argue by induction on the sum of the lengths of $\alpha$ and $\beta$, where the behavior with respect to the empty partition gives us the base case. We thus express $M_{\alpha,q}M_{\beta,q}$ via the $q$-shuffle rule, and the induction hypothesis is that the three products $M_{\bar{\alpha},q}M_{\beta,q}$, $M_{\alpha,q}M_{\bar{\beta},q}$, and $M_{\bar{\alpha},q}M_{\bar{\beta},q}$ are taken, by our map, to $q^{\ell(\bar{\alpha})+\ell(\beta)}M_{\bar{\alpha}}M_{\beta}$, $q^{\ell(\alpha)+\ell(\bar{\beta})}M_{\alpha}M_{\bar{\beta}}$, and $q^{\ell(\bar{\alpha})+\ell(\bar{\beta})}M_{\bar{\alpha}}M_{\bar{\beta}}$ respectively.

However, we apply $l_{\alpha_{1}}$ to the first expression, $l_{\beta_{1}}$ to the second one, and $ql_{\alpha_{1}+\beta_{1}}$ to the third one, before applying the map. This increases the length of every composition by 1, and thus in the map we have to add 1 to all the powers of $q$, with another power in the third one from the extra factor. As $\ell(\alpha)=\ell(\alpha)+1$ and $\ell(\beta)=\ell(\bar{\beta})+1$, this produces a total coefficient of $q^{\ell(\alpha)+\ell(\beta)}$ times the expression $l_{\alpha_{1}}M_{\bar{\alpha}}M_{\beta}+l_{\beta_{1}}M_{\alpha}M_{\bar{\beta}}+l_{\alpha_{1}+\beta_{1}}M_{\bar{\alpha}}M_{\bar{\beta}}$ for $M_{\alpha}M_{\beta}$ from Lemma \ref{monQSym}. This is thus the product of the images $q^{\ell(\alpha)}M_{\alpha}$ and $q^{\ell(\beta)}M_{\beta}$, as desired.

Hence our map is an embedding of connected graded bi-algebras. It thus has to commute with the two antipodes. The fact that our map is clearly injective allows us to determine the antipode on $\mathtt{QSym}_{q}$ by applying the map into $\mathtt{QSym}$, evaluate the antipode there, and take the desired pre-image.

Now, given $\alpha \vDash n$, our map sends $F_{\alpha,q}$ to $\sum_{\beta\preceq\alpha}q^{n-\ell(\beta)} \cdot q^{\ell(\beta)}M_{\beta}$, which equals $q^{n}F_{\alpha}$. As the antipode of $\mathtt{QSym}$ takes it to $(-1)^{n}q^{n}F_{\alpha^{t}}$ via Theorem \ref{SQSymF}, which is the image of $(-1)^{n}F_{\alpha^{t},q}$ by our map, the first assertion is established.

Similarly, Proposition \ref{SQSymM} implies that the antipode of $\mathtt{QSym}$ sends the image $q^{\ell(\alpha)}M_{\alpha}$ of $M_{\alpha,q}$ to $(-1)^{\ell(\alpha)}q^{\ell(\alpha)}E_{\alpha^{r}}$. We decompose the latter symmetric function as $\sum_{\beta\succeq\alpha^{r}}M_{\beta}$, and after we multiply each summand by $q^{\ell(\alpha)}=q^{\ell(\alpha^{r})}$, we get the product of $q^{\ell(\alpha^{r})-\ell(\beta)}$ with $q^{\ell(\beta)}M_{\beta}$. As the latter multiplier is the image of $M_{\beta,q}$, we deduce via Definition \ref{fundqQSym} that $(-1)^{\ell(\alpha)}q^{\ell(\alpha)}E_{\alpha^{r}}$ is the image of $(-1)^{\ell(\alpha)}E_{\alpha^{r},q}$, yielding the second assertion as well. This completes the proof of the theorem.
\end{proof}
Of course, substituting $q=1$ in Theorem \ref{SqQSym} gives back Theorem \ref{SQSymF} and Proposition \ref{SQSymM} (this is circular reasoning with our proof, but one can prove it without these results, and thus get a non-circular argument there).

\begin{defn}
For every $R$-module $M$, the \emph{tensor algebra} $T(M)$, which is a graded algebra whose part of degree $n$ is $M^{\otimes n}$ and whose multiplication is a concatenation of tensors. The co-product (which is co-multiplicative) takes a tensor $x_{1}\otimes\ldots \otimes x_{n}$ to the sum over all partitions of $\mathbb{N}_{n}$ as a disjoint union $J_{l} \cup J_{r}$ of $x_{J_{l}} \otimes x_{J_{r}}$, where $x_{I}$ for $I\subseteq\mathbb{N}_{n}$ is the tensor product of $\{x_{i}\}_{i \in I}$ in the order they show up (this can be expressed in terms of the sets $\mathcal{S}_{r,n-r}$ from Definition \ref{shuffle} below). The resulting antipode sends a tensor of $n$ elements to $(-1)^{n}$ times the tensor of the same elements in the opposite order.
\label{tensalg}
\end{defn}
The structure from Definition \ref{tensalg}, which when dualizing the module $M$ yields the construction from Proposition \ref{grdual}, and is defined as follows.
\begin{defn}
If $M$ is an $R$-module, then we write $T(M)^{*}$ for the same module $T(M)$ from Definition \ref{tensalg}, but with the following operations. The product of two tensors, of lengths $r$ and $s$, is the sum of all their shuffles (also rigorously defined using Definition \ref{shuffle} below), and is thus commutative). The co-multiplication sends each tensor $x_{1}\otimes\ldots \otimes x_{n}$ to the expression $\sum_{r=0}^{n}[x_{1}\otimes\ldots \otimes x_{r}]\otimes[x_{r+1}\otimes\ldots \otimes x_{n}]$, where the tensors inside the brackets are those from the definition of the module $T(M)$, and the one between them represents the landing inside $T(M)^{*}\otimes_{R}T(M)^{*}$. Also here the antipode reverses the order of tensor products and multiplies by the same sign. This Hopf algebra is the \emph{shuffle algebra} of $M$. \label{tensfual}
\end{defn}

\begin{rmk}
Consider the module $M$ over $R$ that is generated by the free basis $\{v_{a}\}_{a=1}^{n}$. Definition \ref{tensfual} produces the Hopf algebra $T(M)^{*}$, in which the grading arises from considering every $v_{a}$ as having degree 1. If we identify $M_{\alpha,0}$ for $\alpha=(\alpha_{1},\ldots,\alpha_{\ell}) \vDash n$ with $v_{\alpha_{1}}\otimes \ldots \otimes v_{\alpha_{\ell}}$, then it has degree $\ell(\alpha)$. However, when we change the grading such that $v_{a}$ has degree $a$, so that our analogue of $M_{\alpha,0}$ has degree $n$ rather than $\ell(\alpha)$, the resulting algebra is the one obtained by substituting $q=0$ in $\mathtt{QSym}_{q}$. \label{shdual}
\end{rmk}

\begin{rmk}
To connect Remark \ref{shdual} to Theorem \ref{SqQSym}, note that substituting $q=0$ in Definition \ref{fundqQSym} reduces $E_{\alpha,0}$ to just $M_{\alpha}$ (as if $\beta\succ\alpha$ then the exponent $\ell(\alpha)-\ell(\beta)$ of $q$ there is positive). Thus sending $M_{\alpha,0}$ to the expression $(-1)^{\ell(\alpha)}E_{\alpha^{r},0}=(-1)^{\ell(\alpha)}M_{\alpha^{r},0}$ from that theorem is precisely the antipode from that remark, via Definition \ref{tensfual}. For $F_{\alpha,0}$ we have $n-\ell(\beta)>0$ for every $\beta\neq1^{n}$, and $1^{n}\succeq\alpha$ for every $\alpha \vDash n$, so that $F_{\alpha,0}=M_{1^{n},0}$ for all $\alpha \vDash n$. As $F_{\alpha^{t},0}=M_{1^{n},0}$ as well, this part of Theorem \ref{SqQSym} reduces, for $q=0$, to the equality $S(M_{1^{n},0})=(-1)^{n}M_{1^{n},0}$, independently of $\alpha$. \label{q0com}
\end{rmk}

\subsection{The Non-Commutative $q$-Deformation}

We now turn to describing the non-commutative $q$-deformation of $\mathtt{QSym}$. The algebra structure is defined following \cite{[TU]}, with the co-algebra being the one from \cite{[L]}. The resulting structure is, in fact, not a Hopf algebra as in Definition \ref{Hopfdef} (nor a bi-algebra via Definition \ref{bialg}), but rather a more general notion of a $q$-Hopf algebra, as mentioned in \cite{[L]} (a related point of view appears in Equation (10.5.10) of \cite{[Mo]})---see Remark \ref{qHopf} below.

\medskip

Let $R\langle\langle\mathbf{x}_{\infty}\rangle\rangle$ be the ring of power series in the variables $\{x_{i}\}_{i=1}^{\infty}$, but now they are non-commuting. Let $R[q]\langle\langle\mathbf{x}_{\infty}\rangle\rangle$ be defined in the same manner, and we define $R\langle\langle\mathbf{x}_{\infty}\rangle\rangle^{(q)}$ to be the quotient of $R[q]\langle\langle\mathbf{x}_{\infty}\rangle\rangle$ by the ideal generated by the expressions $x_{j}x_{i}-qx_{i}x_{j}$ wherever $i<j$. As the ideal is homogeneous, the homogeneous part $R\langle\langle\mathbf{x}_{\infty}\rangle\rangle^{(q)}_{d}$ of degree $d$ there is well-defined for any $d$.

Every monomial in $R[q]\langle\langle\mathbf{x}_{\infty}\rangle\rangle$ is equal to a power of $q$ times a monomial in which the variables are multiplied in non-decreasing order. Thus, whenever $I:=\{i_{j}\}_{j=1}^{\ell}$ with $i_{j}<i_{j+1}$ for any $1 \leq j<\ell$ and $\alpha$ is some composition of length $\ell$, the monomial $x_{I}^{\alpha}:=\prod_{j=1}^{\ell}x_{i_{j}}^{\alpha_{j}}$ will be understood, inside $R\langle\langle\mathbf{x}_{\infty}\rangle\rangle^{(q)}$, as the product with the indices increasing from left to right.

It follows that as $R[q]$-modules, $R\langle\langle\mathbf{x}_{\infty}\rangle\rangle^{(q)}$ is isomorphic to $R[q]\ldbrack\mathbf{x}_{\infty}\rdbrack$. Hence we may again consider the action of from \cite{[Hi2]}, and make the following definition.
\begin{defn}
We say that an element $F \in R\langle\langle\mathbf{x}_{\infty}\rangle\rangle^{(q)}$ is a \emph{$q$-quasi-symmetric function} (over $R$) when its degree is finite and the action from \cite{[Hi2]}, as considered here, leaves it invariant. Equivalently, $F$ contains monomials $x_{I}^{\alpha}$ with $\alpha \vDash n$ for bounded $n$, and if $J$ has the same length as $I$ and $\alpha$ then $x_{J}^{\alpha}$ appears in $F$ with the same coefficient as $x_{I}^{\alpha}$. The set of $q$-quasi-symmetric functions will be denoted by $\mathtt{QSym}^{(q)}$. \label{QSymqdef}
\end{defn}
We deduce the following result from Definition \ref{QSymqdef}.
\begin{lem}
Given $\alpha \vDash d$ of length $\ell$, write $M_{\alpha}^{(q)}:=\sum_{|I|=\ell}x_{I}^{\alpha} \in R\langle\langle\mathbf{x}_{\infty}\rangle\rangle^{(q)}$, which we call the \emph{monomial $q$-quasi-symmetric function} corresponding to $\alpha$, so that in particular $M_{\emptyset}^{(q)}=1$. Then $\{M_{\alpha}^{(q)}\}_{\alpha}$, with $\alpha$ taken over the set of all compositions, is a basis for $\mathtt{QSym}^{(q)}$, and the product from $R\langle\langle\mathbf{x}_{\infty}\rangle\rangle^{(q)}$ reduces to $M_{\alpha}^{(q)}M_{\beta}^{(q)}=l_{\alpha_{1}}M_{\bar{\alpha}}^{(q)}M_{\beta}^{(q)}+q^{\beta_{1}(\alpha_{1}+m)}l_{\beta_{1}}M_{\alpha}^{(q)}M_{\bar{\beta}}^{(q)}+  q^{\beta_{1}m}l_{\alpha_{1}+\beta_{1}}M_{\bar{\alpha}}^{(q)}M_{\bar{\beta}}^{(q)}$ wherever $\alpha=(\alpha_{1},\bar{\alpha})$ with $\bar{\alpha} \vDash m$ and $\beta=(\beta_{1},\bar{\beta})$. \label{monQSymq}
\end{lem}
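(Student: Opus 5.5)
The plan is to prove the two assertions separately, working throughout with the normal-form monomials $x_{I}^{\alpha}$, which form an $R[q]$-basis of $R\langle\langle\mathbf{x}_{\infty}\rangle\rangle^{(q)}$ as a module, as noted before Definition \ref{QSymqdef}.

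\emph{Basis.} By Definition \ref{QSymqdef}, the coefficient of $x_{I}^{\alpha}$ in a $q$-quasi-symmetric $F$ depends only on $\alpha$, and not on $I$. Call it $c_{\alpha}\in R[q]$. Then $F=\sum_{\alpha}c_{\alpha}M_{\alpha}^{(q)}$. The finite degree condition, together with the fact that there are finitely many compositions of each $d$, makes this sum finite. Linear independence is immediate, since distinct $\alpha$ give $M_{\alpha}^{(q)}$ with disjoint supports in the monomial basis $\{x_{I}^{\alpha}\}$. Conversely, each $M_{\alpha}^{(q)}$ is clearly invariant, so it lies in $\mathtt{QSym}^{(q)}$.

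\emph{Product.} Write $M_{\alpha}^{(q)}M_{\beta}^{(q)}=\sum_{I,J}x_{I}^{\alpha}x_{J}^{\beta}$, where $I$ and $J$ are sets of sizes $\ell(\alpha)$ and $\ell(\beta)$. I will split the sum according to how $i:=\min I$ compares with $j:=\min J$. Write $I'$ and $J'$ for the remaining indices, so that $x_{I}^{\alpha}=x_{i}^{\alpha_{1}}x_{I'}^{\bar{\alpha}}$ and $x_{J}^{\beta}=x_{j}^{\beta_{1}}x_{J'}^{\bar{\beta}}$. The only relation needed is $x_{b}x_{a}=qx_{a}x_{b}$ for $a<b$: moving a smaller-indexed variable one step to the left past a larger-indexed one costs a factor $q$.

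In the case $i<j$, the factor $x_{i}^{\alpha_{1}}$ is already leftmost and every other index exceeds $i$. Hence the term is $x_{i}^{\alpha_{1}}$ times $x_{I'}^{\bar{\alpha}}x_{J}^{\beta}$. When the latter is put in normal form, it becomes a combination of $x_{K}^{\gamma}$ with $\min K>i$, and multiplying on the left by $x_{i}^{\alpha_{1}}$ gives exactly $x_{\{i\}\cup K}^{(\alpha_{1},\gamma)}$. Summing over $i$ and over all $(I',J)$ with indices greater than $i$ reproduces $l_{\alpha_{1}}\big(M_{\bar{\alpha}}^{(q)}M_{\beta}^{(q)}\big)$. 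To see this, first expand $M_{\bar{\alpha}}^{(q)}M_{\beta}^{(q)}$ in the basis just established. Then observe that the part of the double sum $\sum_{I',J}$ with all indices above $i$ has the same coefficients as the full product, restricted to the monomials $x_{K}^{\gamma}$ with $\min K>i$. This holds because the $q$-powers produced by normal ordering depend only on the relative order of the indices, not on their actual values.

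In the case $j<i$, I move $x_{j}^{\beta_{1}}$ to the front past all of $x_{I}^{\alpha}$. That monomial has total degree $\alpha_{1}+m$, and all of its indices exceed $j$, so this costs $q^{\beta_{1}(\alpha_{1}+m)}$. The same argument as before then gives $q^{\beta_{1}(\alpha_{1}+m)}l_{\beta_{1}}\big(M_{\alpha}^{(q)}M_{\bar{\beta}}^{(q)}\big)$.

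In the case $i=j$, I move $x_{i}^{\beta_{1}}$ leftward past $x_{I'}^{\bar{\alpha}}$ only. That monomial has degree $m$, so this costs $q^{\beta_{1}m}$, and it then merges with $x_{i}^{\alpha_{1}}$ into $x_{i}^{\alpha_{1}+\beta_{1}}$. This yields $q^{\beta_{1}m}l_{\alpha_{1}+\beta_{1}}\big(M_{\bar{\alpha}}^{(q)}M_{\bar{\beta}}^{(q)}\big)$.

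The three cases exhaust the sum, which gives the stated rule. Closure of $\mathtt{QSym}^{(q)}$ under multiplication then follows by induction on $\ell(\alpha)+\ell(\beta)$, using $M_{\emptyset}^{(q)}=1$ as the base.

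The main obstacle I expect is the justification that $l_{a}$ commutes correctly with restricting to indices above $i$, that is, that normal-ordering $q$-powers depend only on the relative order of indices. I would make this explicit first by noting that the $q$-exponent is a count of inversions weighted by degrees, which is invariant under order-preserving relabelling of indices. This is precisely the invariance from \cite{[Hi2]}.
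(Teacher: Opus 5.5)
Your proposal is correct and is essentially the direct verification the paper leaves implicit when it deduces the lemma from Definition \ref{QSymqdef}: the coefficient of $x_{I}^{\alpha}$ depends only on $\alpha$, and comparing $\min I$ with $\min J$ while counting the factors of $q$ from $x_{j}x_{i}=qx_{i}x_{j}$ gives exactly the three terms. One point to tighten: expanding $M_{\bar{\alpha}}^{(q)}M_{\beta}^{(q)}$ in the basis already uses closure at smaller total length, so your induction on $\ell(\alpha)+\ell(\beta)$ should prove closure and the product formula simultaneously.
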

The multiplication rule from Lemma \ref{monQSymq} is the one appearing in \cite{[TU]}, \cite{[Ho1]}, and \cite{[L]}, and is also referred to as a \emph{quantum quasi-shuffle product} in \cite{[JRZ]}. That lemma implies that with the unit identifying $R$ with $RM_{\emptyset}^{(q)}=R\langle\langle\mathbf{x}_{\infty}\rangle\rangle^{(q)}_{0}$ (which is the part of degree 0 in $\mathtt{QSym}^{(q)}$), $\mathtt{QSym}^{(q)}$ is a graded algebra in the sense of Definition \ref{graded}, which is now non-commutative.

We extend the notions from Definition \ref{pathsmult} as follows.
\begin{defn}
Take an element $\omega \in P_{\ell,k}^{h}$ for some $0 \leq h\leq\min\{\ell,k\}$.
\begin{enumerate}[$(i)$]
\item We say that a pair of locations $1 \leq i<j\leq\ell+k-j$ is an \emph{inversion} for $\omega$ of the $i$th and $j$th entries of $\omega$ are rl, rb, bl, or bb. We denote the set of inversions of $\omega$ by $\operatorname{Inv}(\omega)$.
\item If $\alpha$ and $\beta$ are compositions of lengths $\ell$ and $k$ respectively, then it follows from Definition \ref{pathsmult} that whenever $(i,j)$ is an inversion for $\omega$, the $i$th entry of $\gamma_{\omega}(\alpha,\beta)$ is an entry of $\beta$ (plus perhaps one of $\alpha$), and the $j$th one is an entry of $\alpha$ (plus perhaps one of $\beta$). We shall write the former as $\beta_{\omega,i}$ and the latter as $\alpha_{\omega,j}$.
\item We define the \emph{$\omega$-inversion number} of $\alpha$ and $\beta$ to be $\sum_{(i,j)\in\operatorname{Inv}(\omega)}\beta_{\omega,i}\alpha_{\omega,j}$, and denote it by $\operatorname{inv}_{\omega}(\alpha,\beta)$. It vanishes, of course, when $\operatorname{Inv}(\omega)=\emptyset$.
\end{enumerate} \label{pathsinv}
\end{defn}

Here is the expression from Corollary \ref{prodMs} for this case, using the parameters from Definition \ref{pathsinv}.
\begin{cor}
Let $\alpha$ and $\beta$ be compositions, with $\ell(\alpha)=\ell$ and $\ell(\beta)=k$. We then have the equality $M_{\alpha}^{(q)}M_{\beta}^{(q)}$ is given by $\sum_{\omega \in P_{\ell,k}}q^{\operatorname{inv}_{\omega}(\alpha,\beta)}M_{\gamma_{\omega}(\alpha,\beta)}^{(q)}$. \label{prodMqs}
\end{cor}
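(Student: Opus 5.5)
The plan is to prove Corollary \ref{prodMqs} directly from the definition of $M_{\alpha}^{(q)}$ inside $R\langle\langle\mathbf{x}_{\infty}\rangle\rangle^{(q)}$, by refining the argument behind Corollary \ref{prodMs}; we do not go through the recursion of Lemma \ref{monQSymq}. Write
\[M_{\alpha}^{(q)}M_{\beta}^{(q)}=\sum_{|I|=\ell,\,|J|=k}x_{I}^{\alpha}x_{J}^{\beta},\]
where each product $x_{I}^{\alpha}x_{J}^{\beta}$ is a monomial whose variables are not yet in non-decreasing order. The first step is the usual bijection between pairs $(I,J)$ and pairs $(K,\omega)$. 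Here $K:=I\cup J$, and $\omega\in P_{\ell,k}$ records, for each element of $K$ in increasing order, whether it lies in $I$ only (l), in $J$ only (r) or in both (b). Then $h=|I\cap J|$, $|K|=\ell+k-h$, and conversely $(K,\omega)$ recovers $I$ and $J$.

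The second step is to normalize each monomial $x_{I}^{\alpha}x_{J}^{\beta}$. Use the relation $x_{j}^{b}x_{i}^{a}=q^{ab}x_{i}^{a}x_{j}^{b}$ for $i<j$, which follows from the defining relation $x_{j}x_{i}=qx_{i}x_{j}$ by moving the factors one at a time. We move the factors of $x_{J}^{\beta}$ leftwards past those of $x_{I}^{\alpha}$ until the indices are non-decreasing. A factor $x_{i}^{a}$ from $x_{I}^{\alpha}$ and a factor $x_{j}^{b}$ from $x_{J}^{\beta}$ must be exchanged exactly when $j<i$. When $j=i$ no exchange is needed, and the two factors become adjacent and merge into $x_{i}^{a+b}$. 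When $j>i$ they are already in the right order.

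Translating into positions of $K$, an exchange occurs precisely for a pair of positions $p<p'$ where position $p$ carries a $J$-variable (letter r or b) and position $p'$ carries an $I$-variable (letter l or b). These are exactly the pairs rl, rb, bl and bb of Definition \ref{pathsinv}. The exponent contributed by such a pair is the $\beta$-exponent $\beta_{\omega,p}$ at position $p$ times the $\alpha$-exponent $\alpha_{\omega,p'}$ at position $p'$. One subtlety needs care at a b-position: only the $\beta$-part of the entry at $p$, respectively the $\alpha$-part at $p'$, participates in the exchange. The $\alpha$-part of an earlier b-entry sits before the $J$-factors in the original product, so it never crosses them. Thus
\[x_{I}^{\alpha}x_{J}^{\beta}=q^{\operatorname{inv}_{\omega}(\alpha,\beta)}x_{K}^{\gamma_{\omega}(\alpha,\beta)},\]
with the exponent depending only on $\omega$ and not on $K$.

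The final step is to sum over all $K$ of size $\ell+k-h$ for a fixed $\omega$, which gives $q^{\operatorname{inv}_{\omega}(\alpha,\beta)}M_{\gamma_{\omega}(\alpha,\beta)}^{(q)}$. Summing over $\omega\in P_{\ell,k}$ then yields the claimed formula. The main obstacle is the second step: checking rigorously that the total power of $q$ is additive over the exchanged pairs and independent of the order of the swaps. Since the relations are homogeneous of this multiplicative form, I would handle it by induction on the number of out-of-order pairs, or equivalently by noting that the quotient ring is a skew-polynomial ring with a well-defined normal form. As a sanity check I would also verify that the formula is compatible with Lemma \ref{monQSymq} by conditioning on the first letter of $\omega$. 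For instance, a first letter r contributes the inversions of that first $\beta_{1}$ with all later $\alpha$-entries, totalling $\beta_{1}(\alpha_{1}+m)$, in agreement with the lemma.
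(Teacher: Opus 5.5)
Your proof is correct. It differs from the paper's route, though the paper gives no separate proof. The paper presents Corollary \ref{prodMqs} as the $q$-analogue of Corollary \ref{prodMs}, to be read off from the recursive quantum quasi-shuffle rule of Lemma \ref{monQSymq}. That means induction on $\ell+k$, conditioning on the first letter of $\omega$:
\begin{itemize}
\item an l adds no inversions;
\item an r adds $\beta_{1}(\alpha_{1}+m)$, namely $\beta_{1}$ times the full $\alpha$-mass;
\item a b adds $\beta_{1}m$.
\end{itemize}
This is exactly what you relegate to a sanity check at the end.

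You instead normal-order each product $x_{I}^{\alpha}x_{J}^{\beta}$ directly in $R\langle\langle\mathbf{x}_{\infty}\rangle\rangle^{(q)}$ and bypass the recursion. This gives $\operatorname{inv}_{\omega}(\alpha,\beta)$ a transparent closed-form meaning: it is the total cost of commuting $\beta$-variables past larger-indexed $\alpha$-variables. Your handling of b-letters is right: only the $\beta$-part of the earlier entry and the $\alpha$-part of the later one interact. Your route also reproves Lemma \ref{monQSymq} as a byproduct. The recursive route is shorter once that lemma is available, but its induction must track how the prepended letter interacts with all later positions.

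The ``main obstacle'' you flag is not really one. To prove $x_{I}^{\alpha}x_{J}^{\beta}=q^{\operatorname{inv}_{\omega}(\alpha,\beta)}x_{K}^{\gamma_{\omega}(\alpha,\beta)}$ in the quotient ring, a single explicit sequence of adjacent swaps suffices. For example, move the $J$-factors leftwards one at a time, in increasing order of index. Each then crosses exactly the $I$-factors of larger index, and it lands adjacent to an equal-index $I$-factor when there is one, where they merge. Independence of the swap order, i.e.\ uniqueness of the normal form, is only needed to read off coefficients in the basis $\{M_{\gamma}^{(q)}\}_{\gamma}$. The paper already takes that for granted in Lemma \ref{monQSymq}.
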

The unique element of $P_{\ell,k}$ when $\ell=0$ or $k=0$ contains only a single letter r or l, so that its inversion number vanishes, in correspondence with the fact that $M_{\emptyset}^{(q)}=1$. Otherwise, we have the following observation.
\begin{rmk}
When $\ell$ and $k$ are both positive, there are precisely two elements $\omega \in P_{\ell,k}$ for which $\operatorname{Inv}(\omega)=\emptyset$. One is the word in $P_{\ell,k}^{0}$ that consists of $\ell$ instances of l followed by $k$ r's, and the second one, which lies in $P_{\ell,k}^{1}$, begins with $\ell-1$ instances of l, followed by a single b, and ends with the $k-1$ r's. The corresponding composition $\gamma_{\omega}(\alpha,\beta)$ is the concatenation $\alpha\beta$ with the former one, and the near concatenation $\alpha\odot\beta$ with the latter. \label{invw0}
\end{rmk}

We consider the cases from Example \ref{pathex}.
\begin{ex}
There are no inversions for lr and b from $P_{1,1}$, but rl has one. Substituting into Corollary \ref{prodMqs}, we get $M_{a}^{(q)}M_{b}^{(q)}=M_{a,b}+q^{ab}M_{b,a}+M_{a+b}$ (this differs from the opposite product $M_{b}^{(q)}M_{a}^{(q)}$, in which $q^{ab}$ will now multiply $M_{a,b}$ instead of $M_{b,a}$). In $P_{2,1}$, the elements llr and lb have no inversions, while lrl and bl has one inversion and llr has two. The resulting expression for $M_{a,b}^{(q)}M_{c}^{(q)}$ is thus $M_{a,b,c}^{(q)}+M_{a,b+c}^{(q)}+q^{bc}M_{a,c,b}^{(q)}+q^{bc}M_{a+c,b}^{(q)}+q^{ac+bc}M_{c,a,b}^{(q)}$. In a similarly, lrr and br have no inversions in $P_{1,2}$, while there is one inversion for each of rlr and rb and two for rrl. The product $M_{c}^{(q)}M_{a,b}^{(q)}$ therefore  equals $M_{c,a,b}^{(q)}+M_{c+a,b}^{(q)}+q^{ac}M_{a,c,b}^{(q)}+q^{ac}M_{a,c+b}^{(q)}+q^{ac+bc}M_{a,b,c}^{(q)}$. \label{pathexq}
\end{ex}
The fact that $\mathtt{QSym}$ is commutative while $\mathtt{QSym}^{(q)}$ is not is related to the fact, already visible in Example \ref{pathexq}, that the involution $\omega\mapsto\tilde{\omega}$ between $P_{\ell,k}$ and $P_{k,\ell}$ does not preserve inversion numbers (and even relate sequences without inversions with those that do have them). The sets of inversions, and the corresponding inversion numbers for appropriate $\alpha$ and $\beta$, in the cases from Example \ref{expath} are as follows.
\begin{ex}
Assume that the composition of length 3 is $(a,b,c)$ and that of length 1 consists of the entry $d$. Then lllr and llb from $P_{3,1}$ and lrrr and brr from $P_{1,3}$ have no inversions (and hence vanishing inversion numbers). The elements llrl and lbl of $P_{3,1}$ have a single inversion, with inversion number $cd$, as do rlrr and rbr from $P_{1,3}$, producing the inversion number $ad$. For lrll and bll from $P_{3,1}$ we have two inversions and the inversion number $bd+cd$, and there are two inversions also in rrlr and rrb from $P_{1,3}$, whose inversion numbers are both $ad+bd$. The remaining elements, rlll in $P_{3,1}$ and rrrl from $P_{1,3}$, have three inversions and the number $ad+bd+cd$. Inside $P_{2,2}$, where $\alpha=(a,b)$ and $\beta=(c,d)$, the elements llrr and lbr have no inversions (hence inversion number 0), while lrlr, lrb, blr, and bb have a single inversion each and the inversion number $bc$. For lrrl and brl we have two inversions and inversion number $bc+bd$, while rllr and rlb also have two inversions but their inversion number is $ac+bc$. Next we have rlrl and rbl with three inversions each and inversion number $ac+bc+bd$, and the remaining element rrll has four inversions, yielding the inversion number $ac+ad+bc+bd$. \label{expathq}
\end{ex}
As Example \ref{expathq} illustrates, there is a unique element of each $P_{\ell,k}$, namely the one of $P_{\ell,k}^{0}$ starting with $k$ r's followed by $\ell$ l's, which has the maximal number of inversions (namely $\ell k$), and whose inversion number with $\alpha \vDash n$ of length $\ell$ and $\beta \vDash m$ of length $k$ is $nm$.

\medskip

We recall from Proposition \ref{Sprop} that antipodes reverse the order of multiplications (as is visible in, e.g., Definition \ref{tensfual} and Remark \ref{shdual}). Our analogue of Definitions \ref{fundQSym} and \ref{fundqQSym} will thus involve several objects---see Sections 4 and 5 of \cite{[TU]} for some related definitions.
\begin{defn}
Let $\alpha=(\alpha_{1},\ldots,\alpha_{\ell})$ be a composition of a number $n$.
\begin{enumerate}[$(i)$]
\item The \emph{inversion number} of $\alpha$ is $\operatorname{inv}(\alpha):=\prod_{1 \leq i<j\leq\ell}\alpha_{i}\alpha_{j}$.
\item The \emph{fundamental $q$-quasi-symmetric function} is $F_{\alpha}^{(q)}:=\sum_{\beta\preceq\alpha}M_{\beta}^{(q)}$.
\item We also define the \emph{modified fundamental $q$-quasi-symmetric function} that is associated with $\alpha$ by $\widetilde{F}_{\alpha}^{(q)}:=\sum_{\beta\preceq\alpha}q^{\operatorname{inv}(\beta)-\operatorname{inv}(\alpha)}M_{\beta}^{(q)}$.
\item Finally, we set $E_{\alpha}^{(q)}:=\sum_{\beta\succeq\alpha}q^{\operatorname{inv}(\beta)}M_{\beta}^{(q)}$.
\end{enumerate} \label{fundQSymq}
\end{defn}
The inversion number $\operatorname{inv}(\alpha)$ from Definition \ref{fundQSymq} should not be confused with the $\omega$-inversion number $\operatorname{inv}_{\omega}(\alpha,\beta)$ appearing in Definition \ref{pathsinv}, as we will not use the explicit formula from Corollary \ref{prodMqs} in this paper.

The trivial examples for Definition \ref{fundQSymq} are when $\ell(\alpha)=1$, in which case $\operatorname{inv}(\alpha)=0$ (and we clearly have $\operatorname{inv}(\emptyset)=0$ as well). For the longest composition $1^{n}$ of $n$ we have $\operatorname{inv}(1^{n})=\binom{n+1}{2}$. For the other compositions of $n=3$ and $n=4$, we get the $\operatorname{inv}$-value 2 for $(2,1)$ and $(1,2)$, 3 when the composition is $(3,1)$ or $(1,3)$, we have $\operatorname{inv}(2,2)=4$, and for $(2,1,1)$, $(1,2,1)$, and $(1,1,2)$ the value is 5. If $\alpha=312\vDash6$ as in Example \ref{excomp}, then $\operatorname{inv}(\alpha)=11$.

\begin{ex}
For every $n\geq0$ we have $\widetilde{F}_{1^{n}}^{(q)}=F_{1^{n}}^{(q)}=M_{1^{n}}^{(q)}$, as well as $E_{n}^{(q)}=M_{n}^{(q)}$. The smallest cases not covered by these formulae are given by $\widetilde{F}_{2}^{(q)}=M_{2}^{(q)}+qM_{11}^{(q)}$ (compare with $F_{2}^{(q)}=M_{2}^{(q)}+M_{11}^{(q)}$), which coincides with $E_{11}^{(q)}$ (in fact, the equality $E_{1^{n}}^{(q)}=\widetilde{F}_{n}^{(q)}$ also holds for any $n\geq0$). The remaining modified fundamental $q$-quasi-symmetric functions from Definition \ref{fundQSymq} for $n=3$ are $\widetilde{F}_{21}^{(q)}=M_{21}^{(q)}+qM_{111}^{(q)}$, $\widetilde{F}_{12}^{(q)}=M_{12}^{(q)}+qM_{111}^{(q)}$, and the longest one $\widetilde{F}_{3}^{(q)}=M_{3}^{(q)}+q^{2}M_{21}^{(q)}+q^{2}M_{12}^{(q)}+q^{3}M_{111}^{(q)}$. For $n=4$, the shorter ones are $\widetilde{F}_{211}^{(q)}=M_{211}^{(q)}+qM_{1^{4}}^{(q)}$, $\widetilde{F}_{121}^{(q)}=M_{121}^{(q)}+qM_{1^{4}}^{(q)}$, and $\widetilde{F}_{112}^{(q)}=M_{112}^{(q)}+qM_{1^{4}}^{(q)}$. The modified fundamental $q$-quasi-symmetric functions $\widetilde{F}_{31}^{(q)}$ and $\widetilde{F}_{13}^{(q)}$ are given by $M_{31}^{(q)}+q^{2}M_{211}^{(q)}+q^{2}M_{121}^{(q)}+q^{3}M_{1^{4}}^{(q)}$ and  $M_{13}^{(q)}+q^{2}M_{121}^{(q)}+q^{2}M_{112}^{(q)}+q^{3}M_{1^{4}}^{(q)}$ respectively, and we have $\widetilde{F}_{22}^{(q)}=M_{22}^{(q)}+qM_{211}^{(q)}+qM_{112}^{(q)}+q^{2}M_{1^{4}}^{(q)}$.
\label{exQSymq}
\end{ex}

It is clear that $F_{\emptyset}^{(q)}=\widetilde{F}_{\emptyset}^{(q)}=E_{\emptyset}^{(q)}=M_{\emptyset}^{(q)}=1$, and for the unique partition of $n=1$ again all four $q$-quasi-symmetric functions coincide (these are the case $n=0$ and $n=1$ in Example \ref{exQSymq}). Recall from Proposition \ref{compn} that any $\alpha \vDash n>0$ is $\operatorname{comp}_{n}T$ for some $T\subseteq\mathbb{N}_{n-1}$, and we write $F_{n,T}^{(q)}=F_{\alpha}^{(q)}$ and $\widetilde{F}_{n,T}^{(q)}=\widetilde{F}_{\alpha}^{(q)}$ as in Definition \ref{fundQSym}.

\begin{lem}
The number $\operatorname{inv}$ has the following properties.
\begin{enumerate}[$(i)$]
\item When $\ell(\alpha)=\ell$, the number $\operatorname{inv}$ is constant on the $S_{\ell}$-orbit of $\alpha$.
\item If $\alpha=\beta\gamma$ for $\beta \vDash r$ and $\gamma \vDash s$, then $\operatorname{inv}(\alpha)=\operatorname{inv}(\beta)+\operatorname{inv}(\gamma)+rs$.
\item Recall from Definition \ref{refine} that if $\beta\preceq\alpha$, then $\beta$ is a consecutive sequence of compositions $\gamma_{i}\vDash\alpha_{i}$. We then have $\operatorname{inv}(\beta)=\operatorname{inv}(\alpha)+\sum_{i=1}^{\ell(\alpha)}\operatorname{inv}(\alpha_{i})$.
\item In case $\alpha=\beta\odot\gamma$, with $\ell(\beta)=\ell>0$ and $\gamma\neq\emptyset$, we have the equality $\operatorname{inv}(\alpha)=\operatorname{inv}(\beta)+\operatorname{inv}(\gamma)+rs-\beta_{\ell}\gamma_{1}$.
\end{enumerate}
if $\beta\preceq\alpha$ then one can view $\beta$ as a collection of compositions $\gamma_{i}\vDash\alpha_{i}$, and one can verify that $\operatorname{inv}(\beta)=\operatorname{inv}(\alpha)+\sum_{i=1}^{\ell(\alpha)}\operatorname{inv}(\alpha_{i})\geq\operatorname{inv}(\alpha)$, with equality holding if and only if $\beta=\alpha$. \label{invprop}
\end{lem}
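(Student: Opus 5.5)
The approach is to replace the defining double sum by a closed formula, from which all four parts follow by short algebra. The examples computed after Definition \ref{fundQSymq} (the value $2$ for $(2,1)$, $5$ for $(2,1,1)$, and so on) show that the intended definition is the sum $\operatorname{inv}(\alpha)=\sum_{1\le i<j\le\ell}\alpha_i\alpha_j$, not the product; I work with the sum. Expanding $n^2=(\sum_i\alpha_i)^2$ for $\alpha\vDash n$ then gives the basic identity
\[
\operatorname{inv}(\alpha)=\tfrac{1}{2}\Big(n^{2}-\sum_{i=1}^{\ell}\alpha_i^{2}\Big).
\]
If one prefers to avoid dividing by $2$ in $R$, this is harmless, since the identity is an equality of integers.

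Part $(i)$ is immediate from this identity, because both $n$ and the multiset $\{\alpha_i^2\}$ are invariant under permuting the entries. For part $(ii)$, with $\alpha=\beta\gamma$, $\beta\vDash r$ and $\gamma\vDash s$, the squared entries of $\alpha$ are exactly those of $\beta$ together with those of $\gamma$. Hence
\[
2\operatorname{inv}(\alpha)-2\operatorname{inv}(\beta)-2\operatorname{inv}(\gamma)=(r+s)^2-r^2-s^2=2rs .
\]
For part $(iv)$, the near concatenation $\beta\odot\gamma$ has the same squared entries as $\beta\gamma$, except that $\beta_\ell^2+\gamma_1^2$ is replaced by $(\beta_\ell+\gamma_1)^2$. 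This increases $\sum\alpha_i^2$ by $2\beta_\ell\gamma_1$, so comparing with $(ii)$ yields the stated correction term $-\beta_\ell\gamma_1$.

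For part $(iii)$, write $\beta\preceq\alpha$ as the concatenation of compositions $\gamma_i\vDash\alpha_i$, as in Definition \ref{refine}. The identity then gives
\[
2\big(\operatorname{inv}(\beta)-\operatorname{inv}(\alpha)\big)=\sum_i\Big(\alpha_i^2-\sum_j(\gamma_i)_j^2\Big)=\sum_i 2\operatorname{inv}(\gamma_i).
\]
So the correct statement reads $\operatorname{inv}(\beta)=\operatorname{inv}(\alpha)+\sum_i\operatorname{inv}(\gamma_i)$; the printed $\operatorname{inv}(\alpha_i)$ is a typo, since a single entry would have inversion number $0$. Alternatively, $(iii)$ follows by iterating $(ii)$ across the blocks.

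Finally, the closing assertion follows from $(iii)$. Each $\operatorname{inv}(\gamma_i)$ is a sum of products of positive integers, so it is non-negative, and it vanishes exactly when $\ell(\gamma_i)=1$, i.e.\ $\gamma_i=(\alpha_i)$. Thus $\operatorname{inv}(\beta)\ge\operatorname{inv}(\alpha)$, with equality if and only if $\beta=\alpha$. I do not expect a genuine obstacle. The only points needing care are reading the definition correctly (sum, not product) and reading $(iii)$ with $\operatorname{inv}(\gamma_i)$ in place of $\operatorname{inv}(\alpha_i)$.
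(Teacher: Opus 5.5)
Your proof is correct, and you read the two misprints correctly. The examples force $\operatorname{inv}(\alpha)=\sum_{i<j}\alpha_i\alpha_j$, and the paper's own proof of part $(iii)$ in fact produces $\sum_i\operatorname{inv}(\gamma_i)$.

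Your route differs from the paper's. The paper works directly with the sum over pairs of positions:
\begin{itemize}
\item for $(ii)$ it splits the pairs $i<j$ into those inside $\beta$, those inside $\gamma$, and the cross pairs, which contribute $rs$;
\item for $(iii)$ it splits the pairs of entries of $\beta$ into those in the same block $\gamma_i$ and those in different blocks, the latter summing to $\operatorname{inv}(\alpha)$;
\item it obtains $(iv)$ as a consequence of $(ii)$ and $(iii)$, viewing $\beta\gamma$ as the refinement of $\beta\odot\gamma$ that splits one entry into $(\beta_\ell,\gamma_1)$.
\end{itemize}

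You instead pass once to the closed form $\operatorname{inv}(\alpha)=\tfrac12\bigl(n^2-\sum_i\alpha_i^2\bigr)$. After that, each part is an independent one-line computation, and $(i)$ becomes transparent. Your version is shorter and avoids the block bookkeeping. You also supply the positivity argument behind the closing ``equality if and only if $\beta=\alpha$'' claim, which the paper only asserts (in Remark~\ref{invineq}).

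The paper's pair-counting has its own advantage: it keeps the combinatorial meaning of $\operatorname{inv}$ as a weighted count of reordered pairs. That is how the statistic enters later, for instance in $x^{\tilde J}=q^{\operatorname{inv}(\alpha)}x^J$, and in the order-sensitive inversion numbers of set compositions and $\operatorname{inv}_\omega$. For those statistics, a symmetric closed form like yours is not available.
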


\begin{proof}
Part $(i)$ follows from the fact that $\operatorname{inv}(\alpha)$ is the sum of products of pairs of distinct entries of $\alpha$, irrespective of the order. In the expression for $\operatorname{inv}(\alpha)$ in $(ii)$, the summands $\alpha_{i}\alpha_{j}$ with $i<j$ are separated into three cases. The products with $i<j\leq\ell(\beta)$ combine to $\operatorname{inv}(\beta)$, those in which $\ell(\beta)<i<j$ produce $\operatorname{inv}(\gamma)$, and we consider the produces involving indices $i\leq\ell(\beta)<j$. Since $\alpha_{i}$ runs freely over the entries of $\beta$ (which sum to $r$), $\alpha_{j}$ goes over those of $\gamma$ (whose sum is $s$), and they are independent, the sum is $rs$ as desired.

For part $(iii)$, note that the products $\beta_{j}\beta_{k}$ in the sum defining $\operatorname{inv}(\beta)$ can be partitioned into those of pairs contributing to the same $\alpha_{i}$, and those contributing to two different parts of $\alpha$. An argument like the one proving part $(ii)$ shows that the products arising from the latter sum to $\operatorname{inv}(\alpha)$, and the former yields, for fixed $i$, the expression $\operatorname{inv}(\gamma_{i})$ via Definition \ref{fundQSymq}, and the result follows.

Finally, note that $\beta\gamma$ is a refinement of $\alpha=\beta\odot\gamma$, where $\alpha_{\ell}=\beta_{\ell}+\gamma_{1}$ is replaced by the composition $(\beta_{\ell},\gamma_{1})$, and all the other compositions are of length 1. Part $(iii)$ implies that $\operatorname{inv}(\beta\gamma)-\operatorname{inv}(\beta\odot\gamma)$ is the sum of these partial decompositions, where we know that those of length 1 has a trivial inversion number, and the remaining one have inversion number $\beta_{\ell}\gamma_{1}$. As part $(ii)$ provides us the value of $\operatorname{inv}(\beta\gamma)$, the asserted value of our $\operatorname{inv}(\alpha)$ in part $(iv)$ follows. This completes the proof of the lemma.
\end{proof}

\begin{rmk}
Part $(i)$ of Lemma \ref{invprop} is visible in the examples above. Part $(iii)$ implies, in particular, that if $\beta\preceq\alpha$ then $\operatorname{inv}(\beta)\geq\operatorname{inv}(\alpha)$, with equality holding if and only if $\beta=\alpha$. Moreover, consider a composition $\alpha \vDash n>0$, which is we write as $\operatorname{comp}_{n}T$ for some $T\subseteq\mathbb{N}_{n-1}$, and some $0 \leq r \leq n$. When $r\in\tilde{T}:=T\cup\{0,n\}$, we set $d(\alpha,r):=0$, which we also do in case $n=0$ (and $\alpha=\emptyset$, and $r=0$). Otherwise let $t$ be the maximal element of $\tilde{T}$ that is smaller than $r$, set $s$ to be the minimal element of $\tilde{T}$ that is larger than $r$, and we define $d(\alpha,r):=(r-t)(s-r)>0$. Recalling from Lemma \ref{concexp} that parts $(ii)$ and $(iv)$ correspond to the compositions from Definition \ref{cutatr} (where $s$ is replaced by $n-r$), one translates these parts to the combined equality $\operatorname{inv}(\alpha_{|r})+\operatorname{inv}(\alpha_{r|})+r(n-r)=\operatorname{inv}(\alpha)+d(\alpha,r)$. \label{invineq}
\end{rmk}
It is clear that $\{F_{\alpha}^{(q)}\}_{\alpha}$ from Definition \ref{fundQSymq}, like $\{M_{\alpha}^{(q)}\}_{\alpha}$, a basis for $\mathtt{QSym}^{(q)}$. Remark \ref{invineq} ensures that $\{\widetilde{F}_{\alpha}^{(q)}\}_{\alpha}$ is also contained in that algebra, and forms yet another basis for it.
\begin{ex}
It is clear from Remark \ref{invineq} that $d(1^{n},r)=0$ for all $0 \leq r \leq n$, that $d(n,r)=r(n-r)$, and that $d(\alpha,r)$ is bounded between these two values for every $\alpha \vDash n$ (it thus vanishes if $r=0$ or $r=n$ for any such $\alpha$). The remaining values when $n=3$ are $d(21,1)=d(12,2)=1$ and $d(21,2)=d(12,1)=0$. The numbers $d(211,2)$, $d(211,3)$, $d(121,1)$, $d(121,3)$, $d(112,1)$, and $d(112,2)$ also vanish, and $d(211,1)=d(121,2)=d(112,3)=1$. The other compositions of 4 produce $d(31,3)=d(22,2)=d(13,1)=0$, as well as $d(22,1)=d(22,3)=1$ and $d(31,1)=d(31,2)=d(13,2)=d(13,3)=2$. Our running example $\alpha=213\vDash6$ from Example \ref{excomp} yields, for $d(\alpha,r)$, the values 1, 0, 0, 2, and 2 as $r$ is 1, 2, 3, 4, and 5 respectively. \label{dalpharex}
\end{ex}

\medskip

Here is the analogue of the multisets from Definition \ref{multisets} for this setting.
\begin{defn}
If $J$ is a multiset of size $n$, we write $x^{J}$ for the product $\prod_{k=1}^{n}x_{j_{k}}$, with the indices taken in non-decreasing order. We also denote $x^{\tilde{J}}$ for the same product, but taken in the non-increasing order. \label{multq}
\end{defn}
In order for our analogue of Lemma \ref{sumFnT} in this setting to match the notation, we shall denote $F_{\alpha}^{(q)}$ and $\widetilde{F}_{\alpha}^{(q)}$ from Definition \ref{fundQSymq} also by $F_{n,T}^{(q)}$ and $\widetilde{F}_{n,T}^{(q)}$ respectively in case $\alpha:=\operatorname{comp}_{n}T \vDash n>0$ for some $T\subseteq\mathbb{N}_{n-1}$ (one might also suggest the notation $F_{\tilde{T}}^{(q)}$ and $\widetilde{F}_{\tilde{T}}^{(q)}$ via Remark \ref{gencomp}). Recalling the set $\mathcal{M}_{n,T}$ from Definition \ref{multisets}, and applying Definition \ref{multq}, the argument proving that lemma here yields the following result.
\begin{lem}
Given $T\subseteq\mathbb{N}_{n-1}$ for $n>0$ we have $F_{n,T}^{(q)}=\sum_{J\in\mathcal{M}_{n,T}}x^{J}$, as well as the equality $\sum_{J\in\mathcal{M}_{n,T}}x^{\tilde{J}}=q^{\operatorname{inv}(\alpha)}\widetilde{F}_{n,T}^{(q)}$ for $\alpha:=\operatorname{comp}_{n}T$. \label{sumFnTq}
\end{lem}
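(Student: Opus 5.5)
Both identities will follow from grouping the monomials in the multiset sums according to which equalities they realise, and then comparing with Definition \ref{fundQSymq} term by term. Fix $T\subseteq\mathbb{N}_{n-1}$ and $J=(j_{1},\ldots,j_{n})\in\mathcal{M}_{n,T}$. Let $X(J)\subseteq\mathbb{N}_{n-1}$ be the set of $k$ with $j_{k}<j_{k+1}$; by definition of $\mathcal{M}_{n,T}$ we have $T\subseteq X(J)$. Put $\beta=\operatorname{comp}_{n}X(J)$, of length $\ell$, and let $I=\{i_{1}<\ldots<i_{\ell}\}$ be the set of distinct values of $J$. Then $J$ consists of $\beta_{1}$ copies of $i_{1}$, then $\beta_{2}$ copies of $i_{2}$, and so on. This gives a bijection between $\mathcal{M}_{n,T}$ and the set of pairs $(\beta,I)$ with $\beta\preceq\alpha$ and $|I|=\ell(\beta)$; this is the same bijection that proves Lemma \ref{sumFnT}.

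For the first identity, $x^{J}$ is the product taken in non-decreasing order of indices. It is therefore literally the normal-form monomial $x_{I}^{\beta}$ in $R\langle\langle\mathbf{x}_{\infty}\rangle\rangle^{(q)}$, and no commutation relation is used. Summing over the bijection gives $\sum_{\beta\preceq\alpha}\sum_{|I|=\ell(\beta)}x_{I}^{\beta}=\sum_{\beta\preceq\alpha}M_{\beta}^{(q)}=F_{n,T}^{(q)}$ by Lemma \ref{monQSymq} and Definition \ref{fundQSymq}.

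For the second identity, $x^{\tilde{J}}=x_{i_{\ell}}^{\beta_{\ell}}\cdots x_{i_{1}}^{\beta_{1}}$, and we must bring it to normal form using $x_{j}x_{i}=qx_{i}x_{j}$ for $i<j$. Each pair consisting of one copy of $x_{i_{b}}$ and one copy of $x_{i_{a}}$ with $a<b$ is out of order exactly once, and each swap of such a pair contributes one factor of $q$. Hence $x^{\tilde{J}}=q^{N}x_{I}^{\beta}$ with $N=\sum_{a<b}\beta_{a}\beta_{b}$. This $N$ is $\operatorname{inv}(\beta)$, reading the definition's "$\prod$" as the sum that its use in Lemma \ref{invprop} makes clear was intended. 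Summing over the bijection gives $\sum_{\beta\preceq\alpha}q^{\operatorname{inv}(\beta)}M_{\beta}^{(q)}$. Since every $\beta\preceq\alpha$ satisfies $\operatorname{inv}(\beta)\geq\operatorname{inv}(\alpha)$ (Remark \ref{invineq}), we may factor out $q^{\operatorname{inv}(\alpha)}$ and obtain $q^{\operatorname{inv}(\alpha)}\widetilde{F}_{n,T}^{(q)}$.

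The only step that is not immediate is the commutation count. I would verify it by induction on the number of adjacent transpositions, or simply note that the relation lets us reorder while recording one $q$ per inverted pair of letters, which is well defined because the quotient is homogeneous and the ordered monomials form an $R[q]$-basis.
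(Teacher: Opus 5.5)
Your proof is correct and takes the same route as the paper. The paper also derives the first identity from the multiset-to-$(\beta,I)$ bijection behind Lemma \ref{sumFnT}, and the second from the fact that putting $x^{\tilde{J}}$ in normal form costs $q^{\operatorname{inv}}$ of the composition recording the coincidences of $J$. You are right to read the $\prod$ in Definition \ref{fundQSymq} as a sum, and to use $\operatorname{inv}(\beta)$ for the coincidence composition $\beta\preceq\alpha$ of each $J$; the paper's informal remark writes $\operatorname{inv}(\alpha)$ there, which is only accurate when $\beta=\alpha$.
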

The idea for the second equality in Lemma \ref{sumFnTq} is that if $x^{J}$ is a monomial that shows up in $F_{\alpha}^{(q)}$ (which is the same as $\alpha$ describing the composition of coinciding elements of the multiset $J$), then $x^{\tilde{J}}=q^{\operatorname{inv}(\alpha)}x^{J}$. We will mainly be interested, via Lemma \ref{sumFnTq}, in the expressions $q^{\operatorname{inv}(\alpha)}\widetilde{F}_{n,T}^{(q)}$, or equivalently $q^{\operatorname{inv}(\alpha)}\widetilde{F}_{\alpha}^{(q)}$, but our motivation for normalizing $\widetilde{F}_{\alpha}^{(q)}=\widetilde{F}_{n,T}^{(q)}$ as in Definition \ref{fundQSymq} is the basis property mentioned above.

\medskip

Lemma \ref{monQSymq} implies that $\mathtt{QSym}^{(q)}$ is isomorphic, as a module over $R[q]$, to the extension of scalars of $\mathtt{QSym}$ from $R$ to $R[q]$. We may therefore endow it with the co-multiplication given in Definition \ref{DeltaQSym}, and we note that its image is in $\mathtt{QSym}^{(q)}\otimes_{R[q]}\mathtt{QSym}^{(q)}$, and not $\mathtt{QSym}^{(q)}\otimes_{R}\mathtt{QSym}^{(q)}$. The co-unit is, as usual and expected from Lemma \ref{propgr} (over $R[q]$), the isomorphism of the degree 0 part $R[q]M_{\emptyset}^{(q)}$ onto $R[q]$ and vanishes on the other components.

The argument proving Proposition \ref{coprodQSym} then yields the following result.
\begin{prop}
Take a subset $T\subseteq\mathbb{N}_{n-1}$ with $\alpha=\operatorname{comp}_{n}T \vDash n>0$, and set $\tilde{T}:=T\cup\{0,n\}$ as in Lemma \ref{concexp}. Then $\Delta(M_{\alpha}^{(q)})=\sum_{r\in\tilde{T}}M_{\alpha_{|r}}^{(q)} \otimes M_{\alpha_{r|}}^{(q)}$ and $\Delta(F_{\alpha}^{(q)})=\sum_{r=0}^{n}F_{\alpha_{|r}}^{(q)} \otimes F_{\alpha_{r|}}^{(q)}$. For the modified fundamental $q$-quasi-symmetric functions we have $\Delta(q^{\operatorname{inv}(\alpha)}\widetilde{F}_{\alpha}^{(q)})=\sum_{r=0}^{n}q^{r(n-r)} \cdot q^{\operatorname{inv}(\alpha_{|r})}\widetilde{F}_{\alpha_{|r}}^{(q)} \otimes q^{\operatorname{inv}(\alpha_{r|})}\widetilde{F}_{\alpha_{r|}}^{(q)}$ in the rescaling from Lemma \ref{sumFnTq}, and for the usual ones from Definition \ref{fundQSymq}, the equality is $\Delta(\widetilde{F}_{\alpha}^{(q)})=\sum_{r=0}^{n}q^{d(\alpha,r)}\cdot\widetilde{F}_{\alpha_{|r}}^{(q)}\otimes\widetilde{F}_{\alpha_{r|}}^{(q)}$, using the parameter from Remark \ref{invineq}. \label{coprodQSymq}
\end{prop}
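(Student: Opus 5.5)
The plan is to work entirely with the monomial description in $R\langle\langle\mathbf{x}_{\infty}\rangle\rangle^{(q)}$, as in Definition \ref{DeltaQSym}. We adjoin a second ordered alphabet $\{y_{i}\}_{i=1}^{\infty}$, all of whose variables are larger than every $x_{i}$, and impose the same $q$-commutation rule on the combined alphabet. The co-multiplication is read off by writing an element as a sum of products $g(\mathbf{x})h(\mathbf{y})$, with the $x$-part placed on the left.

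\textbf{The monomial and ordinary fundamental cases.} For $M_{\alpha}^{(q)}$ we have $M_{\alpha}^{(q)}=\sum_{|I|=\ell}x_{I}^{\alpha}$, with indices increasing from left to right. Each index set in the combined alphabet splits as an $x$-part followed by a $y$-part, so a monomial $x_{I}^{\alpha}$ factors, with no reordering and hence no power of $q$, as an $x$-monomial with exponents $\beta$ times a $y$-monomial with exponents $\gamma$, where $\alpha=\beta\gamma$. Lemma \ref{concexp} converts the sum over such splittings into the sum over $r\in\tilde{T}$. For $F_{\alpha}^{(q)}$ we use Lemma \ref{sumFnTq}. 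A multiset $J\in\mathcal{M}_{n,T}$ in the combined alphabet, written in non-decreasing order, consists of its first $r$ entries from the $x$'s and its last $n-r$ entries from the $y$'s. The strictness conditions at positions in $T$ restrict to $T_{|r}$ and $T_{r|}$; the condition at position $r$ is automatic because every $y$ exceeds every $x$. Since $x^{J}$ is already ordered, it factors with no $q$-power as $x^{J_{x}}y^{J_{y}}$, which gives $\sum_{r}F_{\alpha_{|r}}^{(q)}\otimes F_{\alpha_{r|}}^{(q)}$. These two arguments are exactly those of Proposition \ref{coprodQSym}, carried out in the ordered setting.

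\textbf{The rescaled modified functions.} By Lemma \ref{sumFnTq}, $q^{\operatorname{inv}(\alpha)}\widetilde{F}_{\alpha}^{(q)}=\sum_{J}x^{\tilde{J}}$, where $x^{\tilde{J}}$ is written in non-increasing order. With the same splitting of $J$ as before, the product $x^{\tilde{J}}$ now has all $n-r$ of the $y$-variables on the left of all $r$ of the $x$-variables. Moving them into the form $x$-part times $y$-part means commuting each $y$ past each $x$. Each such swap, $y_{j}x_{i}=qx_{i}y_{j}$, contributes one factor of $q$, so the total is $q^{r(n-r)}$ times $x^{\tilde{J}_{x}}y^{\tilde{J}_{y}}$. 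Summing and applying Lemma \ref{sumFnTq} to each tensor factor gives the third formula.

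\textbf{The normalized modified functions.} Divide the third formula by $q^{\operatorname{inv}(\alpha)}$ and use the identity of Remark \ref{invineq}, $\operatorname{inv}(\alpha_{|r})+\operatorname{inv}(\alpha_{r|})+r(n-r)-\operatorname{inv}(\alpha)=d(\alpha,r)$. This produces the exponent $d(\alpha,r)\geq0$, so the resulting identity holds with polynomial coefficients. To avoid dividing by $q$ formally, one can instead compare coefficients in the identity with $q$ treated as a formal variable: both sides lie in a free $R[q]$-module, and multiplication by $q^{\operatorname{inv}(\alpha)}$ is injective there.

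\textbf{Main obstacle.} The step needing care is the third one. One must check that the map $\Delta$ is well defined on the $q$-commuting algebra, namely that the convention "$y$ larger than $x$" is compatible with the quotient relations, so that the $q$-count for the swaps is legitimate. One must also check that the ordering convention in the tensor factors (left equals $x$) matches the one used for $M_{\alpha}^{(q)}$. Once these are settled, the remaining steps are bookkeeping.
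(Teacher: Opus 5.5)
Your proposal is correct and follows essentially the paper's route: the first two formulas come from the same alphabet-splitting argument as in Proposition \ref{coprodQSym}, the third from Lemma \ref{sumFnTq} with the factor $q^{r(n-r)}$ produced by moving the $n-r$ larger variables past the $r$ smaller ones, and the fourth from Remark \ref{invineq}, where cancelling $q^{\operatorname{inv}(\alpha)}$ in the free $R[q]$-module is the right justification. The well-definedness point you flag is routine. The doubled alphabet is just another totally ordered $q$-commuting alphabet, so Lemma \ref{sumFnTq} holds there verbatim, and $\Delta$ is determined by its values on the $M_{\alpha}^{(q)}$, where no reordering occurs.
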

The penultimate formula in Proposition \ref{coprodQSymq} is obtained in the same manner as the first two there, and the last one is related to it via Remark \ref{invineq}. It is clear that all the equalities in Proposition \ref{coprodQSymq} hold also when $n=0$, where they all reduce to the assertion that $\Delta(1)=1\otimes1$ since $\widetilde{F}_{\emptyset}^{(q)}=F_{\emptyset}^{(q)}=M_{\emptyset}^{(q)}=1$ and $\operatorname{inv}(\emptyset)=0$. Note that our co-multiplication on $\mathtt{QSym}^{(q)}$ is the one from \cite{[L]}, but \emph{not} the same as that from Section 5 of \cite{[Ho1]}---the latter produces a Hopf algebra, and ours does not, as Remark \ref{qHopf} below details.
\begin{ex}
We have $\Delta(\widetilde{F}_{1^{n}}^{(q)})=\sum_{r=0}^{n}\widetilde{F}_{1^{r}}^{(q)}\otimes\widetilde{F}_{1^{n-r}}^{(q)}$, in correspondence with the formula from Examples \ref{coprodQSymext} via the fact that $\widetilde{F}_{1^{m}}=F_{1^{m}}=M_{1^{m}}$ for every $m\geq0$ and the vanishing of $d(1^{n},r)$ for every $r$ as seen in Example \ref{dalpharex}. In the expressions with the inversion numbers, note that $\operatorname{inv}(1^{n})=\frac{n(n+1)}{2}$, which is the sum of $\operatorname{inv}(1^{r})=\frac{r(r+1)}{2}$, $\operatorname{inv}(1^{n-r})=\frac{(n-r)(n-r+1)}{2}$, and $r(n-r)$. We also get $\Delta(\widetilde{F}_{n}^{(q)})=\sum_{r=0}^{n}q^{r(n-r)}\cdot\widetilde{F}_{r}^{(q)}\otimes\widetilde{F}_{n-r}^{(q)}$, via the values from Example \ref{dalpharex}, which is unaffected by the vanishing inversion numbers of the compositions involved. \label{DeltaQSymqext}
\end{ex}
As $d(\alpha,r)>0$ whenever $r\not\in\tilde{T}$, the only situation in the last equality in Proposition \ref{coprodQSymq} in which no positive powers of $q$ show up is the case where $T=\mathbb{N}_{n-1}$ and $\alpha=1^{n}$ from Example \ref{DeltaQSymqext}, with $\widetilde{F}_{1^{n}}^{(q)}=F_{1^{n}}^{(q)}=M_{1^{n}}^{(q)}$.

\begin{ex}
Take again the composition $\alpha=213\vDash6$ from Example \ref{excomp}, so that Proposition \ref{coprodQSym} and the values from Example \ref{dalpharex} express $\overline{\Delta}(\widetilde{F}_{\alpha}^{(q)})$ as $q\widetilde{F}_{1}^{(q)}\otimes\widetilde{F}_{113}^{(q)}+\widetilde{F}_{2}^{(q)}\otimes\widetilde{F}_{13}^{(q)}+\widetilde{F}_{21}^{(q)}\otimes \widetilde{F}_{3}^{(q)}+q^{2}\widetilde{F}_{211}^{(q)}\otimes\widetilde{F}_{2}^{(q)}+q^{2}\widetilde{F}_{212}^{(q)}\otimes\widetilde{F}_{1}^{(q)}$. One can verify that multiplying it by $q^{\operatorname{inv}(\alpha)=11}$ and putting $q^{\operatorname{inv}(\alpha_{|r})}$ in front of $\widetilde{F}_{\alpha_{|r}}^{(q)}$ as well as $q^{\operatorname{inv}(\alpha_{r|})}$ before $\widetilde{F}_{\alpha_{r|}}^{(q)}$ for every $1 \leq r\leq5$, the remaining powers of $q$ are the respective values 5, 8, 9, 8, and 5 of $r(6-r)$. \label{coprodQSymqex}
\end{ex}
As the formulae for $\Delta(M_{\alpha}^{(q)})$ and $\Delta(F_{\alpha}^{(q)})$ in Proposition \ref{coprodQSymq} are like those from Proposition \ref{coprodQSym}, adding the superscript $(q)$ in Examples \ref{coprodQSymext} and \ref{DeltaQSymex} still produces valid expressions, which is why Examples \ref{DeltaQSymqext} and \ref{coprodQSymqex} only involved the modified fundamental $q$-quasi-symmetric functions.

\medskip

\begin{rmk}
The algebra $\mathtt{QSym}^{(q)}$ is, with the structure from Lemma \ref{monQSymq} and Proposition \ref{coprodQSymq}, \emph{not} a bi-algebra in the sense of Definition \ref{bialg}, as $m$ and $\Delta$ do not satisfy the required equality there. In fact, for homogeneous $x$ and $y$ in that algebra, when we consider the decompositions of $\Delta(x)$ and $\Delta(y)$ as in Remark \ref{mDeltagr}, we obtain the equality $\Delta(xy)=\sum_{r=0}^{n}\sum_{s=0}^{m}q^{s(n-r)}\Delta_{r}^{(n)}(x)\Delta_{s}^{(m)}(y)$ (namely $\Delta_{t}^{(m+n)}(xy)=\sum_{r+s=t}q^{s(n-r)}\Delta_{r}^{(n)}(x)\Delta_{s}^{(m)}(y)$ for every $0 \leq t \leq m+n$). This makes $\mathtt{QSym}^{(q)}$ a \emph{$q$-bi-algebra}, in the terminology of \cite{[EK]}, \cite{[L]} and others, rather than a bi-algebra. However, as all the formulae required for the existence and the properties of the antipode do not use the compatibility between $m$ and $\Delta$ in Definition \ref{bialg}, they are equally valid for $q$-bi-algebras. Thus the connected property implies the existence of an antipode on $\mathtt{QSym}^{(q)}$ as in Lemma \ref{grconS} (so that $\mathtt{QSym}^{(q)}$ is a \emph{$q$-Hopf algebra}), and this antipode can be evaluated via Corollary \ref{detS}. \label{qHopf}
\end{rmk}
Indeed, one can establish via the product rule from Lemma \ref{monQSymq}, by induction, the equality $\Delta(M_{\alpha}^{(q)}M_{\beta}^{(q)})=\sum_{r\in\tilde{T}}\sum_{s\in\tilde{U}}q^{s(n-r)}M_{\alpha_{|r}}^{(q)}M_{\beta_{|s}}^{(q)} \otimes M_{\alpha_{r|}}^{(q)}M_{\beta_{s|}}^{(q)}$ for every pair of compositions $\alpha:=\operatorname{comp}_{n}T \vDash n>0$ and $\beta:=\operatorname{comp}_{m}U \vDash m>0$, where $T\subseteq\mathbb{N}_{n-1}$, $U\subseteq\mathbb{N}_{m-1}$, and $\tilde{T}:=T\cup\{0,n\}$ and similarly $\tilde{U}:=U\cup\{0,m\}$ (the relation in case $\alpha$ or $\beta$ are $\emptyset$, namely $mn=0$, is trivial). This suffices, by the basis property, to produce the formula from Definition \ref{qHopf}, and also gives the equality $\Delta(F_{\alpha}^{(q)}F_{\beta}^{(q)})=\sum_{r=0}^{n}\sum_{s=0}^{m}q^{s(n-r)}F_{\alpha_{|r}}^{(q)}F_{\beta_{|s}}^{(q)} \otimes F_{\alpha_{r|}}^{(q)}F_{\beta_{s|}}^{(q)}$ for the fundamental $q$-quasi-symmetric functions (similar expressions can be written for the modified ones, but we shall not need them).

Here is the simplest non-trivial example of this property.
\begin{ex}
Take $n=m=1$, so that both multipliers are $M_{1}^{(q)}$. Then the product is $M_{2}^{(q)}+M_{11}^{(q)}+qM_{11}^{(q)}$, and its $\Delta$-image is $(M_{1}^{(q)})^{2}\otimes1+1\otimes(M_{1}^{(q)})^{2}$ plus the two terms $M_{1}^{(q)} \otimes M_{1}^{(q)}$ and $q \cdot M_{1}^{(q)} \otimes M_{1}^{(q)}$. When we compare it with the $\Delta$-image $M_{1}^{(q)}\otimes1+1 \otimes M_{1}^{(q)}$ of the two multipliers, we see that both $r$ and $s$ can be either 0 or 1. The term with $r=s=0$ yields $1\otimes(M_{1}^{(q)})^{2}$, and that with $r=s=1$ (so that $n-r=0$) gives $(M_{1}^{(q)})^{2}\otimes1$. One additional term, with $r=1$ and $s=0$, namely the product $(M_{1}^{(q)}\otimes1)(1 \otimes M_{1}^{(q)})$, yields $M_{1}^{(q)} \otimes M_{1}^{(q)}$ as expected. But the product in the opposite order has $r=0$ and $s=1$, so that its contribution is again a copy of $M_{1}^{(q)} \otimes M_{1}^{(q)}$, but multiplied by $q^{s(n-r)=1}$. \label{qex}
\end{ex}

\medskip

The main result in this section is the following antipode formula for $\mathtt{QSym}^{(q)}$.
\begin{thm}
Let $\alpha$ be a composition of $n$, with the inversion number $\operatorname{inv}(\alpha)$, the fundamental quasi-symmetric function $F_{\alpha}^{(q)}$, and the modified fundamental quasi-symmetric function $\widetilde{F}_{\alpha}^{(q)}$ from Definition \ref{fundQSymq}. Then the antipode $S$ on $\mathtt{QSym}^{(q)}$ is given by the formula $S(F_{\alpha}^{(q)})=(-1)^{n}q^{\operatorname{inv}(\alpha^{t})}\widetilde{F}_{\alpha^{t}}^{(q)}$. \label{SQSymqF}
\end{thm}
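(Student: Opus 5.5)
The plan is to apply Corollary \ref{detS}, in the $q$-Hopf setting allowed by Remark \ref{qHopf}, to the collection $g_{\alpha}:=F_{\alpha}^{(q)}$. Proposition \ref{coprodQSymq} gives exactly the required co-multiplication shape: the part of $\Delta(F_{\alpha}^{(q)})$ in degree $(r,n-r)$ is $F_{\alpha_{|r}}^{(q)}\otimes F_{\alpha_{r|}}^{(q)}$. So we take $\tilde{g}_{\alpha}:=(-1)^{n}q^{\operatorname{inv}(\alpha^{t})}\widetilde{F}_{\alpha^{t}}^{(q)}$, which is linearly independent because $\{\widetilde{F}_{\alpha}^{(q)}\}_{\alpha}$ is a basis. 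It then suffices to show, for every $\alpha\vDash n>0$,
\[\sum_{r=0}^{n}(-1)^{r}q^{\operatorname{inv}((\alpha_{|r})^{t})}\widetilde{F}_{(\alpha_{|r})^{t}}^{(q)}\,F_{\alpha_{r|}}^{(q)}=0.\]

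To make this concrete, I will use Lemma \ref{sumFnTq}. By that lemma, $q^{\operatorname{inv}(\beta)}\widetilde{F}_{\beta}^{(q)}$ is the sum of $x^{\tilde{J}}$, the non-increasing products, over $J\in\mathcal{M}_{r,X}$ when $\beta=\operatorname{comp}_{r}X$. Also $F_{\gamma}^{(q)}$ is the sum of the non-decreasing products $x^{J'}$. By Lemma \ref{relsinv}, $(\alpha_{|r})^{t}=\operatorname{comp}_{r}\big((T_{|r})^{t}\big)$. The $r$th term is therefore $(-1)^{r}$ times a sum of words $x_{j_{1}}\cdots x_{j_{r}}x_{k_{1}}\cdots x_{k_{n-r}}$, computed in the non-commutative ring without rewriting into normal form. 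Here the $j$'s are weakly decreasing, with strict decreases forced at the positions of $(T_{|r})^{t}$ (read appropriately), and the $k$'s are weakly increasing, with strict increases at the positions of $T_{r|}$. Equivalently, each term is a word $w=x_{i_{1}}\cdots x_{i_{n}}$ together with a marked cut point $r$. The first $r$ letters form a ``valley-left'' part whose descent conditions are complementary to the reversal of $T$, and the remaining letters form a non-decreasing part compatible with $T$.

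I will then construct a sign-reversing involution on the pairs $(w,r)$ that keeps the word $w$ fixed and changes $r$ by $\pm1$. This matches pairs with equal words and opposite signs, so the sum vanishes with no $q$-bookkeeping at all, since identical words give identical elements of $R\langle\langle\mathbf{x}_{\infty}\rangle\rangle^{(q)}$. The natural rule is to look at the letters $i_{r}$ and $i_{r+1}$ around the cut. We move the cut right, $r\mapsto r+1$, when $i_{r+1}$ can be appended to the decreasing part. We move it left when $i_{r}$ can be prepended to the increasing part. The position $r\in T$ or $r\notin T$ decides which weak or strict inequality is required at the junction. The transposition $T^{t}$ is exactly what makes these conditions complementary: between $i_{r}$ and $i_{r+1}$ precisely one of the two moves is allowed. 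The boundary cases $r=0$ and $r=n$ need separate checks; they fit in because the condition is vacuous on an empty side.

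The main obstacle is the junction analysis. For a fixed word, I must show that the set of admissible cuts $r$ pairs off perfectly. This requires checking that the strict/weak condition imposed on the left part by $(T_{|r})^{t}$ and on the right part by $T_{r|}$ at position $r$ agree exactly with the conditions at $r\pm1$ after the cut moves. It also requires checking that the map is an involution, with no fixed points and no chains longer than two. I will first handle the extreme cases $\alpha=1^{n}$ and $\alpha=(n)$ as sanity checks, and then work through the case split according to whether $r\in T$ and $r+1\in T$. If a direct matching fails, the fallback is the one the introduction mentions. One proves the same vanishing in $\mathtt{NCQSym}$ for $\mathrm{F}_{\alpha}^{\operatorname{Id}_{n}}$ and $\mathrm{F}_{\alpha^{t}}^{w^{0}_{n}}$, where words are indexed by positions and the involution is cleaner, and then pushes the identity down through the algebra homomorphism $\mathtt{NCQSym}\to\mathtt{QSym}^{(q)}$. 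That homomorphism sends these elements to $F_{\alpha}^{(q)}$ and $q^{\operatorname{inv}(\alpha^{t})}\widetilde{F}_{\alpha^{t}}^{(q)}$. Since only the multiplication is used in the vanishing, the lack of a Hopf morphism is harmless.
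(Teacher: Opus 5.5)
Your proposal is correct, and it takes a genuinely different route from the paper.

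\textbf{Your junction analysis goes through.} In word positions, $(w,r)$ is admissible exactly when
\begin{itemize}
\item $i_{p}\geq i_{p+1}$ for all $1\leq p<r$, strictly if $p\notin T$; and
\item $i_{p}\leq i_{p+1}$ for all $r<p\leq n-1$, strictly if $p\in T$.
\end{itemize}
For each $1\leq p\leq n-1$, exactly one of these two relations holds between $i_{p}$ and $i_{p+1}$. So the admissible cuts of a fixed word are the $r$ with only ``left-type'' positions below them and only ``right-type'' positions above them. This set is either empty or a pair $\{a,a+1\}$, which is your fixed-point-free involution, boundary cases included.

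\textbf{Comparison with the paper.} The paper uses the convention of Remark \ref{firstS} and proves $\sum_{r}(-1)^{n-r}\mathrm{F}_{\alpha_{|r}}^{\operatorname{Id}_{r}}\mathrm{F}_{\alpha_{r|}^{t}}^{w^{0}_{n-r}}=0$ in $\mathtt{NCQSym}$. It expands every product in fundamental elements through the shuffle formula of Proposition \ref{prodFtau}. For each $r$ it uses a tie-breaking permutation $\tau^{(r)}$, built from one fixed $\tau$ with $\operatorname{Des}(\tau)=T$. The resulting terms cancel in pairs via Lemma \ref{forcanc} and Corollary \ref{comppars}, and the identity is then pushed down with Proposition \ref{projQSymq}.

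Your cancellation happens among monomials rather than fundamental elements, so you need none of the shuffle, $\tau$, or descent-set machinery. Moreover, by Lemma \ref{sumFnTNC} your two sums of words are exactly $\mathrm{F}^{w^{0}_{r}}_{(\alpha_{|r})^{t}}$ and $\mathrm{F}^{\operatorname{Id}_{n-r}}_{\alpha_{r|}}$. So your word-by-word identity is already the $\mathtt{NCQSym}$ identity of Remark \ref{altproof}. This has three consequences:
\begin{itemize}
\item you get Theorems \ref{SQSymF} and \ref{SNCQSymF} at no extra cost;
\item your ``fallback'' is unnecessary;
\item the same pairing, with the corresponding complementary relations, gives the identities behind Theorem \ref{SFalphaw0}.
\end{itemize}

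What the paper's heavier route buys is a finer cancellation, among fundamental basis elements ($\binom{n-1}{r}$ pairs for each $r$). It also yields a product formula for the $\mathrm{F}_{\alpha}^{\rho}$ in $\mathtt{NCQSym}$ that is of some independent interest.
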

Using Theorem \ref{SQSymqF}, we obtain the following analogue of Proposition \ref{SQSymM}.
\begin{prop}
For every $\alpha \vDash n$ we have $S(M_{\alpha}^{(q)})=(-1)^{\ell(\alpha)}E_{\alpha^{r}}^{(q)}$. \label{SQSymqM}
\end{prop}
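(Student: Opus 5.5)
Assuming Theorem \ref{SQSymqF}, I will transplant the proof of Proposition \ref{SQSymM} to $\mathtt{QSym}^{(q)}$, keeping track of the powers of $q$.

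\textbf{Step 1.} First I will invert the relation $F_{\beta}^{(q)}=\sum_{\gamma\preceq\beta}M_{\gamma}^{(q)}$. It has exactly the same shape as in $\mathtt{QSym}$, so the $(1-1)^{|Y|-|T|}$ argument from the proof of Proposition \ref{SQSymM} applies verbatim and gives
\[M_{\alpha}^{(q)}=\sum_{\beta\preceq\alpha}(-1)^{\ell(\beta)-\ell(\alpha)}F_{\beta}^{(q)}.\]

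\textbf{Step 2.} The antipode is $R[q]$-linear, so Theorem \ref{SQSymqF} gives
\[S(M_{\alpha}^{(q)})=(-1)^{n}\sum_{\beta\preceq\alpha}(-1)^{\ell(\beta)-\ell(\alpha)}q^{\operatorname{inv}(\beta^{t})}\widetilde{F}_{\beta^{t}}^{(q)}.\]
By Definition \ref{fundQSymq}, $q^{\operatorname{inv}(\beta^{t})}\widetilde{F}_{\beta^{t}}^{(q)}=\sum_{\gamma\preceq\beta^{t}}q^{\operatorname{inv}(\gamma)}M_{\gamma}^{(q)}$. The key point is that the $q$-weight of $M_{\gamma}^{(q)}$ in this expansion depends only on $\gamma$, not on $\beta$. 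This is exactly why the normalization $q^{\operatorname{inv}(\alpha)}\widetilde{F}_{\alpha}^{(q)}$ is the natural one.

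\textbf{Step 3.} Fix $\gamma=\operatorname{comp}_{n}Y$. Its coefficient in $S(M_{\alpha}^{(q)})$ is $q^{\operatorname{inv}(\gamma)}$ times the same signed count as in $\mathtt{QSym}$. That count is $(-1)^{n-\ell(\alpha)}$ times a sum over those $X$ containing $T\cup Y^{t}$ of $(-1)^{|X|-|T|}$. The sum vanishes unless $T\cup Y^{t}=\mathbb{N}_{n-1}$, which is equivalent to $\gamma\succeq\alpha^{r}$. In that case only $X=\mathbb{N}_{n-1}$ survives, with sign $(-1)^{\ell(\alpha)}$. Hence
\[S(M_{\alpha}^{(q)})=(-1)^{\ell(\alpha)}\sum_{\gamma\succeq\alpha^{r}}q^{\operatorname{inv}(\gamma)}M_{\gamma}^{(q)}=(-1)^{\ell(\alpha)}E_{\alpha^{r}}^{(q)}.\]
The case $\alpha=\emptyset$ is trivial.

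\textbf{Main difficulty.} The only step needing care is Step 2, namely confirming that no $\beta$-dependent power of $q$ survives the expansion of $\widetilde{F}_{\beta^{t}}^{(q)}$. Once one uses the prefactor $q^{\operatorname{inv}(\beta^{t})}$ from Theorem \ref{SQSymqF}, this is immediate from the definition.
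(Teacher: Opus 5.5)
Your proposal is correct and follows the paper's proof essentially verbatim: you invert $F_{\beta}^{(q)}=\sum_{\gamma\preceq\beta}M_{\gamma}^{(q)}$, apply Theorem \ref{SQSymqF}, and observe that the coefficient of $q^{\operatorname{inv}(\gamma)}M_{\gamma}^{(q)}$ is the same signed count as in the proof of Proposition \ref{SQSymM}. One small slip: in Step 3 the prefactor should be $(-1)^{n}$, not $(-1)^{n-\ell(\alpha)}$ (as written, the surviving term $X=\mathbb{N}_{n-1}$ would give $+1$), although the final sign $(-1)^{\ell(\alpha)}$ you state is correct.
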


\begin{proof}
We follow the proof of Proposition \ref{SQSymM}. The assertion for $n=0$, namely for $\alpha=\emptyset$, is again trivial, so assume $n>0$. The argument yielding the equality $M_{\alpha}=\sum_{\beta\preceq\alpha}(-1)^{\ell(\beta)-\ell(\alpha)}F_{\beta}$ carries verbatim, with the added superscripts, to produce the equality $M_{\alpha}^{(q)}=\sum_{\beta\preceq\alpha}(-1)^{\ell(\beta)-\ell(\alpha)}F_{\beta}^{(q)}$. Applying Theorem \ref{SQSymqF}, we get $S(M_{\alpha}^{(q)})=\sum_{\beta\preceq\alpha}(-1)^{n+\ell(\beta)-\ell(\alpha)}q^{\operatorname{inv}(\beta)}\widetilde{F}_{\beta^{t}}^{(q)}$.

We decompose each $q^{\operatorname{inv}(\beta)}\widetilde{F}_{\beta^{t}}^{(q)}$ as $\sum_{\gamma\preceq\beta}q^{\operatorname{inv}(\gamma)}M_{\gamma}^{(q)}$ via Definition \ref{fundQSymq}, and we consider, for fixed $\gamma$, the coefficient of $q^{\operatorname{inv}(\gamma)}M_{\gamma}^{(q)}$ in the resulting expression. But this coefficient is exactly the same as in the proof of Proposition \ref{SQSymM}, which was seen to be $(-1)^{\ell(\alpha)}$ in case $\gamma\succeq\alpha^{r}$ and 0 otherwise. Definition \ref{fundQSymq} shows that the total expression is indeed the asserted one. This proves the proposition.
\end{proof}
It is clear that substituting $q=1$ in Theorem \ref{SQSymqF} and Proposition \ref{SQSymqM} produces Theorem \ref{SQSymF} and Proposition \ref{SQSymM} respectively.
\begin{ex}
Theorem \ref{SQSymqF} yields $S(F_{n}^{(q)})=(-1)^{n}q^{n(n+1)/2}\widetilde{F}_{1^{n}}^{(q)}$, which also equals $(-1)^{n}q^{n(n+1)/2}F_{1^{n}}^{(q)}$, as well as $S(F_{1^{n}}^{(q)})=(-1)^{n}F_{n}^{(q)}$, which is the same, via Definition \ref{fundQSymq}, as the equality $S(M_{1^{n}}^{(q)})=(-1)^{n}E_{1^{n}}^{(q)}$ from Proposition \ref{SQSymqM}. Also here we get $S(M_{n}^{(q)})=-E_{n}^{(q)}=-E_{n}^{(q)}$ from the latter proposition, as $M_{n}^{(q)}$ is again primitive in $\mathtt{QSym}^{(q)}$. \label{SQSymqext}
\end{ex}

\begin{ex}
Taking $\alpha=213\vDash6$ as in our running example, Theorem \ref{SQSymqF} yields $S(F_{\alpha}^{(q)})=q^{12}\widetilde{F}_{1131}^{(q)}$, while Proposition \ref{SQSymqM} produces $S(M_{\alpha}^{(q)})=-E_{312}^{(q)}$. \label{antQSymqex}
\end{ex}
When we expand the expressions from Examples \ref{SQSymex} and \ref{antQSymqex} in the monomial basis for comparing them, with the same composition $\alpha$, the value $F_{1131}$ of $S(F_{\alpha})$ is $M_{1131}+M_{11211}+M_{11121}+M_{1^{6}}$, while $S(F_{\alpha}^{(q)})=q^{12}\widetilde{F}_{1131}^{(q)}$ is $q^{12}M_{1131}^{(q)})+q^{14}M_{11211}^{(q)})+q^{14}M_{11121}^{(q)})+q^{15}M_{1^{6}}^{(q)})$. Similarly, the expression $-E_{312}$ for $S(M_{\alpha})$ is $-M_{312}-M_{42}-M_{33}-M_{6}$, and $S(M_{\alpha}^{(q)})=-E_{312}^{(q)}$ decomposes as $-q^{11}M_{312}^{(q)}-q^{8}M_{42}^{(q)}-q^{9}M_{33}^{(q)}-M_{6}^{(q)}$.

\medskip

Note that since the transposition from Definition \ref{invdef} is an involution (and the signs remain the same), the formula from Theorem \ref{SQSymF} shows that the square of the antipode on $\mathtt{QSym}$ is the identity on that Hopf algebra. The same applies for via Theorem \ref{SqQSym}, to the one on $\mathtt{QSym}_{q}$. This is both in correspondence with the second assertion of Proposition \ref{Sprop}, as these Hopf algebras are commutative.

We now see that this is no longer the case for $\mathtt{QSym}^{(q)}$.
\begin{cor}
For generic $q$, $S^{2}$ has infinite order on $\mathtt{QSym}^{(q)}$. \label{deg2QSymq}
\end{cor}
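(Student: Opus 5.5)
We will use Theorem \ref{SQSymqF} and Proposition \ref{SQSymqM} to compute $S^{2}$ explicitly in a small degree. It suffices to exhibit one finite-rank $S$-invariant piece on which $S^{2}$ acts with infinite order. Degree 1 is useless, since $M_{1}^{(q)}$ is primitive and $S^{2}(M_{1}^{(q)})=M_{1}^{(q)}$. We therefore work in degree 2, where $\mathtt{QSym}^{(q)}_{2}$ is free over $R[q]$ with basis $M_{2}^{(q)}$ and $M_{11}^{(q)}$.

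By Proposition \ref{SQSymqM} and Example \ref{SQSymqext}, $S(M_{2}^{(q)})=-E_{2}^{(q)}=-M_{2}^{(q)}$. For the other basis element, $S(M_{11}^{(q)})=E_{11}^{(q)}=\widetilde{F}_{2}^{(q)}=M_{2}^{(q)}+qM_{11}^{(q)}$, using Example \ref{exQSymq} and $\operatorname{inv}(11)=1$. In the basis $(M_{2}^{(q)},M_{11}^{(q)})$, with columns giving images, the matrix of $S$ is therefore upper triangular with diagonal entries $-1$ and $q$ and upper-right entry $1$. Its square is again upper triangular, with diagonal entries $1$ and $q^{2}$ and off-diagonal entry $-1+q$. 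Explicitly, $S^{2}(M_{11}^{(q)})=S(M_{2}^{(q)}+qM_{11}^{(q)})=-M_{2}^{(q)}+qM_{2}^{(q)}+q^{2}M_{11}^{(q)}$.

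By induction we then obtain $S^{2k}(M_{11}^{(q)})=q^{2k}M_{11}^{(q)}+c_{k}M_{2}^{(q)}$ for some $c_{k}\in R[q]$. The coefficient $q^{2k}$ differs from $1$ for every $k\geq1$ whenever $q$ is not a root of unity, and this is how we read ``generic $q$''. In particular it holds when $q$ is an indeterminate over a nonzero ring $R$. Hence $S^{2k}\neq\operatorname{Id}$ for all $k\geq1$, so $S^{2}$ has infinite order.

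Two points need care. First, we must use Definition \ref{fundQSymq} correctly, with $E_{\alpha}^{(q)}$ carrying the full power $q^{\operatorname{inv}(\beta)}$. Second, we should check the claim against Theorem \ref{SQSymqF} directly: $S(F_{11}^{(q)})=F_{2}^{(q)}$ and $S(F_{2}^{(q)})=q\widetilde{F}_{11}^{(q)}=qF_{11}^{(q)}$. So in the $F$-basis the matrix of $S$ has the single off-diagonal entries $1$ and $q$, and $S^{2}=q\operatorname{Id}$ on degree 2. This also has infinite order for generic $q$ and confirms the conclusion more cleanly. Comparing this with the triangular computation above serves as a consistency check of the signs. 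If a Jordan-block phenomenon is desired, as in the introduction, degree 3 could be computed similarly, but degree 2 already suffices.
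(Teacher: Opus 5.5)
Your main argument is the paper's own proof. You restrict to degree $2$, use $S(M_{2}^{(q)})=-M_{2}^{(q)}$ and $S(M_{11}^{(q)})=\widetilde{F}_{2}^{(q)}=M_{2}^{(q)}+qM_{11}^{(q)}$, square the resulting triangular matrix, and observe that the $(2,2)$ entry of $S^{2k}$ is $q^{2k}\neq1$. Your off-diagonal entry $q-1$ for $S^{2}$ is the correct one; the $q+1$ displayed in the paper is a slip. Your explicit $q^{2k}$ argument also makes precise the paper's ``infinite order for generic $q$''.

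The closing ``consistency check'' in the $F$-basis, however, is wrong and should be removed. Theorem \ref{SQSymqF} gives
$S(F_{11}^{(q)})=q^{\operatorname{inv}(2)}\widetilde{F}_{2}^{(q)}=\widetilde{F}_{2}^{(q)}=M_{2}^{(q)}+qM_{11}^{(q)}=F_{2}^{(q)}+(q-1)F_{11}^{(q)}$,
not $F_{2}^{(q)}$; you dropped the tilde. Only $S(F_{2}^{(q)})=qF_{11}^{(q)}$ is right.

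So in the basis $(F_{11}^{(q)},F_{2}^{(q)})$ the columns of $S$ are $(q-1,1)$ and $(q,0)$. Hence $S^{2}$ is not $q\operatorname{Id}$ on degree $2$: its eigenvalues are $1$ and $q^{2}$, exactly as your triangular computation says.

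Your two computations are in fact incompatible as written: the trace of $S$ comes out as $q-1$ in one and $0$ in the other. The comparison should therefore have exposed an error rather than confirmed the signs. The conclusion stands, but only on the strength of the monomial-basis computation.
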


\begin{proof}
We restrict attention to the part of $\mathtt{QSym}^{(q)}$ that is homogeneous of degree 2. We saw in Example \ref{SQSymqext} that $S$ inverts the sign of $M_{2}^{(q)}$, and takes the other other basis element $M_{11}^{(q)}=F_{11}^{(q)}$ to $\widetilde{F}_{2}^{(q)}=M_{2}^{(q)}+qM_{11}^{(q)}$. Hence the matrix of $S$ on this homogeneous part, as a free module over $R[q]$ with the monomial basis, is represented by the matrix $\binom{-1\ \ 1}{\ \ 0\ \ q}$. Thus $S^{2}$ is represented on this part by the square $\binom{1\ \ q+1}{0\ \ \ q^{2}\ }$ of this matrix, which has infinite order for generic $q$. Thus the assertion holds on all of $\mathtt{QSym}^{(q)}$, as desired. This proves the corollary.
\end{proof}
Indeed, the fact that $S$ is no longer an involution, as seen in Corollary \ref{deg2QSymq}, is possible because $\mathtt{QSym}^{(q)}$ is, in addition to being a $q$-Hopf algebra, neither commutative nor co-commutative. Note that the matrix from the proof of that corollary is not diagonalizable over $R[q]$, but only becomes such after localizing in $q+1$, and then $M_{11}^{(q)}+\frac{1}{q+1}M_{2}^{(q)}$ spans the eigenspace of $S$ that is associated with the eigenvalue $q$.

Here is the analogue of Remark \ref{q0com} for this $q$-deformation.
\begin{rmk}
Substituting $q=0$ in Corollary \ref{prodMqs} produces, via Remark \ref{invw0}, the equality $M_{\alpha}^{(0)}M_{\beta}^{(0)}=M_{\alpha\beta}^{(0)}+M_{\alpha\odot\beta}^{(0)}$. The only composition $\beta \vDash n$ for which $\operatorname{inv}(\beta)=0$ is the one of length 1, which satisfies $\beta\succeq\alpha$ for every $\alpha \vDash n$, so that $E_{\alpha}^{(0)}$ from Definition \ref{fundQSymq} is $M_{n}^{(0)}$ for every $\alpha \vDash n$. It thus follows from Proposition \ref{SQSymqM} that the antipode of $\mathtt{QSym}^{(0)}$ takes $M_{\alpha}^{(0)}$ for $\alpha \vDash n$ to $(-1)^{\ell(\alpha)}M_{n}^{(0)}$. In Theorem \ref{SQSymqF}, since $q^{\operatorname{inv}(\alpha^{t})}$ vanishes whenever $\ell(\alpha^{t})>1$ (namely $\alpha\neq1^{n}$) when $q=0$, we deduce that the antipode annihilates $F_{\alpha}^{(0)}$ for any $1^{n}\neq\alpha \vDash n$. For $F_{1^{n}}^{(0)}=M_{1^{n}}^{(0)}$, the antipode image is $(-1)^{n}\widetilde{F}_{n}^{(0)}$, namely $(-1)^{n)}\sum_{\beta \vDash n}q^{\operatorname{inv}(\beta)}M_{\beta}^{(q)}$ at $q=0$, which is indeed $(-1)^{\ell(1^{n})}M_{n}^{(0)}$. \label{q0nc}
\end{rmk}

\medskip

Recall from Remark \ref{SymQSym} that $\mathtt{QSym}$ contains the Hopf algebra $\mathtt{Sym}$, whose graded parts have, in degree 2 or higher, strictly smaller dimensions. The natural action of $S_{\mathbb{N}}$ on $R\ldbrack\mathbf{x}_{\infty}\rdbrack$ can no longer be defined on $R\langle\langle\mathbf{x}_{\infty}\rangle\rangle^{(q)}$, and the $q$-analogue of $\mathtt{Sym}$ is defined, in \cite{[EK]}, \cite{[L]}, and others, as follows.
\begin{rmk}
For any $n\geq0$ we set $e_{n}^{(q)}:=M_{1^{n}}^{(q)}=F_{1^{n}}^{(q)}$ as well as $h_{n}^{(q)}:=F_{n}^{(q)}$. Then $\Delta(e_{n}^{(q)})=\sum_{r=0}^{n}e_{r}^{(q)} \otimes e_{n-r}^{(q)}$ and $\Delta(h_{n}^{(q)})=\sum_{r=0}^{n}h_{r}^{(q)} \otimes h_{n-r}^{(q)}$ (see Example \ref{DeltaQSymqext}), and we define $\mathtt{Sym}^{(q)}$ to be the sub-algebra generated by $\{e_{n}^{(q)}\}_{n=1}^{\infty}$. It satisfies the quantum Newton relations $\sum_{r=0}^{n}(-1)^{r}q^{r(r-1)/2}e_{r}^{(q)}h_{n-r}^{(q)}=0$, as well as $\sum_{r=0}^{n}(-1)^{r}q^{r(r-1)/2}h_{n-r}^{(q)}e_{r}^{(q)}=0$ (which are not immediately equivalent by non-commutativity), from which it follows that $h_{n}^{(q)}\in\mathtt{Sym}^{(q)}$ for every $n\geq1$. \label{SymQSymq}
\end{rmk}
In fact, Theorem \ref{SQSymqF} shows, in the terminology of Remark \ref{SymQSymq}, that $S(h_{n}^{(q)})=(-1)^{n}q^{n(n-1)/2}e_{n}^{(q)}$ for every $n\geq0$. Thus the quantum Newton relations from that remark are related to Corollary \ref{detS} and Remark \ref{firstS}. The substitution $q=0$ from Remark \ref{q0nc} reduces them to $h_{n}^{(0)}=h_{n-1}^{(0)}e_{1}^{(0)}=e_{1}^{(0)}h_{n-1}^{(0)}$, so that $h_{n}^{(0)}$ is just $(e_{1}^{(0)})^{n}$.

Note that $S(e_{n}^{(q)})$ is our $(-1)^{n}\widetilde{F}_{n}^{(q)}$ by that theorem, which is not $h_{n}^{(q)}$, and Corollary \ref{deg2QSymq} shows that also on $\mathtt{Sym}^{(q)}$ the endomorphism $S^{2}$ has infinite order (since the degree 2 part of $\mathtt{Sym}^{(q)}$ is the same as that of $\mathtt{QSym}^{(q)}$). In fact, for general values of $q$, the rank of the homogeneous parts of $\mathtt{Sym}^{(q)}$ may be larger than those of $\mathtt{Sym}$---for example, in degree 3, $\mathtt{Sym}^{(q)}$ contains $e_{3}^{(q)}$, $h_{3}^{(q)}$, and also $(1-q^{2})M_{21}^{(q)}$ and $(1-q^{2})M_{21}^{(q)}$, so that it has degree 4 like that of $\mathtt{QSym}^{(q)}$. It may be interesting to check whether $\mathtt{Sym}^{(q)}$ coincides with all of $\mathtt{QSym}^{(q)}$ after localizing by enough polynomials in $q$.

\section{Word Quasi-Symmetric Functions \label{NCQSym}}

We now turn to describing the details of the larger Hopf algebra $\mathtt{NCQSym}$ of non-commutative quasi-symmetric functions, also known as the Hopf algebra $\mathtt{WQSym}$ of word quasi-symmetric functions. Some of our notions are derived from \cite{[BZ]}, but we will expands them and present them in a way that is adapted for stating and proving our result about the antipode there.

\subsection{Set Compositions and Set Partitions}

The natural basis for $\mathtt{NCQSym}$ is parameterized by set compositions. We will also mention shortly, in Remark \ref{NCSym} below, the algebra $\mathtt{NCSym}$, whose basis is parameterized by set partitions. We thus turn to introducing the required notation involving these objects, as is carried out in \cite{[BZ]}, \cite{[BHRZ]}, \cite{[BRRZ]}, \cite{[LM]}, and \cite{[D]}.
\begin{defn}
Let $X$ be a finite set of size $d$.
\begin{enumerate}[$(i)$]
\item A \emph{set composition} of $X$ is an ordered collection $\mathbf{A}:=(A_{1},\ldots,A_{\ell})$ of non-empty pairwise disjoint sets whose union is $X$. We write this as $\mathbf{A} \vDash X$ and $\ell(\mathbf{A})=\ell$
\item For another set $Y$ that is disjoint from $X$, if $\mathbf{A} \vDash X$ and $\mathbf{B} \vDash Y$ with $\ell(\mathbf{A})=\ell$ and $\ell(\mathbf{B})=k$ then their \emph{concatenation}, written as $\mathbf{A}|\mathbf{B}$, is the set composition $(A_{1},\ldots,A_{\ell},B_{1},\ldots,B_{k})$ of $X \cup Y$.
\item A \emph{set partition} of $X$ is an unordered collection $\mathbf{L}:=\{A_{1},\ldots,A_{\ell}\}$ as in part $(i)$, which we write as $\mathbf{L} \vdash X$ and $\ell(\mathbf{L})=\ell$
\item If $X$ and $Y$ are disjoint, then the \emph{union} of set partitions $\mathbf{L} \vdash X$ and $\mathbf{M} \vdash Y$ is their set-theoretic union $\mathbf{L}\cup\mathbf{M}$ as a set partition of $X \cup Y$.
\end{enumerate} \label{setcomppar}
\end{defn}

\begin{rmk}
The number of set partitions of length $\ell$ of a set $X$ of size $d$ is, by definition, the \emph{Stirling number of the second kind}, denoted by $S(d,\ell)$. Every such set partition is the free orbit under $S_{\ell}$ of precisely $\ell!$ set compositions of $X$, so that the number of set compositions of length $\ell$ of $X$ is $\ell!S(d,\ell)$. The total number $B_{d}:=\sum_{\ell=1}^{d}S(d,\ell)$ of set partitions of $X$ is known as the \emph{Bell number}. The number of set compositions of $X$ is thus $\sum_{\ell=1}^{d}\ell!S(d,\ell)$, sometimes known as the \emph{ordered Bell number}, or the \emph{Fubini number}, that is associated with $d$. \label{StirBell}
\end{rmk}
When $X$ is totally ordered, some authors define set partitions as a special case of set compositions (similarly to Definition \ref{defcomp}) by imposing the extra condition that the minimal elements of the sets $A_{j}$, $1 \leq j\leq\ell$ are in increasing order. We also recall from Remark \ref{SymQSym} that they can also be viewed as (representatives of) orbits of the latter under the action $S_{\ell}$ (where $\ell$ is the length), which corresponds to forgetting the order of the entries of the composition. As for a general, unordered set $X$ there is no canonical way to choose the representative, Definition \ref{setcomppar} uses the analogue of Remark \ref{SymQSym} here. Indeed, some authors (like \cite{[BZ]}) also refer to set compositions as \emph{ordered set partition}.

\medskip

While Definition \ref{setcomppar} considers set partitions and set compositions of arbitrary finite sets, we will be mostly interested in totally ordered ones.
\begin{defn}
Assume that the set $X$, of size $d$, is totally ordered.
\begin{enumerate}[$(i)$]
\item We denote by $\iota_{X}$ the unique order-preserving bijection from $X$ onto $\mathbb{N}_{d}$.
\item For any set composition $\mathbf{A} \vDash X$, we define the \emph{standardization} $\operatorname{st}_{X}\mathbf{A}$ to be the set composition $(\iota_{X}A_{1},\ldots,\iota_{X}A_{\ell})\vDash\mathbb{N}_{d}$.
\item The \emph{standardization} $\operatorname{st}_{X}\mathbf{L}\vdash\mathbb{N}_{d}$ of a set partition $\mathbf{L} \vdash X$ is defined in the same manner as in part $(ii)$, using $\iota_{X}$ from part $(i)$.
\item For a subset $Y\subseteq\mathbb{N}$ and $n\in\mathbb{N}$, write $Y+n$ for $\{y+n\;|\;y \in Y\}\subseteq\mathbb{N}$.
\item Assume that $X\subseteq\mathbb{N}$, and take a set composition $\mathbf{A}=(A_{1},\ldots,A_{\ell}) \vDash X$. Then the set composition $(A_{1}+n,\ldots,A_{\ell}+n) \vDash X+n$ will be denoted by $\mathbf{A}[+n]$. The set partition $\mathbf{L}[+n] \vdash X+n$ arising from a set partition $\mathbf{L} \vdash X$ is defined in the same manner.
\item Assume that $\mathbf{A}\vDash\mathbb{N}_{n}$ and $\mathbf{B}\vDash\mathbb{N}_{m}$ are (already standardized) set compositions. We then write $\mathbf{A}|\mathbf{B}$ for the concatenation $\mathbf{A}|\mathbf{B}[+n]\vDash\mathbb{N}_{n+m}$.
\item When $\mathbf{L}\vDash\mathbb{N}_{n}$ and $\mathbf{M}\vDash\mathbb{N}_{n}$, we write $\mathbf{L}|\mathbf{M}$ for the union $\mathbf{L}\cup(\mathbf{M}[+n])$ as a (standardized) set partition of $\mathbb{N}_{n+m}$.
\end{enumerate} \label{ordset}
\end{defn}
The form for the concatenation of standardized set compositions in Definition \ref{ordset}, while a slight abuse of notation (because the sets are no longer disjoint), corresponds to the idea that when $X$ and $Y$ are well-ordered, and we consider every element of $Y$ to be larger (in that order) from all elements of $X$, then $\operatorname{st}_{X \cup Y}(\mathbf{A}|\mathbf{B})$, for $\mathbf{A} \vDash X$ and $\mathbf{B} \vDash Y$, would be $\operatorname{st}_{X}\mathbf{A}|\operatorname{st}_{Y}\mathbf{B}$. The fact that order is not symmetric between $X$ and $Y$ is the reason for the notation also for set partitions, where we again have $\operatorname{st}_{X \cup Y}(\mathbf{L}\cup\mathbf{M})=\operatorname{st}_{X}\mathbf{L}|\operatorname{st}_{Y}\mathbf{M}$.

\begin{rmk}
It is clear that for a given set $X$, the standardization $\operatorname{st}_{X}$ from Definition \ref{setcomppar} is a bijection from set compositions of $X$ onto set compositions of $\mathbb{N}_{d}$. A set composition of $\mathbb{N}_{d}$ is thus the image under $\operatorname{st}_{X}$ of set compositions of $X$ for many ordered sets $X$. Thus, if we wish to invert the standardization, we need to state which set $X$ are we working with, which is the reason for our notation $\operatorname{st}_{X}$ (unlike authors who simply write $\operatorname{st}$, without the subscript). \label{stdbij}
\end{rmk}
As an example of Remark \ref{stdbij}, the map $\mathbf{A}\mapsto\mathbf{A}[+n]$ on (standardized) set compositions $\mathbf{A}\vDash\mathbb{N}_{d}$ is the inverse $\operatorname{st}_{X}^{-1}$ of the standardization map associated with the totally ordered set $X:=\mathbb{N}_{n+d}\setminus\mathbb{N}_{n}$, of size $d$.

It is clear that the empty set admits a (unique) set composition, which is its unique set partition as well, of length 0. Since we will work mainly with standardized set compositions and partitions, we will allow an abuse of notation and write it as $\emptyset\vDash\mathbb{N}_{0}$ as well as $\emptyset\vdash\mathbb{N}_{0}$.

\medskip

Recall that for the quasi-shuffle rule from Lemma \ref{monQSym} (or its $q$-analogue appearing in Lemma \ref{monQSymq}), we considered the map $l_{a}$, which is based on replacing any composition of an integer $d$ by the composition of $a+d$ obtained by adding $a$ as the initial entry. We would like to do the same for standardized set compositions. This is done as follows.
\begin{defn}
Let $A$ be a subset of $\mathbb{N}$, with $|A|=a$ and maximal element $m$.
\begin{enumerate}[$(i)$]
\item For any $d \geq m-a$, we have $A\subseteq\mathbb{N}_{a+d}$, and write $X$ for the set $\mathbb{N}_{a+d} \setminus A$, of size $d$.
\item Given any (standardized) set composition $\mathbf{C}\vDash\mathbb{N}_{d}$ for such $d$, consider the set composition $\operatorname{st}_{X}^{-1}\mathbf{C} \vDash X$ via Remark \ref{stdbij}. We denote by $l_{A}\mathbf{C}$ the composition of $\mathbb{N}_{n+a}$ that is obtained by adding $A$ to $\operatorname{st}_{X}^{-1}\mathbf{C}$ as the first set in the set composition.
\item Assume that $f$ is a sum $\sum_{\mathbf{A}}c_{\mathbf{C}}f_{\mathbf{C}}$ of some module spanned freely by a basis parameterized by set compositions $\mathbf{C}$, where each $\mathbf{C}$ that appears in $f$ is a set composition of $\mathbb{N}_{d}$ for some $d \geq m-a$. We then write $l_{A}f$ for the expression $\sum_{\mathbf{A}}c_{\mathbf{A}}f_{l_{A}\mathbf{C}}$.
\end{enumerate} \label{insset}
\end{defn}
One may also consider an analogue of Definition \ref{insset} for standardized set partitions, but we will not use such an analogue.

It is clear that any non-empty set composition $\mathbf{A}=(A_{1},\ldots,A_{\ell})\vDash\mathbb{N}_{d}$ (thus with $d>0$ and length $\ell>0$) can be written uniquely as $l_{A}\mathbf{C}$ via Definition \ref{insset}, where $A$ is the first set $A_{1}$ of $\mathbf{A}$, and $\mathbf{C}$ is $\operatorname{st}_{\mathbb{N}_{d} \setminus A_{1}}(A_{2},\ldots,A_{\ell})\vDash\mathbb{N}_{d-|A_{1}|}$. We will denote that $\mathbf{C}$ by $\overline{\mathbf{A}}$.
\begin{ex}
It is clear from Definition \ref{insset} that when $A$ is empty, $l_{A}$ is the identity map. In the other extreme case, if $\mathbf{C}=\emptyset\vDash\mathbb{N}_{0}$ then $A$ must be $\mathbb{N}_{a}$, and then $l_{A}\mathbf{C}$ is the set composition of length 1 of $\mathbb{N}_{a}$. For a non-trivial example, take $A:=\{2,4,7\}$, with $a=3$ and $m=7$, and we consider, for $d=5\geq4=m-a$, the set composition $\mathbf{C}:=(23,5,14)\vDash\mathbb{N}_{5}$. Then with $a+d=8$ the set $X:=\mathbb{N}_{8} \setminus A$ is $\{1,3,5,6,8\}$ seen above, the set composition $\operatorname{st}_{X}^{-1}\mathbf{C}$ is $(35,8,16)$ from above, and thus $\mathbf{A}:=l_{A}\mathbf{C}$ is $(247,35,8,16)\vDash\mathbb{N}_{8}$. We also note that the only way to get $\mathbf{A}$ as $l_{\tilde{A}}\tilde{\mathbf{C}}$ is with $\tilde{A}$ being the first set $\{2,4,7\}=A$, and then $\tilde{\mathbf{C}}$ has to be standard and related to $(35,8,16)\vDash\{1,3,5,6,8\}$, and thus $\tilde{\mathbf{C}}=\mathbf{C}$. \label{exlA}
\end{ex}
In Example \ref{exlA} we have introduced the simplified notations for set compositions in which we omit the curly brackets of sets, where we wrote $(35,8,16)$ for the set composition $\big(\{3,5\},\{8\},\{1,6\}\big)$ of $\{1,3,5,6,8\}$. Inside superscripts, which will contain standardized set compositions, we may remove the external parentheses as well, and write expressions like $\mathrm{M}_{23,5,14}$ instead of $\mathrm{M}_{(23,5,14)}$ for the standardization of this set composition.

In order to relate our set compositions (and we do the same for set partitions) to some objects from above, we make the following definition.
\begin{defn}
Let $X$ be a set of size $d$.
\begin{enumerate}[$(i)$]
\item For $\mathbf{A} \vDash X$ we define the composition $\operatorname{sz}(\mathbf{A}) \vDash d$ which has length $\ell(\mathbf{A})$ and whose $j$th entry is the number $|A_{j}|$.
\item Assume that $X$ is totally ordered. We then define the \emph{inversion number} $\operatorname{inv}(\mathbf{A})$ to be number of pairs $(x,y)$ from $X$ where $x<y$ by the index of the set from $\mathbf{A}$ that contains $x$ is larger than that of the one containing $y$.
\item Similarly to part $(i)$, wherever $\mathbf{L} \vdash X$, the partition $\operatorname{sz}(\mathbf{L}) \vdash d$ is obtained by ordering the numbers $|A_{j}|$, $1 \leq j\leq\ell(\mathbf{L})$ and putting them in non-increasing order.
\end{enumerate} \label{sizes}
\end{defn}
The reason for the notation from Definition \ref{sizes} is that this construction replaces each set by its size. The inversion number will be used in the relation with $\mathtt{QSym}^{(q)}$ in Proposition \ref{projQSymq} below.

\begin{rmk}
The map from Definition \ref{sizes} commutes with $l_{A}$ from Definition \ref{insset} and $l_{a}$ on compositions, where $a=|A|$, in the sense that we have the equality $\operatorname{sz}(l_{A}\mathbf{C})=l_{a}\operatorname{sz}(\mathbf{C})$ for any $\mathbf{C}\vDash\mathbb{N}_{d}$ when $d \geq m-a$ (when $m=\max A$). Using those from Example \ref{exlA}, we have $\alpha:=\operatorname{sz}(\mathbf{C})=212\vDash5$, $a=3$, and indeed the image under $\operatorname{sz}$ of the composition $l_{A}\mathbf{C}\vDash\mathbb{N}_{8}$ there is $3212=l_{3}\alpha\vDash8$. \label{szlrel}
\end{rmk}

\medskip

We will need a partial analogue of Definition \ref{cutatr} for set compositions.
\begin{defn}
Take a set composition $\mathbf{A}=(A_{1},\ldots,A_{\ell})\vDash\mathbb{N}_{n}$, with the composition $\alpha:=\operatorname{sz}(\mathbf{A}) \vDash n$. We also set $\tilde{T}$ to be $\{0\}$ when $n=0$ and to be $T\cup\{0,n\}$ in case $n>0$ and $T:=\operatorname{comp}_{n}^{-1}\alpha\subseteq\mathbb{N}_{n-1}$ via Proposition \ref{compn}, and write its elements as $\{t_{i}\}_{i=0}^{\ell}$ in increasing order as in that proposition.
\begin{enumerate}[$(i)$]
\item Take $r\in\tilde{T}$, so that $r=t_{k}$ for some $0 \leq k\leq\ell$. Set $X_{|r}:=\bigcup_{j=1}^{k}A_{j}$ and $X_{r|}:=\bigcup_{j=k+1}^{\ell}A_{j}$, and we define $\mathbf{A}_{|r}$ to be $\operatorname{st}_{X_{|r}}(A_{1},\ldots,A_{k})\vDash\mathbb{N}_{r}$ and $\mathbf{A}_{r|}:=\operatorname{st}_{X_{r|}}(A_{k+1},\ldots,A_{\ell})\vDash\mathbb{N}_{n-r}$.
\item If $n>0$ and $r\not\in\tilde{T}$, let $1 \leq k\leq\ell$ be such that $t_{k-1}<r<t_{k}$. We define $\tilde{\mathbf{A}}$ to be the set composition in which we replace $A_{k}$ by two sets, the first one containing the set of $r-t_{k-1}$ smallest elements of $A_{k}$, and the second one with the larger $t_{k}-r$ elements there. We then set $\mathbf{A}_{|r}:=\tilde{\mathbf{A}}_{|r}$ and $\mathbf{A}_{r|}:=\tilde{\mathbf{A}}_{r|}$.
\item Given a permutation $\rho \in S_{n}$, let $\mathbf{B}_{\rho}$ for $\rho \in S_{n}$ be the set composition $\big(\rho(1),\ldots,\rho(n)\big)$ of $\mathbb{N}_{n}$ into singletons in this order. This defines a bijection between $S_{n}$ and set compositions $\mathbf{A}\vDash\mathbb{N}_{n}$ with $\operatorname{sz}(\mathbf{A})=1^{n}$.
\item For any $0 \leq r \leq n$, we write $\rho_{|r} \in S_{r}$ and $\rho_{r|} \in S_{n-r}$ for the permutations for which $\mathbf{B}_{\rho_{|r}}=(\mathbf{B}_{\rho})_{|r}$ and $\mathbf{B}_{\rho_{r|}}=(\mathbf{B}_{\rho})_{r|}$ via parts $(i)$ and $(iii)$.
\end{enumerate} \label{setwithr}
\end{defn}
It is clear that $\mathbf{A}_{|0}=\mathbf{A}_{n|}=\emptyset$ as well as $\mathbf{A}_{|n}=\mathbf{A}_{0|}=\mathbf{A}$ wherever $\mathbf{A}\vDash\mathbb{N}_{n}$ in Definition \ref{setwithr}, as well as $\rho_{|0}$ and $\rho_{n|}$ being the trivial element of $S_{0}$ and $\rho_{|n}=\rho_{0|}=\rho$ for any $\rho \in S_{n}$. Here are some non-trivial values.
\begin{ex}
Let $\mathbf{A}$ be $(23,5,14)\vDash\mathbb{N}_{5}$, namely $\mathbf{C}$ from Example \ref{exlA}. Then $\mathbf{A}_{|2}=(12)$, $\mathbf{A}_{|3}=(12,3)$, $\mathbf{A}_{3|}=(12)$, and $\mathbf{A}_{2|}=(3,12)$ via part $(i)$ of Definition \ref{setwithr}. Part $(ii)$ there yields $\mathbf{A}_{|1}=\mathbf{A}_{4|}=(1)$ as well as $\mathbf{A}_{|4}=(23,4,1)$ and $\mathbf{A}_{1|}=(2,4,13)$. If $\rho \in S_{5}$ is 23514 in one-line notation, then $\rho_{|1}=\rho_{4|}=1 \in S_{1}$, $\rho_{|2}=\rho_{3|}=12 \in S_{2}$ (both are the identity elements there), $\rho_{|3}$ is the identity in $S_{3}$ as well, $\rho_{2|}=312$ there and $\rho_{|4}=2341$, and $\rho_{1|}=2413$ inside $S_{4}$. \label{sepsetcomp}
\end{ex}
The fact that $\mathbf{A}_{|1}=\mathbf{A}_{n-1|}=(1)\vDash1$ for any $\mathbf{A}\vDash\mathbb{N}_{n}$ with $n\geq1$, as seen in Example \ref{sepsetcomp}, is clear since $\mathbb{N}_{1}$ has a unique set composition (and set partition). In a similar manner, $\rho_{|1}=\rho_{n-1|}$ is the unique trivial element of $S_{1}$ whenever $\rho \in S_{n}$ and $n\geq1$ (also seen in that example).

\medskip

We will later be using the following two orders on set compositions on $\mathbb{N}_{n}$, defined in Sections 5 and 6 of \cite{[BZ]} respectively.
\begin{defn}
Let $\mathbf{A}$ and $\mathbf{B}$ be two set compositions of $\mathbb{N}_{n}$.
\begin{enumerate}[$(i)$]
\item We say that $\mathbf{B}$ \emph{refines} $\mathbf{A}$, written as $\mathbf{B}\preceq\mathbf{A}$, if each set in $\mathbf{A}$ is the union of a sets from $\mathbf{B}$ that appear consecutively. We let $\mathbf{B}\prec\mathbf{A}$ denote as usual, the combination of $\mathbf{B}\preceq\mathbf{A}$ and $\mathbf{B}\neq\mathbf{A}$.
\item We will write $\mathbf{B}\preceq_{*}\mathbf{A}$ in case $\mathbf{B}\preceq\mathbf{A}$ and whenever the union of two consecutive sets from $\mathbf{B}$, say $B_{j}$ and $B_{j+1}$, is contained in a set from $\mathbf{A}$, all the elements of $B_{i}$ are smaller than those of $B_{i+1}$. The notation $\mathbf{B}\prec_{*}\mathbf{A}$ has the resulting meaning.
\end{enumerate} \label{setord}
\end{defn}
In fact, the set compositions $\tilde{\mathbf{A}}$ and $\mathbf{A}$ in Definition \ref{setwithr} satisfy $\tilde{\mathbf{A}}\prec_{*}\mathbf{A}$, and any covering in that order is obtained in this way. This is in correspondence with the covering in the refinement order on compositions from Definition \ref{refine}.

The two orders from Definition \ref{setord} have the following properties, with respect to refinements of compositions via Definition \ref{refine} and the operation $\operatorname{sz}$ from Definition \ref{sizes}.
\begin{lem}
Take two set compositions $\mathbf{A}$ and $\mathbf{B}$ of $\mathbb{N}_{n}$, and let $\alpha:=\operatorname{sz}(\mathbf{A})$ and $\beta=\operatorname{sz}(\mathbf{B})$ be the corresponding compositions of $n$.
\begin{enumerate}[$(i)$]
\item If $\mathbf{B}\preceq\mathbf{A}$, and in particular when $\mathbf{B}\preceq_{*}\mathbf{A}$, then $\beta\preceq\alpha$.
\item When $\mathbf{B}$, $\beta=\operatorname{sz}(\mathbf{B})$ and $\alpha \vDash n$ with $\beta\preceq\alpha$ are given, then there exists a unique set composition $\mathbf{A}\vDash\mathbb{N}_{n}$ for which $\mathbf{B}\preceq\mathbf{A}$ and $\operatorname{sz}(\mathbf{A})=\alpha$.
\item For every $\rho \in S_{n}$ and $\alpha \vDash n$ there exists a unique set composition $\mathbf{A}_{\alpha}^{\rho}\vDash\mathbb{N}_{n}$ with $\operatorname{sz}(\mathbf{A}_{\alpha}^{\rho})=\alpha$ and $\mathbf{B}_{\rho}$ from
    Definition \ref{setwithr} satisfies $\mathbf{B}_{\rho}\preceq\mathbf{A}_{\alpha}^{\rho}$.
\item If $\beta\preceq\alpha$ and $\rho \in S_{n}$ are given, then $\mathbf{A}_{\beta}^{\rho}\preceq\mathbf{A}_{\alpha}^{\rho}$.
\item In case we are given $\mathbf{A}$, $\alpha=\operatorname{sz}(\mathbf{A})$, and some $\beta \vDash n$ satisfying $\beta\preceq\alpha$, the is a unique $\mathbf{B}\vDash\mathbb{N}_{n}$ that satisfies both $\operatorname{sz}(\mathbf{B})=\beta$ and $\mathbf{B}\preceq_{*}\mathbf{A}$.
\item For every $\mathbf{A}\vDash\mathbb{N}_{n}$ there is a unique $\rho(\mathbf{A}) \in S_{n}$ such that $\mathbf{B}_{\rho(\mathbf{A})}\preceq_{*}\mathbf{A}$.
\end{enumerate} \label{szord}
\end{lem}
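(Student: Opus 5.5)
The plan is to prove all six parts directly from Definitions \ref{setord}, \ref{sizes} and \ref{refine}. Two bookkeeping facts do most of the work. First, a set composition $\mathbf{B}\preceq\mathbf{A}$ is determined by which consecutive runs of sets of $\mathbf{B}$ are merged. Second, a composition $\alpha$ coarser than $\beta$ is determined by the subset $\operatorname{comp}_{n}^{-1}\alpha\subseteq\operatorname{comp}_{n}^{-1}\beta$ of partial sums, as in Proposition \ref{compn}.

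For part $(i)$, I would write each $A_{i}$ as the union of the consecutive sets $B_{j}$ with $j$ in some interval $I_{i}$. Since these sets are disjoint, $|A_{i}|=\sum_{j\in I_{i}}|B_{j}|$, so $\beta$ is the concatenation of compositions of the $\alpha_{i}$, in order. That is exactly $\beta\preceq\alpha$, and the case $\mathbf{B}\preceq_{*}\mathbf{A}$ is a special case.

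For part $(ii)$, the relation $\beta\preceq\alpha$ says that $\beta$ splits into consecutive blocks summing to $\alpha_{1},\ldots,\alpha_{\ell}$. I would define $A_{i}$ as the union of the sets of $\mathbf{B}$ in the $i$th block, which gives existence. For uniqueness, suppose $\mathbf{A}'$ is another such set composition. The intervals $I_{i}$ of consecutive indices of $\mathbf{B}$ making up $A'_{i}$ have cardinality sums $\alpha_{i}$, so their endpoints are forced by the partial sums of $\alpha$ inside those of $\beta$. Hence $\mathbf{A}'=\mathbf{A}$.

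Part $(iii)$ is part $(ii)$ applied to $\mathbf{B}=\mathbf{B}_{\rho}$, whose size composition is $1^{n}\preceq\alpha$. For part $(iv)$, I would first note that $\preceq$ on set compositions is transitive, since a union of consecutive unions of consecutive sets is a union of consecutive sets. Then I would apply part $(ii)$ to $\mathbf{A}_{\beta}^{\rho}$ and $\alpha$, which produces some $\mathbf{A}'\succeq\mathbf{A}_{\beta}^{\rho}$ with $\operatorname{sz}(\mathbf{A}')=\alpha$. By transitivity $\mathbf{B}_{\rho}\preceq\mathbf{A}'$, and the uniqueness in part $(iii)$ gives $\mathbf{A}'=\mathbf{A}_{\alpha}^{\rho}$. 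This transitivity step, together with the compatibility of the blocks, is the one point needing a little care.

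For part $(v)$, I would use $\beta\preceq\alpha$ to split each $\alpha_{i}$ into a consecutive composition $(\gamma_{i,1},\ldots,\gamma_{i,k_{i}})$. Then I cut $A_{i}$, listed in increasing order, into consecutive pieces of sizes $\gamma_{i,1},\ldots$, with the smallest elements going first. This yields $\mathbf{B}\preceq_{*}\mathbf{A}$ with $\operatorname{sz}(\mathbf{B})=\beta$. For uniqueness, part $(ii)$ fixes which sets of $\mathbf{B}$ lie in each $A_{i}$, and their sizes. The $\preceq_{*}$ condition forces consecutive pieces inside $A_{i}$ to be increasing, so every element of an earlier piece is smaller than every element of a later one. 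The pieces are therefore exactly the initial segments of the sorted $A_{i}$.

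Part $(vi)$ is part $(v)$ with $\beta=1^{n}$. Concretely, $\rho(\mathbf{A})$ lists the elements of $A_{1}$ in increasing order, then those of $A_{2}$, and so on. Uniqueness transfers through the bijection $\rho\leftrightarrow\mathbf{B}_{\rho}$ of Definition \ref{setwithr}.
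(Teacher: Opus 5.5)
Your proposal is correct and follows the paper's proof essentially step for step: $(i)$ directly from the definitions, $(ii)$ by merging the blocks of $\mathbf{B}$ according to $\alpha$, $(iii)$ as the case $\mathbf{B}=\mathbf{B}_{\rho}$ of $(ii)$, $(iv)$ via $(ii)$ together with transitivity of $\preceq$ and the uniqueness in $(iii)$, and $(v)$ by assigning the smallest elements of each $A_{j}$ to the earliest pieces. You derive $(vi)$ from $(v)$ with $\beta=1^{n}$; the paper's text cites part $(iv)$ at that point, which appears to be a slip, and your reference to $(v)$ is the correct one.
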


\begin{proof}
Part $(i)$ is clear from the definitions. For part $(ii)$ we observe that the sets in $\mathbf{A}$ are determined by taking the unions of the sets from $\mathbf{B}$ according to $\alpha$. Part $(iii)$ is the special case of part $(ii)$ where $\beta=1^{n}$ (and $1^{n}\preceq\alpha$). For part $(iv)$ we apply part $(ii)$ with $\mathbf{B}=\mathbf{A}_{\beta}^{\rho}$, and as the resulting set composition $\mathbf{A}$ satisfies $\mathbf{B}_{\rho}\preceq\mathbf{A}_{\beta}^{\rho}\preceq\mathbf{A}$, it must be $\mathbf{A}_{\alpha}^{\rho}$ by the uniqueness in part $(iii)$.

In part $(v)$, we need to know how to decompose the set $A_{j}$ for $1 \leq j\leq\ell(\mathbf{A})$ into sets of sizes as determined by $\beta$. But part $(ii)$ of Definition \ref{setord} implies that the only way to do so is to take the smallest elements of $A_{j}$ to the first set in that location of $\mathbf{B}$, the next smallest one to the next set, and so forth. This shows that $\mathbf{B}$ exists, and determines it uniquely. Part $(vi)$ is again the case $\beta=1^{n}\preceq\alpha$ of part $(iv)$. This proves the lemma.
\end{proof}

\begin{ex}
When $\ell(\alpha)=1$, namely $\alpha=n \vDash n$, there is a unique set composition $(\mathbb{N}_{n})$ whose $\operatorname{sz}$-image is $\alpha$, and as it is refined by $\mathbf{B}_{\rho}$ for every $\rho \in S_{n}$, we get $\mathbf{A}_{n}^{\rho}=(\mathbb{N}_{n})$ for every $\rho$ with this $\alpha$. In the other extreme, if $\alpha=1^{n}$ then we get $\mathbf{A}_{1^{n}}^{\rho}=\mathbf{B}_{\rho}$ for every $\rho \in S_{n}$, which we henceforth write in one-line notation. When $n=1$ there is just one choice of $\alpha$, of $\rho$, and of $\mathbf{A}$, and the same for $n=0$, where all are empty. For $n=2$, where $S_{2}$ has two elements, we have the two compositions 2 and 11, for which the expressions for $\mathbf{A}_{\alpha}^{\rho}$ are already covered. When $n=3$, the two compositions 21 and 12 not yet covered yield $\mathbf{A}_{21}^{123}=\mathbf{A}_{21}^{213}=(12,3)$, $\mathbf{A}_{21}^{132}=\mathbf{A}_{21}^{312}=(13,2)$, $\mathbf{A}_{21}^{231}=\mathbf{A}_{21}^{321}=(23,1)$, $\mathbf{A}_{12}^{123}=\mathbf{A}_{21}^{132}=(1,23)$, $\mathbf{A}_{12}^{213}=\mathbf{A}_{12}^{231}=(2,13)$, and $\mathbf{A}_{12}^{312}=\mathbf{A}_{12}^{321}=(3,12)$. \label{Aalpharhoex}
\end{ex}
It is clear from Lemma \ref{szord} that every $\mathbf{A}\vDash\mathbb{N}_{n}$ is $\mathbf{A}_{\alpha}^{\rho}$ for some $\alpha$ and $\rho$. Indeed, $\alpha$ is determined as $\operatorname{sz}(\mathbf{A})$, and for $\rho$ we can take $\rho(\mathbf{A})$, and then $\mathbf{B}_{\rho}\preceq_{*}\mathbf{A}$ and thus $\mathbf{B}_{\rho}\preceq\mathbf{A}$ as desired. As the case $n=0$ is trivial (see Example \ref{Aalpharhoex}), we may assume that $n\geq1$ and get the following consequence.
\begin{cor}
Assume that the composition $\alpha \vDash n\geq1$ is $\operatorname{comp}_{n}T$ for $T\subseteq\mathbb{N}_{n-1}$, and that it is $\operatorname{sz}(\mathbf{A})$ for $\mathbf{A}\vDash\mathbb{N}_{n}$. We express $\mathbf{A}$ as $\mathbf{A}_{\alpha}^{\rho}$ for some $\rho \in S_{n}$, so that $\mathbf{B}_{\rho}\preceq\mathbf{A}$.
\begin{enumerate}[$(i)$]
\item We have $\rho=\rho(\mathbf{A})$, or equivalently $\mathbf{B}_{\rho}\preceq_{*}\mathbf{A}$, if and only if $T$ contains the descent set of $\rho$.
\item For a composition $\mathbf{B}\vDash\mathbb{N}_{n}$ we have $\mathbf{B}\preceq_{*}\mathbf{A}$ if and only if $\mathbf{B}=\mathbf{A}_{\beta}^{\rho}$ for $\rho=\rho(\mathbf{A})$ and $\beta\preceq\alpha$.
\end{enumerate} \label{weakord}
\end{cor}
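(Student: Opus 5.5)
Both parts follow from Lemma \ref{szord} together with the elementary description of $\mathbf{A}_{\alpha}^{\rho}$. By uniqueness in part $(iii)$ of that lemma, $\mathbf{A}_{\alpha}^{\rho}$ is obtained by cutting the one-line word $\rho(1)\rho(2)\cdots\rho(n)$ into consecutive segments of lengths $\alpha_{1},\ldots,\alpha_{\ell}$. Equivalently, the $k$th set is $\{\rho(i)\;|\;t_{k-1}<i\leq t_{k}\}$, where $\tilde{T}=\{t_{0}<\cdots<t_{\ell}\}=T\cup\{0,n\}$. The singletons $\{\rho(i)\}$ and $\{\rho(i+1)\}$ lie in the same set of $\mathbf{A}$ precisely when $i\notin T$.

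For part $(i)$, I would unwind Definition \ref{setord}$(ii)$ for the pair $\mathbf{B}_{\rho}\preceq\mathbf{A}$. The refinement condition already holds, so $\mathbf{B}_{\rho}\preceq_{*}\mathbf{A}$ says exactly this: for each $i$ with $1\leq i<n$ and $\{\rho(i),\rho(i+1)\}$ contained in one set of $\mathbf{A}$, we have $\rho(i)<\rho(i+1)$. Since these $i$ are exactly those in $\mathbb{N}_{n-1}\setminus T$, the condition reads ``no descent of $\rho$ lies outside $T$'', that is, $\operatorname{Des}(\rho)\subseteq T$. The uniqueness statement of Lemma \ref{szord}$(vi)$ then identifies $\rho$ with $\rho(\mathbf{A})$, giving the equivalence. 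I also note that $\rho(\mathbf{A})$ is the permutation listing each set of $\mathbf{A}$ in increasing order.

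For part $(ii)$, first take $\mathbf{B}\preceq_{*}\mathbf{A}$ and set $\beta:=\operatorname{sz}(\mathbf{B})$. Lemma \ref{szord}$(i)$ gives $\beta\preceq\alpha$. Put $\sigma:=\rho(\mathbf{B})$, so $\mathbf{B}_{\sigma}\preceq_{*}\mathbf{B}$. Transitivity of $\preceq_{*}$ then gives $\mathbf{B}_{\sigma}\preceq_{*}\mathbf{A}$, and uniqueness in Lemma \ref{szord}$(vi)$ forces $\sigma=\rho(\mathbf{A})$. Hence $\mathbf{B}=\mathbf{A}_{\beta}^{\rho(\mathbf{A})}$. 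Transitivity is easy: if two consecutive singletons of $\mathbf{B}_{\sigma}$ lie in a common set of $\mathbf{A}$, then either they lie in a common set of $\mathbf{B}$ and the condition for $\mathbf{B}_{\sigma}\preceq_{*}\mathbf{B}$ applies, or they are the last element of $B_{j}$ and the first of $B_{j+1}$ with $B_{j}\cup B_{j+1}$ inside one set of $\mathbf{A}$, and the condition for $\mathbf{B}\preceq_{*}\mathbf{A}$ applies.

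Conversely, let $\rho=\rho(\mathbf{A})$ and $\beta\preceq\alpha$, so $\mathbf{A}_{\beta}^{\rho}\preceq\mathbf{A}$ by Lemma \ref{szord}$(iv)$. By part $(i)$, $\operatorname{Des}(\rho)\subseteq T$. Two consecutive sets of $\mathbf{A}_{\beta}^{\rho}$ inside a single set of $\mathbf{A}$ are consecutive segments of $\rho$ with no position of $T$ between or within them. So $\rho$ increases across both segments, every element of the first is smaller than every element of the second, and $\mathbf{A}_{\beta}^{\rho}\preceq_{*}\mathbf{A}$. One could instead invoke the uniqueness in Lemma \ref{szord}$(v)$. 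The only step needing care is the transitivity of $\preceq_{*}$, and it is routine.
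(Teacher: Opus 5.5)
Your proof is correct and follows the paper's argument. For part $(i)$ you both observe that the non-separating positions $\mathbb{N}_{n-1}\setminus T$ must be ascents of $\rho$, and for the forward half of part $(ii)$ you both use the chain $\mathbf{B}_{\sigma}\preceq_{*}\mathbf{B}\preceq_{*}\mathbf{A}$ together with the uniqueness of $\rho(\mathbf{A})$. The only differences are that you verify the transitivity of $\preceq_{*}$ explicitly, and you check the converse of $(ii)$ directly from $\operatorname{Des}(\rho)\subseteq T$, whereas the paper gets it from the existence half of Lemma \ref{szord}$(v)$ (so in your aside it is existence, not uniqueness, that is needed).
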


\begin{proof}
The fact that $\mathbf{A}=\mathbf{A}_{\alpha}^{\rho}$ is equivalent, via Definition \ref{setord} and Lemma \ref{szord}, to the statement that every set in $\mathbf{A}$ contains elements that appear as consecutive entries of the one-line notation of $\rho$. We get $\mathbf{B}_{\rho}\preceq_{*}\mathbf{A}$ if and only if the elements in every such set appeared in increasing order. Since $T$ is the set of separating locations between these sets (via Proposition \ref{compn}), the latter property is equivalent to the complement of $T$ in $\mathbb{N}_{n-1}$, which consists of the locations that are no longer separating when we constructed $\mathbf{A}$ (or $\alpha$) contains only ascents of $\rho$. As this happens if and only if all the descents of $\rho$ are contained in $T$, part $(i)$ follows.

Now, part $(i)$ of Lemma \ref{szord} implies that if $\mathbf{B}\preceq_{*}\mathbf{A}$ and $\beta=\operatorname{sz}(\mathbf{B})$ then $\beta\preceq\alpha$, and part $(v)$ there implies that for every such $\beta$ there is a unique $\mathbf{B}$ with $\operatorname{sz}(\mathbf{B})=\beta$ and $\mathbf{B}\preceq_{*}\mathbf{A}$. So fix such $\beta$, let $\mathbf{B}$ be the corresponding set composition, and set $\sigma:=\rho(\mathbf{B})$, which determines $\mathbf{B}$ as $\mathbf{A}_{\beta}^{\sigma}$. But then $\mathbf{B}_{\sigma}\preceq_{*}\mathbf{B}\preceq_{*}\mathbf{A}$, which implies that $\sigma=\rho(\mathbf{A})=\rho$ via part $(vi)$ of Lemma \ref{szord}, and part $(ii)$ is also established. This completes the proof of the corollary.
\end{proof}

\begin{ex}
We have $\rho(\mathbf{B}_{\rho})=\rho$ for every $\rho \in S_{n}$, and $\rho(\mathbb{N}_{n})$ is $\operatorname{Id} \in S_{n}$, which we henceforth write as $\operatorname{Id}_{n}$. While we have $\mathbf{A}\preceq(\mathbb{N}_{n})$ for every $\mathbf{A}\vDash\mathbb{N}_{n}$, the relation $\mathbf{A}\preceq_{*}(\mathbb{N}_{n})$ holds, via Corollary \ref{weakord}, if and only if $\rho(\mathbf{A})=\operatorname{Id}_{n}$. In the remaining expressions appearing in Example \ref{Aalpharhoex} when $n=3$, each set composition of $\mathbb{N}_{3}$ appears as $\mathbf{A}_{\alpha}^{\rho}$ for two choices of $\rho$. In each one of them, the first one is with $\rho=\rho(\mathbf{A})$. This $\rho$ is $\operatorname{Id}_{3}=123$ for two of these set compositions of $\mathbb{N}_{3}$, in correspondence with the large part in Figure 3 of \cite{[BZ]}, and we have $\mathbf{B}_{321}$ as an isolated element in that figure. The remaining four expressions with $\rho=\rho(\mathbf{A})$ are the other maximal elements in that figure, each of which lie over its corresponding $\mathbf{B}_{\rho}$. \label{rhoAex}
\end{ex}

\begin{ex}
If $\mathbf{A}$ is $(23,5,14)\vDash\mathbb{N}_{5}$ from Example \ref{sepsetcomp}, then those $\mathbf{B}\vDash\mathbb{N}_{5}$ that satisfy $\mathbf{B}\preceq_{*}\mathbf{A}$ are $\mathbf{A}$ itself, $(2,3,5,14)$, $(23,5,1,4)$, and $\mathbf{B}_{23514}$. The set compositions that satisfy $\mathbf{B}\preceq\mathbf{A}$ but are not related via the weaker order are $(3,2,5,14)$, $(23,5,4,1)$, and the set compositions $\mathbf{B}_{32514}$, $\mathbf{B}_{23541}$, and $\mathbf{B}_{32541}$. Since $\alpha:=\operatorname{sz}(\mathbf{A})=212\vDash5$, our $\mathbf{A}$ can be written as $\mathbf{A}_{\alpha}^{\rho}$ where $\rho$ is one of the four elements of $S_{5}$ appearing as an index of $\mathbf{B}_{\rho}$ here, but only the first one, whose one-line notation is 23514 (namely $\rho$ from Example \ref{sepsetcomp}), is $\rho(\mathbf{A})$. \label{refAex}
\end{ex}

\begin{rmk}
In \cite{[BZ]} there are orders $\preceq$ and $\preceq_{*}$ also on set partitions of $\mathbb{N}_{n}$. As a set partition $\mathbf{L}\vdash\mathbb{N}_{n}$ of length $\ell$ can be seen as $S_{\ell}$-orbit of set compositions of $\mathbb{N}_{n}$, one can define them via Definition \ref{setord} by saying that $\mathbf{L}\preceq\mathbf{M}$ (resp. $\mathbf{L}\preceq_{*}\mathbf{M}$) if and only if there exist $\mathbf{A}\vDash\mathbb{N}_{n}$ whose orbit is $\mathbf{L}$ and $\mathbf{B}\vDash\mathbb{N}_{n}$ that maps to $\mathbf{M}$ such that $\mathbf{A}\preceq\mathbf{B}$ (resp. $\mathbf{A}\preceq_{*}\mathbf{B}$). \label{partord}
\end{rmk}
As an example for the orbits in Remark \ref{partord}, we note that the set partition $A(\alpha)$ from Equation (15) of \cite{[BRRZ]} corresponds to the orbit of our $\mathbf{A}_{\alpha}^{\operatorname{Id}_{n}}$.

\subsection{Non-Commutative Quasi-Symmetric Functions}

We have already seen the ring $R\langle\langle\mathbf{x}_{\infty}\rangle\rangle$. It admits two actions of $S_{\mathbb{N}}$, that reduce in the commutative quotient to those whose invariants are $\mathtt{Sym}$ and $\mathtt{QSym}$ respectively. The simpler one, which is via ring automorphisms, will be mentioned shortly later. We are interested in the one that is in the spirit of \cite{[Hi2]}, as is mentioned in \cite{[Hi1]}.

Since when there is no restriction on the variables in $R\langle\langle\mathbf{x}_{\infty}\rangle\rangle$, using exponents for monomials is no longer an adequate way for writing them (as the monomial $x_{1}x_{2}x_{1}$ shows). We thus make the following definition.
\begin{defn}
Recall the action $\rho:I \mapsto \rho I$ of $\rho \in S_{\mathbb{N}}$ on finite subsets $I$ of a fixed size $\ell$ of $\mathbb{N}$
\begin{enumerate}[$(i)$]
\item Let $\eta$ be a finite sequence of integers, of some length $d$. Then $x^{\eta} \in R\langle\langle\mathbf{x}_{\infty}\rangle\rangle$ is the monomial $\prod_{j=1}^{d}x_{\eta_{j}}$, multiplied in the order they appear in $\eta$.
\item Let $I$ be the set of numbers appearing in $\eta$, of size $\ell$, written in increasing order, so that for every $1 \leq k \leq d$ there exists $j=j(k)$ such that $\eta_{k}$ is the $j$th element of $I$. Take $\rho \in S_{\mathbb{N}}$, and write $\rho I$ also in increasing order. We then define $\rho\eta$ to be sequence of length $d$ in which each $\eta_{k}$ with $1 \leq k \leq d$ is replaced by the $j$th entry of $\rho I$.
\item The action of an element $\rho \in S_{\mathbb{N}}$ on a general element $\sum_{\eta}c_{\eta}x^{\eta} \in R\langle\langle\mathbf{x}_{\infty}\rangle\rangle$, with $c_{\eta} \in R$ for every $\eta$, takes it to $\sum_{\eta}c_{\eta}x^{\rho\eta}$ for $\rho\eta$ from part $(ii)$.
\item A \emph{non-commutative quasi-symmetric function}, or a \emph{word quasi-symmetric function}, is an element of $R\langle\langle\mathbf{x}_{\infty}\rangle\rangle$ that has finite degree and is invariant under this action of $S_{\mathbb{N}}$. The set of such elements is denoted by $\mathtt{NCQSym}$.
\end{enumerate} \label{NCQSymdef}
\end{defn}
If we denote the part of $R\langle\langle\mathbf{x}_{\infty}\rangle\rangle$ that is homogeneous of degree $d$ (which contains every $x^{\eta}$ for $\eta$ of length $d$) by $R\langle\langle\mathbf{x}_{\infty}\rangle\rangle_{d}$, then this submodule is closed under the action of $S_{\mathbb{N}}$ from Definition \ref{NCQSymdef}, and $\mathtt{NCQSym}$ is the direct sum over $d$ of the invariant subspace of $R\langle\langle\mathbf{x}_{\infty}\rangle\rangle_{d}$.

\medskip

Definition \ref{NCQSymdef} implies that a basis for $\mathtt{NCQSym}$ can be described once we parameterize the orbits of the action of $S_{\mathbb{N}}$ on monomials in $R\langle\langle\mathbf{x}_{\infty}\rangle\rangle$. To do so, we will consider an alternative notation for the monomials in $R\langle\langle\mathbf{x}_{\infty}\rangle\rangle$.
\begin{lem}
For every $d\geq0$ there is a natural bijection between the set of sequences $\eta$ of natural numbers of length $d$ and the set of pairs $(\mathbf{A},I)$ where $\mathbf{A}\vDash\mathbb{N}_{d}$ and $I\subseteq\mathbb{N}$ satisfy $\ell(\mathbf{A})=|I|$. \label{altmon}
\end{lem}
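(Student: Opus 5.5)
The bijection will be built explicitly in both directions, and then we check that the two maps are mutually inverse. Given a sequence $\eta=(\eta_{1},\ldots,\eta_{d})$ of natural numbers, let $I$ be the set of values occurring in $\eta$, of size $\ell$, written in increasing order as $i_{1}<\cdots<i_{\ell}$. For each $1\leq j\leq\ell$ put
\[A_{j}:=\{k\in\mathbb{N}_{d}\;|\;\eta_{k}=i_{j}\}.\]
Every $A_{j}$ is non-empty because $i_{j}$ occurs in $\eta$. The sets are pairwise disjoint because each position $k$ carries exactly one value. Their union is $\mathbb{N}_{d}$. Hence $\mathbf{A}:=(A_{1},\ldots,A_{\ell})\vDash\mathbb{N}_{d}$ is a set composition, in the sense of Definition \ref{setcomppar}, with $\ell(\mathbf{A})=|I|$. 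We send $\eta$ to the pair $(\mathbf{A},I)$. For $d=0$ the empty sequence goes to $(\emptyset,\emptyset)$.

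Conversely, take a pair $(\mathbf{A},I)$ with $\ell(\mathbf{A})=|I|=\ell$, and write $I=\{i_{1}<\cdots<i_{\ell}\}$. For $k\in\mathbb{N}_{d}$ there is a unique $j$ with $k\in A_{j}$, since $\mathbf{A}$ is a set composition. Define $\eta_{k}:=i_{j}$. Each $A_{j}$ is non-empty, so every $i_{j}$ is actually attained. Therefore the set of values of $\eta$ is exactly $I$.

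It remains to check that the two maps are mutually inverse.
\begin{enumerate}[$(i)$]
\item Start from $\eta$ and pass to $(\mathbf{A},I)$. Rebuilding a sequence puts $i_{j}$ at every position of $A_{j}$, and these are precisely the positions where $\eta$ takes the value $i_{j}$. So we recover $\eta$.
\item Start from $(\mathbf{A},I)$ and build $\eta$. The value set of $\eta$ is $I$, as shown above. The level set of the $j$th smallest value $i_{j}$ is exactly $A_{j}$, so we recover the pair $(\mathbf{A},I)$, including the order of the sets in $\mathbf{A}$.
\end{enumerate}

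The bijection is natural in the sense needed later. Under it, the action of $\rho\in S_{\mathbb{N}}$ from Definition \ref{NCQSymdef} fixes $\mathbf{A}$ and replaces $I$ by $\rho I$, because $\rho\eta$ is defined by replacing the $j$th element of $I$ with the $j$th element of $\rho I$. This is immediate from the definitions. There is no real obstacle in the argument; the one point needing care is that the ordering of the sets in $\mathbf{A}$ is tied to the increasing order of $I$, and this is what makes the correspondence well defined.
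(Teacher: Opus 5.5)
Your proof is correct and is essentially the paper's argument: the same level-set map $\eta\mapsto(\mathbf{A},I)$ with the sets ordered by the increasing order of $I$, and the same inverse assigning $i_{j}$ to every position in $A_{j}$. You are slightly more explicit than the paper in checking non-emptiness and both compositions, and your remark that the bijection turns the $S_{\mathbb{N}}$-action into $(\mathbf{A},I)\mapsto(\mathbf{A},\rho I)$ matches the observation the paper makes immediately after the lemma.
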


\begin{proof}
Given such a sequence $\eta$, we take $I$ and $\ell$ to be the associated set from Definition \ref{NCQSymdef} and its size respectively, and write $I$ as $\{i_{j}\}_{j=1}^{\ell}$ in increasing order. For every $1 \leq j\leq\ell$, we write $A_{j}$ to be the set of indices of entries in $\eta$ that equal $i_{j}$. Then it is clear that $\mathbf{A}:=(A_{1},\ldots,A_{\ell})$ is a set partition of $\mathbb{N}_{d}$, of length $\ell=|I|$. This gives a map from sequences to pairs.

Conversely, assume that $\mathbf{A}\vDash\mathbb{N}_{d}$ and $I$ with $\ell(\mathbf{A})=|I|$ are given, and write again $I=\{i_{j}\}_{j=1}^{\ell}$ in increasing order. Then, for every $1 \leq k \leq d$, there exists a unique $j=j(k)$ for which $k \in A_{j}$, and we set $\eta_{k}:=i_{j}$, yielding a sequence of length $d$. This is the map in the opposite direction, and it is clear that these maps are inverses, yielding the desired bijection. This proves the lemma.
\end{proof}
Using the bijection from Lemma \ref{altmon}, we can writ each monomial $x^{\eta}$ from Definition \ref{NCQSymdef} also as $x_{I}^{\mathbf{A}}$. Then it is clear that the action of $\rho \in S_{\mathbb{N}}$ on $R\langle\langle\mathbf{x}_{\infty}\rangle\rangle$ takes a monomial $x_{I}^{\mathbf{A}}$, with $\ell(\mathbf{A})=|I|$, to $x_{\rho I}^{\mathbf{A}}$. Using this, now get, via Definitions \ref{NCQSymdef} and \ref{insset}, the following result, which compares with the expressions involving $\Delta$ in Subsection 5.2 of \cite{[BZ]}.
\begin{lem}
Set $\mathrm{M}_{\mathbf{A}}$ for a set composition $\mathbf{A}\vDash\mathbb{N}_{d}$ with $\ell(\mathbf{A})=\ell$ to be the \emph{monomial non-commutative quasi-symmetric function} $\sum_{|I|=\ell}x_{I}^{\mathbf{A}}$, so that in particular $\mathrm{M}_{\emptyset}=1$. Then $\{\mathrm{M}_{\mathbf{A}}\}_{\mathbf{A}}$, in which $\mathbf{A}$ goes over all set compositions of $\mathbb{N}_{d}$ for all non-negative integers $d$, is a basis for $\mathtt{NCQSym}$ over $R$. This module is closed under the product from $R\langle\langle\mathbf{x}_{\infty}\rangle\rangle$, yielding a quasi-shuffle like rule of the form $\mathrm{M}_{\mathbf{A}}\mathrm{M}_{\mathbf{B}}=l_{A_{1}}\mathrm{M}_{\bar{\mathbf{A}}}\mathrm{M}_{\mathbf{B}}+l_{B_{1}+n}\mathrm{M}_{\mathbf{A}} \mathrm{M}_{\bar{\mathbf{B}}}+l_{A_{1}\cup(B_{1}+n)}\mathrm{M}_{\bar{\mathbf{A}}}\mathrm{M}_{\bar{\mathbf{B}}}$ wherever $\mathbf{A}=l_{A_{1}}\overline{A}\vDash\mathbb{N}_{n}$ and $\mathbf{B}=l_{B_{1}}\overline{B}\vDash\mathbb{N}_{m}$ are non-trivial set compositions. \label{monNCQSym}
\end{lem}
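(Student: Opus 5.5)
The plan is to handle the three assertions of Lemma \ref{monNCQSym} in turn, working with the bijection of Lemma \ref{altmon}. Under it every monomial $x^{\eta}$ of degree $d$ is written uniquely as $x_{I}^{\mathbf{A}}$, and $\rho \in S_{\mathbb{N}}$ acts by $x_{I}^{\mathbf{A}} \mapsto x_{\rho I}^{\mathbf{A}}$.

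\emph{Basis.} Since $S_{\mathbb{N}}$ acts transitively on the subsets of $\mathbb{N}$ of a fixed size $\ell$, the orbits of monomials are indexed exactly by the set compositions $\mathbf{A}\vDash\mathbb{N}_{d}$. An element $\sum_{\eta}c_{\eta}x^{\eta}$ of degree $d$ is therefore invariant if and only if $c_{\eta}$ depends only on $\mathbf{A}$. This means it equals $\sum_{\mathbf{A}}c_{\mathbf{A}}\mathrm{M}_{\mathbf{A}}$, a sum that is finite in each degree by Remark \ref{StirBell}. Distinct $\mathrm{M}_{\mathbf{A}}$ involve disjoint sets of monomials, so they are linearly independent over any $R$. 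Finally, $\mathrm{M}_{\emptyset}=1$ because the empty $I$ gives the empty monomial.

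\emph{Product.} Take $\mathbf{A}\vDash\mathbb{N}_{n}$ and $\mathbf{B}\vDash\mathbb{N}_{m}$. A product $x_{I}^{\mathbf{A}}x_{J}^{\mathbf{B}}$ is the word $x^{\eta}$ obtained by concatenating the two words, with $\eta$ of length $n+m$. Its set of letters is $K=I\cup J$, and its set composition $\mathbf{C}\vDash\mathbb{N}_{n+m}$ has as $j$th part, for the $j$th smallest $k\in K$, the union of $A_{i}$ (if $k=i$-th element of $I$) and $B_{i'}+n$ (if $k=i'$-th element of $J$). Summing over all $I,J$ is the same as summing over all $K$ together with a choice of how $K$ splits into $I$, $J$, or both. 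Since this choice depends only on the relative order, it gives $\sum_{\mathbf{C}}\mathrm{M}_{\mathbf{C}}$, showing closure under the product.

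To get the recursive form, condition on the smallest element $k_{1}$ of $K$. There are three cases: $k_{1}\in I\setminus J$, $k_{1}\in J\setminus I$, or $k_{1}\in I\cap J$. In each case the first part of $\mathbf{C}$ is $A_{1}$, $B_{1}+n$, or $A_{1}\cup(B_{1}+n)$ respectively. What remains is exactly the same construction applied to the pair $(\bar{\mathbf{A}},\mathbf{B})$, $(\mathbf{A},\bar{\mathbf{B}})$, or $(\bar{\mathbf{A}},\bar{\mathbf{B}})$, with the resulting set composition living on the complement of that first part and then standardized. This is precisely the operator $l$ from Definition \ref{insset}: the inverse standardization $\operatorname{st}_{X}^{-1}$ takes the standardized product back to $X=\mathbb{N}_{n+m}$ minus the first set.

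\emph{Main obstacle.} The one delicate point is checking that standardization is compatible with the shift convention. Let $\mathbf{C}'$ be a set composition occurring in $\mathrm{M}_{\bar{\mathbf{A}}}\mathrm{M}_{\mathbf{B}}$. Its parts are subsets of $\mathbb{N}_{(n-|A_{1}|)+m}$, with the parts coming from $\mathbf{B}$ shifted by $n-|A_{1}|$, while in $\mathbf{C}$ they are shifted by $n$. The check is that the order-preserving map $\iota_{X}^{-1}$, for $X=\mathbb{N}_{n+m}\setminus A_{1}$, sends $\operatorname{st}(\mathbb{N}_{n}\setminus A_{1})$ onto $\mathbb{N}_{n}\setminus A_{1}$ and sends the block $\{n-|A_{1}|+1,\ldots\}$ onto $\{n+1,\ldots,n+m\}$. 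This holds because $A_{1}\subseteq\mathbb{N}_{n}$, so everything above $n$ is untouched apart from a translation. The same check applies in the other two cases, where we use $B_{1}+n\subseteq\mathbb{N}_{n+m}\setminus\mathbb{N}_{n}$. The condition $d\geq m-a$ of Definition \ref{insset} is then automatic.
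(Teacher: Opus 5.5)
Your proposal is correct and follows the route the paper indicates: the paper gives no written proof, saying only that the lemma follows from Lemma \ref{altmon} and Definitions \ref{NCQSymdef} and \ref{insset}. You make this explicit by taking the orbits of $x_{I}^{\mathbf{A}}\mapsto x_{\rho I}^{\mathbf{A}}$ for the basis, and by splitting according to the smallest letter plus the standardization check for the recursion. The one difference is order: you obtain the full path expansion (the paper's Corollary \ref{prodNCMs}) before the recursion, whereas the paper states the recursion first and deduces that expansion from it.
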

As we saw in Remark \ref{StirBell} that for each $d$, the rank of the degree $d$ part of $\mathtt{NCQSym}$ is finite, we get via Lemma \ref{monNCQSym} that $\mathtt{NCQSym}$ is a (non-commutative) graded algebra through Definition \ref{graded}, with the unit being the isomorphism between $R$ and the part $R\langle\langle\mathbf{x}_{\infty}\rangle\rangle_{0}=R\mathrm{M}_{\emptyset}$ of degree 0 inside $\mathtt{NCQSym}$.

\medskip

The relevant extension of Definition \ref{pathsmult} in this case is as follows.
\begin{defn}
Take an element $\omega \in P_{\ell,k}^{h}$ for some $0 \leq h\leq\min\{\ell,k\}$, as well as two set compositions $\mathbf{A}\vDash\mathbb{N}_{n}$ and $\mathbf{B}\vDash\mathbb{N}_{m}$ of respective lengths $\ell$ and $k$. We replace $\mathbf{B}$ by $\mathbf{B}{[+n]}$ from Definition \ref{ordset}, and form the set compositions $\mathbf{C}_{\omega}(\mathbf{A},\mathbf{B})\vDash\mathbb{N}_{n+m}$, of length $\ell+k-h$, in the following manner. We put the sets from $\mathbf{A}$ in the locations of the l's and the b's of $\omega$ (respecting the order). Those of $\mathbf{B}[+n]$ are put in the same manner in those of the r's and the b's, and we replace the two sets in each b of $\omega$ by their (disjoint) union. \label{pathsNCQ}
\end{defn}
The analogue of Corollaries \ref{prodMs} and \ref{prodMqs} here is the following assertion, using the notation from Definition \ref{pathsNCQ}.
\begin{cor}
If $\mathbf{A}\vDash\mathbb{N}_{n}$ and $\mathbf{B}\vDash\mathbb{N}_{m}$ are standardized set compositions with $\ell(\mathbf{A})=\ell$ and $\ell(\mathbf{B})=k$, then we have $\mathrm{M}_{\mathbf{A}}\mathrm{M}_{\mathbf{B}}=\sum_{\omega \in P_{\ell,k}}\mathrm{M}_{\mathbf{C}_{\omega}(\mathbf{A},\mathbf{B})}$. \label{prodNCMs}
\end{cor}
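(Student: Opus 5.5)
The plan is to derive Corollary \ref{prodNCMs} from the recursive rule of Lemma \ref{monNCQSym}, by induction on $\ell+k$, mirroring how Corollary \ref{prodMs} follows from Lemma \ref{monQSym}. If $\ell=0$ or $k=0$, then $P_{\ell,k}$ has a single element, consisting only of r's or only of l's. The corresponding $\mathbf{C}_{\omega}(\mathbf{A},\mathbf{B})$ is $\mathbf{B}[+0]=\mathbf{B}$ or $\mathbf{A}$, which agrees with $\mathrm{M}_{\emptyset}=1$.

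For $\ell,k>0$, I will split $P_{\ell,k}$ according to the first letter of $\omega$. The first letter is l when $\omega=\mathrm{l}\omega'$ with $\omega'\in P_{\ell-1,k}$, it is r when $\omega=\mathrm{r}\omega'$ with $\omega'\in P_{\ell,k-1}$, and it is b when $\omega=\mathrm{b}\omega'$ with $\omega'\in P_{\ell-1,k-1}$. These three maps are bijections onto their respective sets, and the number of l's, r's and b's in $\omega'$ matches the required lengths. Applying the induction hypothesis to the three products $\mathrm{M}_{\bar{\mathbf{A}}}\mathrm{M}_{\mathbf{B}}$, $\mathrm{M}_{\mathbf{A}}\mathrm{M}_{\bar{\mathbf{B}}}$ and $\mathrm{M}_{\bar{\mathbf{A}}}\mathrm{M}_{\bar{\mathbf{B}}}$ in Lemma \ref{monNCQSym}, it remains to check three identities:
\[
l_{A_{1}}\mathbf{C}_{\omega'}(\bar{\mathbf{A}},\mathbf{B})=\mathbf{C}_{\mathrm{l}\omega'}(\mathbf{A},\mathbf{B}),
\qquad
l_{B_{1}+n}\mathbf{C}_{\omega'}(\mathbf{A},\bar{\mathbf{B}})=\mathbf{C}_{\mathrm{r}\omega'}(\mathbf{A},\mathbf{B}),
\]
and similarly $l_{A_{1}\cup(B_{1}+n)}\mathbf{C}_{\omega'}(\bar{\mathbf{A}},\bar{\mathbf{B}})=\mathbf{C}_{\mathrm{b}\omega'}(\mathbf{A},\mathbf{B})$.

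Verifying these identities is the main point, and it comes down to unwinding standardizations. I will take the first identity as the model case.

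1. By construction, $\mathbf{C}_{\mathrm{l}\omega'}(\mathbf{A},\mathbf{B})$ has first set $A_{1}$. Its remaining sets are obtained by placing the sets $A_{2},\ldots,A_{\ell}$ and $B_{j}+n$ according to $\omega'$.
2. These remaining sets form a set composition of $X:=\mathbb{N}_{n+m}\setminus A_{1}$.
3. Standardizing by $\operatorname{st}_{X}$ sends $A_{2},\ldots,A_{\ell}$ to the sets of $\bar{\mathbf{A}}\vDash\mathbb{N}_{n-|A_{1}|}$. This is because $\iota_{X}$ restricted to $\mathbb{N}_{n}\setminus A_{1}$ is exactly the standardization defining $\bar{\mathbf{A}}$.
4. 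Since every element of $\mathbb{N}_{n}\setminus A_{1}$ is smaller than every element of $\mathbb{N}_{n+m}\setminus\mathbb{N}_{n}$, the map $\iota_{X}$ sends each $B_{j}+n$ to $B_{j}+(n-|A_{1}|)$.
5. Hence the standardization is $\mathbf{C}_{\omega'}(\bar{\mathbf{A}},\mathbf{B})$, and Definition \ref{insset} gives the claim.

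The other two identities follow the same pattern. In the r case, the set removed is $B_{1}+n$, and $\iota_{X}$ is the identity on $\mathbb{N}_{n}$ and shifts the remaining elements of $\mathbb{N}_{n+m}\setminus\mathbb{N}_{n}$ to account for $B_{1}$. In the b case, both $A_{1}$ and $B_{1}+n$ are removed, and the same order argument applies. In each case the condition $d\geq m-a$ from Definition \ref{insset} holds automatically, since all sets involved lie in $\mathbb{N}_{n+m}$.

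The only delicate step is the bookkeeping in step 4, namely that removing a subset of the lower block $\mathbb{N}_{n}$ shifts the upper block uniformly. I expect this to be the main, though minor, obstacle. Summing the three cases over the bijection then gives $\mathrm{M}_{\mathbf{A}}\mathrm{M}_{\mathbf{B}}=\sum_{\omega\in P_{\ell,k}}\mathrm{M}_{\mathbf{C}_{\omega}(\mathbf{A},\mathbf{B})}$.
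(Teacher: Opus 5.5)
Your proof is correct and is essentially the argument the paper intends. The paper states Corollary~\ref{prodNCMs} without a written proof, as the analogue of Corollary~\ref{prodMs}, that is, as the result of unwinding the recursive rule of Lemma~\ref{monNCQSym}; your induction on $\ell+k$, sorting $\omega$ by its first letter and checking that $\operatorname{st}_{X}$ fixes the lower block and shifts $\mathbb{N}_{n+m}\setminus\mathbb{N}_{n}$ uniformly, is exactly that unwinding made explicit, and the bookkeeping also works for the merged sets in the b case, since $\iota_{X}$ is a bijection and so commutes with unions.
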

If $\ell=0$ or $k=0$, the unique element of $P_{\ell,k}$ is such that the expression from Definition \ref{pathsNCQ} yields the other set composition, in correspondence, via Corollary \ref{prodNCMs}, with the fact that $\mathrm{M}_{\emptyset}=1$. Noting that if $\mathbf{A}\vDash\mathbb{N}_{a}$ satisfies $\ell(\mathbf{A})=1$ then $\mathbf{A}=(\mathbb{N}_{a})$, the situation in the setting from Examples \ref{pathex} and \ref{pathexq} is as follows.
\begin{ex}
The three terms of $P_{1,1}$ from Example \ref{pathex} produce, via Corollary \ref{prodNCMs}, the equality $\mathrm{M}_{(\mathbb{N}_{a})}\mathrm{M}_{(\mathbb{N}_{b})}=\mathrm{M}_{(\mathbb{N}_{a},\mathbb{N}_{b}+a)}+\mathrm{M}_{(\mathbb{N}_{a+b})}+\mathrm{M}_{(\mathbb{N}_{b}+a,\mathbb{N}_{a})}$. We now assume that $\mathbb{N}_{a+b}$ is the disjoint union of the sets $A$ and $B$, of sizes $a$ and $b$ respectively. Then using the five terms of $P_{2,1}$, the product $\mathrm{M}_{(A,B)}\mathrm{M}_{(\mathbb{N}_{c})}$ equals $\mathrm{M}_{(A,B,\mathbb{N}_{c}+n)}+\mathrm{M}_{(A,B\cap[\mathbb{N}_{c}+n])}+\mathrm{M}_{(A,\mathbb{N}_{c}+n,B)}+\mathrm{M}_{(A\cap[\mathbb{N}_{c}+n],B)}+\mathrm{M}_{(\mathbb{N}_{c}+n,A,B)}$, where $n:=a+b$. Similarly, with $P_{2,1}$ the product $\mathrm{M}_{(\mathbb{N}_{c})}\mathrm{M}_{(A,B)}$ is the sum of the terms $\mathrm{M}_{(\mathbb{N}_{c},A+c,B+c)}$, $\mathrm{M}_{(\mathbb{N}_{c}\cup[A+c],B+c)}$, $\mathrm{M}_{(A+c,\mathbb{N}_{c},B+c)}$, $\mathrm{M}_{(A+c,\mathbb{N}_{c}\cup[B+c])}$, and $\mathrm{M}_{(A+c,B+c,\mathbb{N}_{c})}$. \label{pathNCex}
\end{ex}
the involution $\omega\mapsto\tilde{\omega}$ that we saw that exists between $P_{\ell,k}$ and $P_{k,\ell}$ does not produce commutativity here, because of the asymmetry in Definition \ref{pathsNCQ}, where it is $\mathbf{B}$ to which we apply $+n$, and not $\mathbf{A}$ (similarly to the asymmetry of $\mathbf{A}|\mathbf{B}$ in Definition \ref{ordset}). This is related to the difference in the inversion numbers via Proposition \ref{projQSymq} below.

With the expressions appearing in Examples \ref{expath} and \ref{expathq}, we get the following formulae.
\begin{ex}
If $\mathbf{A}=(1,2,3)\vDash\mathbb{N}_{3}$ and $\mathbf{B}=(1)\vDash\mathbb{N}_{1}$, then the set composition $\mathbf{C}_{\omega}(\mathbf{A},\mathbf{B})$, where $\omega$ runs over the elements of $P_{3,1}$ in the order they show up in Example \ref{expath}, give $(1,2,3,4)$, $(1,2,4,3)$, $(1,4,2,3)$, $(4,1,2,3)$, $(1,2,34)$, $(1,24,3)$, and $(14,2,3)$. When we interchange $\mathbf{A}$ and $\mathbf{B}$ and let $\omega$ go over $P_{1,3}$ from that example, we get $(2,3,4,1)$, $(2,3,1,4)$, $(2,1,3,4)$, $(1,2,3,4)$, $(2,3,14)$, $(2,13,4)$, and $(12,3,4)$. With $\mathbf{A}=\mathbf{B}=(1,2)\vDash\mathbb{N}_{1}$ and the entries of $P_{2,2}$, the resulting set compositions are $(1,2,3,4)$, $(3,4,1,2)$, $(1,3,2,4)$, $(3,1,4,2)$, $(1,3,4,2)$, and $(3,1,2,4)$, followed by $(1,3,24)$, $(3,1,24)$, $(1,23,4)$, $(3,14,2)$, $(13,2,4)$, and $(13,4,2)$, and finally $(13,24)$. \label{expathNC}
\end{ex}
As the sequences $\omega$ were ordered in Example \ref{expath} in a way that makes the involution $\omega\mapsto\tilde{\omega}$ visible, the difference in the resulting set compositions is visible in Examples \ref{pathNCex} and \ref{expathNC}.

\medskip

We would like to have an analogue of the fundamental quasi-symmetric functions also in this setting. The non-commutativity requires that the order of multiplication be incorporated into the definition. We will get too many such elements, which will thus be linearly dependent (they will contain one of the bases from \cite{[BZ]} as a subset), but some of them will play a role in the formula for the antipode below. For this we shall also need an alternative notation for the monomial basis from Lemma \ref{monNCQSym}.
\begin{defn}
Take a composition $\alpha \vDash n$ and a permutation $\rho \in S_{n}$.
\begin{enumerate}[$(i)$]
\item If $\mathbf{A}$ is $\mathbf{A}_{\alpha}^{\rho}$ from Lemma \ref{szord}, then denote $\mathrm{M}_{\mathbf{A}}$ also by $\mathrm{M}_{\alpha}^{\rho}$.
\item The \emph{fundamental non-commutative quasi-symmetric function} $\mathrm{F}_{\alpha}^{\rho}$ that is associated with $\alpha$ and $\rho$ is $\sum_{\beta\preceq\alpha}\mathrm{M}_{\beta}^{\rho}$.
\item For any set composition $\mathbf{A}\vDash\mathbb{N}_{n}$ we set $\mathrm{F}_{\mathbf{A}}:=\sum_{\mathbf{B}\preceq_{*}\mathbf{A}}\mathrm{M}_{\mathbf{B}}$.
\item We define $\mathrm{E}_{\alpha}^{\rho}$ to be $\sum_{\beta\succeq\alpha}\mathrm{M}_{\beta}^{\rho}$.
\end{enumerate} \label{fundNCQSym}
\end{defn}
Lemma \ref{szord} implies that $\mathrm{F}_{\alpha}^{\rho}$ from Definition \ref{fundNCQSym} can be written as the sum of $\mathrm{M}_{\mathbf{A}}$ over all set compositions $\mathbf{A}\vDash\mathbb{N}_{n}$ for which $\operatorname{sz}(\mathbf{A})\preceq\alpha$ and $\mathbf{B}_{\rho}\preceq\mathbf{A}$. When $n=0$, all the expressions from that definition reduce to $\mathrm{M}_{\emptyset}=1$. Note that our $\mathrm{F}_{\mathbf{A}}$ is not the same as $Q_{\mathbf{A}}$ from \cite{[BZ]}, which is defined to be $\sum_{\mathbf{B}\succeq_{*}\mathbf{A}}\mathrm{M}_{\mathbf{B}}$.
\begin{ex}
Example \ref{Aalpharhoex} implies that $\mathrm{M}_{n}^{\rho}=\mathrm{M}_{(\mathbb{N}_{n})}$ in Definition \ref{fundNCQSym} for every $\rho \in S_{n}$, and $\mathrm{E}_{n}^{\rho}=\mathrm{M}_{(\mathbb{N}_{n})}$ as well. We also get $\mathrm{F}_{1^{n}}^{\rho}=\mathrm{M}_{1^{n}}^{\rho}=\mathrm{M}_{\mathbf{B}_{\rho}}$ for every such $\rho$. For $n=0$ all the expressions are just $\mathrm{M}_{\emptyset}=1$, and when $n=1$ they are all equal to $\mathrm{M}_{(1)}$. The remaining expressions for $n=2$ are given by $\mathrm{E}_{11}^{12}=\mathrm{F}_{2}^{12}=\mathrm{M}_{(12)}+\mathrm{M}_{(1,2)}$ and $\mathrm{F}_{2}^{21}=\mathrm{F}_{11}^{21}=\mathrm{M}_{(12)}+\mathrm{M}_{(2,1)}$. When $n=3$ we get the equalities $\mathrm{M}_{21}^{123}=\mathrm{M}_{21}^{213}=\mathrm{M}_{(12,3)}$, $\mathrm{M}_{21}^{132}=\mathrm{M}_{21}^{312}=\mathrm{M}_{(13,2)}$, $\mathrm{M}_{21}^{231}=\mathrm{M}_{21}^{321}=\mathrm{M}_{(23,1)}$, $\mathrm{M}_{12}^{123}=\mathrm{M}_{21}^{132}=\mathrm{M}_{(1,23)}$, $\mathrm{M}_{12}^{213}=\mathrm{M}_{12}^{231}=\mathrm{M}_{(2,13)}$, and $\mathrm{M}_{12}^{312}=\mathrm{M}_{12}^{321}=\mathrm{M}_{(3,12)}$. The fundamental ones with indices 12 and 21 are obtained by adding $\mathrm{M}_{111}^{\rho}$ for the corresponding $\rho$, and $\mathrm{F}_{3}^{\rho}$ is the sum of the expressions $\mathrm{M}_{3}^{\rho}=\mathrm{M}_{(123)}$, $\mathrm{M}_{21}^{\rho}$ and $\mathrm{M}_{12}^{\rho}$ listed here, and $\mathrm{M}_{111}^{\rho}$. \label{exfundNCQ}
\end{ex}
As $1^{n}\preceq\alpha$ for every $\alpha \vDash n$, the monomial $\mathrm{M}_{1^{n}}^{\rho}$ appears in $\mathrm{F}_{\alpha}^{\rho}$, and determines $\rho$ as we saw in Example \ref{exfundNCQ}.

\medskip

Recall the equality $\sum_{\ell=1}^{n}\ell!S(n,\ell)=\sum_{\sigma \in S_{n}}2^{|\operatorname{Des}(\sigma)|}$, where $\operatorname{Des}(\sigma)$ is the descent set of $\sigma \in S_{n}$, known as the \emph{ordered Bell identity}. We now relate the expressions from part $(iii)$ of Definition \ref{fundNCQSym} with the fundamental non-commutative quasi-symmetric functions there, as well as with the latter identity.
\begin{prop}
Given $\mathbf{A}\vDash\mathbb{N}_{n}$, we have the equality $\mathrm{F}_{\mathbf{A}}=\mathrm{F}_{\operatorname{sz}(\mathbf{A})}^{\rho(\mathbf{A})}$. This subset of the set of fundamental non-commutative quasi-symmetric functions is a basis for degree $n$ part of $\mathtt{NCQSym}$, which lifts the ordered Bell identity. The ordered Bell number $\sum_{\ell=1}^{n}\ell!S(n,\ell)$ is bounded by $2^{n-1}n!$, and the inequality is strict when $n\geq2$. \label{basdim}
\end{prop}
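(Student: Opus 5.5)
The plan has three parts: establish the equality, deduce the basis property by triangularity, and then count.

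For the equality $\mathrm{F}_{\mathbf{A}}=\mathrm{F}_{\operatorname{sz}(\mathbf{A})}^{\rho(\mathbf{A})}$, write $\alpha:=\operatorname{sz}(\mathbf{A})$ and $\rho:=\rho(\mathbf{A})$. By Lemma \ref{szord} we have $\mathbf{A}=\mathbf{A}_{\alpha}^{\rho}$. Part $(ii)$ of Corollary \ref{weakord} says that the set compositions $\mathbf{B}\preceq_{*}\mathbf{A}$ are exactly the $\mathbf{A}_{\beta}^{\rho}$ with $\beta\preceq\alpha$. Moreover, $\beta\mapsto\mathbf{A}_{\beta}^{\rho}$ is injective, since $\operatorname{sz}(\mathbf{A}_{\beta}^{\rho})=\beta$. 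Comparing the two sums in Definition \ref{fundNCQSym} then gives the equality directly.

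For the basis property, I will use the partial order $\preceq_{*}$. The defining sum expresses $\mathrm{F}_{\mathbf{A}}=\mathrm{M}_{\mathbf{A}}+\sum_{\mathbf{B}\prec_{*}\mathbf{A}}\mathrm{M}_{\mathbf{B}}$. This is unitriangular with respect to any linear extension of $\preceq_{*}$ on the finite set of set compositions of $\mathbb{N}_{n}$. By Lemma \ref{monNCQSym}, $\{\mathrm{M}_{\mathbf{A}}\}$ is a basis of the degree $n$ part, so inverting the unitriangular matrix over $R$ shows that $\{\mathrm{F}_{\mathbf{A}}\}_{\mathbf{A}\vDash\mathbb{N}_{n}}$ is a basis too. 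This works over any commutative ring, since the diagonal entries are $1$.

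For the counting, I parametrize the basis by pairs $(\alpha,\rho)$. The map $\mathbf{A}\mapsto(\operatorname{sz}(\mathbf{A}),\rho(\mathbf{A}))$ is injective, because $\mathbf{A}=\mathbf{A}_{\alpha}^{\rho}$. By part $(i)$ of Corollary \ref{weakord}, its image is the set of pairs with $\alpha=\operatorname{comp}_{n}T$ and $\operatorname{Des}(\rho)\subseteq T$. Conversely, every such pair arises: given such $(\alpha,\rho)$, take $\mathbf{A}:=\mathbf{A}_{\alpha}^{\rho}$, and Corollary \ref{weakord}$(i)$ shows $\rho(\mathbf{A})=\rho$. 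For fixed $\rho$, the number of admissible $T$ is $2^{n-1-|\operatorname{Des}(\rho)|}$. So the rank equals $\sum_{\rho}2^{n-1-|\operatorname{Des}(\rho)|}$, while Remark \ref{StirBell} gives it as $\sum_{\ell}\ell!S(n,\ell)$. Reindexing through the complement of the descent set (for instance via $\rho\mapsto\rho\circ w^{0}_{n}$, which exchanges descents and ascents) turns this into the ordered Bell identity, so the basis is a lift of that identity. I expect the main place needing care to be matching this reindexing with the precise form of the identity quoted in the paper, since its exponent is written as $|\operatorname{Des}(\sigma)|$ rather than the number of non-descents.

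Finally, the bound is immediate from the counting. Each term $2^{n-1-|\operatorname{Des}(\rho)|}$ is at most $2^{n-1}$, so the total is at most $n!\,2^{n-1}$. Equality holds only if every $\rho\in S_{n}$ has no descents, which fails for $n\geq2$ because $w^{0}_{n}$ has descents. Hence the inequality is strict when $n\geq2$.
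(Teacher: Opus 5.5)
Your proposal is correct and follows essentially the same route as the paper: Corollary \ref{weakord} for the equality, unitriangularity with respect to $\preceq_{*}$ for the basis property, and counting the subsets $T\supseteq\operatorname{Des}(\rho)$ for each fixed $\rho$. Your worry about the reindexing is unfounded. By Lemma \ref{opersDes}, $\operatorname{Des}(\rho w^{0}_{n})=\operatorname{Des}(\rho)^{t}$, which has size $n-1-|\operatorname{Des}(\rho)|$, so your map $\rho\mapsto\rho\circ w^{0}_{n}$ works; the paper instead uses $\sigma:=w^{0}_{n}\rho$, for which $\operatorname{Des}(\sigma)=\operatorname{Asc}(\rho)$ exactly.
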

The lifting property from Proposition \ref{basdim} is the statement that this basis, whose size is the dimension $\sum_{\ell=1}^{n}S(n,\ell)$ from Remark \ref{StirBell}, is the disjoint union over $\sigma \in S_{n}$, where the set corresponding to $\sigma$ has size $2^{|\operatorname{Des}(\sigma)|}$.
\begin{proof}
Corollary \ref{weakord} implies that the summands appearing in the definition of $\mathrm{F}_{\mathbf{A}}$ are $\mathbf{A}_{\beta}^{\rho}$, where $\rho=\rho(\mathbf{A})$ and $\beta$ runs over the set compositions of $n$ that satisfy $\beta\preceq\alpha$. This produces the first assertion via Definition \ref{fundNCQSym}, and the fact that they form a basis follow from the same property of $\{\mathrm{M}_{\mathbf{A}}\}_{\mathbf{A}\vDash\mathbb{N}_{n}}$ via Lemma \ref{monNCQSym}, since the matrix expressing them using that basis is unipotent (because $\preceq_{*}$ is an order).

Corollary \ref{weakord} also shows that for a given $\rho \in S_{n}$, the number of choices for $\alpha$ for which $\mathrm{F}_{\alpha}^{\rho}$ is an $\mathrm{F}_{\mathbf{A}}$ is the number of subsets of $\mathbb{N}_{n-1}$ that contain $\operatorname{Des}(\rho)$. As such a subset is determined by how it intersects the complement of $\operatorname{Des}(\rho)$ in $\mathbb{N}_{n-1}$, which is the set $\operatorname{Asc}(\rho)$ of ascents of $\rho$, the number of such sets is $2^{|\operatorname{Asc}(\rho)|}$. We associate this set with $\sigma:=w^{0}_{n}\rho$, where $w^{0}_{n} \in S_{n}$ is the longest element (taking each $i\in\mathbb{N}_{n}$ to $n+1-i$), and the first assertion follows from the fact that we then have $\operatorname{Asc}(\rho)=\operatorname{Des}(\sigma)$.

Since $\operatorname{Des}(\sigma)\subseteq\mathbb{N}_{n-1}$ for every $\sigma \in S_{n}$, we get $2^{|\operatorname{Des}(\sigma)|}\leq2^{n-1}$, yielding the desired bound. The sharp inequality follows since for $n\geq2$ there exists permutations without descents (like $\operatorname{Id}_{n}$), or using $\rho=w^{0}_{n}\sigma$ there are permutations without ascents (e.g., $w^{0}_{n}$). This proves the proposition.
\end{proof}
The total number of fundamental non-commutative quasi-symmetric functions that have degree $n$ in $\mathtt{NCQSym}$ is $2^{n-1}n!$ (because the choice of $\rho \in S_{n}$ and $\alpha \vDash n$ are independent). Proposition \ref{basdim} thus implies that for $n\geq2$ they are linearly dependent, and indeed every one that is not an $\mathrm{F}_{\mathbf{A}}$ there (and there are such) is a linear combination of that basis.
\begin{ex}
Proposition \ref{basdim} (or Definition \ref{fundNCQSym} directly) imply that $\mathrm{F}_{\mathbf{B}_{\rho}}$ equals $\mathrm{F}_{1^{n}}^{\rho}=\mathrm{M}_{1^{n}}^{\rho}$ for any $\rho \in S_{n}$, while $\mathrm{F}_{(\mathbb{N}_{n})}$ is $\mathrm{F}_{n}^{\operatorname{Id}_{n}}$. The remaining expression in degree 2 is $\mathrm{F}_{2}^{21}$, which is the linear combination $\mathrm{F}_{(12)}-\mathrm{F}_{(1,2)}+\mathrm{F}_{(2,1)}$. When $n=3$, we have $\mathrm{F}_{21}^{123}=\mathrm{F}_{(12,3)}$, $\mathrm{F}_{21}^{132}=\mathrm{F}_{(13,2)}$, $\mathrm{F}_{21}^{231}=\mathrm{F}_{(23,1)}$, $\mathrm{F}_{12}^{123}=\mathrm{F}_{(1,23)}$, $\mathrm{F}_{12}^{213}=\mathrm{F}_{(2,13)}$, and $\mathrm{F}_{12}^{312}=\mathrm{F}_{(3,12)}$, while the others from Example \ref{exfundNCQ} are given by $\mathrm{F}_{21}^{132}=\mathrm{F}_{(12,3)}-\mathrm{F}_{(1,2,3)}+\mathrm{F}_{(2,1,3)}$, $\mathrm{F}_{12}^{321}=\mathrm{F}_{(3,12)}-\mathrm{F}_{(3,1,2)}+\mathrm{F}_{(3,2,1)}$, and similarly for the remaining expressions in which $\alpha$ is 21 or 12. When $\alpha=3$ we get, for example, the equality $\mathrm{F}_{3}^{132}=\mathrm{F}_{(123)}-\mathrm{F}_{(12,3)}+\mathrm{F}_{(13,2)}$, while $\mathrm{F}_{3}^{231}=\mathrm{F}_{(123)}-\mathrm{F}_{(1,23)}+\mathrm{F}_{(2,13)}-\mathrm{F}_{(12,3)}+\mathrm{F}_{(23,1)}+\mathrm{F}_{(1,2,3)}-\mathrm{F}_{(2,1,3)}$ and $\mathrm{F}_{3}^{321}$ requires all 13 basis elements $\{\mathrm{F}_{\mathbf{A}}\}_{\mathbf{A}\vDash\mathbb{N}_{3}}$ for spanning it. \label{FAex}
\end{ex}
The expressions from Example \ref{FAex} are in correspondence with those appearing in Example \ref{rhoAex}.

\medskip

We shall be using expressions similar to those from Definitions \ref{multisets} and \ref{multq}.
\begin{defn}
Let $J$ be a multiset of size $n$ as in Definition \ref{multisets}, written as $(j_{1},\ldots,j_{n})$, and take $\rho \in S_{n}$. We define $\rho J$ to be $(j_{\rho^{-1}(1)},\ldots,j_{\rho^{-1}(n)})$, and set $x^{\rho J}$ to be the corresponding monomial from Definition \ref{NCQSymdef}. \label{multNC}
\end{defn}
The inversion in Definition \ref{multNC} is the convention with which $\rho:J\mapsto\rho J$ is an action of $S_{n}$ on sequences of length $n$. In addition, the monomial $x^{\tilde{J}}$ from Definition \ref{multq} is $x^{w^{0}_{n}J}$ in the notation from Definition \ref{multNC}.

Assuming now that $n>0$, for any subset $T\subseteq\mathbb{N}_{n-1}$ and any $\rho \in S_{n}$ we denote the fundamental non-commutative quasi-symmetric function $\mathrm{F}_{\alpha}^{\rho}$ for $\alpha:=\operatorname{comp}_{n}T$ also by $\mathrm{F}_{n,T}^{\rho}$ (or $\mathrm{F}_{\tilde{T}}^{\rho}$ through Remark \ref{gencomp}). We then obtain, using $\mathcal{M}_{n,T}$ from Definition \ref{multisets}, the following analogue of Lemmas \ref{sumFnT} and \ref{sumFnTq}.
\begin{lem}
We have $\mathrm{F}_{n,T}^{\rho}=\sum_{J\in\mathcal{M}_{n,T}}x^{\rho J}$ for every such $n$, $T$, and $\rho$. Equivalently, by setting $\mathcal{M}_{n,T}^{\rho}:=\{\rho J\;|\;J\in\mathcal{M}_{n,T}\}$ we get $\mathrm{F}_{n,T}^{\rho}=\sum_{\eta\in\mathcal{M}_{n,T}^{\rho}}x^{\eta}$, which is the sum over all monomials $x^{\eta}$ of degree $n$ in which $\eta_{\rho(k)}\leq\eta_{\rho(k+1)}$ wherever $1 \leq k<n$, and the equality is strict for any $k \in T$. \label{sumFnTNC}
\end{lem}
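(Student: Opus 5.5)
The plan is to expand the left hand side through Definition \ref{fundNCQSym} and Lemma \ref{monNCQSym} into a sum of monomials, and to match these monomials bijectively with $\mathcal{M}_{n,T}$ via $J\mapsto\rho J$. By definition $\mathrm{F}_{n,T}^{\rho}=\sum_{\beta\preceq\alpha}\mathrm{M}_{\mathbf{A}_{\beta}^{\rho}}=\sum_{\beta\preceq\alpha}\sum_{|I|=\ell(\beta)}x_{I}^{\mathbf{A}_{\beta}^{\rho}}$, with $\alpha=\operatorname{comp}_{n}T$. By part $(iii)$ of Lemma \ref{szord} and part $(i)$ of the same lemma, the set compositions $\mathbf{A}_{\beta}^{\rho}$ for distinct $\beta$ are distinct, since $\operatorname{sz}(\mathbf{A}_{\beta}^{\rho})=\beta$. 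By Lemma \ref{altmon}, distinct pairs $(\mathbf{A},I)$ give distinct monomials. Hence the right hand side is a sum of distinct monomials, each with coefficient $1$, indexed by pairs $(\beta,I)$ with $\beta\preceq\alpha$ and $|I|=\ell(\beta)$. Since $\rho$ is a bijection, the monomials $x^{\rho J}$ for $J\in\mathcal{M}_{n,T}$ are also distinct. It therefore suffices to show that the two sets of sequences coincide.

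First I record the convention: by Definition \ref{multNC}, $\eta=\rho J$ means $\eta_{\rho(k)}=j_{k}$ for every $1\leq k\leq n$. Consequently the conditions $j_{k}\leq j_{k+1}$, strict for $k\in T$, translate exactly into $\eta_{\rho(k)}\leq\eta_{\rho(k+1)}$, strict for $k\in T$. This already gives the equivalence of the two formulations in the statement. I will then work with the multiset side.

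\emph{Forward direction.} Given $J\in\mathcal{M}_{n,T}$, let $I$ be its set of values $i_{1}<\dots<i_{\ell}$, and let $\beta\vDash n$ record the multiplicities of these values in order. Because $J$ is non-decreasing, the set $\{m: j_{m}=i_{j}\}$ is a block of consecutive indices. The set composition $\mathbf{A}$ attached to $\eta=\rho J$ in Lemma \ref{altmon} therefore has $j$th set equal to $\rho$ applied to this consecutive block. Thus $\mathbf{A}$ is obtained by merging consecutive singletons of $\mathbf{B}_{\rho}$, so $\mathbf{B}_{\rho}\preceq\mathbf{A}$ with $\operatorname{sz}(\mathbf{A})=\beta$, and uniqueness in part $(iii)$ of Lemma \ref{szord} gives $\mathbf{A}=\mathbf{A}_{\beta}^{\rho}$. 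The strictness $j_{k}<j_{k+1}$ for $k\in T$ says that every $k\in T$ is a boundary between blocks, i.e.\ $T\subseteq\operatorname{comp}_{n}^{-1}\beta$, which is $\beta\preceq\alpha$.

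\emph{Reverse direction.} Given $\beta\preceq\alpha$ and $I$, let $\eta$ be the sequence of $x_{I}^{\mathbf{A}_{\beta}^{\rho}}$ and set $J:=\rho^{-1}\eta$. The same computation, read backwards, shows that $J$ is constant on the consecutive blocks determined by $\beta$ and takes the increasing values of $I$ on successive blocks. Hence $J$ is non-decreasing and strict at the block boundaries $\operatorname{comp}_{n}^{-1}\beta\supseteq T$, so $J\in\mathcal{M}_{n,T}$.

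The only delicate point is keeping the $\rho$ versus $\rho^{-1}$ convention of Definition \ref{multNC} consistent with the one-line convention for $\mathbf{B}_{\rho}$. I will fix $\eta_{\rho(k)}=j_{k}$ at the outset and check it against the case $\alpha=1^{n}$, where both sides reduce to $\mathrm{M}_{\mathbf{B}_{\rho}}$ (Example \ref{exfundNCQ}). The rest is the same bookkeeping as in Lemmas \ref{sumFnT} and \ref{sumFnTq}.
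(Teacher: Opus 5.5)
Your proposal is correct and follows the paper's own route. The paper only sketches the argument: it observes that for strictly increasing $J$ the monomial $x^{\rho J}$ lies in $\mathrm{M}_{\mathbf{B}_{\rho}}$, and then appeals to the ideas behind Lemmas \ref{sumFnT} and \ref{sumFnTq}. You supply the bookkeeping the paper leaves implicit: the convention $\eta_{\rho(k)}=j_{k}$, the consecutive-block description of the set composition identifying it as $\mathbf{A}_{\beta}^{\rho}$ with $\beta\preceq\alpha$, and the distinctness of monomials on both sides.
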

Note that if $J$ has strictly increasing entries, then the smallest variable in $x^{\rho J}$ appears in the location number $\rho(1)$ etc., so that the monomial non-commutative quasi-symmetric function containing this monomial is indeed $\mathrm{M}_{\mathbf{B}_{\rho}}$ via Definition \ref{setwithr}. Combining similar observations with the ideas yielding Lemmas \ref{sumFnT} and \ref{sumFnTq} proves both expressions in Lemma \ref{sumFnTNC} as well.

\medskip

For the bi-algebra structure on $\mathtt{NCQSym}$, the co-unit is, as usual through Lemma \ref{propgr}, the isomorphism between the degree 0 part $R\mathrm{M}_{\emptyset}$ and $R$ and the other homogeneous parts sent to 0. The co-product is defined, analogously to Definition \ref{DeltaQSym}, as follows.
\begin{defn}
Take an element $\mathrm{G}\in\mathtt{NCQSym}$, and replace the ordered set of non-commutative variables by the union of $\{x_{i}\}_{i=1}^{\infty}$ and $\{y_{i}\}_{i=1}^{\infty}$, where $y_{1}$ is again considered as having a larger index than $x_{i}$ for any $i$. The resulting expression $\mathrm{G}(\mathbf{x}_{\infty},\mathbf{y}_{\infty})$ in invariant under the action of $S_{\mathbb{N}}$ from Definition \ref{NCQSymdef}, and in every monomial showing up in it, we take the part of it involving $\{x_{i}\}_{i=1}^{\infty}$ (in the same order) to the left, and the with the $y_{i}$'s to the right. Thus summing the resulting tensor product of monomials over all those appearing in $\mathrm{G}(\mathbf{x}_{\infty},\mathbf{y}_{\infty})$ produces an element of $\mathtt{NCQSym}\otimes_{R}\mathtt{NCQSym}$, which we define to be $\Delta(\mathrm{G})$. \label{DeltaNCQSym}
\end{defn}
Equivalently, we can take $\mathrm{G}(\mathbf{x}_{\infty},\mathbf{y}_{\infty})$ in Definition \ref{DeltaNCQSym}, project it modulo the relation in which each $x_{i}$ commutes with each $y_{j}$, and the resulting image algebra is canonically isomorphic to $\mathtt{NCQSym}\otimes_{R}\mathtt{NCQSym}$, yielding our $\Delta(\mathrm{G})$.

Using the notation from Definition \ref{setwithr}, we obtain the following analogue of Propositions \ref{coprodQSym} and \ref{coprodQSymq}.
\begin{prop}
Assume that $\mathbf{A}=(A_{1},\ldots,A_{\ell})\vDash\mathbb{N}_{n}$ with $n>0$, and that $\alpha:=\operatorname{sz}(\mathbf{A}) \vDash n$ is $\operatorname{comp}_{n}T$ for $T\subseteq\mathbb{N}_{n-1}$, and set $\tilde{T}:=T\cup\{0,n\}$ as usual. Then we have $\Delta(\mathrm{M}_{\mathbf{A}})=\sum_{r\in\tilde{T}}\mathrm{M}_{\mathbf{A}_{|r}}\otimes\mathrm{M}_{\mathbf{A}_{r|}}$. For the fundamental non-commutative quasi-symmetric functions from Definition \ref{fundNCQSym}, the formula for the co-multiplication is $\Delta(\mathrm{F}_{\alpha}^{\rho})=\sum_{r=0}^{n}\mathrm{F}_{\alpha_{|r}}^{\rho_{|r}}\otimes\mathrm{F}_{\alpha_{r|}}^{\rho_{r|}}$. \label{coprodNCQSym}
\end{prop}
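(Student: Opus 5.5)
For the first formula, I would substitute the two ordered alphabets into $\mathrm{M}_{\mathbf{A}}=\sum_{|I|=\ell}x_{I}^{\mathbf{A}}$, exactly as in Definition \ref{DeltaNCQSym}. A monomial $x_{I}^{\mathbf{A}}$ in the combined alphabet is determined by splitting the increasing index set $I$ into an initial segment of $k$ indices from the $x$-alphabet and a final segment of $\ell-k$ indices from the $y$-alphabet; this uses that every $y_{j}$ is larger than every $x_{i}$. The $x$-letters then occupy exactly the positions in $X_{|r}=A_{1}\cup\cdots\cup A_{k}$, where $r=t_{k}\in\tilde{T}$. Once we move the $x$-part to the left in the same order, the positions are standardized via $\operatorname{st}_{X_{|r}}$, which yields $x^{\mathbf{A}_{|r}}$, and likewise the $y$-part gives $y^{\mathbf{A}_{r|}}$. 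Summing over the independent choices of the two index subsets produces $\mathrm{M}_{\mathbf{A}_{|r}}\otimes\mathrm{M}_{\mathbf{A}_{r|}}$ for each $0\leq k\leq\ell$, which is the claimed formula. All of this is just the bijection of Lemma \ref{altmon}.

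For the fundamental functions, I would start from the description $\mathrm{F}_{n,T}^{\rho}=\sum_{J\in\mathcal{M}_{n,T}}x^{\rho J}$ of Lemma \ref{sumFnTNC}. Take a multiset $J=(j_{1}\leq\cdots\leq j_{n})$ over the combined ordered alphabet $x_{1}<x_{2}<\cdots<y_{1}<y_{2}<\cdots$. There is a unique $0\leq r\leq n$ such that $j_{1},\ldots,j_{r}$ are $x$-letters and $j_{r+1},\ldots,j_{n}$ are $y$-letters. The condition $J\in\mathcal{M}_{n,T}$ splits accordingly: strictness at $k\in T$ with $k<r$ becomes the condition for $\mathcal{M}_{r,T_{|r}}$; strictness at $k>r$ becomes the condition for $\mathcal{M}_{n-r,T_{r|}}$ after shifting by $r$; and at $k=r$ strictness holds automatically, whether or not $r\in T$. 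This is the same mechanism as in the proof of Proposition \ref{coprodQSym}, and it explains why all $r$, not only those in $\tilde{T}$, appear.

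It remains to track positions under $\rho$. In $x^{\rho J}$ the letter $j_{k}$ sits at position $\rho(k)$. Hence the $x$-letters occupy the positions $\rho(\{1,\ldots,r\})$, in the order induced by $\rho$ restricted to $\{1,\ldots,r\}$. Moving them to the left while preserving their order and standardizing the positions gives the monomial $x^{\sigma J'}$, where $J'=(j_{1},\ldots,j_{r})$ and $\sigma$ is the standardization of the word $\rho(1)\cdots\rho(r)$. I then need to identify this $\sigma$ with $\rho_{|r}$ from Definition \ref{setwithr}. Since $r$ always lies in the index set of $1^{n}$, part $(i)$ there gives $(\mathbf{B}_{\rho})_{|r}=\operatorname{st}(\rho(1),\ldots,\rho(r))$, so indeed $\sigma=\rho_{|r}$. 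In the same way, the $y$-part is $y^{\rho_{r|}J''}$ with $J''=(j_{r+1},\ldots,j_{n})$. Combining this with the splitting of $\mathcal{M}_{n,T}$ shows that, for fixed $r$, the pair $(J',J'')$ runs bijectively over $\mathcal{M}_{r,T_{|r}}\times\mathcal{M}_{n-r,T_{r|}}$ (relabelling the $y$-indices). Applying Lemma \ref{sumFnTNC} again, the $r$-th piece is $\mathrm{F}_{\alpha_{|r}}^{\rho_{|r}}\otimes\mathrm{F}_{\alpha_{r|}}^{\rho_{r|}}$.

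The main obstacle I expect is the bookkeeping in this last step. One has to check that the convention $\rho J=(j_{\rho^{-1}(1)},\ldots)$ of Definition \ref{multNC} really places $j_{k}$ at position $\rho(k)$. One also has to confirm that extracting the subword by position, rather than by value, is compatible with both $\operatorname{st}$ and the definition of $\rho_{|r}$ through $\mathbf{B}_{\rho}$. I would verify this on the example $\rho=23514$ of Example \ref{sepsetcomp} before writing it in general. Finally, the case $n=0$ is trivial, and the argument also handles $r=0$ and $r=n$ uniformly.
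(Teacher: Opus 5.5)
Your argument is correct and follows the route the paper intends. The paper gives no separate proof: it presents the statement as the analogue of Propositions \ref{coprodQSym} and \ref{coprodQSymq}, obtained by substituting the split ordered alphabet of Definition \ref{DeltaNCQSym}. Your treatment of $\mathrm{F}_{\alpha}^{\rho}$ correctly supplies the missing details: the multiset description of Lemma \ref{sumFnTNC}, the automatic strictness at the cut $r$, and the identification of the standardized positions $\rho(1)\cdots\rho(r)$ with $\rho_{|r}$ through $(\mathbf{B}_{\rho})_{|r}$. One could equally sum the $\mathrm{M}$-formula over $\beta\preceq\alpha$ and regroup the pairs $(\beta,r)$ with $r$ a cut point of $\beta$.
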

Proposition \ref{coprodNCQSym}, which clearly extends to $n=0$ using the image under $\Delta$ of $\mathrm{M}_{\emptyset}=1$, shows that the co-multiplication from Definition \ref{DeltaNCQSym} is the same one from \cite{[BZ]} (see Equation (16) there). It also involves a reminiscent of the co-multiplication in one of the algebras from \cite{[MR]}, whose formula is given by $\rho\mapsto\sum_{r=0}^{n}\rho_{|r}\otimes\rho_{r|}$ for $\rho \in S_{n}$ in the notation from Definition \ref{setwithr}. In fact, this argument proves the following consequence of Proposition \ref{coprodNCQSym}.
\begin{cor}
The map taking $\rho \in S_{n}$, as an element of that algebra from \cite{[MR]}, to our $\mathrm{F}_{1^{n}}^{\rho}=\mathrm{M}_{1^{n}}^{\rho}=\mathrm{F}_{\mathbf{B}_{\rho}}$ from Example \ref{FAex}, is a map of co-algebras. \label{MRcoalg}
\end{cor}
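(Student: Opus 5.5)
The plan is to compare the co-multiplication of the Malvenuto--Reutenauer algebra with the co-multiplication formula for $\mathrm{F}_{1^{n}}^{\rho}$ supplied by Proposition \ref{coprodNCQSym}. In that algebra a permutation $\rho\in S_{n}$ is sent to $\sum_{r=0}^{n}\rho_{|r}\otimes\rho_{r|}$, where $\rho_{|r}$ and $\rho_{r|}$ are the permutations from part $(iv)$ of Definition \ref{setwithr}. Call the linear map $\phi:\rho\mapsto\mathrm{F}_{1^{n}}^{\rho}$. The goal is the identity $(\phi\otimes\phi)\circ\Delta_{MR}=\Delta\circ\phi$ on each basis element $\rho$, together with compatibility with the co-units.

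\textbf{Co-multiplication.} Take $\alpha=1^{n}$ in Proposition \ref{coprodNCQSym}. This gives $\Delta(\mathrm{F}_{1^{n}}^{\rho})=\sum_{r=0}^{n}\mathrm{F}_{(1^{n})_{|r}}^{\rho_{|r}}\otimes\mathrm{F}_{(1^{n})_{r|}}^{\rho_{r|}}$. By Definition \ref{cutatr}, $1^{n}=\operatorname{comp}_{n}\mathbb{N}_{n-1}$, so $T_{|r}=\mathbb{N}_{r-1}$ and $T_{r|}=\mathbb{N}_{n-r-1}$. Hence $(1^{n})_{|r}=1^{r}$ and $(1^{n})_{r|}=1^{n-r}$, and this also holds at the endpoints $r=0$ and $r=n$. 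The right-hand side is therefore $\sum_{r}\phi(\rho_{|r})\otimes\phi(\rho_{r|})$, which is exactly $(\phi\otimes\phi)$ applied to the MR co-product.

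\textbf{Matching the conventions.} I will check that $\rho_{|r}$ and $\rho_{r|}$ as defined here agree with the standard MR convention. Since every $r$ lies in $\tilde{T}$ for $\mathbf{B}_{\rho}$, part $(i)$ of Definition \ref{setwithr} applies directly. Thus $\mathbf{B}_{\rho_{|r}}$ is the standardization of $(\rho(1),\ldots,\rho(r))$, and $\mathbf{B}_{\rho_{r|}}$ is the standardization of $(\rho(r+1),\ldots,\rho(n))$. These are the standardized prefix and suffix of the one-line notation, which is the MR co-product.

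\textbf{Co-units.} Both co-units are the identity in degree $0$ and vanish in positive degree, and $\phi$ preserves degree. So $\varepsilon\circ\phi=\varepsilon_{MR}$.

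The only point needing care is the convention match in the previous step; everything else is direct substitution into Proposition \ref{coprodNCQSym}.
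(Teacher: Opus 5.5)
Your proposal is correct and is exactly the paper's argument: specialize Proposition~\ref{coprodNCQSym} to $\alpha=1^{n}$, use $(1^{n})_{|r}=1^{r}$ and $(1^{n})_{r|}=1^{n-r}$, and compare with the co-product $\rho\mapsto\sum_{r=0}^{n}\rho_{|r}\otimes\rho_{r|}$ that the paper attributes to \cite{[MR]}. Your explicit check that $\rho_{|r}$ and $\rho_{r|}$ are the standardized prefix and suffix of the one-line notation, together with the co-unit check, spells out what the paper leaves implicit; this identification is also where the convention matters, since with the other common form of the \cite{[MR]} co-product (restriction to value intervals) the co-algebra map would instead be $\rho\mapsto\mathrm{F}_{1^{n}}^{\rho^{-1}}$.
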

The map from Corollary \ref{MRcoalg} is, however, not an embedding of Hopf algebras, as it does not respect the algebra structure.

Recall from Proposition \ref{basdim} that the $\mathrm{F}_{\alpha}^{\rho}$'s from Definition \ref{fundNCQSym} are not a basis for $\mathtt{NCQSym}$, but it contains the basis $\{\mathrm{F}_{\mathbf{A}}\}_{\mathbf{A}\vDash\mathbb{N}_{n}}$. Using the notation from Definition \ref{setwithr}, we obtain the following property of this basis.
\begin{cor}
We have $\Delta(\mathrm{F}_{\mathbf{A}})=\sum_{r=0}^{n}\mathrm{F}_{\mathbf{A}_{|r}}\otimes\mathrm{F}_{\mathbf{A}_{r|}}$ for every $\mathbf{A}\vDash\mathbb{N}_{n}$. \label{FADelta}
\end{cor}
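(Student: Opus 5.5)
The plan is to deduce Corollary \ref{FADelta} from Proposition \ref{coprodNCQSym}, applied with $\rho=\rho(\mathbf{A})$. Set $\alpha:=\operatorname{sz}(\mathbf{A})$ and $\rho:=\rho(\mathbf{A})$. Proposition \ref{basdim} gives $\mathrm{F}_{\mathbf{A}}=\mathrm{F}_{\alpha}^{\rho}$, so Proposition \ref{coprodNCQSym} yields
\[\Delta(\mathrm{F}_{\mathbf{A}})=\sum_{r=0}^{n}\mathrm{F}_{\alpha_{|r}}^{\rho_{|r}}\otimes\mathrm{F}_{\alpha_{r|}}^{\rho_{r|}}.\]
It therefore suffices to show, for each $0\leq r\leq n$, the two equalities $\mathrm{F}_{\alpha_{|r}}^{\rho_{|r}}=\mathrm{F}_{\mathbf{A}_{|r}}$ and $\mathrm{F}_{\alpha_{r|}}^{\rho_{r|}}=\mathrm{F}_{\mathbf{A}_{r|}}$. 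Applying Proposition \ref{basdim} again, each equality reduces to checking two facts: $\operatorname{sz}(\mathbf{A}_{|r})=\alpha_{|r}$, and $\rho(\mathbf{A}_{|r})=\rho_{|r}$ (and the analogous pair for the right parts). The cases $r=0$ and $r=n$ are trivial.

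For the sizes, I would go through Definition \ref{setwithr}. If $r\in\tilde{T}$, then $\mathbf{A}_{|r}$ standardizes the first $k$ sets, so its $\operatorname{sz}$ is $(\alpha_{1},\ldots,\alpha_{k})=\alpha_{|r}$ by Lemma \ref{concexp}. If $r\notin\tilde{T}$, splitting $A_{k}$ into pieces of sizes $r-t_{k-1}$ and $t_{k}-r$ gives $\operatorname{sz}(\tilde{\mathbf{A}})=\alpha_{|r}\alpha_{r|}$, and $\alpha_{|r}$ and $\alpha_{r|}$ are exactly the first and last parts. Standardization preserves sizes.

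For the permutations, I would use Corollary \ref{weakord}. The condition $\mathbf{B}_{\rho}\preceq_{*}\mathbf{A}$ means that the one-line word of $\rho$ lists the sets of $\mathbf{A}$ in order, each in increasing order. When $r\notin\tilde{T}$, the word of $\rho$ is increasing inside $A_{k}$, so its first $r$ letters are the elements of $A_{1},\ldots,A_{k-1}$ together with the $r-t_{k-1}$ smallest elements of $A_{k}$. In other words, they form exactly $X_{|r}$ for $\tilde{\mathbf{A}}$. This shows $\mathbf{B}_{\rho}\preceq_{*}\tilde{\mathbf{A}}\preceq_{*}\mathbf{A}$, so one can work with $\tilde{\mathbf{A}}$ (or $\mathbf{A}$ itself when $r\in\tilde{T}$).

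The permutation $\rho_{|r}$ is the standardization, via $\iota_{X_{|r}}$, of the first $r$ letters of $\rho$. Standardization is order preserving, so applying it to the initial segment of $\mathbf{B}_{\rho}$ keeps both properties: the sets are consecutive in the word, and each set is increasing. Hence $\mathbf{B}_{\rho_{|r}}\preceq_{*}\mathbf{A}_{|r}$. The same argument with $X_{r|}$ gives $\mathbf{B}_{\rho_{r|}}\preceq_{*}\mathbf{A}_{r|}$. The uniqueness in part $(vi)$ of Lemma \ref{szord} then identifies $\rho(\mathbf{A}_{|r})=\rho_{|r}$ and $\rho(\mathbf{A}_{r|})=\rho_{r|}$.

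I expect the main obstacle to be the case $r\notin\tilde{T}$. There one has to confirm that the cut made in Definition \ref{setwithr}$(ii)$, which sends the smallest elements of $A_{k}$ to the left, agrees with the cut of the word of $\rho$ after $r$ letters. This is exactly where the choice $\rho=\rho(\mathbf{A})$ is needed: for other $\rho$ with $\mathbf{A}=\mathbf{A}_{\alpha}^{\rho}$ the two cuts can differ.
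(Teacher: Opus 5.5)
Your proof is correct and follows essentially the paper's argument: you reduce via Propositions \ref{basdim} and \ref{coprodNCQSym} to the identities $\operatorname{sz}(\mathbf{A}_{|r})=\alpha_{|r}$ and $\rho(\mathbf{A}_{|r})=\rho_{|r}$ (and their right-hand analogues), and then conclude by the uniqueness in Lemma \ref{szord}. The only difference is that you skip the paper's auxiliary set compositions $\mathbf{B}_{r},\mathbf{C}_{r}$, which the paper obtains from $\operatorname{Des}(\rho_{|r})\subseteq T_{|r}$ via Corollary \ref{weakord}, and instead verify $\mathbf{B}_{\rho_{|r}}\preceq_{*}\mathbf{A}_{|r}$ directly; this also spells out the step the paper only asserts, namely that the cut commutes with $\preceq_{*}$ and with standardization.
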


\begin{proof}
Proposition \ref{basdim} expresses $\mathrm{F}_{\mathbf{A}}$ as $\mathrm{F}_{\alpha}^{\rho}$ for $\alpha:=\operatorname{sz}(\mathbf{A})$ and $\rho:=\rho(\mathbf{A})$, and then Proposition \ref{coprodNCQSym} implies that its $\Delta$-image is $\sum_{r=0}^{n}\mathrm{F}_{\alpha_{|r}}^{\rho_{|r}}\otimes\mathrm{F}_{\alpha_{r|}}^{\rho_{r|}}$. When $n=0$ we only have the usual $\delta$-image of $\mathrm{F}_{\emptyset}=\mathrm{M}_{\emptyset}=1$ and there is nothing to prove, so we assume that $n\geq1$.

Next, Corollary \ref{weakord} shows that $T:=\operatorname{comp}_{n}^{-1}\alpha\subseteq\mathbb{N}_{n-1}$ contains $\operatorname{Des}(\rho)$. Definition \ref{setwithr} implies that $\operatorname{Des}(\rho_{|r})$ and $\operatorname{Des}(\rho_{r|})$ are $\operatorname{Des}(\rho)_{|r}$ and $\operatorname{Des}(\rho)_{r|}$ using the notation for these operations on sets in Definition \ref{cutatr}. They are therefore contained in $T_{|r}\operatorname{comp}_{r}^{-1}\alpha_{|r}$ and $T_{r|}\operatorname{comp}_{n-r}^{-1}\alpha_{r|}$ respectively, so that the $r$th summand is $\mathrm{F}_{\mathbf{B}_{r}}\otimes\mathrm{F}_{\mathbf{C}_{r}}$ for set compositions $\mathbf{B}_{r}\vDash\mathbb{N}_{r}$ and $\mathbf{C}_{r}\vDash\mathbb{N}_{n-r}$ by another application of Corollary \ref{weakord}.

Now, it is clear from Definitions \ref{setwithr} and \ref{cutatr} that $\operatorname{sz}(\mathbf{A}_{|r})=\alpha_{|r}=\operatorname{sz}(\mathbf{B}_{r})$ and $\operatorname{sz}(\mathbf{A}_{r|})=\alpha_{r|}=\operatorname{sz}(\mathbf{C}_{r})$. Moreover, the operations from the former definition commute with the refinement from Definition \ref{setord} and preserve the order (when extended to set compositions of ordered sets which are not necessarily $\mathbb{N}_{d}$ for some $d$), and thus commute also with the second order there, and clearly these operations are all preserved under the standardizations from Definition \ref{ordset}. It follows (via the uniqueness in part $(vi)$ of Lemma \ref{szord}) that the equalities $\rho(\mathbf{A}_{|r})=\rho_{|r}=\rho(\mathbf{B}_{r})$ and $\rho(\mathbf{A}_{r|})=\rho_{r|}=\rho(\mathbf{C}_{r})$ hold as well.

As knowing $\operatorname{sz}(\mathbf{B})$ and $\rho(\mathbf{B})$ determined a set composition $\mathbf{B}\vDash\mathbb{N}_{d}$ (via part $(iii)$ of Lemma \ref{szord}), we deduce that $\mathbf{B}_{r}=\mathbf{A}_{|r}$ and $\mathbf{C}_{r}=\mathbf{A}_{r|}$ for every $0 \leq r \leq n$, which establishes the asserted formula. This proves the corollary.
\end{proof}
Corollary \ref{FADelta} shows that the basis $\bigcup_{n=0}^{\infty}\{\mathrm{F}_{\mathbf{A}}\}_{\mathbf{A}\vDash\mathbb{N}_{n}}$ is the analogue for the fundamental basis for $\mathtt{QSym}$ and $\mathtt{NCQSym}$, in the sense that it is a basis and has the corresponding behavior under the co-multiplication.

\begin{ex}
Take $\mathbb{A}$ to be $(23,5,14)\vDash\mathbb{N}_{5}$ from Examples \ref{sepsetcomp} and \ref{refAex}, where we saw that $\alpha:=\operatorname{sz}(\mathbf{A})=212\vDash5$ and $\rho:=\rho(\mathbf{A})=23514 \in S_{5}$. Using the parameters from the former example, Corollary \ref{FADelta} gives that $\overline{\Delta}(\mathrm{F}_{\mathbf{A}})$ is the sum of $\mathrm{F}_{(1)}\otimes\mathrm{F}_{(2,4,13)}$, $\mathrm{F}_{(12)}\otimes\mathrm{F}_{(3,12)}$, $\mathrm{F}_{(12,3)}\otimes\mathrm{F}_{(12)}$, and $\mathrm{F}_{(23,4,1)}\otimes\mathrm{F}_{(1)}$ (and $\Delta(\mathrm{F}_{\mathbf{A}})$ is $\overline{\Delta}(\mathrm{F}_{\mathbf{A}})$ plus the two terms $1\otimes\mathrm{F}_{\mathbf{A}}$ and $\mathrm{F}_{\mathbf{A}}\otimes1$ via Equation \eqref{redDelta} as usual). Expressing each of these summands via Proposition \ref{basdim}, these four summands are $\mathrm{F}_{1}^{1}\otimes\mathrm{F}_{112}^{2413}$, $\mathrm{F}_{2}^{12}\otimes\mathrm{F}_{12}^{312}$, $\mathrm{F}_{21}^{123}\otimes\mathrm{F}_{2}^{12}$, and $\mathrm{F}_{211}^{2341}\otimes\mathrm{F}_{1}^{1}$, with the superscripts being the values of $\rho_{|r}$ and $\rho_{r|}$ from Example \ref{sepsetcomp}. \label{exDeltaFA}
\end{ex}
The parameters from Example \ref{exDeltaFA} also exemplify the fact that for every $\mathbb{A}\vDash\mathbb{N}_{n}$ and $0 \leq r \leq n$ we have $\operatorname{sz}(\mathbf{A}_{|r})=\operatorname{sz}(\mathbf{A})_{|r}$ and $\operatorname{sz}(\mathbf{A}_{r|})=\operatorname{sz}(\mathbf{A})_{r|}$.

\medskip

We can now turn to the main result of this section. As $\mathtt{NCQSym}$ is a connected graded bi-algebra as in Definitions \ref{graded} and Definition \ref{condef}, it is a Hopf algebra via Lemma \ref{grconS}. While we do not obtain the formula for the antipode for any fundamental non-commutative quasi-symmetric function from Definition \ref{fundNCQSym}, we get a result for a subset of them, of the same size (for every degree) as those appearing in Theorems \ref{SQSymF} and \ref{SQSymqF}.
\begin{thm}
Given a composition $\alpha \vDash n$, the antipode $S$ of $\mathtt{NCQSym}$ takes the element $\mathrm{F}_{\alpha}^{\operatorname{Id}_{n}}$ to $(-1)^{n}\mathrm{F}_{\alpha^{t}}^{w^{0}_{n}}$, where $w^{0}_{n}$ is the longest element of $S_{n}$. \label{SNCQSymF}
\end{thm}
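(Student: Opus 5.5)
We will apply Corollary \ref{detS} (or Remark \ref{firstS}) to the family $g_{\alpha}:=\mathrm{F}_{\alpha}^{\operatorname{Id}_{n}}$, $\alpha\vDash n$, $n\geq0$. By Proposition \ref{basdim} each $\mathrm{F}_{\alpha}^{\operatorname{Id}_{n}}$ equals $\mathrm{F}_{\mathbf{A}}$ for $\mathbf{A}=\mathbf{A}_{\alpha}^{\operatorname{Id}_{n}}$, since $\operatorname{Des}(\operatorname{Id}_{n})=\emptyset\subseteq T$ in Corollary \ref{weakord}. Hence the family is part of a basis and is linearly independent. Proposition \ref{coprodNCQSym} gives $\Delta(\mathrm{F}_{\alpha}^{\operatorname{Id}_{n}})=\sum_{r=0}^{n}\mathrm{F}_{\alpha_{|r}}^{\operatorname{Id}_{r}}\otimes\mathrm{F}_{\alpha_{r|}}^{\operatorname{Id}_{n-r}}$, using $(\operatorname{Id}_{n})_{|r}=\operatorname{Id}_{r}$ and $(\operatorname{Id}_{n})_{r|}=\operatorname{Id}_{n-r}$. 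So the hypotheses of Corollary \ref{detS} hold with $\beta_{k}^{l}(\alpha)=\alpha_{|k}$ and $\beta_{n-k}^{r}(\alpha)=\alpha_{k|}$. We set $\tilde{g}_{\alpha}:=(-1)^{n}\mathrm{F}_{\alpha^{t}}^{w^{0}_{n}}$, which is again linearly independent because $\rho(\mathbf{A}_{\alpha^t}^{w^0_n})$ can be checked to be $w^0_n$ hereafter. It then remains to prove, for $n\geq1$, the vanishing
\[\sum_{r=0}^{n}(-1)^{r}\mathrm{F}_{(\alpha_{|r})^{t}}^{w^{0}_{r}}\,\mathrm{F}_{\alpha_{r|}}^{\operatorname{Id}_{n-r}}=0.\]
By Lemma \ref{relsinv}, $(\alpha_{|r})^{t}=(\alpha^{t})_{n-r|}$, which lets us write all terms in terms of $\alpha$ and $\alpha^{t}$.

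For the vanishing we pass to monomials via Lemma \ref{sumFnTNC}. The factor $\mathrm{F}_{(\alpha_{|r})^{t}}^{w^{0}_{r}}$ is the sum of words $\eta$ of length $r$ that are weakly decreasing, with strict decreases at positions forced by $(T^{t})_{n-r|}$. The factor $\mathrm{F}_{\alpha_{r|}}^{\operatorname{Id}_{n-r}}$ is the sum of weakly increasing words of length $n-r$, with strict ascents at positions in $T_{r|}$. A product term is therefore a word $x^{\eta}$ of length $n$ together with a cut point $r$, such that the prefix is a ``valley-left'' decreasing part and the suffix an increasing part. We will exhibit a sign-reversing involution on the pairs $(\eta,r)$ which preserves $\eta$ and changes $r$ by $\pm1$. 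Concretely, the involution looks at the letters $\eta_{r}$ (last letter of the prefix) and $\eta_{r+1}$ (first of the suffix). It moves the boundary letter to the other side whenever the resulting pair still satisfies the constraints. In effect, we move the minimum of the valley into whichever side accepts it.

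The key compatibility to check is the following. For $r\in T$, the suffix requires a strict ascent after position $r$ relative to the original indices, and the complement condition $T^{t}$ for the prefix means the prefix requires strictness exactly at positions not in $T$. Transposition thus exactly interchanges weak and strict conditions at each junction. Concretely, the letter at position $r$ may be moved from the prefix to the suffix iff $\eta_{r}\le\eta_{r+1}$ with strictness when $r\in T$. It may stay in the prefix iff $\eta_{r}\ge\eta_{r+1}$ with strictness when $r\notin T$. Exactly one of these holds, which is why each configuration has a unique partner.

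We define the involution by choosing $r\mapsto r-1$ or $r\mapsto r+1$ according to this dichotomy at the boundary. We must handle the cases $r=0$ and $r=n$ carefully (an empty factor is always valid), and verify that the map is well defined and fixed-point free. This gives the vanishing, and Corollary \ref{detS} concludes.

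The main obstacle is the bookkeeping at the cut. We must check that the positions at which strictness is demanded in $(\alpha^{t})_{n-r|}$ under the reversed ordering of $w^{0}_{r}$ really correspond to the complement of $T$ near the boundary. We must also check that moving one letter across the boundary does not disturb the conditions deeper inside either factor. We would first verify this on the extreme cases $\alpha=n$ and $\alpha=1^{n}$, where the identity becomes a quantum-free Newton relation, and on Example \ref{excomp}, before writing the general involution.
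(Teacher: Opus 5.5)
Your proposal is correct, but it follows a genuinely different route from the paper. The paper proves the vanishing in the convention of Remark \ref{firstS}, namely $\sum_{r}(-1)^{n-r}\mathrm{F}_{\alpha_{|r}}^{\operatorname{Id}_{r}}\mathrm{F}_{\alpha_{r|}^{t}}^{w^{0}_{n-r}}=0$. It expands every product in fundamental elements through the twisted shuffle formula of Proposition \ref{prodFtau}, with the permutations $\tau^{(r)}$ all built from one $\tau$ satisfying $\operatorname{Des}(\tau)=T$. It then cancels the term of each shuffle $\sigma\in\mathcal{S}_{r,n-r}$ ending in r against that of $\tilde{\sigma}$ ending in l (Lemma \ref{forcanc}, Corollary \ref{comppars}).

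You use the mirror convention (that of Remark \ref{altproof}) and cancel word by word, and your bookkeeping is right. By Lemma \ref{sumFnTNC}, a word $\theta$ with cut $r$ occurs exactly when two conditions hold. Every gap $j<r$ must be of type D, meaning $\theta_{j}>\theta_{j+1}$, or $\theta_{j}=\theta_{j+1}$ with $j\in T$. Every gap $j>r$ must be of type A, meaning $\theta_{j}<\theta_{j+1}$, or $\theta_{j}=\theta_{j+1}$ with $j\notin T$. These conditions do not depend on $r$.

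Since each gap has exactly one type, the valid cuts for a fixed $\theta$ are precisely $k$ and $k+1$ when the type word is $D^{k}A^{n-1-k}$, and there are none otherwise. Stating it this way settles at once the well-definedness and involutivity of your map, the endpoints $r=0$ and $r=n$, and the case $n=1$.

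Your argument is much shorter and needs none of Definitions \ref{shuffle}--\ref{tausigma} or Lemmas \ref{matchsigma} and \ref{sigmamatch}. The paper's approach in exchange exhibits the cancellation at the level of fundamental elements, which it reuses for Theorem \ref{SFalphaw0} and Proposition \ref{symeq}. Your word-level argument also yields the two identities needed for Theorem \ref{SFalphaw0}; there the gap types are $\leq$ versus $>$, respectively $<$ versus $\geq$.

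One justification in your first paragraph is wrong, though harmless. By Corollary \ref{weakord}, $\rho(\mathbf{A}_{\alpha^{t}}^{w^{0}_{n}})=w^{0}_{n}$ would force $\operatorname{comp}_{n}^{-1}\alpha^{t}\supseteq\operatorname{Des}(w^{0}_{n})=\mathbb{N}_{n-1}$, i.e.\ $\alpha^{t}=1^{n}$. For instance, $\mathbf{A}_{3}^{321}=(123)$ has $\rho(\mathbf{A})=123$, and the paper stresses that $\mathrm{F}_{\alpha^{t}}^{w^{0}_{n}}$ is generally not a basis element $\mathrm{F}_{\mathbf{A}}$.

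The family $\{\mathrm{F}_{\beta}^{w^{0}_{n}}\}_{\beta\vDash n}$ is nevertheless linearly independent, being unitriangular over the distinct monomials $\mathrm{M}_{\gamma}^{w^{0}_{n}}$ with $\gamma\preceq\beta$. In any case, the proof of Corollary \ref{detS} only uses independence of the $g_{\alpha}$'s. Also, your phrase that the boundary letter ``may stay in the prefix'' should say that the first letter of the suffix may join the prefix.
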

Theorem \ref{SNCQSymF} considers, in fact, the antipode image of a subset of the basis considered in Proposition \ref{basdim}. Indeed, it gives a formula for $S(\mathrm{F}_{\mathbf{A}})$ for every $\mathbf{A}\vDash\mathbb{N}_{n}$ for which $\mathbf{B}_{\operatorname{Id}_{n}}\preceq_{*}\mathbf{A}$ via Definition \ref{setord} where we recall from Definition \ref{setwithr} that $\mathbf{B}_{\operatorname{Id}_{n}}$ is the set composition $(1,2,\ldots,n)\vDash\mathbb{N}_{n}$ (this is equivalent, for this choice of $\rho \in S_{n}$, also to $\mathbf{B}_{\operatorname{Id}_{n}}\preceq\mathbf{A}$). This image is, however, not of the form $(-1)^{n}\mathrm{F}_{\tilde{\mathbf{A}}}$ for any set composition $\tilde{\mathbf{A}}\vDash\mathbb{N}_{n}$, except when $\alpha=n$, where $\alpha^{t}=1^{n}$ and $\mathrm{F}_{\alpha^{t}}^{w^{0}_{n}}=\mathbf{B}_{w^{0}_{n}}$.

Using Theorem \ref{SNCQSymF} we get, in the following analogue of Propositions \ref{SQSymM} and \ref{SQSymqM} a formula for the antipode on a partial set of the monomial non-commutative quasi-symmetric functions from Lemma \ref{monNCQSym}, using the other elements of $\mathtt{NCQSym}$ from Definition \ref{fundNCQSym}.
\begin{prop}
If $\alpha \vDash n$ then $S(\mathrm{M}_{\alpha}^{\operatorname{Id}_{n}})=(-1)^{\ell(\alpha)}\mathrm{E}_{\alpha^{r}}^{w^{0}_{n}}$. \label{SNCQSymM}
\end{prop}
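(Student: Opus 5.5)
We follow the proofs of Propositions \ref{SQSymM} and \ref{SQSymqM}, now with Theorem \ref{SNCQSymF} as input.

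\textbf{Step 1: Möbius inversion with $\rho=\operatorname{Id}_{n}$ fixed.} The case $n=0$ is trivial, so assume $n>0$ and $\alpha=\operatorname{comp}_{n}T$. By part $(iv)$ of Lemma \ref{szord}, $\beta\mapsto\mathbf{A}_{\beta}^{\operatorname{Id}_{n}}$ is order-preserving. Hence, by Definition \ref{fundNCQSym}, $\mathrm{F}_{\beta}^{\operatorname{Id}_{n}}=\sum_{\gamma\preceq\beta}\mathrm{M}_{\gamma}^{\operatorname{Id}_{n}}$ has exactly the shape of $F_{\beta}=\sum_{\gamma\preceq\beta}M_{\gamma}$, with the same index set and coefficients. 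The $(1-1)^{|Y|-|T|}$ argument from Proposition \ref{SQSymM} therefore carries over verbatim and gives
\[
\mathrm{M}_{\alpha}^{\operatorname{Id}_{n}}=\sum_{\beta\preceq\alpha}(-1)^{\ell(\beta)-\ell(\alpha)}\mathrm{F}_{\beta}^{\operatorname{Id}_{n}}.
\]
The only check needed is that the elements $\mathrm{M}_{\gamma}^{\operatorname{Id}_{n}}$ for distinct $\gamma$ are distinct monomials, so that coefficients may be compared. This holds because $\operatorname{sz}(\mathbf{A}_{\gamma}^{\operatorname{Id}_{n}})=\gamma$.

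\textbf{Step 2: apply the antipode.} By linearity and Theorem \ref{SNCQSymF},
\[
S(\mathrm{M}_{\alpha}^{\operatorname{Id}_{n}})=\sum_{\beta\preceq\alpha}(-1)^{n+\ell(\beta)-\ell(\alpha)}\mathrm{F}_{\beta^{t}}^{w^{0}_{n}}.
\]
Expanding each term as $\mathrm{F}_{\beta^{t}}^{w^{0}_{n}}=\sum_{\gamma\preceq\beta^{t}}\mathrm{M}_{\gamma}^{w^{0}_{n}}$, all monomials now carry the same permutation $w^{0}_{n}$. As in Step 1, $\gamma\mapsto\mathrm{M}_{\gamma}^{w^{0}_{n}}$ is injective, since $\operatorname{sz}(\mathbf{A}_{\gamma}^{w^{0}_{n}})=\gamma$. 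So we may compute, for each fixed $\gamma$, the coefficient of $\mathrm{M}_{\gamma}^{w^{0}_{n}}$.

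\textbf{Step 3: the coefficient count.} This coefficient is the same signed sum over sets $X$ with $T\subseteq X$ and $Y^{t}\subseteq X$ that appears in the proof of Proposition \ref{SQSymM}. It vanishes unless $T\cup Y^{t}=\mathbb{N}_{n-1}$. In that case it equals $(-1)^{\ell(\alpha)}$, and the condition is equivalent to $\gamma\succeq\alpha^{r}$. Summing gives $(-1)^{\ell(\alpha)}\sum_{\gamma\succeq\alpha^{r}}\mathrm{M}_{\gamma}^{w^{0}_{n}}$, which is $(-1)^{\ell(\alpha)}\mathrm{E}_{\alpha^{r}}^{w^{0}_{n}}$ by part $(iv)$ of Definition \ref{fundNCQSym}.

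The main point to handle carefully is the injectivity used in Steps 1 and 2. The $\mathrm{F}_{\alpha}^{\rho}$ are in general linearly dependent, so the inversion is valid only because the permutation is fixed throughout, first at $\operatorname{Id}_{n}$ and then at $w^{0}_{n}$.
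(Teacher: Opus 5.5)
Your proof is correct and is essentially the paper's argument: the paper likewise writes $\mathrm{M}_{\alpha}^{\operatorname{Id}_{n}}=\sum_{\beta\preceq\alpha}(-1)^{\ell(\beta)-\ell(\alpha)}\mathrm{F}_{\beta}^{\operatorname{Id}_{n}}$, applies Theorem \ref{SNCQSymF}, and then reuses the coefficient count from Proposition \ref{SQSymM} for each $\mathrm{M}_{\gamma}^{w^{0}_{n}}$. Your surviving condition $\gamma\succeq\alpha^{r}$ is the correct one (the paper's text writes $\gamma\preceq\alpha^{r}$, which is a typo). Your injectivity check is true and harmless but not actually needed, because collecting coefficients by the index $\gamma$ is already a formal identity.
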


\begin{proof}
We again follow the proof of Proposition \ref{SQSymM}. The same argument proves that $\mathrm{M}_{\alpha}^{\operatorname{Id}_{n}}=\sum_{\beta\preceq\alpha}(-1)^{\ell(\beta)-\ell(\alpha)}\mathrm{F}_{\beta}^{\operatorname{Id}_{n}}$ (and in fact, the same holds when $\operatorname{Id}_{n}$ is replaced by any $\rho \in S_{n}$ on both sides). Applying $S$, Theorem \ref{SNCQSymF} takes the summand associated with $\beta$ to $(-1)^{n}\mathrm{F}_{\beta^{t}}^{w^{0}_{n}}$, which we expand via Definition \ref{fundNCQSym} and gather the coefficients in front of $\mathrm{M}_{\gamma}^{w^{0}_{n}}$. As the same argument from that proof shows that the total coefficient is $(-1)^{\ell(\alpha)}$ when $\gamma\preceq\alpha^{r}$, this yields the desired result through the corresponding part of that definition. This completes the proof of the proposition.
\end{proof}

We will see in, e.g., Proposition \ref{antipode3} below, that the formulae from Theorem \ref{SNCQSymF} and Proposition \ref{SNCQSymM} do not extend to almost any other $\mathrm{F}_{\alpha}^{\rho}$ or $\mathrm{M}_{\alpha}^{\rho}$, where $n\geq2$ and $\rho\neq\operatorname{Id}_{n}$. In fact, their values do not give a signed basis element in any of our bases. However, for each such $n$ there two exceptions where we do get nice expressions for the antipode.
\begin{thm}
For $n\geq2$ we have the equalities $S(\mathrm{F}_{n}^{w^{0}_{n}})=(-1)^{n}\mathrm{F}_{1^{n}}^{\operatorname{Id}_{n}}$ and $S(\mathrm{F}_{1^{n}}^{w^{0}_{n}})=S(\mathrm{M}_{1^{n}}^{w^{0}_{n}})=(-1)^{n}\mathrm{F}_{n}^{\operatorname{Id}_{n}}=(-1)^{n}\mathrm{E}_{1^{n}}^{\operatorname{Id}_{n}}$.
\label{SFalphaw0}
\end{thm}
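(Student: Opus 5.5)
Since the antipode images in Theorem \ref{SNCQSymF} are already determined for the elements $\mathrm{F}_{\alpha}^{\operatorname{Id}_{n}}$, the plan is to feed them into Corollary \ref{detS} and Remark \ref{firstS} for two extra families. These are $\{\mathrm{F}_{n}^{w^{0}_{n}}\}_{n}$ and $\{\mathrm{F}_{1^{n}}^{w^{0}_{n}}\}_{n}$. Throughout I will use the basic fact that restricting $w^{0}_{n}$ to an initial or final segment gives a longest element again: $(w^{0}_{n})_{|r}=w^{0}_{r}$ and $(w^{0}_{n})_{r|}=w^{0}_{n-r}$. This holds because $\mathbf{B}_{w^{0}_{n}}=(n,n-1,\ldots,1)$, so both pieces are decreasing after standardization. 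Combined with $n_{|r}=r$ and $(1^{n})_{|r}=1^{r}$, Proposition \ref{coprodNCQSym} gives
\[
\Delta(\mathrm{F}_{n}^{w^{0}_{n}})=\sum_{r=0}^{n}\mathrm{F}_{r}^{w^{0}_{r}}\otimes\mathrm{F}_{n-r}^{w^{0}_{n-r}},\qquad
\Delta(\mathrm{M}_{1^{n}}^{w^{0}_{n}})=\sum_{r=0}^{n}\mathrm{M}_{1^{r}}^{w^{0}_{r}}\otimes\mathrm{M}_{1^{n-r}}^{w^{0}_{n-r}}.
\]
Each family is therefore closed under the coproduct in the shape Corollary \ref{detS} requires. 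Linear independence is immediate: the elements have distinct degrees, and each contains the monomial $\mathrm{M}_{\mathbf{B}_{w^{0}_{n}}}$ with coefficient $1$.

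It then suffices to prove, for $n\geq1$, the two identities
\[
\sum_{r=0}^{n}(-1)^{r}\mathrm{F}_{1^{r}}^{\operatorname{Id}_{r}}\,\mathrm{F}_{n-r}^{w^{0}_{n-r}}=0,
\qquad
\sum_{r=0}^{n}(-1)^{r}\mathrm{F}_{r}^{\operatorname{Id}_{r}}\,\mathrm{M}_{1^{n-r}}^{w^{0}_{n-r}}=0,
\]
or, if one of them fails, their mirrored versions with the $w^{0}$-factor on the left, as permitted by Remark \ref{firstS}. The low-degree cases $n=0,1$ are trivial, and the statement excludes them anyway.

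I would prove these by the monomial description of Lemma \ref{sumFnTNC}. We have $\mathrm{F}_{1^{r}}^{\operatorname{Id}_{r}}=\sum x^{\eta}$ over strictly increasing words $\eta$ of length $r$, and $\mathrm{F}_{r}^{\operatorname{Id}_{r}}$ is the sum over weakly increasing words. On the other side, $\mathrm{F}_{m}^{w^{0}_{m}}$ is the sum over weakly decreasing words, and $\mathrm{M}_{1^{m}}^{w^{0}_{m}}=\mathrm{F}_{1^{m}}^{w^{0}_{m}}$ is the sum over strictly decreasing words. The products are then sums over concatenated words. In the first identity, a word of length $n$ appears in the $r$th term exactly when it splits at position $r$ into a strictly increasing prefix followed by a weakly decreasing suffix. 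Such a word is a strict increase followed by a weak decrease, and it has exactly two valid splitting points, which differ by one. These two splits are the peak belonging to the prefix or to the suffix, where I use that the peak letter strictly exceeds its left neighbour. The two terms carry opposite signs, which gives a sign-reversing involution and hence vanishing. The second identity is handled the same way, with weak increase followed by strict decrease, and the peak is again movable.

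Applying Corollary \ref{detS} then yields $S(\mathrm{F}_{n}^{w^{0}_{n}})=(-1)^{n}\mathrm{F}_{1^{n}}^{\operatorname{Id}_{n}}$ and $S(\mathrm{M}_{1^{n}}^{w^{0}_{n}})=(-1)^{n}\mathrm{F}_{n}^{\operatorname{Id}_{n}}$. The remaining equalities in the statement are identifications from Example \ref{exfundNCQ}: $\mathrm{F}_{1^{n}}^{w^{0}_{n}}=\mathrm{M}_{1^{n}}^{w^{0}_{n}}$ and $\mathrm{F}_{n}^{\operatorname{Id}_{n}}=\mathrm{E}_{1^{n}}^{\operatorname{Id}_{n}}$. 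The latter holds because both sides equal the sum of $\mathrm{M}_{\mathbf{A}}$ over the set compositions refined by $(1,\ldots,n)$.

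The main obstacle is the bookkeeping in the involution. First, equal adjacent letters must be handled correctly: in the first identity the peak can be repeated on the decreasing side, and one has to check that exactly two splits remain. Second, the word must have both a nonempty increasing part and a nonempty decreasing part available, including the boundary cases $r=0$ and $r=n$. Third, one has to confirm which side Corollary \ref{detS} puts $\tilde g$ on, since the sums above have the $\tilde g$-factor (the $\operatorname{Id}$-family) on the left as that corollary requires. If that side convention fails, I would instead use Remark \ref{firstS}, where the analogous valley-splitting involution should work symmetrically.
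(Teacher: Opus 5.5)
Your proposal is correct. Its skeleton coincides with the paper's: Corollary \ref{detS} in its original convention (the $\operatorname{Id}$-family on the left) is applied to $\{\mathrm{F}_{n}^{w^{0}_{n}}\}_{n}$ and $\{\mathrm{F}_{1^{n}}^{w^{0}_{n}}\}_{n}$, which reduces everything to the same two alternating identities. The difference lies in how those identities are proved.

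The paper does not verify them afresh. By Remark \ref{cancext} they are, up to the sign $(-1)^{n}$, the cases $\alpha=1^{n}$ and $\alpha=n$ of the vanishing $\sum_{r}(-1)^{n-r}\mathrm{F}_{\alpha_{|r}}^{\operatorname{Id}_{r}}\mathrm{F}_{\alpha_{r|}^{t}}^{w^{0}_{n-r}}=0$ already obtained for Theorem \ref{SNCQSymF}. There each product is expanded by Proposition \ref{prodFtau} with the auxiliary permutation $\tau^{(r)}$, and the shuffles are paired through Lemma \ref{forcanc} and Corollary \ref{comppars}. So one set of equations yields Theorem \ref{SNCQSymF} (for these $\alpha$) when read via Remark \ref{firstS}, and Theorem \ref{SFalphaw0} when read via Corollary \ref{detS}.

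You instead check the identities directly on words through Lemma \ref{sumFnTNC}. Your maximal-prefix/minimal-suffix analysis is right: the admissible splitting points are either absent or two consecutive integers. This also holds when the peak letter is repeated and at $r=0$ and $r=n$.

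The paper's route costs nothing once Theorem \ref{SNCQSymF} is proved, and it makes the symmetry between the two theorems visible. Yours is self-contained and needs neither the shuffle-product formula nor the choice of $\tau$.

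Your involution also works beyond these two cases. Take $\alpha=\operatorname{comp}_{n}T$. A word $\theta$ occurs in the $r$th term of the identity for Theorem \ref{SNCQSymF} exactly when the condition ``$\theta_{i}\leq\theta_{i+1}$, strictly if $i \in T$'' holds for every $i<r$ and fails for every $i>r$. Hence the admissible $r$ are again either none or two consecutive values, and the same peak-moving involution gives the vanishing for every $\alpha$.
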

The fact that only the superscript $\operatorname{Id}_{n}$ and $w^{0}_{n}$ can produce simple expressions for the antipode as in Theorems \ref{SNCQSymF} and \ref{SFalphaw0} is perhaps related to these permutations having simple antipode formulae in the Hopf algebra from \cite{[MR]} (see Propositions 9.4 and 9.5 of \cite{[BS]}), while others do not (as the rest of Section 9 of that reference discusses).

Considering the proof of Corollary \ref{deg2QSymq}, here in the degree 2 part of $\mathtt{NCQSym}$ is spanned by the four (linearly dependent) fundamental elements $\mathrm{F}_{11}^{12}=\mathrm{F}_{(1,2)}$, $\mathrm{F}_{11}^{21}=\mathrm{F}_{(2,1)}$,  $\mathrm{F}_{2}^{12}=\mathrm{F}_{(12)}$, and $\mathrm{F}_{2}^{21}=\mathrm{F}_{(12)}-\mathrm{F}_{(1,2)}+\mathrm{F}_{(2,1)}$. Theorems \ref{SNCQSymF} and \ref{SFalphaw0} show that $S$ interchanges them in pairs, $\mathrm{F}_{11}^{21}$ with $\mathrm{F}_{2}^{12}$ and $\mathrm{F}_{11}^{12}$ with $\mathrm{F}_{2}^{21}$ (and the linear dependence is preserved). Thus $S^{2}$ is the identity on this part. We will consider a higher part in Corollary \ref{deg3NCQSym} below.

\medskip

Recall from Remark \ref{SymQSym} that $\mathtt{QSym}$ contains $\mathtt{Sym}$, which is the invariants of the action of $S_{\mathbb{N}}$ on $R\ldbrack\mathbf{x}_{\infty}\rdbrack$ that coincides with the one from \cite{[Hi2]} on the part of degree 1 but extend as automorphisms of rings. We also have the inclusion of $\mathtt{Sym}^{(q)}$ from Remark \ref{SymQSymq} inside $\mathtt{QSym}^{(q)}$. We may similarly consider the action of that group on $R\langle\langle\mathbf{x}_{\infty}\rangle\rangle$ but via ring automorphisms which coincide with the one from Definition \ref{NCQSymdef} on the degree 1 part. This produces the following analogous situation.
\begin{rmk}
The invariants of the ring automorphisms action of $S_{\mathbb{N}}$ on $R\langle\langle\mathbf{x}_{\infty}\rangle\rangle$ is the algebra $\mathtt{NCSym}$ of \emph{non-commutative symmetric functions}. They admit a monomial basis (among several others) that is parameterized by set compositions. , the basis element $\mathrm{m}_{\mathbf{L}}\in\mathtt{NCSym}$ that is associated with $\mathbf{L}\vdash\mathbb{N}_{n}$ has the following description as an element of $\mathtt{NCQSym}$. If $\ell(\mathbf{L})=\ell$ then $\mathbf{L}$ represents the $S_{\ell}$-orbit of $l!$ set compositions $\mathbf{A}\vDash\mathbb{N}_{n}$, and we write the assertion that $\mathbf{L}$ is the orbit of $\mathbf{A}$ as $S_{\ell}\mathbf{A}=\mathbf{L}$. Then we have $\mathrm{m}_{\mathbf{L}}=\sum_{\{\mathbf{A}\;|\;S_{\ell}\mathbf{A}=\mathbf{L}\}}\mathrm{M}_{\mathbf{A}}\in\mathtt{NCQSym}$, in a manner similar to Remark \ref{SymQSym}. Note that $\mathtt{NCSym}$ is non-commutative but co-commutative, while $\mathtt{NCQSym}$ is not co-commutative either. \label{NCSym}
\end{rmk}
The space $\mathtt{NCSym}$ from Remark \ref{NCSym} was initially introduced in \cite{[W]}. The algebra structure there investigated in \cite{[RS]}, and \cite{[BRRZ]} established their Hopf algebra properties (note that \cite{[BHRZ]} also considers a different Hopf algebra structure on $\mathtt{NCSym}$). The paper \cite{[LM]} considers the primitives in that Hopf algebra, and uses it for establishing the antipode formula for that algebra (see also Subsection 8.13 of \cite{[D]}). One can compare their formula, in low degrees, with the one that we present in Remark \ref{NCSymS} below.

\medskip

Note that the ring $R\ldbrack\mathbf{x}_{\infty}\rdbrack$ is a quotient of $R\langle\langle\mathbf{x}_{\infty}\rangle\rangle$ under the ideal generated by $x_{j}x_{i}-x_{i}x_{j}$ for all $i<j$. As this ideal respects the action from \cite{[Hi1]} and \cite{[Hi2]}, it takes $\mathtt{NCQSym}$ to $\mathtt{QSym}$, as follows.
\begin{prop}
The map taking $\mathrm{M}_{\mathbf{A}}\in\mathtt{NCQSym}$ for $\mathbf{A}\vDash\mathbb{N}_{n}$ to $M_{\operatorname{sz}(\mathbf{A})}\in\mathtt{QSym}$ is a surjective homomorphism of Hopf algebras. It also sends $\mathrm{M}_{\alpha}^{\rho}$ to $M_{\alpha}$, $\mathrm{F}_{\alpha}^{\rho}$ to $F_{\alpha}$, $\mathrm{F}_{\mathbf{A}}$ to $F_{\operatorname{sz}(\mathbf{A})}$, and $\mathrm{E}_{\alpha}^{\rho}$ to $E_{\alpha}$ for any such $\mathbf{A}$ and for $\alpha \vDash n$ and $\rho \in S_{n}$. \label{projQSym}
\end{prop}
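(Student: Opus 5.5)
The proposed map is the reduction modulo the ideal generated by the commutators $x_{j}x_{i}-x_{i}x_{j}$, restricted to $\mathtt{NCQSym}$. The plan is to check three things in turn: that this reduction lands in $\mathtt{QSym}$ with the stated values, that it respects all the structure maps, and that it behaves as claimed on the derived elements.

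The first task is to compute the image of a single monomial. For $\mathbf{A}\vDash\mathbb{N}_{n}$ of length $\ell$ and $I=\{i_{1}<\cdots<i_{\ell}\}$, the monomial $x_{I}^{\mathbf{A}}$ places $x_{i_{j}}$ at the positions in $A_{j}$. Once the variables commute it becomes $\prod_{j}x_{i_{j}}^{|A_{j}|}=x_{I}^{\operatorname{sz}(\mathbf{A})}$. Summing over all $I$ with $|I|=\ell$ turns $\mathrm{M}_{\mathbf{A}}$ into $M_{\operatorname{sz}(\mathbf{A})}$ by Lemma \ref{monQSym}. The quotient map of $R\langle\langle\mathbf{x}_{\infty}\rangle\rangle$ onto $R\ldbrack\mathbf{x}_{\infty}\rdbrack$ is a ring homomorphism. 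This gives multiplicativity, and it also preserves degrees, units and co-units. Surjectivity follows because every $M_{\alpha}$ is the image of $\mathrm{M}_{\alpha}^{\rho}$ for any $\rho$.

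For the co-multiplication I would compare the formula of Proposition \ref{coprodNCQSym} with that of Proposition \ref{coprodQSym}. The formula $\Delta(\mathrm{M}_{\mathbf{A}})=\sum_{r\in\tilde{T}}\mathrm{M}_{\mathbf{A}_{|r}}\otimes\mathrm{M}_{\mathbf{A}_{r|}}$ maps to $\sum_{r\in\tilde{T}}M_{\operatorname{sz}(\mathbf{A}_{|r})}\otimes M_{\operatorname{sz}(\mathbf{A}_{r|})}$. By Definitions \ref{setwithr} and \ref{cutatr}, for $r\in\tilde{T}$ we have $\operatorname{sz}(\mathbf{A}_{|r})=\alpha_{|r}$ and $\operatorname{sz}(\mathbf{A}_{r|})=\alpha_{r|}$, because the operations simply split the list of blocks at $r$. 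The resulting expression is then exactly $\Delta(M_{\alpha})$. Alternatively, one can observe directly that Definitions \ref{DeltaNCQSym} and \ref{DeltaQSym} commute with the abelianization.

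It then follows that the map is a homomorphism of connected graded bi-algebras, and hence of Hopf algebras. This is automatic, because the antipodes are uniquely determined by the recursion in Lemma \ref{grconS}, and that recursion is preserved by any bi-algebra homomorphism.

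For the remaining images, $\mathrm{M}_{\alpha}^{\rho}=\mathrm{M}_{\mathbf{A}_{\alpha}^{\rho}}$ with $\operatorname{sz}(\mathbf{A}_{\alpha}^{\rho})=\alpha$ by Lemma \ref{szord}, so it maps to $M_{\alpha}$. Summing over $\beta\preceq\alpha$, or over $\beta\succeq\alpha$, in Definition \ref{fundNCQSym} and comparing with Definition \ref{fundQSym} gives $F_{\alpha}$ and $E_{\alpha}$ respectively. The claim for $\mathrm{F}_{\mathbf{A}}$ then follows from Proposition \ref{basdim}, which writes $\mathrm{F}_{\mathbf{A}}=\mathrm{F}_{\operatorname{sz}(\mathbf{A})}^{\rho(\mathbf{A})}$. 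I expect no real obstacle in any of this. The only point needing care is the identification $\operatorname{sz}(\mathbf{A}_{|r})=\operatorname{sz}(\mathbf{A})_{|r}$ for $r\in\tilde{T}$, and that is immediate from the definitions.
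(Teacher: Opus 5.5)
Your proposal is correct and follows essentially the paper's argument. The map is the abelianization restricted to $\mathtt{NCQSym}$; compatibility with $\Delta$ comes from $\operatorname{sz}(\mathbf{A}_{|r})=\operatorname{sz}(\mathbf{A})_{|r}$ and $\operatorname{sz}(\mathbf{A}_{r|})=\operatorname{sz}(\mathbf{A})_{r|}$ compared against Proposition \ref{coprodQSym}; and the Hopf property is automatic for a map of connected graded bi-algebras. The only difference is cosmetic: the paper also checks multiplicativity combinatorially by matching the quasi-shuffle rules through Remark \ref{szlrel}, whereas you take it directly from the quotient map being a ring homomorphism, which is equally valid.
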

The fact that the map from Proposition \ref{projQSym} is a map of algebras follows from the way Remark \ref{szlrel} connects the quasi-shuffle rule from Lemma \ref{monQSym} with the one from Lemma \ref{monNCQSym} (and the fact that it sends the unit $\mathrm{M}_{\emptyset}$ of $\mathtt{NCQSym}$ to that of $\mathtt{QSym}$, namely $m_{\emptyset}$). Corollaries \ref{prodMs} and \ref{prodNCMs} are thus related via the equality $\operatorname{sz}\big(\mathbf{C}_{\omega}(\mathbf{A},\mathbf{B})\big)=\gamma_{\omega}(\alpha,\beta)$ where $\alpha=\operatorname{sz}(\mathbf{A})$ and $\beta=\operatorname{sz}(\mathbf{B})$, which one can verify using via Definitions \ref{pathsmult}, \ref{pathsNCQ}, and \ref{sizes}.

Definitions \ref{sizes}, \ref{setwithr}, and \ref{cutatr} yield the equalities $\operatorname{sz}(\mathbf{A}_{|r})=\operatorname{sz}(\mathbf{A})_{|r}$ and $\operatorname{sz}(\mathbf{A}_{r|})=\operatorname{sz}(\mathbf{A})_{r|}$ for every $\mathbf{A}\vDash\mathbb{N}_{n}$ and $0 \leq r \leq n$. Thus the map from Proposition \ref{projQSym} is also a map of co-algebras, and thus of bi-algebras. It is thus a map of Hopf algebras, hence commutes with the two antipodes, which makes Theorem \ref{SQSymF} and Proposition \ref{SQSymM} consequences of Theorem \ref{SNCQSymF} and Proposition \ref{SNCQSymM} respectively. However, our proof below will produce Theorems \ref{SQSymF} and \ref{SNCQSymF} simultaneously.

We can also consider, after tensoring with $R[q]$, the map from $R[q]\langle\langle\mathbf{x}_{\infty}\rangle\rangle$ onto $R\langle\langle\mathbf{x}_{\infty}\rangle\rangle^{(q)}$, which if obtained after dividing by the expressions $x_{j}x_{i}-qx_{i}x_{j}$ for all pairs with $i<j$. This projection also commutes with the operation from \cite{[Hi1]} and \cite{[Hi2]}, and we get a map from $\mathtt{NCQSym}$ (over $R[q]$) onto $\mathtt{QSym}^{(q)}$, which we now describe, using the parameter $\operatorname{inv}$ from Definition \ref{sizes}.
\begin{prop}
We have a map of algebras from $\mathtt{NCQSym}$ onto $\mathtt{QSym}^{(q)}$, which given $\mathbf{A}\vDash\mathbb{N}_{n}$, sends $\mathrm{M}_{\mathbf{A}}$ to $q^{\operatorname{inv}(\mathbf{A})}M_{\operatorname{sz}(\mathbf{A})}^{(q)}$. In particular it maps $\mathrm{M}_{\alpha}^{\operatorname{Id}_{n}}$ to $M_{\alpha}^{(q)}$, $\mathrm{F}_{\alpha}^{\operatorname{Id}_{n}}$ to $F_{\alpha}^{(q)}$, $\mathrm{F}_{\alpha}^{w^{0}_{n}}$ to $q^{\operatorname{inv}(\alpha)}\widetilde{F}_{\alpha}^{(q)}$, and $\mathrm{E}_{\alpha}^{w^{0}_{n}}$ to $E_{\alpha}^{(q)}$ wherever $\alpha \vDash n$. \label{projQSymq}
\end{prop}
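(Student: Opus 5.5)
The plan is to realize the map as the restriction of the quotient map $\pi:R[q]\langle\langle\mathbf{x}_{\infty}\rangle\rangle\to R\langle\langle\mathbf{x}_{\infty}\rangle\rangle^{(q)}$. Since $\pi$ is a ring homomorphism, its restriction to $\mathtt{NCQSym}$ is automatically multiplicative and unital. What remains is to compute $\pi(\mathrm{M}_{\mathbf{A}})$ and to check that it lies in $\mathtt{QSym}^{(q)}$. Surjectivity will then follow from the image formulae, since $\mathrm{M}_{\alpha}^{\operatorname{Id}_{n}}\mapsto M_{\alpha}^{(q)}$ and the $M_{\alpha}^{(q)}$ form a basis.

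The basic computation is on a single monomial. Take $x_{I}^{\mathbf{A}}=x^{\eta}$ with $I=\{i_{1}<\ldots<i_{\ell}\}$. Sorting the word $\eta$ into non-decreasing order by adjacent transpositions costs one factor of $q$ each time $x_{j}x_{i}$ with $i<j$ is turned into $x_{i}x_{j}$. The total exponent is therefore the number of pairs of positions $k<k'$ with $\eta_{k}>\eta_{k'}$. By the bijection of Lemma \ref{altmon}, such a pair means that $k\in A_{j}$ and $k'\in A_{j'}$ with $j>j'$, which is exactly $\operatorname{inv}(\mathbf{A})$ from Definition \ref{sizes}. After sorting, the result is $x_{I}^{\operatorname{sz}(\mathbf{A})}$, so that $\pi(x_{I}^{\mathbf{A}})=q^{\operatorname{inv}(\mathbf{A})}x_{I}^{\operatorname{sz}(\mathbf{A})}$. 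This exponent is independent of $I$, so summing over all $|I|=\ell$ gives $\pi(\mathrm{M}_{\mathbf{A}})=q^{\operatorname{inv}(\mathbf{A})}M_{\operatorname{sz}(\mathbf{A})}^{(q)}$. This element lies in $\mathtt{QSym}^{(q)}$ by Lemma \ref{monQSymq}, and $\pi$ commutes with the action from \cite{[Hi2]}.

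For the particular images, observe that $\mathbf{A}_{\alpha}^{\operatorname{Id}_{n}}$ consists of consecutive intervals in increasing order, so it has no inversions and $\mathrm{M}_{\alpha}^{\operatorname{Id}_{n}}\mapsto M_{\alpha}^{(q)}$. Summing over $\beta\preceq\alpha$ then gives $\mathrm{F}_{\alpha}^{\operatorname{Id}_{n}}\mapsto F_{\alpha}^{(q)}$. For $w^{0}_{n}$, the set composition $\mathbf{A}_{\beta}^{w^{0}_{n}}$ has consecutive intervals listed in decreasing order. Every pair of elements lying in different blocks is an inversion, while pairs inside a single block are not. Hence $\operatorname{inv}(\mathbf{A}_{\beta}^{w^{0}_{n}})=\sum_{i<j}\beta_{i}\beta_{j}$, which is $\operatorname{inv}(\beta)$ of Definition \ref{fundQSymq}, read as a sum of products (the $\prod$ printed there must be a typo, as the computed values confirm). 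Consequently $\mathrm{F}_{\alpha}^{w^{0}_{n}}\mapsto\sum_{\beta\preceq\alpha}q^{\operatorname{inv}(\beta)}M_{\beta}^{(q)}=q^{\operatorname{inv}(\alpha)}\widetilde{F}_{\alpha}^{(q)}$, and summing over $\beta\succeq\alpha$ gives $\mathrm{E}_{\alpha}^{w^{0}_{n}}\mapsto E_{\alpha}^{(q)}$.

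As an alternative check of the $w^{0}_{n}$ case, one can use Lemma \ref{sumFnTNC} with $\rho=w^{0}_{n}$, which gives $x^{w^{0}_{n}J}=x^{\tilde{J}}$, and then compare with Lemma \ref{sumFnTq}. The main obstacle is the careful identification of the sorting exponent with $\operatorname{inv}(\mathbf{A})$; everything else is bookkeeping.
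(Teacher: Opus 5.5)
Your proposal is correct and follows essentially the paper's route. The paper obtains the map as the restriction to $\mathtt{NCQSym}$ (over $R[q]$) of the quotient map by the relations $x_{j}x_{i}-qx_{i}x_{j}$, noting that this quotient commutes with the quasi-symmetrizing action, and it leaves the image formulae unverified. Your sorting count, which identifies the resulting power of $q$ with $\operatorname{inv}(\mathbf{A})$, and your computation $\operatorname{inv}(\mathbf{A}_{\beta}^{w^{0}_{n}})=\sum_{i<j}\beta_{i}\beta_{j}$ supply exactly those omitted checks; you are also right that the $\prod$ in Definition \ref{fundQSymq} is a typo for $\sum$.
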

In case $\mathbf{B}_{\operatorname{Id}_{n}}\preceq\mathbf{A}$, so that $\mathbf{A}=\mathbf{A}_{\alpha}^{\operatorname{Id}_{n}}$ for $\alpha=\operatorname{sz}(\mathbf{A})$, the map from Proposition \ref{projQSymq} sends $\mathrm{F}_{\mathbf{A}}$ to $F_{\alpha}$ as well. It is not hard to describe, for any $\alpha \vDash n$, an element $\rho \in S_{n}$ for which it maps $\mathrm{F}_{\alpha}^{\rho}$ to the remaining element $\widetilde{F}_{\alpha}^{(q)}$ from Definition \ref{fundQSymq}. For relating Corollaries \ref{prodMqs} and \ref{prodNCMs}, we may apply the equality used for Corollary \ref{prodMs} as well as, for any such $\mathbf{A}$, $\mathbf{B}$, $\alpha=\operatorname{sz}(\mathbf{A})$ and $\beta=\operatorname{sz}(\mathbf{B})$, the equality $\operatorname{inv}\big(\mathbf{C}_{\omega}(\mathbf{A},\mathbf{B})\big)=\operatorname{inv}(\mathbf{A})+\operatorname{inv}(\mathbf{B})+\operatorname{inv}_{\omega}(\alpha,\beta)$.

It is important to note that the map from Proposition \ref{projQSymq} is not a map of co-algebras (or bi-algebras, or Hopf algebras). Indeed, as we saw in Remark \ref{qHopf}, $\mathtt{QSym}^{(q)}$ is not a Hopf algebra, but rather a $q$-Hopf algebra, so cannot be the image of a Hopf algebra. To see this explicitly, note that both $\mathrm{M}_{(1,2)}$ and $\mathrm{M}_{(2,1)}$ have $\overline{\Delta}$-images $\mathrm{M}_{(1)}\otimes\mathrm{M}_{(1)}$. But as we saw in Example \ref{qex}, the former maps to $M_{11}^{(q)}$, with $\overline{\Delta}$-image $M_{1} \otimes M_{1}$ (as a map of co-algebras would produce), but the latter is sent to $qM_{11}^{(q)}$, whose $\overline{\Delta}$-image is $q \cdot M_{1} \otimes M_{1}$, which does not respect the co-multiplication.

We remark that applying Proposition \ref{projQSym} to the cases presented in Examples \ref{pathNCex} and \ref{expathNC} produces Examples \ref{pathex} and \ref{expath} respectively, and doing the same with \ref{projQSymq} yields, respectively, Examples \ref{pathexq} and \ref{expathq}.

\medskip

There is another Hopf algebra that is sometimes called non-commutative symmetric functions, which is denoted by $\mathtt{NSym}$ and was shown in \cite{[MR]} to be the graded dual of $\mathtt{QSym}$ (it is thus co-commutative and non-commutative). It is freely generated as an non-commutative algebra by symbols $\{H_{n}\}_{n=1}^{\infty}$, with $H_{n}$ having degree $n$, and the co-multiplication takes $H_{n}$ to $\sum_{r=0}^{n}H_{r} \otimes H_{n-r}$ (where $H_{0}=1$). It is equally generated by $\{E_{n}\}_{n=1}^{\infty}$, with the same property under the co-multiplication (and $E_{0}=1$ as well), and where the Newton identities $\sum_{r=0}^{n}(-1)^{r}E_{r}H_{n-r}=\sum_{r=0}^{n}(-1)^{n-r}H_{r}E_{n-r}=\delta_{n,0}$ hold (this means, via Corollary \ref{detS}, that its antipode takes $H_{n}$ to $(-1)^{n}E_{n}$ and $E_{n}$ to $(-1)^{n}H_{n}$).

This algebra has the following relations with the other ones discussed here.
\begin{prop}
The following assertions hold:
\begin{enumerate}[$(i)$]
\item The map taking $H_{n}$ to $h_{n}$, or equivalently $E_{n}$ to $e_{n}$, is a surjection of Hopf algebras from $\mathtt{NSym}$ onto $\mathtt{Sym}$.
\item When we view $\mathtt{NCSym}$ as a sub-algebra of $\mathtt{NCQSym}$, the map from Proposition \ref{projQSym} reduces to one from $\mathtt{NCSym}$ to $\mathtt{Sym}$, which is surjective when $R$ is a $\mathbb{Q}$-algebra.
\item Under the assumption on $R$ from part $(ii)$, we can embed $\mathtt{NSym}$ into $\mathtt{NCSym}$ in a way that the projection from that part restricts to the one from part $(i)$ from $\mathtt{NSym}$ onto $\mathtt{Sym}$.
\item There is a Hopf ideal $I$ of $\mathtt{NCSym}$ such that $\mathtt{NCSym}/I$ is naturally isomorphic to $\mathtt{NSym}$.
\item The operation of annihilating all but two of the variables from $\{x_{i}\}_{i=1}^{\infty}$ produces another quotient of $\mathtt{NCSym}$ that is isomorphic to $\mathtt{NSym}$.
\item The map sending $H_{n}$ to $\mathrm{F}_{n}^{\operatorname{Id}_{n}}=\mathrm{F}_{(\mathbb{N}_{n})}$ is an embedding of $\mathtt{NSym}$ directly into $\mathtt{NCQSym}$, which takes $E_{n}$ to $\mathrm{F}_{1^{n}}^{w^{0}_{n}}=\mathrm{M}_{1^{n}}^{w^{0}_{n}}=\mathrm{M}_{\mathbf{B}_{w^{0}_{n}}}=\mathrm{F}_{\mathbf{B}_{w^{0}_{n}}}$.
\item The map from part $(vi)$ yields, via Proposition \ref{projQSym}, the map from part $(i)$ composed with the embedding of $\mathtt{Sym}$ into $\mathtt{QSym}$ via Remark \ref{SymQSym}.
\end{enumerate} \label{NSym}
\end{prop}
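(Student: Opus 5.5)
The statement is a collection of largely independent assertions. I would prove them in the order (vi), (vii), (i), (ii), (iii), and then (iv) and (v) together. Throughout I use one fact: $\mathtt{NSym}$ is free as an algebra on $\{H_{n}\}_{n\geq1}$. Hence an algebra map out of it is determined by the images of the $H_{n}$. Since $\Delta$, $\varepsilon$ and $S$ are (anti)multiplicative, it is a map of Hopf algebras as soon as $\Delta$ is respected on the generators $H_{n}$.

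For part $(vi)$, Definition \ref{setwithr} gives $(n)_{|r}=(r)$ and $(n)_{r|}=(n-r)$, and the restrictions of $\operatorname{Id}_{n}$ are identities. So Proposition \ref{coprodNCQSym} yields $\Delta(\mathrm{F}_{n}^{\operatorname{Id}_{n}})=\sum_{r}\mathrm{F}_{r}^{\operatorname{Id}_{r}}\otimes\mathrm{F}_{n-r}^{\operatorname{Id}_{n-r}}$, and $H_{n}\mapsto\mathrm{F}_{n}^{\operatorname{Id}_{n}}$ is a Hopf map. It commutes with antipodes, and in $\mathtt{NSym}$ we have $S(H_{n})=(-1)^{n}E_{n}$. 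Theorem \ref{SNCQSymF} with $\alpha=n$, so that $\alpha^{t}=1^{n}$, therefore gives that $E_{n}$ goes to $\mathrm{F}_{1^{n}}^{w^{0}_{n}}$. The remaining equalities are Example \ref{FAex}.

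For injectivity I would use Lemma \ref{sumFnTNC}: $\mathrm{F}_{n}^{\operatorname{Id}_{n}}$ is the sum of all weakly increasing words of length $n$. Hence the product over a composition $\alpha$ is the sum of words that split into consecutive weakly increasing blocks of lengths $\alpha_{1},\ldots,\alpha_{\ell}$. A word whose descents occur exactly at the positions $\operatorname{comp}_{n}^{-1}\alpha$ appears in this product, and it appears in the product for $\beta$ only if $\operatorname{comp}_{n}^{-1}\alpha\subseteq\operatorname{comp}_{n}^{-1}\beta$. This is a unitriangularity with respect to inclusion of sets, so the images of the monomials $H_{\alpha}$ are linearly independent.

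Part $(vii)$ follows by composing with Proposition \ref{projQSym}, which sends $\mathrm{F}_{n}^{\operatorname{Id}_{n}}$ to $F_{n}=h_{n}$ (Remark \ref{SymQSym}). This also proves part $(i)$: the composite is a Hopf map landing in $\mathtt{Sym}$, and it is surjective because the $h_{n}$ generate $\mathtt{Sym}$.

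For part $(ii)$, I would apply the map of Proposition \ref{projQSym} to $\mathrm{m}_{\mathbf{L}}$ as written in Remark \ref{NCSym}. The image is $\sum_{\alpha}M_{\alpha}$ over the $\ell!$ orderings of the block sizes. This equals $\prod_{i}c_{i}!\,m_{\lambda}$, where $\lambda=\operatorname{sz}(\mathbf{L})$ and $c_{i}$ is the multiplicity of $i$ in $\lambda$, so surjectivity holds once these factorials are invertible.

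For part $(iii)$, working over a $\mathbb{Q}$-algebra, I would replace $\{H_{n}\}$ by the primitive free generators $\Psi_{n}$ of $\mathtt{NSym}$. I would send $\Psi_{n}$ to $\mathrm{m}_{\{\mathbb{N}_{n}\}}=\mathrm{M}_{(\mathbb{N}_{n})}$, which is primitive by Proposition \ref{coprodNCQSym} and maps to $M_{n}=p_{n}$ in $\mathtt{Sym}$. Injectivity again comes from unitriangularity: the product $\mathrm{M}_{(\mathbb{N}_{a_{1}})}\cdots\mathrm{M}_{(\mathbb{N}_{a_{k}})}$ contains $\mathrm{M}_{(\mathbb{N}_{a_{1}})|\cdots|(\mathbb{N}_{a_{k}})}$ with coefficient $1$, and every other term is coarser (Corollary \ref{prodNCMs}).

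Parts $(iv)$ and $(v)$ are the main obstacle. For $(v)$ I would specialize to the two variables $x_{1},x_{2}$ and show that the images of the $\mathrm{M}_{(\mathbb{N}_{n})}$ are algebraically independent and generate the image. Independence would come from a leading-word argument in the free algebra on two letters. For $(iv)$ I would take $I$ to be the kernel of this two-variable specialization. The delicate point is that this kernel is a Hopf ideal, since restricting to finitely many variables is not a priori compatible with the doubling of variables in Definition \ref{DeltaNCQSym}. Failing a direct argument, I would instead construct $I$ as the ideal generated by differences of elements of $\mathtt{NCSym}$ that agree under the splitting $\mathbf{L}\mapsto\mathbf{L}_{|r}\otimes\mathbf{L}_{r|}$, and check closure under $\Delta$ on monomials.
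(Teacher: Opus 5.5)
\paragraph{Part $(iv)$.} This part has a genuine gap. Your primary candidate, the kernel of the specialization to $x_{1},x_{2}$, is not a Hopf ideal. The restriction of $\mathrm{m}_{\mathbf{L}}$ to two variables vanishes when $\ell(\mathbf{L})\geq3$. Otherwise it is the sum of the two words of an orbit under $x_{1}\leftrightarrow x_{2}$, and distinct $\mathbf{L}$ give distinct orbits. Hence the kernel is exactly the span of the $\mathrm{m}_{\mathbf{L}}$ with $\ell(\mathbf{L})\geq3$.

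Summing Proposition \ref{coprodNCQSym} over the orderings of $\mathbf{L}$ gives $\Delta(\mathrm{m}_{\mathbf{L}})=\sum_{\mathbf{K}\subseteq\mathbf{L}}\mathrm{m}_{\operatorname{st}\mathbf{K}}\otimes\mathrm{m}_{\operatorname{st}(\mathbf{L}\setminus\mathbf{K})}$. Here $\mathbf{K}$ runs over subsets of the set of blocks and $\operatorname{st}$ is standardization. For $\mathbf{L}=(1,23,456)\vdash\mathbb{N}_{6}$, the basis tensor $\mathrm{m}_{(1)}\otimes\mathrm{m}_{(12,345)}$ then occurs with coefficient $1$, although neither factor lies in the kernel.

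So the ideals in $(iv)$ and $(v)$ must differ, which is what ``another quotient'' in $(v)$ signals. Your fallback neither specifies an ideal nor identifies the quotient; note also that $\mathbf{L}_{|r}$ is not defined for set partitions.

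One construction that does work, at least over a $\mathbb{Q}$-algebra, uses $\mathrm{p}_{\mathbf{L}}:=\sum_{\mathbf{M}\succeq\mathbf{L}}\mathrm{m}_{\mathbf{M}}$, the sum of all words constant on the blocks of $\mathbf{L}$. These elements:
\begin{itemize}
\item form a basis of $\mathtt{NCSym}$;
\item satisfy $\mathrm{p}_{\mathbf{L}}\mathrm{p}_{\mathbf{M}}=\mathrm{p}_{\mathbf{L}|\mathbf{M}}$;
\item satisfy $\Delta(\mathrm{p}_{\mathbf{L}})=\sum_{\mathbf{K}\subseteq\mathbf{L}}\mathrm{p}_{\operatorname{st}\mathbf{K}}\otimes\mathrm{p}_{\operatorname{st}(\mathbf{L}\setminus\mathbf{K})}$.
\end{itemize}
Therefore $\mathrm{p}_{\mathbf{L}}\mapsto\Psi_{c_{1}}\cdots\Psi_{c_{k}}$, where $c_{1},\ldots,c_{k}$ are the block sizes listed by increasing minimal element, is a surjective Hopf map $\mathtt{NCSym}\to\mathtt{NSym}$. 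It is a retraction of your embedding from $(iii)$, since $\mathrm{p}_{(\mathbb{N}_{n})}=\mathrm{m}_{(\mathbb{N}_{n})}$, and its kernel is a Hopf ideal as required.

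\paragraph{Part $(v)$.} Your proposal only announces this part, but the missing step is a single triangularity. For $\gamma\vDash n$, let $o_{\gamma}$ be the sum of the two words in $x_{1},x_{2}$ whose maximal constant runs have lengths $\gamma_{1},\gamma_{2},\ldots$. The image of $\mathtt{NCSym}$ is spanned by the $o_{\gamma}$. For $\alpha=(a_{1},\ldots,a_{k})$ one has $\prod_{i}(x_{1}^{a_{i}}+x_{2}^{a_{i}})=\sum_{\gamma\succeq\alpha}o_{\gamma}$. This shows at once, over any $R$, that the images $x_{1}^{n}+x_{2}^{n}$ of the $\mathrm{M}_{(\mathbb{N}_{n})}$ generate the image freely. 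So the quotient is free on one generator in each degree, hence isomorphic to $\mathtt{NSym}$ as a graded algebra. By the previous paragraph, the co-product does not descend to it.

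\paragraph{The other parts.} The paper proves none of this proposition and only refers to Sections 4 and 5 of \cite{[BRRZ]}. Your arguments for $(i)$, $(ii)$, $(vi)$ and $(vii)$ are therefore a correct, self-contained alternative. Using Theorem \ref{SNCQSymF} for the image of $E_{n}$ is not circular, since that theorem is proved independently of this proposition.

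In $(iii)$, state the triangularity in the basis $\{\mathrm{m}_{\mathbf{L}}\}$ rather than for set compositions. At the level of set compositions, Corollary \ref{prodNCMs} also produces reorderings: already $\mathrm{M}_{(1)}^{2}$ contains $\mathrm{M}_{(2,1)}$, which is not coarser than $(1,2)$. In $\mathtt{NCSym}$, every term other than $\mathrm{m}_{(\mathbb{N}_{a_{1}})|\cdots|(\mathbb{N}_{a_{k}})}$ is indeed a strictly coarser set partition.
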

For all the maps in Proposition \ref{NSym}, see Sections 4 and 5 of \cite{[BRRZ]}.
\begin{rmk}
The reason why parts $(ii)$ and $(iii)$ of Proposition \ref{NSym} require $R$ to be a $\mathbb{Q}$-algebra is that the map from Proposition \ref{projQSym} does not take the monomial $\mathrm{m}_{\mathbf{L}}$ from Remark \ref{NCSym} to $m_{\operatorname{sz}(\mathbf{L})}$ inside $\mathtt{Sym}$, but rather to an integral scalar multiple of it (by the product of factorials of the multiplicities of numbers in the partition $\operatorname{sz}(\mathbf{L})$). These scalars are why the formula from Theorem 4.6 of \cite{[BRRZ]} takes the asserted form for the image of $H_{n}$. Working with $E_{n}$, we only need the monomial non-commutative symmetric function that is associated with the unique set partition of $\mathbb{N}_{n}$ of length $n$ (the orbit of all the $\mathbf{B}_{\rho}$'s for $\rho \in S_{n}$). But $E_{n}$ is not sent to this monomial, but rather to its quotient over $n!$, since the image of that monomial under Proposition \ref{projQSym} is $n!e_{n}$. \label{notoverQ}
\end{rmk}
Remark \ref{notoverQ} explains why for getting the projection from part $(i)$ of Proposition \ref{NSym} without integral denominators, we need to embed $\mathtt{NSym}$ into $\mathtt{NCQSym}$ (as in part $(vi)$ there), and cannot work with $\mathtt{NCSym}$. The comparison with $\mathtt{Sym}^{(q)}$ from Remark \ref{SymQSymq} is as follows.
\begin{rmk}
Consider the embedding of $\mathtt{NSym}$ into $\mathtt{NCQSym}$ via part $(vi)$ of Proposition \ref{NSym}, and apply the map from Proposition \ref{projQSymq}. When working with the generators $\{H_{n}\}_{n=1}^{\infty}$, we get $H_{n} \mapsto h_{n}^{(q)}$, which respects the co-multiplication of such elements (though not on their products, because of the $q$-Hopf structure from Remark \ref{qHopf}). However, $E_{n}$ is taken to $q^{n(n-1)/2}e_{n}^{(q)}$ rather than $e_{n}^{(q)}$, which no longer commutes with the co-multiplication. This is the reason why the Newton identities in Remark \ref{SymQSymq} are not the images of those appearing in $\mathtt{NSym}$, but rather involve powers of $q$. \label{NSymq}
\end{rmk}
For Remark \ref{NSymq}, see \cite{[L]}, which also explains how to modify the structure on $\mathtt{NSym}$ in order to make it the graded dual of the Hopf algebra $\mathtt{QSym}^{(q)}$.

\section{Proofs of the Main Theorems \label{Proofs}}

The proofs of our results will be by establishing the vanishing of the products that is required in Corollary \ref{detS} for determining the images of elements under the antipodes. The products involved will be of fundamental quasi-symmetric functions and their generalizations. The product of two such objects is based on shuffling of their indices, and we will take advantage of a degree of freedom in doing so, as appears in, e.g., Corollary 5.12 of \cite{[AsS]}.

\subsection{Products of Fundamental Quasi-Symmetric Functions}

We begin with introducing our notation for shuffles and related objects.
\begin{defn}
Let $n$ and $m$ be two non-negative integers.
\begin{enumerate}[$(i)$]
\item A \emph{shuffle of size $(n,m)$} is a sequence $\sigma$ of length $m+n$ which contains $n$ instances of the letter l and $m$ instances of the letter r. We write $\mathcal{S}_{n,m}$ for the set of such shuffles, whose size is clearly $\frac{(n+m)!}{n!m!}$.
\item For $\sigma\in\mathcal{S}_{n,m}$ and $1 \leq t<n+m$, write $t$ as $a+b$ where among the first $t$ entries of $\sigma$ there are $a$ instances of l and $b$ of r. Then the entries in the locations $t$ and $t+1$ of $\sigma$ can be ll, lr, rl, and rr.
\item Assume that $n$ and $m$ are positive, and take $\sigma\in\mathcal{S}_{n,m}$, $T\subseteq\mathbb{N}_{n-1}$ and $S\subseteq\mathbb{N}_{m-1}$. We define $R^{\sigma}(T,S)\subseteq\mathbb{N}_{n+m-1}$ by running over $1 \leq t<n+m$, and consider the cases from part $(ii)$ and the parameters $a$ and $b$, as determined by $\sigma$. In the ll case we put $t$ inside $R^{\sigma}(T,S)$ if and only $a \in T$, and in the rr case we do the same with whether $b \in S$. We always put $t$ in $R^{\sigma}(T,S)$ in the rl case, and never in the lr case.
\item For $\sigma\in\mathcal{S}_{n,m}$ with positive $n$ and $m$, and for compositions $\alpha \vDash n$ and $\beta \vDash m$, we set $T:=\operatorname{comp}_{n}^{-1}\alpha\subseteq\mathbb{N}_{n-1}$ and $S:=\operatorname{comp}_{m}^{-1}\beta\subseteq\mathbb{N}_{m-1}$, and define $\gamma^{\sigma}(\alpha,\beta) \vDash n+m$ to be the image of $R^{\sigma}(T,S)$ from part $(iii)$ under
    $\operatorname{comp}_{n+m}$.
\item Let $\rho \in S_{n}$ and $\varphi \in S_{m}$ be permutations, both written in one-line notation, and take $\sigma\in\mathcal{S}_{n,m}$. Write $\varphi+n$ for the sequence obtained by increasing all the entries of $\varphi$ by $n$. We define $\psi^{\sigma}(\rho,\varphi) \in S_{n+m}$ to be the permutation in $S_{n+m}$ that is obtained by putting $\rho$ in the locations marked with l in $\sigma$ and $\varphi+n$ in those containing r, preserving the orders on both.
\end{enumerate} \label{shuffle}
\end{defn}
The expression $\varphi+n$ can be seen the permutation of the set $\mathbb{N}_{n+m}\setminus\mathbb{N}_{n}$ that is obtained after conjugating $\varphi$ by the order-preserving map between that set and $\mathbb{N}_{m}$. We extend Definition \ref{shuffle} to the case where $n$ (resp. $m$) vanishes, by setting $\gamma^{\sigma}(\alpha,\beta)$ to be $\beta$ (resp. $\alpha$). This is in correspondence with $\mathcal{S}_{n,m}$ containing a single element, the one with $m$ instances of r (resp. $n$ instances of l), and the corresponding set from part $(iii)$ there, determined only by the rr (resp. ll) case, being $S=\operatorname{comp}_{m}^{-1}\beta$ (resp. $T=\operatorname{comp}_{n}^{-1}\alpha$).

To obtain the smallest examples, recall that if $n=1$ then $T=\emptyset$ and $\alpha=1\vDash1$, and similarly when $m=1$, the set $S$ is empty and $\beta=1\vDash1$. When both are 1 there is a single element to check for $R^{\sigma}(\emptyset,\emptyset)$ and $\gamma^{\sigma}(1,1)$, and when either $n=1$ and $m=2$ or $n=2$ and $m=1$, here are two elements to consider in the corresponding shuffles, which have length 3. Also, in the latter case we have either $T=\emptyset$ and $\alpha=2\vDash2$ or $T=\{1\}$ and $\alpha=11\vDash2$, and the one before it involves either $S=\emptyset$ and $\beta=2\vDash2$ or $S=\{1\}$ and $\beta=11\vDash2$.
\begin{ex}
When $n=m=1$, the set $\mathcal{S}_{1,1}$ contains lr, with which $R^{\sigma}(\emptyset,\emptyset)$ does not contain the element and is thus empty, and then $\gamma^{\sigma}(1,1)=2\vDash2$. It also contains rl, where $R^{\sigma}(\emptyset,\emptyset)=\{1\}$ and $\gamma^{\sigma}(1,1)=11\vDash2$. For $n=2$ and $m=1$, from the element lrl of $\mathcal{S}_{2,1}$ we get the set $R^{\sigma}(T,\emptyset)=\{2\}$ and $\gamma^{\sigma}(\alpha,1)=21\vDash3$, irrespective of $\alpha$ and $T$. From llr we get $R^{\sigma}(\emptyset,\emptyset)=\emptyset$ and $\gamma^{\sigma}(2,1)=3\vDash3$ as well as $R^{\sigma}(\{1\},\emptyset)=\{1\}$ and $\gamma^{\sigma}(11,1)=12\vDash3$, while rll produces $R^{\sigma}(\emptyset,\emptyset)=\{1\}$ and $\gamma^{\sigma}(2,1)=12\vDash3$ and also $R^{\sigma}(\{1\},\emptyset)=\{1,2\}$ and $\gamma^{\sigma}(11,1)=111\vDash3$. Similarly, when $n=1$ and $m=2$, the set $\mathcal{S}_{1,2}$ contains rlr, for which $R^{\sigma}(\emptyset,S)=\{1\}$ and $\gamma^{\sigma}(1,\beta)=12$ for both values of $S$ and $\beta$. With lrr we obtain $R^{\sigma}(\emptyset,\emptyset)=\emptyset$ and $\gamma^{\sigma}(1,2)=3$ and also $R^{\sigma}(\emptyset,\{1\})=\{2\}$ and $\gamma^{\sigma}(1,2)=21$, and the element rrl yields $R^{\sigma}(\emptyset,\emptyset)=\{2\}$ and $\gamma^{\sigma}(1,2)=12$ and also $R^{\sigma}(\emptyset,\{1\})=\{1,2\}$ and $\gamma^{\sigma}(1,11)=111$. \label{exshuff}
\end{ex}

\begin{ex}
The group $S_{1}$ has a single element with one-line notation 1. The two elements $\mathcal{S}_{1,1}$, appearing in the order from Example \ref{exshuff}, produce for $\psi^{\sigma}(1,1)$ the respective elements 12 and 21 of $S_{2}$. For $\mathcal{S}_{2,1}$, there are three elements, and in their order of appearance in that example the resulting values of $\psi^{\sigma}(12,1)$ are 132, 123, and 312, and for $\psi^{\sigma}(21,1)$ they produce 231, 213, and 321. With the three elements of $\mathcal{S}_{1,2}$ in that order, $\psi^{\sigma}(1,12)$ yields 213, 123, and 231, while for $\psi^{\sigma}(1,21)$ we get 312, 132, and 321. \label{permshuff}
\end{ex}
Note that despite the similarity of writing compositions with no separation and permutations in one-line notation, when the external letter is $\gamma$ we mean compositions (as in Example \ref{exshuff}), and when it is $\psi$ the expressions are permutations (like in Example \ref{permshuff}).
\begin{rmk}
The letters l and r correspond to the left and the right composition involved in the calculation. The conditions from Definition \ref{shuffle} correspond to taking elements of $S_{n}$ and $S_{m}$ with descent sets $T$ and $S$, increasing the entries of the one from $S_{m}$ by $n$ (to make them larger than those of the one from $S_{n}$), shuffling them according to $\sigma$, and taking the descent set of the resulting permutation in $S_{n+m}$. While the result does not depend on the choice of permutations, our formulation does not require it at all. \label{withperms}
\end{rmk}
The next examples, with $n+m=4$, are as follows.
\begin{ex}
The element llrl of $\mathcal{S}_{3,1}$ yields $\gamma^{\sigma}(3,1)=\gamma^{\sigma}(21,1)=31$ and $\gamma^{\sigma}(12,1)=\gamma^{\sigma}(111,1)=121$, while lrll produces $\gamma^{\sigma}(3,1)=\gamma^{\sigma}(12,1)=22$ and $\gamma^{\sigma}(21,1)=\gamma^{\sigma}(111,1)=211$. From lllr we have $\gamma^{\sigma}(3,1)=4$, $\gamma^{\sigma}(21,1)=22$, $\gamma^{\sigma}(12,1)=13$, and $\gamma^{\sigma}(111,1)=112$. With the remaining element rlll we get $\gamma^{\sigma}(3,1)=13$, $\gamma^{\sigma}(21,1)=121$, $\gamma^{\sigma}(12,1)=112$, and $\gamma^{\sigma}(111,1)=1111$. Turning to $\mathcal{S}_{1,3}$, rlrr produces the expressions $\gamma^{\sigma}(1,3)=\gamma^{\sigma}(1,12)=13$ and $\gamma^{\sigma}(1,21)=\gamma^{\sigma}(1,111)=121$, while rrlr yields $\gamma^{\sigma}(1,3)=\gamma^{\sigma}(1,21)=22$ and $\gamma^{\sigma}(1,12)=\gamma^{\sigma}(1,111)=112$. With lrrr we get $\gamma^{\sigma}(1,3)=4$, $\gamma^{\sigma}(1,21)=31$, $\gamma^{\sigma}(1,12)=22$, and $\gamma^{\sigma}(1,111)=211$. Finally, from rrrl we obtain $\gamma^{\sigma}(1,3)=31$, $\gamma^{\sigma}(1,21)=211$, $\gamma^{\sigma}(1,12)=121$, and $\gamma^{\sigma}(1,111)=1111$. \label{shuffex}
\end{ex}

\begin{ex}
When $n=m=2$, the set $\mathcal{S}_{2,2}$ contains 6 elements, and $\alpha$ and $\beta$ can each be either $2\vDash2$ or $11\vDash2$. The element lrlr yields $\gamma^{\sigma}(\alpha,\beta)=22$ irrespective of $\alpha$ and $\beta$, and with rlrl we always have $\gamma^{\sigma}(\alpha,\beta)=121$. Taking lrrl produces $\gamma^{\sigma}(\alpha,2)=31$ and $\gamma^{\sigma}(\alpha,11)=211$ for both compositions $\alpha$, and when the element is rllr we get $\gamma^{\sigma}(2,\beta)=13$ and $\gamma^{\sigma}(11,\beta)=112$ for each $\beta$. Turning to llrr, we obtain $\gamma^{\sigma}(2,2)=4$, $\gamma^{\sigma}(2,11)=31$, $\gamma^{\sigma}(11,2)=13$, and $\gamma^{\sigma}(11,11)=121$. The last element is rrll, with the expressions $\gamma^{\sigma}(2,2)=22$, $\gamma^{\sigma}(2,11)=112$, $\gamma^{\sigma}(11,2)=211$, and $\gamma^{\sigma}(11,11)=1111$. \label{shuff22}
\end{ex}
In Example \ref{shuffex} we got 16 possible expressions using $\mathcal{S}_{3,1}$, and another 16 from $\mathcal{S}_{1,3}$. The total number of expressions in Example \ref{shuff22} is 24 (with many following a pattern). For $\psi^{\sigma}$'s in this setting, the numbers are 24 from each. Here are some of them.
\begin{ex}
Assume that $\sigma$ is the element lrll of $\mathcal{S}_{3,1}$. If we let $\rho$ run over the 6 elements 123, 213, 132, 231, 312, and 321 of $S_{3}$, then $\psi^{\sigma}(\rho,1)$ gives 1423, 2413, 1432, 2431, 3412, and 3421. Similarly we take rrlr in $\mathcal{S}_{1,3}$, and when $\varphi$ goes over $S_{3}$ in that order, the value of $\psi^{\sigma}(1,\varphi)$ becomes 2314, 3214, 2413, 3412, 4213, and 4312. We now take the element lrlr of $\mathcal{S}_{2,2}$, and get $\psi^{\sigma}(12,12)=1324$, $\psi^{\sigma}(21,12)=2314$, $\psi^{\sigma}(12,21)=1423$, and $\psi^{\sigma}(21,21)=2413$. Similarly, with rlrl the respective values are 3142, 3241, 4132, and 4231. \label{permsum4}
\end{ex}
Note that the two elements from $\mathcal{S}_{2,2}$ for which $\gamma^{\sigma}(\alpha,\beta)$ was the same regardless of the choice of compositions $\alpha$ and $\beta$ of 2, the four associated permutations $\psi^{\sigma}(\rho,\varphi)$ in Example \ref{permsum4} are all different.

\medskip

Some of the notions in Definition \ref{shuffle} are used for establishing the formula for the product of two fundamental quasi-symmetric functions in $\mathtt{QSym}$. We would like to show that this formula lifts to $\mathtt{NCQSym}$, with the appropriate parameters. In order to do so, we begin with the following construction.
\begin{defn}
For positive $n$ and $m$ and permutations $\rho \in S_{n}$ and $\varphi \in S_{m}$, take subsets $T\subseteq\mathbb{N}_{n-1}$ and $S\subseteq\mathbb{N}_{m-1}$, and let $\eta\in\mathcal{M}_{n,T}^{\rho}$ and $\kappa\in\mathcal{M}_{m,S}^{\varphi}$ be elements of the sets from Lemma \ref{sumFnTNC}. Then we define $\sigma(\eta,\kappa)\in\mathcal{S}_{n,m}$ by induction as follows. Assume that the first $a+b$ entries in $\sigma$ were determined, with $a$ of them being l's and the other $b$ are r's (the initial step is the case where $a=b=0$). If $a=n$ or $b=m$ then we complete with the remaining symbol to land inside $\mathcal{S}_{n,m}$. Otherwise, we consider the indices $\eta_{\rho(a+1)}$ and $\kappa_{\varphi(b+1)}$, and we put the symbol l as the next entry of $\sigma$ in case $\eta_{\rho(a+1)}\leq\kappa_{\varphi(b+1)}$, and fill it in with an r when $\eta_{\rho(a+1)}>\kappa_{\varphi(b+1)}$. \label{defsigma}
\end{defn}
In fact, when $n$ or $m$ vanishes the construction from Definition \ref{defsigma} still works, and produces the unique element of $\mathcal{S}_{n,m}$ for every $\eta$ and $\kappa$ (one of which runs over the set $\{\emptyset\}$). The main calculation is based on the following observation.
\begin{lem}
For $n$, $m$, $T$, and $S$ as in Definition \ref{defsigma}, and take an element $\sigma\in\mathcal{S}_{n,m}$. Then the concatenation $(\eta,\kappa)\mapsto\eta\kappa$ is a bijection between the set of pairs $(\eta,\kappa)\in\mathcal{M}_{n,T}^{\rho}\times\mathcal{M}_{m,S}^{\varphi}$ for which $\sigma(\eta,\kappa)=\sigma$ and the set $\mathcal{M}_{n+m,R}^{\psi}$, with $\psi:=\psi^{\sigma}(\rho,\varphi)$ and $R:=R^{\sigma}(T,S)\subseteq\mathbb{N}_{n+m-1}$ from Definition \ref{shuffle}. \label{matchsigma}
\end{lem}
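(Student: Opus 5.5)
The plan is to show three things in turn: the concatenation map lands in $\mathcal{M}_{n+m,R}^{\psi}$, it is injective on the fibre $\sigma(\eta,\kappa)=\sigma$, and it is surjective onto $\mathcal{M}_{n+m,R}^{\psi}$, with the inverse reading off $\eta$ and $\kappa$ as the first $n$ and last $m$ entries. Injectivity is immediate, since $\eta$ and $\kappa$ are recovered from $\eta\kappa$ by cutting at position $n$. The work lies in matching the defining inequalities.

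Recall from Lemma \ref{sumFnTNC} that $\eta\in\mathcal{M}_{n,T}^{\rho}$ means that the sequence $u_{a}:=\eta_{\rho(a)}$ is non-decreasing, with strict increase at each $a\in T$. Similarly $\kappa\in\mathcal{M}_{m,S}^{\varphi}$ means $v_{b}:=\kappa_{\varphi(b)}$ is non-decreasing with strict increase at $b\in S$. The permutation $\psi=\psi^{\sigma}(\rho,\varphi)$ places $\rho(a)$ at the position of the $a$th l of $\sigma$ and $\varphi(b)+n$ at the position of the $b$th r. Hence for $\theta:=\eta\kappa$ the sequence $w_{t}:=\theta_{\psi(t)}$ is exactly the merge of $u$ and $v$ according to $\sigma$: $w_{t}=u_{a}$ if the $t$th letter is the $a$th l, and $w_{t}=v_{b}$ if it is the $b$th r.

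So $\theta\in\mathcal{M}_{n+m,R}^{\psi}$ means that $w$ is non-decreasing with strict steps at $t\in R$. I would check this step by step, using the four cases of part $(ii)$ of Definition \ref{shuffle}:
\begin{itemize}
\item In the ll case, $w_{t}\le w_{t+1}$ is $u_{a}\le u_{a+1}$, strict exactly when required by $a\in T$. This matches the rule for $R^{\sigma}(T,S)$.
\item The rr case is the same, using $v$ and $S$.
\item In the lr case, $w_{t}=u_{a}$ and $w_{t+1}=v_{b+1}$, with no strictness imposed.
\item In the rl case, $w_{t}=v_{b}$ and $w_{t+1}=u_{a+1}$, with strictness imposed.
\end{itemize}

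The key step, and the only real obstacle, is to show that the condition $\sigma(\eta,\kappa)=\sigma$ from Definition \ref{defsigma} is equivalent, given that $u$ and $v$ are individually monotone, to the merged sequence $w$ satisfying $u_{a}\le v_{b+1}$ at every lr step and $v_{b}<u_{a+1}$ at every rl step. The greedy rule says the next letter is l iff $u_{a+1}\le v_{b+1}$, and r iff $u_{a+1}>v_{b+1}$; once one side is exhausted there is no condition.

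Going forward, each lr step at $t$ is preceded by a greedy choice that placed $u_{a}$ at some earlier point with $u_{a}\le v_{b'}$, where $b'$ is the current r-count then. That count equals $b$, because no r's occur between that choice and position $t$, the block of l's being consecutive. Hence $u_{a}\le v_{b+1}$. Symmetrically, each rl step yields $v_{b}<u_{a+1}$ from the greedy decision placing $v_{b}$.

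Going backward, assume $w$ is non-decreasing with the prescribed strictness. Then at each step the greedy choice reproduces $\sigma$. Suppose the next letter of $\sigma$ is l, so it is $u_{a+1}$, and the next r, $v_{b+1}$, appears later. Monotonicity of $w$ gives $u_{a+1}\le v_{b+1}$, so greedy picks l. Suppose instead the next letter is r, namely $v_{b+1}$, with $u_{a+1}$ later. Then $v_{b+1}\le u_{a+1}$, and somewhere between them there is an rl transition $v_{b'}<u_{a+1}$ with $b'\ge b+1$, where strictness is forced. Combined with $v_{b+1}\le v_{b'}$, this gives $v_{b+1}<u_{a+1}$, so greedy picks r. Finally, once $a=n$ or $b=m$, both sides agree automatically.

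This gives the bijection. I expect the only delicacy to be in handling the strict inequalities at rl transitions, and in the boundary cases where one of $n$ or $m$ is exhausted or vanishes.
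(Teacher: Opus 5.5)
Your proposal is correct and follows essentially the same route as the paper. Both arguments pass to the merged sequence $\theta_{\psi(t)}$. For one direction they check the ll/lr/rl/rr steps against the greedy inequalities from Definition \ref{defsigma}. For the other they use the first later position $c$ of the opposite letter, together with the forced strictness at the rl step $c-1\in R$, to recover $\sigma(\eta,\kappa)=\sigma$.

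One point you leave implicit, which the paper spells out, is that $\theta\in\mathcal{M}_{n+m,R}^{\psi}$ already forces $\eta\in\mathcal{M}_{n,T}^{\rho}$, and likewise for $\kappa$. When the $a$th and $(a+1)$st l of $\sigma$ are not adjacent, your ll bullet does not apply; instead the rl step just before the latter gives $u_{a}<u_{a+1}$ regardless of whether $a\in T$. Also, in your forward step the greedy comparison is with $v_{b'+1}$, not $v_{b'}$.
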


\begin{proof}
This concatenation takes a pair of sequences $\eta$ and $\kappa$ of lengths $n$ and $m$ to a sequence $\theta:=\eta\kappa$ of length $n+m$, and it is clear that every sequence $\theta$ is a unique such composition, where $\eta$ is the first $n$ elements of $\theta$, and $\kappa$ is the last $m$ entries. We thus need to show that when $\theta=\eta\kappa$ we have $\theta\in\mathcal{M}_{n+m,R}^{\psi}$ if and only if $\eta\in\mathcal{M}_{n,T}^{\rho}$, $\kappa\in\mathcal{M}_{m,S}^{\varphi}$, and $\sigma(\eta,\kappa)=\sigma$.

We first assume that $\theta\in\mathcal{M}_{n+m,R}^{\psi}$, and note that for every $1 \leq j<n$, the index $\rho(j)$ is $\psi(k)$ where $k$ is the $j$'s location of l in $\sigma$. Then $\rho(j+1)$ is $\psi(h)$ for the next location of l in $\sigma$, so that $h>k$ and thus $\psi(h)\geq\psi(k)$. Moreover, the only case where $\theta_{\psi(k)}=\theta_{\psi(h)}$ is where none of the entries between $k$ and $h-1$ are in $R^{\sigma}(T,S)$. This means that there can be no r in between (for rl producing an element of that set), and thus $h=k+1$, and for $R^{\sigma}(T,S)$ not to contain $k$ we need $j \not\in T$.

This proves that $\eta\in\mathcal{M}_{n,T}^{\rho}$, and a similar argument with $\varphi+n$ implies that $\kappa\in\mathcal{M}_{m,S}^{\varphi}$. It also shows that the only situation where $\theta_{\psi(k)}=\theta_{\psi(h)}$ for indices $k$ and $h$ for which $\psi(k)=\varphi(j)+n$ and $\psi(h)=\varphi(j+1)+n$ is where $h=k+1$ and $j \not\in S$. We need to show that $\sigma(\eta,\kappa)$ from Definition \ref{defsigma} is our $\sigma$.

For this, we take an index $a+b+1$, with $a<n$ and $b<m$, and assume that we verified that the first $a+b$ entries of $\sigma(\eta,\kappa)$ coincide with those of $\sigma$, and contained $a$ instances of l and $b$ instances of $r$. Definition \ref{shuffle} shows that $\psi(a+b+1)$ equals $\rho(a+1)$ (resp. $\varphi(b+1)+n$) if that location of $\sigma$ is l (resp. r), and if we define $c>a+b+1$ be the minimal location of an r (resp. l) in $\sigma$ then $\psi(c)$ is $\varphi(b+1)+n$ (resp. $\rho(a+1)$). Since $\theta_{\psi(a+b+1)}\leq\theta_{\psi(c)}$, we deduce in the l case that $\eta_{\rho(a+1)}\leq\kappa_{\varphi(b+1)}$ and indeed the location number $a+b+1$ in $\sigma(\eta,\kappa)$ is also l. In the r case we get $a+b+1 \leq c-1 \in R$ and thus $\kappa_{\varphi(b+1)}=\theta_{\psi(a+b+1)}<\theta_{\psi(c)}=\eta_{\rho(a+1)}$, so that this location is r also in $\sigma(\eta,\kappa)$.

Therefore all the locations in $\sigma(\eta,\kappa)$ coincide with those in $\sigma$ until we cover either all the l's or all the r's. But then the rest is also the same since both of these sequences are in $\mathcal{S}_{n,m}$. We have thus established this direction.

Conversely, assume that $\eta$ and $\kappa$ are in the corresponding sets and produce $\sigma$ via Definition \ref{defsigma}. We need to show that $\theta_{\psi(t)}\leq\theta_{\psi(t+1)}$ for any $1 \leq t<n+m$, with the inequality being strict when $t \in R$. There are the four cases for the locations $t$ and $t+1$ of $\sigma$, namely ll, lr, rl, and rr as in Definition \ref{shuffle}. We can write $t=a+b$, where the first $t$ entries of $\sigma$ consist of $a$ l's and $b$ r's.

But Definition \ref{shuffle} implies that $\theta_{\psi(t)}$ is $\eta_{\rho(a)}$ in the former two cases and $\kappa_{\varphi(b)}$ in the latter two cases, while the value of $\theta_{\psi(t+1)}$ is $\eta_{\rho(a+1)}$ in the first and third cases, and it is $\kappa_{\varphi(b+1)}$ in the second and fourth ones. Moreover, we deduce from Definition \ref{defsigma} that the first and third case correspond to the inequality $\eta_{\rho(a+1)}\leq\kappa_{\varphi(b+1)}$, while the second and fourth one are associated with $\eta_{\rho(a+1)}>\kappa_{\varphi(b+1)}$. Applying these considerations to the $t$th location implies that $\eta_{\rho(a)}\leq\kappa_{\varphi(b+1)}$ in the first two cases, and produces the inequality $\kappa_{\varphi(b)}<\eta_{\rho(a+1)}$ in last two.

Now, our assumption on $\eta$ and $\kappa$ produce $\eta_{\rho(a)}\leq\eta_{\rho(a+1)}$ and $\kappa_{\varphi(b)}\leq\kappa_{\varphi(b+1)}$, which yields $\theta_{\psi(t)}\leq\theta_{\psi(t+1)}$ in all four cases. As $t \not\in R$ in the lr case, and we have a strict inequality in rl case, these two cases are covered. As in the first (resp. fourth) case we have $t \in R$ if and only if $a \in T$ (resp. $b \in S$), which then implies $\eta_{\rho(a)}<\eta_{\rho(a+1)}$ (resp. $\kappa_{\varphi(b)}<\kappa_{\varphi(b+1)}$), we deduce the required strict inequality also in these cases. Hence $\theta$ satisfies all the conditions for being in $\mathcal{M}_{n+m,R}^{\psi}$, as desired. This completes the proof of the lemma.
\end{proof}
The extension of Definition \ref{shuffle} to the case of vanishing $n$ or $m$ implies that the assertion of Lemma \ref{matchsigma} is also valid in these cases.

\medskip

We can now establish our formula.
\begin{prop}
Take compositions $\alpha \vDash n$ and $\beta \vDash m$, as well as permutations $\rho \in S_{n}$ and $\varphi \in S_{m}$. Then $\mathrm{F}_{\alpha}^{\rho}\mathrm{F}_{\beta}^{\varphi}=\sum_{\sigma\in\mathcal{S}_{n,m}}\mathrm{F}_{\gamma^{\sigma}(\alpha,\beta)}^{\psi^{\sigma}(\rho,\varphi)}$ in $\mathtt{NCQSym}$. \label{prodF}
\end{prop}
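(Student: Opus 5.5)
The plan is to expand both sides as sums of monomials via Lemma \ref{sumFnTNC} and compare them, with Lemma \ref{matchsigma} providing the bijection. First dispose of the degenerate cases. If $n=0$ then $\mathrm{F}_{\alpha}^{\rho}=1$, the set $\mathcal{S}_{0,m}$ has a single element, and by the extension of Definition \ref{shuffle} we get $\gamma^{\sigma}(\emptyset,\beta)=\beta$ and $\psi^{\sigma}(\rho,\varphi)=\varphi$, so the identity is trivial; the case $m=0$ is symmetric. So assume $n,m>0$ and write $T:=\operatorname{comp}_{n}^{-1}\alpha$ and $S:=\operatorname{comp}_{m}^{-1}\beta$.

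By Lemma \ref{sumFnTNC}, $\mathrm{F}_{\alpha}^{\rho}=\sum_{\eta\in\mathcal{M}_{n,T}^{\rho}}x^{\eta}$ and $\mathrm{F}_{\beta}^{\varphi}=\sum_{\kappa\in\mathcal{M}_{m,S}^{\varphi}}x^{\kappa}$. Since the variables do not commute and $x^{\eta}x^{\kappa}=x^{\eta\kappa}$ (concatenation of sequences), the product is $\sum_{(\eta,\kappa)}x^{\eta\kappa}$ over $\mathcal{M}_{n,T}^{\rho}\times\mathcal{M}_{m,S}^{\varphi}$. Definition \ref{defsigma} assigns to each pair a unique $\sigma(\eta,\kappa)\in\mathcal{S}_{n,m}$, so this double sum splits as $\sum_{\sigma\in\mathcal{S}_{n,m}}\sum_{\sigma(\eta,\kappa)=\sigma}x^{\eta\kappa}$.

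For fixed $\sigma$, Lemma \ref{matchsigma} says that concatenation is a bijection from the pairs with $\sigma(\eta,\kappa)=\sigma$ onto $\mathcal{M}_{n+m,R}^{\psi}$, where $\psi=\psi^{\sigma}(\rho,\varphi)$ and $R=R^{\sigma}(T,S)$. Therefore the inner sum equals $\sum_{\theta\in\mathcal{M}_{n+m,R}^{\psi}}x^{\theta}$. By Lemma \ref{sumFnTNC} again, this is $\mathrm{F}_{n+m,R}^{\psi}$, and Definition \ref{shuffle}$(iv)$ gives $\operatorname{comp}_{n+m}R=\gamma^{\sigma}(\alpha,\beta)$. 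So the inner sum is $\mathrm{F}_{\gamma^{\sigma}(\alpha,\beta)}^{\psi^{\sigma}(\rho,\varphi)}$, and summing over $\sigma$ gives the formula.

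Since Lemma \ref{matchsigma} is taken as given, the main obstacle is already absorbed there. The only remaining care is to check that the decomposition by $\sigma(\eta,\kappa)$ is a genuine partition of the pairs, which holds because $\sigma(\eta,\kappa)$ is a well-defined function, and that each monomial is counted with multiplicity one. The latter holds because distinct pairs give distinct concatenations $\eta\kappa$, as the lengths $n$ and $m$ are fixed.
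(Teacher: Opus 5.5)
Your proposal is correct and follows the paper's proof essentially step for step. Both handle the degenerate cases $n=0$ or $m=0$ directly. For $n,m>0$, both expand the product via Lemma \ref{sumFnTNC}, partition the pairs $(\eta,\kappa)$ by $\sigma(\eta,\kappa)$ from Definition \ref{defsigma}, and identify each block with $\mathrm{F}_{\gamma^{\sigma}(\alpha,\beta)}^{\psi^{\sigma}(\rho,\varphi)}$ using Lemma \ref{matchsigma} followed by Lemma \ref{sumFnTNC} again.
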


\begin{proof}
We would like to employ Lemma \ref{sumFnTNC}, which in our formulation applies only when both $n$ and $m$ are positive. But when $n$ or $m$ vanishes, the corresponding multiplier is 1, the set $\mathcal{S}_{n,m}$ consists of a single expression, and the resulting parameters $\gamma^{\sigma}(\alpha,\beta)$ and $\psi^{\sigma}(\rho,\varphi)$ produce those of the other multiplier. So the equality holds in this case.

We thus assume that $n$ and $m$ are positive, so that $\alpha:=\operatorname{comp}_{n}T$ for some $T\subseteq\mathbb{N}_{n-1}$ and $\beta=\operatorname{comp}_{m}S$ with $S\subseteq\mathbb{N}_{m-1}$, and the product in question is $\mathrm{F}_{n,T}^{\rho}\mathrm{F}_{m,S}^{\varphi}$. We express both multipliers via Lemma \ref{sumFnTNC}, so that our product becomes $\sum_{\eta\in\mathcal{M}_{n,T}^{\rho}}\sum_{\kappa\in\mathcal{M}_{m,S}^{\varphi}}x^{\eta}x^{\kappa}$, and each summand is the monomial associated with the concatenation $\eta\kappa$.

Recalling the map from Definition \ref{defsigma}, we can separate this expression for $\mathrm{F}_{n,T}^{\rho}\mathrm{F}_{m,S}^{\varphi}$ as $\sum_{\sigma\in\mathcal{S}_{n,m}}\sum_{(\eta,\kappa)\in\mathcal{M}_{n,T}^{\rho}\times\mathcal{M}_{m,S}^{\varphi},\ \sigma(\eta,\kappa)=\sigma}x^{\eta\kappa}$. But then Lemma \ref{matchsigma} expresses the sum associated with each $\sigma\in\mathcal{S}_{n,m}$ as $\sum_{\theta\in\mathcal{M}_{n+m,R}^{\psi}}x^{\theta}$, with the parameters $\psi:=\psi^{\sigma}(\rho,\varphi)$ and $R:=R^{\sigma}(T,S)\subseteq\mathbb{N}_{n+m-1}$ as in Definition \ref{shuffle}. Another application of Lemma \ref{sumFnTNC} reduces the latter sum to $\mathrm{F}_{n+m,R}^{\psi}$, and as this is the same as $\mathrm{F}_{\gamma}^{\psi}$ for $\gamma:=\gamma^{\sigma}(\alpha,\beta)$ (Definition \ref{shuffle} again), the desired expression is established. This proves the proposition.
\end{proof}
In fact, using Remark \ref{gencomp} and the resulting extension of Lemmas \ref{sumFnTNC} and \ref{matchsigma} and Definition \ref{defsigma} to the case where $n$ or $m$ vanishes, we could have avoided the separation into cases in Proposition \ref{prodF}. The formula in $\mathtt{QSym}$ follows from Proposition \ref{prodF} after applying the projection from Proposition \ref{projQSym} (which ignores the permutations part).

The cases where $n=0$ or $m=0$ are given by the fact that the corresponding multiplier there is 1. Here are the smallest non-trivial examples.
\begin{ex}
When $n=m=1$, Examples \ref{exshuff} and \ref{permshuff} show that $(\mathrm{F}_{1}^{1})^{2}$ is the sum of $\mathrm{F}_{2}^{12}$ from lr in $\mathcal{S}_{1,1}$, and $\mathrm{F}_{11}^{21}$ arising from rl. Following the terms from $\mathcal{S}_{2,1}$ as they appear in these examples, $\mathrm{F}_{2}^{12}\mathrm{F}_{1}^{1}$ is the sum of $\mathrm{F}_{21}^{132}$, $\mathrm{F}_{3}^{123}$, and $\mathrm{F}_{12}^{312}$, while for $\mathrm{F}_{11}^{12}\mathrm{F}_{1}^{1}$ we get $\mathrm{F}_{21}^{132}$, $\mathrm{F}_{12}^{123}$, and $\mathrm{F}_{111}^{312}$. Replacing the superscript 12 by 21 would replace 132, 123, and 312 by 231, 213, and 321 respectively. Similarly, with $\mathcal{S}_{1,2}$ we get $\mathrm{F}_{1}^{1}\mathrm{F}_{2}^{12}=\mathrm{F}_{12}^{213}+\mathrm{F}_{3}^{123}+\mathrm{F}_{21}^{231}$, $\mathrm{F}_{1}^{1}\mathrm{F}_{11}^{12}=\mathrm{F}_{12}^{213}+\mathrm{F}_{21}^{123}+\mathrm{F}_{111}^{231}$, and with the superscript 21 we change 213, 123, and 231 to 312, 132, and 321 respectively. \label{exprod}
\end{ex}

\begin{rmk}
The two summands in the expression for $(\mathrm{F}_{1}^{1})^{2}$ in Example \ref{exprod} correspond to the fact that when multiplying $\mathrm{F}_{1}^{1}=\sum_{i=1}^{\infty}x_{i}$ with itself, we will have products $x_{i}x_{j}$ with $i \leq j$ and those with $j<i$, yielding the two terms. Similarly, when we multiply $\mathrm{F}_{2}^{12}=\sum_{i \leq j}x_{i}x_{j}$ (resp. $\mathrm{F}_{11}^{12}=\sum_{i<j}x_{i}x_{j}$) with $\mathrm{F}_{1}^{1}$ from the right, we get, in the respective orders, the summands where $i \leq k<j$, those where $i \leq j \leq k$ (resp. $i<j \leq k$), and those in which $k<i \leq j$ (resp. $k<i<j$). The superscript 21 corresponds to taking $j \leq i$ (resp. $j<i$) but still of the products $x_{i}x_{j}$ in that order. When we multiply in the opposite order, namely our $\mathrm{F}_{1}^{1}$ times $\sum_{j \leq k}x_{j}x_{k}$ (resp. $\sum_{j<k}x_{j}x_{k}$), the respective summands are with $j<i \leq k$, then $i \leq j \leq k$ (resp. $i \leq j<k$), and $k \leq j<i$ (resp. $k<j<i$), and again the order on $j$ and $k$ is inverted for the superscript 21. \label{expsumind}
\end{rmk}

\begin{ex}
Take $n=m=3$, with $\rho=312 \in S_{3}$ and $\alpha=21\vDash3$ (so that $T:=\{2\}$) and the other multiplier involving $\varphi=231$ and $\beta=12\vDash3$ (and hence $S:=\{1\}$). Then if we write the monomial $x^{\eta}$ for $\eta\in\mathcal{M}_{3,\{2\}}^{312}$ as $x_{i_{1}}x_{i_{2}}x_{i_{3}}$ and $x^{\kappa}$ with $\kappa\in\mathcal{M}_{3,\{1\}}^{231}$ as $x_{j_{1}}x_{j_{2}}x_{j_{3}}$, then we have $i_{3} \leq i_{1}<i_{2}$ and $j_{2}<j_{3} \leq j_{1}$. We look for the conditions for $\eta$ and $\kappa$ to be such that the element $\sigma(\eta,\kappa)\in\mathcal{S}_{3,3}$ from Definition \ref{defsigma} would be lrrllr. The first l implies that $i_{3} \leq j_{2}$, in the comparison of the two smallest entries. The following r now means that $j_{2}<i_{1}$, and the next r is $j_{3}<i_{1}$ as well. Then an l standing for $i_{1} \leq j_{1}$, another l meaning that $i_{2} \leq j_{1}$ as well, and the final is because we exhausted our l's. Combining these inequalities yields $i_{3} \leq j_{2}<j_{3}<i_{1}<i_{2} \leq j_{1}$, and the product lies in $\mathcal{M}_{6,\{2,3,4\}}^{356124}$. Going over Definition \ref{shuffle} shows that the superscript is $\psi^{\sigma}(\rho,\varphi)$, the second subscript is the set $R^{\sigma}(T,S)$, and the corresponding composition $2112\vDash6$ is $\gamma^{\sigma}(\alpha,\beta)$, as Lemma \ref{matchsigma} predicts. \label{exsigma}
\end{ex}

\begin{rmk}
One also easily verifies that for every monomial in $\mathcal{M}_{6,R}^{\psi}$ from Example \ref{exsigma}, the first three indices satisfy $i_{3}<i_{1}<i_{2}$ and the for last three ones we have $j_{2}<j_{3}<j_{1}$. The inequalities that we got are stricter than the requirement that the left half $x^{\eta}$ of the monomial is with $\eta\in\mathcal{M}_{3,\{2\}}^{312}$ and the right half $x^{\kappa}$ corresponds to $\kappa\in\mathcal{M}_{3,\{1\}}^{231}$, because the determination of $\sigma(\eta,\kappa)$ can only arise when those stronger inequalities hold. For other choices of that element of $\mathcal{S}_{3,3}$, terms with $i_{3}=i_{1}<i_{2}$ as well as those in which $j_{2}<j_{3}=j_{1}$, can show up. \label{strongineq}
\end{rmk}
One can check the smaller cases in the expressions from Example \ref{exprod}, expressed as in Remark \ref{expsumind}, and see in which of them stronger inequalities than the original show up (as in Remark \ref{strongineq}), and in which equalities are allowed.

\medskip

Note that the convention in Definition \ref{defsigma} is that when equalities between the indices in the left multiplier $x^{\eta}$ and those in the right multiplier $x^{\kappa}$ appear, we consider the index from $\eta$ to come before that from $\sigma$. This corresponds to the fact that elements appearing in the case lr of Definition \ref{shuffle} do not go into $R^{\sigma}(S,T)$, and those with rl do. This is visible in Example \ref{expsumind} in the fact that the term $\sum_{i=1}^{\infty}x_{i}^{2}=\mathrm{M}_{12}$ for $(12)\vDash\mathbb{N}_{2}$ is considered, in the product $(\mathrm{F}_{1}^{1})^{2}$, as part of the summand $\mathrm{F}_{2}^{12}$, and not $\mathrm{F}_{11}^{21}$.

The convention we applied is, however, not the only one. Indeed, in the terminology defining shuffles through descents, the way to obtain $R^{\sigma}(T,S)$ is by taking permutations $\rho \in S_{n}$ and $\varphi \in S_{m}$ such that $\operatorname{Des}(\rho)=T$ and $\operatorname{Des}(\varphi)=S$, and then $R^{\sigma}(T,S)$ is the descent set of $\psi^{\sigma}(\rho,\varphi)$. But the latter permutation is based on $\rho$ and $\varphi+n$, namely we chose the convention of making the entries of $\varphi$ (which appear in the r locations of $\sigma$) larger than those of $\rho$ (from the l locations), which refers to and lr never being a descent but an rl always being one. We now wish to generalize this.

In the spirit of the previous paragraph, we recall from Corollary 5.12 of \cite{[AsS]} that the product formula in $\mathtt{QSym}$, in the fundamental basis, can be obtained by substituting any lifts of $\rho$ and $\varphi$ to words on disjoint sets of numbers inside $\mathbb{N}_{n+m}$, not necessarily where those of $\varphi$ are larger than those of $\rho$. We therefore extend Definition \ref{shuffle} as follows.
\begin{defn}
Consider two positive integers $n$ and $m$.
\begin{enumerate}[$(i)$]
\item Consider an element $\sigma\in\mathcal{S}_{n,m}$ and a permutation $\tau \in S_{n+m}$, written in one-line notation. We define the word $w^{\sigma}(\tau) \in S_{n+m}$ by putting the first $n$ entries of $\tau$ in the locations of l in $\sigma$ while preserving their order, and doing the same with the last $m$ entries of $\tau$ and the locations of r in $\sigma$.
\item Given such $\sigma$ as well as subsets $T\subseteq\mathbb{N}_{n-1}$ and $S\subseteq\mathbb{N}_{n-1}$, take $\tau \in S_{n+m}$ such that $\operatorname{Des}(\tau_{|n})=T$ and $\operatorname{Des}(\tau_{n|})=S$. We then define the subset $R^{\sigma}_{\tau}(T,S)\subseteq\mathbb{N}_{n+m-1}$ to be the descent set of $w^{\sigma}(\tau) \in S_{n+m}$ from part $(i)$.
\item For $\alpha \vDash n$ and $\beta \vDash m$, set $T:=\operatorname{comp}_{n}^{-1}\alpha$ and $S:=\operatorname{comp}_{n}^{-1}\alpha$, and then for $\tau$ as in part $(ii)$ and $\sigma\in\mathcal{S}_{n,m}$, with the resulting subset $R^{\sigma}_{\tau}(T,S)\subseteq\mathbb{N}_{n-1}$, we define $\gamma^{\sigma}_{\tau}(\alpha,\beta):=\operatorname{comp}_{n+m}R^{\sigma}_{\tau}(T,S) \vDash n+m$.
\end{enumerate} \label{altshuff}
\end{defn}
Like Definition \ref{shuffle}, Definition \ref{altshuff} for $\gamma^{\sigma}_{\tau}(\alpha,\beta)$ extends to the case where $n$ or $m$ vanishes, where both $\sigma$ and $\tau$ are uniquely determined, and it produces the other composition.

Note that for $\tau \in S_{n+m}$ we indeed have $\tau_{|n} \in S_{n}$ and $\tau_{n|} \in S_{m}$ via Definition \ref{setwithr}, yielding indeed descent sets in $\mathbb{N}_{n-1}$ and in $\mathbb{N}_{m-1}$ respectively in Definition \ref{altshuff}. These descent sets are the same as those of the two sequences obtained by separating $\tau$ in the corresponding location, without the standardization required for yielding $\tau_{|n}$ and $\tau_{n|}$ in the former definition. Moreover, the rule for determining whether an element $t=a+b$ lies in $R^{\sigma}_{\tau}(T,S)$ is the same for every $\tau$ in the ll and rr cases from Definition \ref{shuffle}.

Here are some relations between the notions from Definitions \ref{shuffle} and \ref{altshuff}.
\begin{rmk}
If $\sigma\in\mathcal{S}_{n,m}$ is the sequence beginning with $n$ l's and ending with $m$ r's (we call this $\sigma$ the \emph{trivial sequence}), then $w^{\sigma}(\tau)=\tau$ by definition. We also note that from $\rho \in S_{n}$ and $\varphi \in S_{m}$ as in Definition \ref{shuffle}, the element $\psi^{\sigma}(\rho,\varphi)$ is obtained as $w^{\sigma}(\tau)$ where $\tau$ is the word obtained by concatenating $\rho$ with $\varphi+n$. Conversely, wherever the first $n$ entries of $\tau$ are some element of $S_{n}$ with descent set $T$, and the last $m$ entries are obtained by adding $n$ to an element of $S_{m}$ with descent set $S$, the set $R^{\sigma}_{\tau}(T,S)$ from Definitions \ref{altshuff} reduces to $R^{\sigma}(T,S)$, because all the rl cases must be descents while none of the lr cases are (and hence $\gamma^{\sigma}_{\tau}(\alpha,\beta)=\gamma^{\sigma}(\alpha,\beta)$ for every $\alpha \vDash n$ and $\beta \vDash m$). \label{trivtau}
\end{rmk}
We call the case of $\tau$ as in Remark \ref{trivtau} the \emph{trivial} choice (the identity element of $S_{n+m}$ is thus always trivial, and associated with empty $T$ and $S$ and with $\alpha$ and $\beta$ of length 1), and we now turn to the cases from Example \ref{exshuff} with non-trivial choices of $\tau$.
\begin{ex}
When $n+m=2$, the unique non-trivial element $\tau \in S_{2}$ is 21. With this $\tau$, we get $R^{\sigma}_{\tau}(\emptyset,\emptyset)=\{1\}$ and $\gamma^{\sigma}_{\tau}(1,1)=11\vDash2$ when $\sigma$ is lr, and $R^{\sigma}_{\tau}(\emptyset,\emptyset)=\emptyset$ and $\gamma^{\sigma}_{\tau}(1,1)=2\vDash2$ if $\sigma$ is rl (opposite to Example \ref{exshuff}). For $n=2$ and $m=1$, we consider the case where $\tau$ is 231 when $T=\emptyset\subseteq\mathbb{N}_{1}$, and 321 for the non-empty $T$. Then when $\sigma$ is lrl we get $R^{\sigma}_{\tau}(T,\emptyset)=\{1\}$ and $\gamma^{\sigma}_{\tau}(\alpha,1)=12\vDash3$ for both choices of $\alpha$ and $T$ (with the corresponding $\tau$). If $\sigma$ is llr, then $R^{\sigma}_{\tau}(\emptyset,\emptyset)=\{2\}$ and $\gamma^{\sigma}_{\tau}(2,1)=3\vDash3$ while $R^{\sigma}_{\tau}(\{1\},\emptyset)=\{1,2\}$ and $\gamma^{\sigma}_{\tau}(11,1)=111\vDash3$. Taking $\sigma$ to be rll, we get $R^{\sigma}_{\tau}(\emptyset,\emptyset)=\emptyset$ and $\gamma^{\sigma}_{\tau}(2,1)=3\vDash3$ as well as $R^{\sigma}_{\tau}(\{1\},\emptyset)=\{2\}$ and $\gamma^{\sigma}_{\tau}(11,1)=21\vDash3$. The other choice of $\tau$ is to be 132 for the empty $T$ and 312 when it is non-empty, with which the value lrl of $\sigma$ now produces results that depend on $T$ and $\alpha$, namely $R^{\sigma}_{\tau}(\emptyset,\emptyset)=\emptyset$ and $\gamma^{\sigma}_{\tau}(2,1)=3\vDash3$ while $R^{\sigma}_{\tau}(\{1\},\emptyset)=\{1,2\}$ and $\gamma^{\sigma}_{\tau}(11,1)=111\vDash3$. When $\sigma$ is llr, we again have $R^{\sigma}_{\tau}(\emptyset,\emptyset)=\{2\}$ and $\gamma^{\sigma}_{\tau}(2,1)=21\vDash3$ but $R^{\sigma}_{\tau}(\{1\},\emptyset)=\{1\}$, and $\gamma^{\sigma}_{\tau}(11,1)=12\vDash3$, while if $\sigma$ is rll, then the values are $R^{\sigma}_{\tau}(\emptyset,\emptyset)=\{1\}$, $\gamma^{\sigma}_{\tau}(2,1)=12\vDash3$, $R^{\sigma}_{\tau}(\{1\},\emptyset)=\{2\}$ and $\gamma^{\sigma}_{\tau}(11,1)=21\vDash3$. \label{shufftau}
\end{ex}
One can complement Example \ref{shufftau} by checking the values with $n=1$ and $m=2$ for non-trivial $\tau$. One such choice of $\tau$ is 213 when $S$ is empty and 231 in case it is not, and the other choice is 312 for empty $S$ and 321 for non-empty $S$. We will write only those in which $\sigma$ is rlr, where the results for the latter pair are independent of $S$ and $\beta$, and produce $R^{\sigma}_{\tau}(\emptyset,S)=\{2\}$ and $\gamma^{\sigma}_{\tau}(1,\beta)=21$. With the former pair we get $R^{\sigma}_{\tau}(\emptyset,\emptyset)=\emptyset$ and $\gamma^{\sigma}_{\tau}(1,2)=3$, while $R^{\sigma}_{\tau}(\emptyset,\{1\})=\{1,2\}$ and $\gamma^{\sigma}_{\tau}(1,11)=111$.

\begin{rmk}
When $\tau$ is the trivial choice from Remark \ref{trivtau}, some choices of $\sigma$ produce the notions from Definition \ref{defconc}. Indeed, the trivial sequence $\sigma$ produces the value $\gamma^{\sigma}_{\tau}(\alpha,\beta)=\gamma^{\sigma}(\alpha,\beta)=\alpha\odot\beta$ wherever $n$ and $m$ are positive, while if $\sigma$ is the opposite one, namely consists of $m$ r's followed by $n$ l's, then $\gamma^{\sigma}_{\tau}(\alpha,\beta)=\gamma^{\sigma}(\alpha,\beta)=\beta\alpha$. We now assume that $\tau$ is opposite to the trivial case, namely its last $m$ entries form an element of $S_{m}$ with descent set $S$, and its first $n$ entries are the result of adding $m$ to the entries of an element of $S_{n}$ with descent set $T$. In this case again the values of $R^{\sigma}_{\tau}(T,S)$ and $\gamma^{\sigma}_{\tau}(\alpha,\beta)$ are independent of the choices of permutations with the desired descent sets, and the latter composition reduces to $\alpha\beta$ when $\sigma$ is the trivial sequence and to $\beta\odot\alpha$ when it is the opposite one (when $n$ and $m$ are positive). \label{gammaconc}
\end{rmk}
We note that the only way to get $\gamma^{\sigma}(\alpha,\beta)=n+m$ of length 1 (or equivalently an empty set $R^{\sigma}(T,S)$) is where $\alpha$ and $\beta$ are also compositions of length one (namely $T=S=\emptyset$) and $\sigma$ is the trivial sequence. Similarly, in order to get $\gamma^{\sigma}(\alpha,\beta)=1^{n+m}$ (i.e., $R^{\sigma}(T,S)=\mathbb{N}_{n+m-1}$) we must have $\alpha=1^{n}$ and $\beta=1^{m}$ (namely $T=\mathbb{N}_{n-1}$ and $S=\mathbb{N}_{m-1}$) and $\sigma$ has to be the opposite sequence in $\mathcal{S}_{n,m}$. Here are these extreme cases with Definition \ref{altshuff}.
\begin{rmk}
The case $\gamma^{\sigma}_{\tau}(\alpha,\beta)=n+m$ and $R^{\sigma}(T,S)=\emptyset$ corresponds to $w^{\sigma}(\tau)=\operatorname{Id}_{n+m}$. For this to happen we must have $\tau_{|n}=\operatorname{Id}_{n}$ and $\tau_{n|}=\operatorname{Id}_{m}$, i.e., $T=S=\emptyset$ and $\ell(\alpha)=\ell(\beta)=1$. Then there is a correspondence between the elements $\sigma\in\mathcal{S}_{n,m}$ and elements $\tau \in S_{n+m}$ with these properties, in which $\sigma$ orders $\tau$ to be $\operatorname{Id}_{n+m}$, yielding the desired result. Similarly, $\gamma_{\tau}^{\sigma}(\alpha,\beta)=1^{n+m}$ and $R^{\sigma}_{\tau}(T,S)=\mathbb{N}_{n+m-1}$ are associated with $w^{\sigma}(\tau)$ being the longest element $w^{0}_{n+m}$, which implies that $T$, $S$, $\alpha$, and $\beta$ are the full sets and longest compositions as above, and we once again get a similar bijection for this setting. Note that in these cases, as well as in the Remarks \ref{trivtau} and \ref{gammaconc} and Example \ref{shufftau}, the value of $\tau_{|n}$ and $\tau_{n|}$ was either determined by its descent set, or the resulting $R^{\sigma}_{\tau}(T,S)$ and $\gamma_{\tau}^{\sigma}(\alpha,\beta)$ were independent of $\tau_{|n}$ and $\tau_{n|}$ (and only depended on the set of entries they contained before the standardization). We will soon see that this is not always the case. \label{extreme}
\end{rmk}

\begin{ex}
When $n=1$ (so that $T$ and $\alpha$ are fixed) and $m=3$, if $S=\emptyset$ and $\beta=3\vDash3$, the choices for $\tau$ are 1234, 2134, 3124, and 4123. These are associated, via the bijection from Remark \ref{extreme}, with the elements lrrr, rlrr, rrlr, and rrrl of $\mathcal{S}_{1,3}$ in that order. The elements $\tau$ for which $S=\mathbb{N}_{2}$ and $\beta=111$, which match our ordering of $\mathcal{S}_{1,3}$ in the second bijection from that remark, are 4321, 3421, 2431, and 1432 respectively. For the other choices of $\beta$, the elements $\tau$ that begin with 2 or 3 (so they are neither trivial nor opposite from it) are 2143, 2341, 3142, and 3241 where $\beta=21$, as well as 2314, 2413, 3214, and 3412 yielding $\beta=12$. When we match these elements $\tau$ with $\sigma$ which is rrlr, the composition $\gamma_{\tau}^{\sigma}(\alpha,\beta)\vDash4$ is 22 twice, 211 twice, 13 twice, and then 13 and 121. Similarly, when $\sigma$ is rlrr these are 31 and 121, followed by 31 twice and 112 twice, and ended with 22 and 112. \label{n1m3}
\end{ex}
Note that for $\beta=21\vDash3$ the value of $\tau_{1|}$ can be either 132 or 231, and where $\beta=12$ this standardized part is either 213 or 312. Then $\tau$ itself is determined by its first entry. With rrlr for $\sigma$ in Example \ref{n1m3}, the fact that the compositions 22 and 211 appear twice means that the choice of the permutation $\tau_{1|} \in S_{3}$ with descent set $\{2\}$ (to match with $\beta=21$) does not affect $\gamma_{\tau}^{\sigma}(\alpha,\beta)$, as in the cases from Remarks \ref{trivtau}, \ref{gammaconc}, \ref{extreme}. But for $\beta=12$, with the first entry 3, the element 213 of $S_{3}$ produced $\tau=3214$ and $\gamma_{\tau}^{\sigma}(\alpha,\beta)=13$, while with 312 we got $\tau=3412$ and $\gamma_{\tau}^{\sigma}(\alpha,\beta)=121$, despite both having descent set $\{1\}$ and the same first entry, exemplifying the last sentence in Remark \ref{extreme}.

\medskip

Using Definition \ref{altshuff}, we make the following generalization of Definition \ref{defsigma}.
\begin{defn}
Let $n$, $m$, $\rho$, $\varphi$, $T$, $S$, $\eta$, and $\kappa$ as in Definition \ref{defsigma}, and fix $\tau \in S_{n+m}$ with $\operatorname{Des}(\tau_{|n})=T$ and $\operatorname{Des}(\tau_{n|})=S$ as in Definition \ref{altshuff}, in the one-line notation $\tau_{1}\ldots\tau_{n+m}$. We then define $\sigma_{\tau}(\eta,\kappa)\in\mathcal{S}_{n,m}$ by a similar induction on $a+b$, where the case with $a=n$ or $b=m$ being determined by the $\mathcal{S}_{n,m}$ condition as above. For $a<n$ and $b<m$, we again consider $\eta_{\rho(a+1)}$ and $\kappa_{\varphi(b+1)}$, and put, in the location $a+b+1$ of our element of $\mathcal{S}_{n,m}$, the symbol l when $\eta_{\rho(a+1)}<\kappa_{\varphi(b+1)}$, and the symbol r in case $\eta_{\rho(a+1)}>\kappa_{\varphi(b+1)}$. When $\eta_{\rho(a+1)}=\kappa_{\varphi(b+1)}$, the symbol that we put is l when $\tau_{a+1}<\tau_{n+b+1}$, and it is r in case $\tau_{a+1}>\tau_{n+b+1}$. \label{tausigma}
\end{defn}
As when $\tau$ is trivial as in Remark \ref{trivtau}, we always have $\eta_{\rho(a+1)}<\kappa_{\varphi(b+1)}$, the sequence $\sigma_{\tau}(\eta,\kappa)$ from Definition \ref{tausigma} reduces, with such $\tau$, to $\sigma(\eta,\kappa)$ from Definition \ref{defsigma}. Like with the latter definition, Definition \ref{tausigma} produces, when $n$ or $m$ vanishes, the unique element of $\mathcal{S}_{n,m}$ for every $\eta$ and $\kappa$ (one of which is always $\emptyset$).

We have the following extension of Lemma \ref{matchsigma} to this setting.
\begin{lem}
Take $n$, $m$, $T$, $S$, and $\tau$ as in Definition \ref{tausigma}, and fix $\sigma\in\mathcal{S}_{n,m}$. Set $R:=R^{\sigma}_{\tau}(T,S)\subseteq\mathbb{N}_{n+m-1}$ via Definition \ref{altshuff}, and write $\psi$ for the element $\psi^{\sigma}(\rho,\varphi) \in S_{n+m}$ as in Lemma \ref{matchsigma}. Then $(\eta,\kappa)\mapsto\eta\kappa$ yields a bijection from $\{(\eta,\kappa)\in\mathcal{M}_{n,T}^{\rho}\times\mathcal{M}_{m,S}^{\varphi}\;|\;\sigma_{\tau}(\eta,\kappa)=\sigma\}$ and $\mathcal{M}_{n+m,R}^{\psi}$ from Lemma \ref{sumFnTNC}. \label{sigmamatch}
\end{lem}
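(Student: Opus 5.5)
The plan is to follow the proof of Lemma \ref{matchsigma} almost verbatim, replacing the trivial convention for ties by the $\tau$-convention of Definition \ref{tausigma}. The one new ingredient is a systematic way to handle equal indices between the two halves. We first reduce to showing that for $\theta=\eta\kappa$ one has $\theta\in\mathcal{M}_{n+m,R}^{\psi}$ if and only if $\eta\in\mathcal{M}_{n,T}^{\rho}$, $\kappa\in\mathcal{M}_{m,S}^{\varphi}$ and $\sigma_{\tau}(\eta,\kappa)=\sigma$. Concatenation is clearly a bijection between pairs of sequences and sequences of length $n+m$.

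The key reformulation is to attach to every position $t$ of $\sigma$ the pair $(\theta_{\psi(t)},\lambda_{t})$, where $\lambda:=w^{\sigma}(\tau)$. Definition \ref{tausigma} then says exactly that $\sigma_{\tau}(\eta,\kappa)$ is obtained by merging the $\rho$-ordered entries of $\eta$ with the $\varphi$-ordered entries of $\kappa$ according to the lexicographic order on these pairs. Here the $\tau$-values $\tau_{a+1}$ and $\tau_{n+b+1}$ break ties, since they are precisely the $\lambda$-labels carried by the $(a+1)$st l and the $(b+1)$st r.

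For the converse direction (the sequences lie in the sets and the merge gives $\sigma$), we mimic the second half of the earlier proof. For $t=a+b$ we compare positions $t$ and $t+1$ in the four cases ll, lr, rl, rr. In the ll and rr cases the weak inequality follows from the monotonicity of $\eta$ along $\rho$ (resp.\ $\kappa$ along $\varphi$). Strictness when $t\in R$ follows because, as remarked after Definition \ref{altshuff}, membership of $t$ in $R$ in these cases is governed by $a\in T=\operatorname{Des}(\tau_{|n})$ (resp.\ $b\in S$), exactly as before. In the mixed cases the merge rule gives $(\theta_{\psi(t)},\lambda_{t})<(\theta_{\psi(t+1)},\lambda_{t+1})$ lexicographically. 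We obtain this by chaining the merge decision at step $t+1$ with the same-side monotonicity, as in the earlier proof. This gives a weak inequality on $\theta$, and it is strict whenever $\lambda_{t}>\lambda_{t+1}$, i.e.\ whenever $t\in\operatorname{Des}(w^{\sigma}(\tau))=R$.

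The chaining is the step needing care. In the lr case we need $(\eta_{\rho(a)},\tau_{a})<(\kappa_{\varphi(b+1)},\tau_{n+b+1})$. We get it from the step-$t$ decision $(\eta_{\rho(a)},\tau_{a})<(\kappa_{\varphi(b')},\tau_{n+b'})$ for the current r candidate together with monotonicity within $\kappa$. That monotonicity must itself be lexicographic: if $\kappa_{\varphi(b)}=\kappa_{\varphi(b+1)}$ then $b\notin S=\operatorname{Des}(\tau_{n|})$, so $\tau_{n+b}<\tau_{n+b+1}$. So the pairs are strictly increasing along each side, and the merge is a genuine merge of two increasing chains.

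For the forward direction, take $\theta\in\mathcal{M}^{\psi}_{n+m,R}$. Along $\psi$ the pairs $(\theta_{\psi(t)},\lambda_{t})$ are lexicographically increasing, because equal consecutive $\theta$-values force $t\notin R$, i.e.\ $\lambda_{t}<\lambda_{t+1}$. Restricting to the l positions gives $\eta\in\mathcal{M}^{\rho}_{n,T}$. Here equalities can occur only between consecutive l positions, since any r in between would create an intermediate value strictly squeezed in lexicographic order; more precisely, the equality case forces the $\lambda$-order to match $\operatorname{Des}(\tau_{|n})=T$. The same argument on the r positions gives $\kappa\in\mathcal{M}^{\varphi}_{m,S}$. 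Since the sequence of pairs is totally increasing, the lexicographic merge recovers the interleaving $\sigma$, which is $\sigma_{\tau}(\eta,\kappa)=\sigma$.

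I expect the main obstacle to be checking carefully that ties between two entries of the same side behave consistently with the $\tau$-labels: equal $\eta$-values at consecutive $\rho$-positions must go together with $\tau_{a}<\tau_{a+1}$. This is exactly what $\operatorname{Des}(\tau_{|n})=T$ provides. Once this compatibility is in place, the whole lemma becomes a statement about merging two lexicographically increasing chains, and the rest is bookkeeping.
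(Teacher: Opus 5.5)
Your proof is correct and is essentially the paper's argument in cleaner form. The paper tracks the $\theta$-values and the descents of $w^{\sigma}(\tau)$ separately; its key observation, that equal $\theta$-values at two positions force $w^{\sigma}(\tau)$ to increase on the stretch between them, is just transitivity of your lexicographic order on the pairs $(\theta_{\psi(t)},\lambda_{t})$. You encode both at once, so all three membership conditions become strict monotonicity and Definition \ref{tausigma} becomes a merge of two sorted chains. Two remarks are inaccurate but unused: with the $\tau$-convention, equal $\eta$-values at the $a$th and $(a+1)$st l can have r's between them (with the same $\theta$-value and intermediate $\lambda$-labels, as the paper's proof explicitly allows), and in the lr case the step-$t$ decision already involves the $(b+1)$st r, so no chaining is needed there.
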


\begin{proof}
As in the proof of Lemma \ref{matchsigma}, we need to show that when $\theta=\eta\kappa$ the condition $\theta\in\mathcal{M}_{n+m,R}^{\psi}$ is equivalent to $\eta\in\mathcal{M}_{n,T}^{\rho}$, $\kappa\in\mathcal{M}_{m,S}^{\varphi}$, and $\sigma_{\tau}(\eta,\kappa)=\sigma$. When the former holds, and we write $\rho(j)=\psi(k)$ and $\rho(j+1)=\psi(h)$ for $h>k$, and $\eta_{\rho(j)}=\theta_{\psi(k)}\leq\theta_{\psi(h)}=\eta_{\rho(j+1)}$. Equality can occur if the entries between $k$ and $h-1$ are not in $R^{\sigma}_{\tau}(T,S)$, namely they are not descents of $w^{\sigma}(\tau)$ via Definition \ref{altshuff}.

But this implies that the sequence starting with $w_{k}=\tau_{j}$ and ending with $w_{h}=\tau_{j+1}$, and containing in the middle $h-k-1$ entries of $\tau$ with indices that are larger than $n$ (arising from the r's in $\sigma$ between the $j$th l and the next one), is strictly increasing. But this means that $\tau_{j}<\tau_{j+1}$, namely $j$ is not a descent of $\tau_{|n}$, hence does not lie in $T$. This establishes that $\eta\in\mathcal{M}_{n,T}^{\rho}$, with an analogous argument (with $\varphi+n$) yielding that $\kappa\in\mathcal{M}_{m,S}^{\varphi}$.

For proving that $\sigma_{\tau}(\eta,\kappa)$ from Definition \ref{tausigma} is the desired element $\sigma$, we work by the usual induction, assuming that we verified it for the first $t=a+b$ entries (with $a<n$ and $b<m$), and consider the next one. We again let $c>a+b+1$ be the minimal index for a location of $\sigma$ containing the opposite symbol from that in the location $t+1$. We then have $\theta_{\psi(a+b+1)}\leq\theta_{\psi(c)}$, with the indices being $\rho(a+1)$ and $\varphi(b+1)$, so that where the values compared are $\eta_{\rho(a+1)}$ and $\kappa_{\varphi(b+1)}$.

Assuming that the inequality is strict, when the location $t+1$ of $\sigma$ is an l it becomes $\eta_{\rho(a+1)}=\theta_{\psi(a+b+1)}<\theta_{\psi(c)}=\kappa_{\varphi(b+1)}$ and we have an l in that location of $\sigma_{\tau}(\eta,\kappa)$, and $\kappa_{\varphi(b+1)}=\theta_{\psi(a+b+1)}<\theta_{\psi(c)}=\eta_{\rho(a+1)}$ yields the same with an r as in the proof of Lemma \ref{matchsigma}. An equality implies that the numbers between $a+b+1$ and $c+1$ are not in $R$ hence not descents of $w^{\sigma}(\tau)$, and in particular its $c$th entry is larger than its $(a+b+1)$st entry.

But Definition \ref{altshuff} and the choices of our indices shows that these entries of $w^{\sigma}(\tau)$ are $\tau_{a+1}$ and $\tau_{b+1+n}$. If the location in question of $\sigma$ is an l (resp. r), then the inequality in question is $\tau_{a+1}<\tau_{n+b+1}$ (resp. $\tau_{a+1}>\tau_{n+b+1}$), and then, since we work under the assumption that $\eta_{\rho(a+1)}=\kappa_{\varphi(b+1)}$, Definition \ref{tausigma} shows that our location in $\sigma_{\tau}(\eta,\kappa)$ is also an l (resp. r). This induction, and the usual argument for $a=n$ or $b=m$ to land in $\mathcal{S}_{n,m}$, shows that $\sigma_{\tau}(\eta,\kappa)=\sigma$ as desired.

Conversely, the argument proving the other direction in Lemma \ref{matchsigma} still holds, with non-strict inequalities, yielding the inequality $\theta_{\psi(t)}\leq\theta_{\psi(t+1)}$ for every $t$. The strict inequality in case $t \in R$ in the ll and rr cases also extends verbatim to our case. It thus remains to show that in the remaining two cases, lr and rl, the situation in which $t \in R$ also implies a strict inequality.

Now, the former case occurs when $\theta_{\psi(t)}=\eta_{\rho(a)}\leq\kappa_{\varphi(b+1)}=\theta_{\psi(t+1)}$, and note that a strict inequality indeed yields, via Definition \ref{tausigma}, that the $t$th location of $\sigma=\sigma_{\tau}(\eta,\kappa)$ has to be an l. However, in the case of an equality this is an l if $\tau_{a}<\tau_{n+b+1}$, while with the opposite inequality $\tau_{a}<\tau_{n+b+1}$, which corresponds to the fact that $t \in R$, that location would have been an r, and it is not. The latter case works the same with $\theta_{\psi(t)}=\kappa_{\varphi(b)}\leq\eta_{\rho(a+1)}=\theta_{\psi(t+1)}$, the r in the $t$th location, and the comparisons between $\tau_{n+b}$ and $\tau_{a}$.

We have thus established that $\theta_{\psi(t)}<\theta_{\psi(t+1)}$ wherever $t \in R$, so that $\mathcal{M}_{n+m,R}^{\psi}$, as required. This completes the proof of the lemma.
\end{proof}

\medskip

Lemma \ref{sigmamatch} (which again has the natural extension to the case where $n$ or $m$ vanish, by the extension of Definition \ref{altshuff}) allows us to obtain the following generalization of Proposition \ref{prodF}.
\begin{prop}
Fix $\tau\in\mathcal{S}_{n,m}$ and take $\alpha \vDash n$, $\beta \vDash m$, $\rho \in S_{n}$, and $\varphi \in S_{m}$ as in Proposition \ref{prodF}. Then in $\mathtt{NCQSym}$ we have $\mathrm{F}_{\alpha}^{\rho}\mathrm{F}_{\beta}^{\varphi}=\sum_{\sigma\in\mathcal{S}_{n,m}}\mathrm{F}_{\gamma^{\sigma}_{\tau}(\alpha,\beta)}^{\psi^{\sigma}(\rho,\varphi)}$. \label{prodFtau}
\end{prop}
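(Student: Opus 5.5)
The plan is to copy the proof of Proposition \ref{prodF} word for word, with Lemma \ref{sigmamatch} taking the place of Lemma \ref{matchsigma}. (The statement says $\tau\in\mathcal{S}_{n,m}$, but it should read $\tau\in S_{n+m}$ with $\operatorname{Des}(\tau_{|n})=T$ and $\operatorname{Des}(\tau_{n|})=S$, where $T:=\operatorname{comp}_{n}^{-1}\alpha$ and $S:=\operatorname{comp}_{m}^{-1}\beta$, as in Definition \ref{altshuff}. Only for such $\tau$ is $\gamma^{\sigma}_{\tau}(\alpha,\beta)$ defined.)

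\emph{Degenerate case.} If $n=0$ or $m=0$, then one multiplier equals $1$. In this case $\mathcal{S}_{n,m}$ is a singleton and $\tau$ is uniquely determined. By the extension of Definition \ref{altshuff}, $\gamma^{\sigma}_{\tau}(\alpha,\beta)$ is the other composition, and $\psi^{\sigma}(\rho,\varphi)$ is the other permutation. So the equality is immediate.

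\emph{Main case, $n,m>0$.} First I write both factors with Lemma \ref{sumFnTNC}:
\[
\mathrm{F}_{\alpha}^{\rho}\mathrm{F}_{\beta}^{\varphi}=\sum_{\eta\in\mathcal{M}_{n,T}^{\rho}}\sum_{\kappa\in\mathcal{M}_{m,S}^{\varphi}}x^{\eta\kappa}.
\]
Next I split the double sum according to the value of $\sigma_{\tau}(\eta,\kappa)\in\mathcal{S}_{n,m}$ from Definition \ref{tausigma}. This function is defined for every pair $(\eta,\kappa)$: when the two compared indices are equal, the tie is broken by comparing $\tau_{a+1}$ with $\tau_{n+b+1}$, and these are distinct entries of a permutation. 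So the partition is well defined. For a fixed $\sigma$, Lemma \ref{sigmamatch} says that concatenation is a bijection from the fibre over $\sigma$ onto $\mathcal{M}_{n+m,R}^{\psi}$, where $R=R^{\sigma}_{\tau}(T,S)$ and $\psi=\psi^{\sigma}(\rho,\varphi)$. Hence the inner sum is $\sum_{\theta\in\mathcal{M}_{n+m,R}^{\psi}}x^{\theta}$. Applying Lemma \ref{sumFnTNC} again, this equals $\mathrm{F}_{n+m,R}^{\psi}$, which is $\mathrm{F}_{\gamma^{\sigma}_{\tau}(\alpha,\beta)}^{\psi}$ by part $(iii)$ of Definition \ref{altshuff}. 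Summing over $\sigma$ gives the formula.

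\emph{Main obstacle.} All the real content sits in Lemma \ref{sigmamatch}, which I may assume here. The one remaining point to check is that the superscript is still the ordinary $\psi^{\sigma}(\rho,\varphi)$ from Definition \ref{shuffle}, and not some $\tau$-dependent variant. This holds because the position of each variable in the monomial $x^{\eta\kappa}$ is governed only by $\rho$, by $\varphi+n$, and by how $\sigma$ interleaves them; $\tau$ only changes which of the adjacent inequalities must be strict. This is exactly how Lemma \ref{sigmamatch} is stated, so nothing further is needed.
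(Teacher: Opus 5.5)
Your proposal is correct and follows the paper's own argument. The paper reruns the proof of Proposition~\ref{prodF}, grouping the pairs $(\eta,\kappa)$ by $\sigma_{\tau}(\eta,\kappa)$ and using Lemma~\ref{sigmamatch} in place of Lemma~\ref{matchsigma}; the superscript $\psi^{\sigma}(\rho,\varphi)$ does not depend on $\tau$. You are also right that the hypothesis should read $\tau\in S_{n+m}$ with $\operatorname{Des}(\tau_{|n})=T$ and $\operatorname{Des}(\tau_{n|})=S$.

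One small correction to the degenerate case: $\tau$ is not actually unique there. For $n=0$ it can be any element of $S_{m}$ with descent set $S$. This does no harm, because then $w^{\sigma}(\tau)=\tau$ has descent set $S$, so $\gamma^{\sigma}_{\tau}(\emptyset,\beta)=\beta$ for every admissible $\tau$; the case $m=0$ is the same.
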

Indeed, we consider the proof of Proposition \ref{prodF}, and note the extension of Definition \ref{altshuff} to the case of vanishing $n$ or $m$. Otherwise, the same argument from that proof, but in which we decompose the pairs $(\eta,\kappa)\in\mathcal{M}_{n,T}^{\rho}\times\mathcal{M}_{m,S}^{\varphi}$ according to the value of $\sigma_{\tau}(\eta,\kappa)$ from Definition \ref{tausigma} instead of $\sigma(\eta,\kappa)$ from Definition \ref{defsigma}, and apply Lemma \ref{sigmamatch} in the place of Lemma \ref{matchsigma}, establishes Proposition \ref{prodFtau}. Note that in the latter two results, the choice of $\tau$ affects only the composition $\gamma^{\sigma}_{\tau}(\alpha,\beta) \vDash n+m$, and not the permutation $\psi^{\sigma}(\rho,\varphi) \in S_{n+m}$.

\begin{ex}
If we take the product $(\mathrm{F}_{1}^{1})^{2}$ from Example \ref{exprod} and Remark \ref{expsumind}, but now with $\tau=21 \in S_{2}$, then the expression from Proposition \ref{prodFtau} would be, via Example \ref{shufftau}, the sum of $\mathrm{F}_{11}^{12}$ from the element lr of $\mathcal{S}_{1,1}$, and $\mathrm{F}_{2}^{21}$ obtained via rl. Hence the monomials $x_{i}x_{j}$ in which $i=j$ are now considered as part of $\mathrm{F}_{2}^{21}$ and rl with this $\tau$, rather than $\mathrm{F}_{2}^{12}$ in that example (with the trivial $\tau$). With the non-trivial values 231 and 321 of $\tau$ from that example, we get $\mathrm{F}_{2}^{12}\mathrm{F}_{1}^{1}=\mathrm{F}_{12}^{132}+\mathrm{F}_{21}^{123}+\mathrm{F}_{3}^{312}$ and $\mathrm{F}_{11}^{12}\mathrm{F}_{1}^{1}=\mathrm{F}_{12}^{132}+\mathrm{F}_{111}^{123}+\mathrm{F}_{21}^{312}$, while letting $\tau$ take the values 132 and 312 produces $\mathrm{F}_{2}^{12}\mathrm{F}_{1}^{1}=\mathrm{F}_{3}^{132}+\mathrm{F}_{21}^{123}+\mathrm{F}_{12}^{312}$ and $\mathrm{F}_{11}^{12}\mathrm{F}_{1}^{1}=\mathrm{F}_{111}^{132}+\mathrm{F}_{12}^{123}+\mathrm{F}_{21}^{312}$ respectively. As in Example \ref{exprod}, taking the former multiplier to be with the superscript 21 instead of 12 will give the same expressions, but in which the respective superscipts will be 231, 213, and 321. \label{altprod}
\end{ex}
The expressions for $n=1$ and $m=2$ mentioned after Example \ref{shufftau} can complement Example \ref{altprod} to give alternative formulae for the products $\mathrm{F}_{1}^{1}\mathrm{F}_{2}^{12}$ and $\mathrm{F}_{1}^{1}\mathrm{F}_{11}^{12}$ (as well as $\mathrm{F}_{1}^{1}\mathrm{F}_{2}^{21}$ and $\mathrm{F}_{1}^{1}\mathrm{F}_{11}^{21}$) considered in Example \ref{exprod}.
\begin{ex}
Let $n=m=3$, $\rho=312 \in S_{3}$, $\alpha=21\vDash3$, $T:=\{2\}$, $\varphi=231$, $\beta=12\vDash3$, and $S:=\{1\}$ be as in Example \ref{exsigma}, so that for $\eta\in\mathcal{M}_{3,\{2\}}^{312}$ and $\kappa\in\mathcal{M}_{3,\{1\}}^{231}$ we have $x^{\eta}=x_{i_{1}}x_{i_{2}}x_{i_{3}}$ and $x^{\kappa}=x_{j_{1}}x_{j_{2}}x_{j_{3}}$ with the inequalities $i_{3} \leq i_{1}<i_{2}$ and $j_{2}<j_{3} \leq j_{1}$. When $\sigma\in\mathcal{S}_{3,3}$ is lrrllr, the trivial values for $\tau \in S_{6}$, which are 132546, 231546, 132645, or 231645 via Remark \ref{trivtau}, we saw the conditions for $\sigma_{\tau}(\eta,\kappa)=\sigma(\eta,\kappa)$ to be $\sigma$ in that example. Definition \ref{altshuff} implies that 2 and 4 are in $R^{\sigma}_{\tau}(T,S)$ with that $\sigma$ for every choice of $\tau$ (for which the descent sets of $\tau_{|3}$ and $\tau_{3|}$ are $T$ and $S$ respectively), and with choices like 152436, 241635, 351426, and others (44 options for $\tau$), this set contains only those elements and Lemma \ref{sigmamatch} compares the concatenations $\eta\kappa$ for which $\sigma_{\tau}(\eta,\kappa)=\sigma$ as $\mathcal{M}_{6,\{2,4\}}^{356124}$. The set $\{2,3,4\}$ is obtained only from the 4 trivial options of $\tau$, while $\{1,2,3,4\}$, $\{2,3,4,5\}$, and $\{1,2,3,4,5\}$ are not attained at all. The 6 choices 365214, 563412, 465213, 564213, 465312, and 564312 yield $\{1,2,4,5\}$, while there are 13 options (like 451362 and 561423) for which the concatenations produce $\mathcal{M}_{6,\{1,2,4\}}^{356124}$, and 13 permutations (including 154623 and 165324) yielding $\mathcal{M}_{6,\{1,2,4\}}^{356124}$. \label{extausigma}
\end{ex}
Note that with the parameters from Example \ref{extausigma}, even the minimal set $\{2,4\}$ forces, with that value of $\sigma$, the inequalities $i_{3}<i_{1}<i_{2}$ for $\eta$ and $j_{2}<j_{3}<j_{1}$ for $\kappa$, as in Remark \ref{strongineq}. For different values of $\sigma$ one or both of these can still be weakened. Note that the value $\psi^{\sigma}(\rho,\varphi)=356124$ in that example is independent of $\tau$, as the formula from Lemma \ref{sigmamatch} asserts.

\subsection{Matching Expressions and the Proofs}

We saw that the parameter $\tau \in S_{m+n}$ from Definition \ref{altshuff} encodes, via the descent sets of its two parts, the subsets $T\subseteq\mathbb{N}_{n}$ and $S\subseteq\mathbb{N}_{m}$ (or the compositions $\alpha \vDash n$ and $\beta \vDash m$). In order to match this property with the fact that in our main theorems we relate a composition $\alpha$ with its transpositions $\alpha^{t}$ from Definition \ref{invdef}, we make the following observation, already noted in \cite{[Z1]} and others.
\begin{lem}
For $w \in S_{n}$ we have the three equalities $\operatorname{Des}(w^{0}_{n}w)=\operatorname{Des}(w)^{c}$, $\operatorname{Des}(ww^{0}_{n})=\operatorname{Des}(w)^{t}$, and $\operatorname{Des}(w^{0}_{n}ww^{0}_{n})=\operatorname{Des}(w)^{r}$. \label{opersDes}
\end{lem}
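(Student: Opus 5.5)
The plan is to verify directly what each of the three compositions does to the one-line notation of $w$, and then to read off the descent sets. Throughout we use the conventions already in force: $w$ is written as $w(1)\ldots w(n)$ in one-line notation, $\operatorname{Des}(w)=\{1\leq k<n\;|\;w(k)>w(k+1)\}$, and $w^{0}_{n}$ sends $i$ to $n+1-i$.

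\textbf{Left multiplication.} The permutation $w^{0}_{n}w$ has one-line notation $n+1-w(1),\ldots,n+1-w(n)$. Replacing every value by $n+1$ minus itself reverses every comparison between consecutive entries. Since the entries are distinct, each $k\in\mathbb{N}_{n-1}$ is a descent of $w^{0}_{n}w$ if and only if it is an ascent of $w$. Hence $\operatorname{Des}(w^{0}_{n}w)=\mathbb{N}_{n-1}\setminus\operatorname{Des}(w)=\operatorname{Des}(w)^{c}$, as in Definition \ref{invdef}.

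\textbf{Right multiplication.} The permutation $ww^{0}_{n}$ has one-line notation $w(n),w(n-1),\ldots,w(1)$, so its $k$th and $(k+1)$st entries are $w(n+1-k)$ and $w(n-k)$. Thus $k\in\operatorname{Des}(ww^{0}_{n})$ if and only if $w(n-k)<w(n+1-k)$. This says that $n-k$ is an ascent of $w$, i.e.\ $n-k\in\operatorname{Des}(w)^{c}$. Therefore $\operatorname{Des}(ww^{0}_{n})=\{n-t\;|\;t\in\operatorname{Des}(w)^{c}\}$, which is exactly $\operatorname{Des}(w)^{t}$ by part $(iii)$ of Definition \ref{invdef}.

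\textbf{Conjugation.} For the third identity we combine the first two, writing $w^{0}_{n}ww^{0}_{n}=w^{0}_{n}(ww^{0}_{n})$. Applying the first identity to $ww^{0}_{n}$ and then the second gives $\operatorname{Des}(w^{0}_{n}ww^{0}_{n})=(\operatorname{Des}(w)^{t})^{c}$. Since complement and reversal are commuting involutions whose composition is transposition (Definition \ref{invdef}), we have $(T^{t})^{c}=T^{r}$, which yields $\operatorname{Des}(w)^{r}$.

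There is no real obstacle here. The only place requiring care is the index bookkeeping in the second step: making sure the reversal is $t\mapsto n-t$ and not $t\mapsto n+1-t$, which is consistent with descents living in $\mathbb{N}_{n-1}$. The case $n\leq1$ is trivial, since all the sets involved are empty.
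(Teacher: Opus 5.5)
Your proof is correct: the one-line descriptions of $w^{0}_{n}w$ and $ww^{0}_{n}$, the index bookkeeping $k\mapsto n-k$, and the deduction of the third identity from the first two via $(T^{t})^{c}=T^{r}$ are all right. The paper gives no proof of its own and presents the lemma as an observation already noted in earlier work. It does remark that $ww^{0}_{n}$ is the reversed word, which confirms your multiplication convention, and your direct verification is exactly the standard argument the paper leaves implicit.
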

Note that the word $ww^{0}_{n}$ is typically denoted by $w^{r}$, as it is the word obtained by writing the entries of $w$ in the opposite order. However, based on Lemma \ref{opersDes}, we write $w^{t}$ for this word. Note that this expression extends for every word, not necessarily arising from a one-line notation from $S_{n}$, and indeed we will use it for parts of words from $S_{n}$, without the standardization (which does not affect the validity of this part of Lemma \ref{opersDes}).

\medskip

For proving our main theorems, we will apply Corollary \ref{detS}, or more precisely the second convention of it which appears in Remark \ref{firstS}. To do this, we will apply the co-multiplication $\Delta$ to an appropriate fundamental quasi-symmetric function, and then we need to establish the vanishing of a linear combination of products of two fundamental quasi-symmetric functions. We will do so by canceling the summands obtained from Proposition \ref{prodFtau} in pairs, for which we will set some notation and establish a matching.
\begin{defn}
Take $\tau \in S_{n}$, with $n>0$, and a number $0 \leq r \leq n$.
\begin{enumerate}[$(i)$]
\item Let $\tau_{\leq r}$ and $\tau_{>r}$ be the words consisting of the first $r$ entries and the last $n-r$ entries of $\tau$ respectively. Thus $\tau$ is the concatenation $\tau_{\leq r}\tau_{>r}$.
\item Write $\tau^{(r)}$ for the concatenation $\tau_{\leq r}\tau_{>r}^{t}$, after transposing the second word.
\end{enumerate} \label{taurdef}
\end{defn}
The cases $r=n$ and $r=0$ in Definition \ref{taurdef} produce $\tau_{\leq n}=\tau_{>0}=\tau$ while $\tau_{\leq 0}$ and $\tau_{>n}$ are empty, so that $\tau^{(n)}=\tau$ and $\tau^{(0)}=\tau^{t}$. Note that $\tau_{|r}$ and $\tau_{r|}$ from Definition \ref{setwithr} are the standardizations of our $\tau_{\leq r}$ and $\tau_{>r}$ respectively. The expressions $\tau_{\leq r}$ and $\tau_{>r}$ are denoted in \cite{[LM]} by $\tau_{[r]}$ and $\tau^{[r]}$ respectively.

We deduce from that definition via Lemma \ref{opersDes} the following consequence.
\begin{cor}
If $\alpha:=\operatorname{comp}_{n}\operatorname{Des}(\tau)$ for $\tau \in S_{n}$, then for every $0 \leq r \leq n$, the sequence $\tau^{(r)}$ from Definition \ref{taurdef} is an element of $S_{n}$ which produces an expression for the product $\mathrm{F}_{\alpha_{|r}}^{\rho_{|r}}\mathrm{F}_{\alpha_{r|}^{t}}^{\rho_{r|}}$ for any $\rho \in S_{n}$. \label{tauforr}
\end{cor}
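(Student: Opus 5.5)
The plan is to verify that $\tau^{(r)}$ satisfies the two hypotheses that Definition \ref{altshuff} (and hence Proposition \ref{prodFtau}) places on the auxiliary permutation for a product of the form $\mathrm{F}_{\alpha_{|r}}^{\rho_{|r}}\mathrm{F}_{\alpha_{r|}^{t}}^{\rho_{r|}}$, with $n$ replaced by $r$ and $m$ by $n-r$. That proposition allows any $\tau'\in S_{n}$ with $\operatorname{Des}(\tau'_{|r})=\operatorname{comp}_{r}^{-1}\alpha_{|r}$ and $\operatorname{Des}(\tau'_{r|})=\operatorname{comp}_{n-r}^{-1}(\alpha_{r|}^{t})$. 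The permutations $\rho_{|r}$ and $\rho_{r|}$ play no role in this condition, which explains the ``for any $\rho$''. The fact that $\tau^{(r)}\in S_{n}$ is immediate: reversing the last $n-r$ entries of the one-line notation keeps the same multiset of entries $\{1,\ldots,n\}$.

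For the left part, the first $r$ entries of $\tau^{(r)}$ form $\tau_{\leq r}$, whose standardization is $\tau_{|r}$. Descent sets are unaffected by standardization, so $\operatorname{Des}(\tau^{(r)}_{|r})=\operatorname{Des}(\tau)\cap\mathbb{N}_{r-1}=T_{|r}$, where $T=\operatorname{Des}(\tau)=\operatorname{comp}_{n}^{-1}\alpha$. By Definition \ref{cutatr} this is $\operatorname{comp}_{r}^{-1}\alpha_{|r}$.

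For the right part, the last $n-r$ entries of $\tau^{(r)}$ form $\tau_{>r}^{t}$, the reversal of $\tau_{>r}$. First, $\operatorname{Des}(\tau_{>r})=\{t-r\;|\;r<t\in T\}=T_{r|}$, again because standardization does not matter. Next, apply Lemma \ref{opersDes} to the standardized word $\tau_{r|}\in S_{n-r}$; as noted after that lemma, the identity $\operatorname{Des}(ww^{0})=\operatorname{Des}(w)^{t}$ holds for unstandardized words as well. This gives $\operatorname{Des}(\tau_{>r}^{t})=(T_{r|})^{t}$, which is $\operatorname{comp}_{n-r}^{-1}\big((\alpha_{r|})^{t}\big)$ by Definition \ref{invdef}.

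The edge cases $r=0$ and $r=n$ need separate handling. There one of the factors is $1$, Definition \ref{altshuff} extends trivially, and $\tau^{(n)}=\tau$ and $\tau^{(0)}=\tau^{t}$ agree with the computation above. Putting these together, Proposition \ref{prodFtau} with auxiliary permutation $\tau^{(r)}$ expresses the product as $\sum_{\sigma\in\mathcal{S}_{r,n-r}}\mathrm{F}^{\psi^{\sigma}(\rho_{|r},\rho_{r|})}_{\gamma^{\sigma}_{\tau^{(r)}}(\alpha_{|r},\alpha_{r|}^{t})}$. The only delicate point in the argument is the bookkeeping that reversal, restricted to the tail segment, produces the transpose of the cut set $T_{r|}$ rather than of $T$. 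Lemma \ref{opersDes}, applied to the standardized tail, handles this.
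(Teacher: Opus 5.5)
Your proposal is correct and follows essentially the same route as the paper. The paper's proof also identifies the descent sets of the two halves of $\tau^{(r)}$ as $T_{|r}$ and $(T_{r|})^{t}$, using Lemma \ref{opersDes} on the reversed tail, so that $\tau^{(r)}$ is an admissible auxiliary permutation in Proposition \ref{prodFtau}, and it likewise treats $r=0$ and $r=n$ separately.
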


\begin{proof}
Write $T:=\operatorname{Des}(\tau)$, so that $\alpha=\operatorname{comp}_{n}T$. Then if $0<r<n$ then it is clear that the descent sets of $\tau_{\leq r}$ and $\tau_{>r}$, which are those of $\tau_{|r}$ and $\tau_{r|}$ respectively, are $T_{|r}$ and $T_{r|}$ respectively, whose associated compositions of $r$ and $n-r$ are $\alpha_{|r}$ and $\alpha_{r|}$ respectively.

After applying transposition to $\tau_{>r}$, as we keep the set of entries, the result is again a permutation in $S_{n}$ (in one-line notation as usual). As the standardization of the resulting word is clearly $\tau_{r|}^{t}$, whose descent set is $T_{r|}^{t}=\operatorname{comp}_{n-r}\alpha_{r|}^{t}$ by Lemma \ref{opersDes}, the result follows.

As the cases $r=n$, which involves just $\mathrm{F}_{\rho}^{\alpha}\cdot1$ with $\tau^{(n)}=\tau$, and $r=0$, with which the product is $1\cdot\mathrm{F}_{\rho}^{\alpha^{t}}$ and the permutation $\tau^{(0)}$ is $\tau^{t}$, the formula holds in these cases as well. This proves the corollary.
\end{proof}

\begin{ex}
We take $n=6$, $T=\{2,3\}$, and $\alpha=213\vDash6$ as in Example \ref{excomp}, and then the element $\tau=264135 \in S_{6}$ has $T$ as its descent set. The reverse word $\tau^{t}=531462$ has descent set $T^{t}=\{1,2,5\}$ (as Lemma \ref{opersDes} asserts), and $\alpha^{t}=1131$. For the values $1 \leq r\leq5$, we have $\tau_{\leq5}=26413$, $\tau_{>5}=5$, $\tau_{\leq4}=2641$, $\tau_{>4}=35$, $\tau_{\leq3}=264$, $\tau_{>3}=135$, $\tau_{\leq2}=26$, $\tau_{>2}=4135$, $\tau_{\leq1}=2$, and $\tau_{>1}=64135$ for the partial words from Definition \ref{taurdef}, whose descent sets (or those of their standardizations) are those appearing in that example. Transposing the words $\tau_{>r}$ and concatenating them with $\tau_{\leq r}$ yields the words $\tau^{(5)}=264135$, $\tau^{(4)}=264153$, $\tau^{(3)}=264531$, $\tau^{(2)}=265314$,  and $\tau^{(1)}=253146$, and one can verify Corollary \ref{tauforr} for these words. \label{extaur}
\end{ex}
In fact, Definition \ref{taurdef} always yields $\tau^{(n-1)}=\tau$, as seen in Example \ref{extaur}.

\medskip

The desired cancelation will be obtained using the following observation.
\begin{lem}
Consider $\tau \in S_{n}$, $0 \leq r \leq n-1$, and $\sigma\in\mathcal{S}_{r,n-r}$ that ends with an r, and let $\tilde{\sigma}$ be the element of $\mathcal{S}_{r+1,n-r-1}$ that is obtained by replacing the last r in $\sigma$ by an l. Then we have the equality $w^{\sigma}(\tau^{(r)})=w^{\tilde{\sigma}}(\tau^{(r+1)})$. \label{forcanc}
\end{lem}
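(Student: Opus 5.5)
The proof will be a direct comparison of the two words position by position, using the explicit description of $w^{\sigma}(\tau)$ in Definition \ref{altshuff}. That word places the first $r$ entries of $\tau^{(r)}$ into the l-locations of $\sigma$ in order, and the last $n-r$ entries into the r-locations in order.

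First I write out the two permutations via Definition \ref{taurdef}. Writing $\tau=\tau_{1}\ldots\tau_{n}$, we have
\[\tau^{(r)}=\tau_{1}\ldots\tau_{r}\,\tau_{n}\tau_{n-1}\ldots\tau_{r+1}\quad\text{and}\quad\tau^{(r+1)}=\tau_{1}\ldots\tau_{r+1}\,\tau_{n}\ldots\tau_{r+2}.\]
The first part of $\tau^{(r+1)}$ is the first part $\tau_{\leq r}$ of $\tau^{(r)}$ with $\tau_{r+1}$ appended at the end. The second part of $\tau^{(r+1)}$ is the reversed word $\tau_{>r}^{t}=\tau_{n}\ldots\tau_{r+1}$ with its \emph{last} letter $\tau_{r+1}$ removed.

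Next I compare the locations. Let $p=n$ be the final position, which carries an r in $\sigma$ by assumption. The sequence $\tilde{\sigma}$ agrees with $\sigma$ in positions $1,\ldots,n-1$, and has an l at position $n$ instead.
\begin{enumerate}[$(i)$]
\item The l-positions of $\sigma$ are the first $r$ l-positions of $\tilde{\sigma}$, and in both words they receive $\tau_{1},\ldots,\tau_{r}$ in order.
\item The last l of $\tilde{\sigma}$ is at position $n$ and receives $\tau_{r+1}$. In $w^{\sigma}(\tau^{(r)})$, position $n$ is the last r-position, so it receives the last letter of $\tau_{>r}^{t}$, which is also $\tau_{r+1}$.
\item The remaining r-positions, the first $n-r-1$ of them, coincide for $\sigma$ and $\tilde{\sigma}$. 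They receive $\tau_{n},\ldots,\tau_{r+2}$ in order in both words.
\end{enumerate}
Hence the two words agree at every location.

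The only genuine point to watch is the boundary cases. When $r=0$, $\sigma$ consists only of r's and $\tilde{\sigma}$ has a single l at the end. When $r=n-1$, $\tilde{\sigma}$ is the all-l sequence and $\tau^{(n)}=\tau$, recalling that $\tau^{(n-1)}=\tau$ as well. I check that the same bookkeeping applies in these cases with empty words, using the extension of Definition \ref{altshuff} to vanishing $n$ or $m$.

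No real obstacle is expected. The key observation is that transposing $\tau_{>r}$ moves $\tau_{r+1}$ to the very end, which is exactly where the final letter of $\sigma$ sits.
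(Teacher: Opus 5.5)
Your proposal is correct and follows the paper's own proof. Both arguments write $\tau_{\leq r+1}$ as $\tau_{\leq r}$ followed by $\tau_{r+1}$, and $\tau_{>r}^{t}$ as $\tau_{>r+1}^{t}$ followed by $\tau_{r+1}$, and then match the l-positions, the first $n-r-1$ r-positions, and the final position, which receives $\tau_{r+1}$ in both words (the paper also notes that this last letter is forced anyway because both words are permutations of $\mathbb{N}_{n}$).
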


\begin{proof}
Definitions \ref{altshuff} and \ref{taurdef} imply that $w^{\sigma}(\tau^{(r)})$ is obtained by putting the entries of $\tau_{\leq r}$ in the $r$ locations marked with l in $\sigma$, and those of $\tau_{>r}^{t}$ in the $n-r$ locations containing r there. Similarly, for $w^{\tilde{\sigma}}(\tau^{(r+1)})$ we do the same with $\tau_{\leq r+1}$ and the $r+1$ l's of $\tilde{\sigma}$ and using $\tau_{>r+1}^{t}$ for the $n-r-1$ places of r in that sequence.

But the latter definition implies that $\tau_{\leq r+1}$ is the concatenation of $\tau_{\leq r}$ and the single letter $\tau_{r+1}$, while $\tau_{>r}$ is the concatenation of the letter $\tau_{r+1}$ and $\tau_{>r+1}$. From the latter it follows that $\tau_{>r}^{t}$ is the concatenation of $\tau_{>r+1}^{t}$ and $\tau_{r+1}$.

Thus the $r$ locations of l in $\sigma$, which are the first $r$ locations of l in $\tilde{\sigma}$, contain the entries of $\tau_{\leq r}$ in $w^{\sigma}(\tau^{(r)})$ (according to their order), and also in $w^{\tilde{\sigma}}(\tau^{(r+1)})$ because the first $r$ entries of $\tau_{\leq r+1}$ is $\tau_{\leq r}$. Moreover, the first $n-r-1$ locations of r in $\sigma$ are all the locations of r in $\tilde{\sigma}$, and in $w^{\tilde{\sigma}}(\tau^{(r+1)})$ we put the entries of $\tau_{>r+1}^{t}$ in them (with their order), as we do in $w^{\sigma}(\tau^{(r)})$ because the first $n-r-1$ letters of $\tau_{>r}^{t}$ produce $\tau_{>r+1}^{t}$.

This shows that all the letters of $w^{\sigma}(\tau^{(r)})$ and of $w^{\tilde{\sigma}}(\tau^{(r+1)})$ but the last coincide. There are two ways to see that the last letter in both has to be $\tau_{r+1}$, thus completing the equality. First, the last r in $\sigma$ shows up at the end and this letter is the last one in $\tau_{>r}^{t}$, and last l in $\tilde{\sigma}$ is also at the end and our letter is the last one of $\tau_{\leq r+1}$. Alternatively, both $w^{\sigma}(\tau^{(r)})$ and of $w^{\tilde{\sigma}}(\tau^{(r+1)})$ are permutations in $S_{n}$ (in one-line notation), so that the last entry is determined by the other ones to complete the content of both words to be $\mathbb{N}_{n}$. This completes the proof of the lemma.
\end{proof}

\begin{ex}
Take $\tau$ (and $n$) as in Example \ref{extaur}, and consider the case $r=2$. Then there are 10 choices of $\sigma\in\mathcal{S}_{2,4}$ that end with an r, which correspond to the 10 elements $\tilde{\sigma}\in\mathcal{S}_{3,3}$ ending with an l. The trivial one llrrrr for the former produces $w^{\sigma}(\tau^{(2)})=\tau^{(2)}=265314$, which corresponds to llrrrl for $\tilde{\sigma}$, with which $w^{\tilde{\sigma}}(\tau^{(3)})$ with $\tau^{(3)}=264531$ produces the same word. As another choice, take rlrlrr for $\sigma$, associated with which is rlrlrl for $\tilde{\sigma}$, and then both words from Lemma \ref{forcanc} are 523614. \label{cancex}
\end{ex}

\begin{rmk}
The number 10 of choices in Example \ref{cancex} is the size $\frac{5!}{2!3!}$ of $\mathcal{S}_{2,3}$, adding an r at the end of which produces all relevant elements $\sigma\in\mathcal{S}_{2,4}$, and when we add l at the end, we obtain the corresponding $\tilde{\sigma}$. \label{numofpairs}
\end{rmk}
The consequences of Lemma \ref{forcanc} that will be used are the following ones, in which expressions like $\alpha_{r|}^{t}$ are to be understood as $(\alpha_{r|})^{t}$ (first apply the operation from Definition \ref{cutatr}, and then reverse), and not as $(\alpha^{t})_{r|}$ (which is different in general).
\begin{cor}
Take $\tau$, $r$, $\sigma$, and $\tilde{\sigma}$ as in Lemma \ref{forcanc}, and set $T:=\operatorname{Des}(\tau)$ and $\alpha:=\operatorname{comp}_{n}T \vDash n$. Then both $\gamma^{\sigma}_{\tau^{(r)}}(\alpha_{|r},\alpha_{r|}^{t})$ and $\gamma^{\tilde{\sigma}}_{\tau^{(r+1)}}(\alpha_{|r+1},\alpha_{r+1|}^{t})$ are defined and they are equal, and we have $\psi^{\sigma}(\operatorname{Id}_{r},w^{0}_{n-r})=\psi^{\tilde{\sigma}}(\operatorname{Id}_{r+1},w^{0}_{n-r-1})$. \label{comppars}
\end{cor}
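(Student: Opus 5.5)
The plan is to derive both assertions directly from Lemma \ref{forcanc}, which already gives the equality of words $w:=w^{\sigma}(\tau^{(r)})=w^{\tilde{\sigma}}(\tau^{(r+1)})$ in $S_{n}$. The two compositions in question are, by Definition \ref{altshuff}, $\operatorname{comp}_{n}$ of the descent sets of $w^{\sigma}(\tau^{(r)})$ and of $w^{\tilde{\sigma}}(\tau^{(r+1)})$. These are then literally the descent set of the same permutation $w$, so the compositions will coincide once both are known to be defined.

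Definedness is a matter of checking the hypotheses of Definition \ref{altshuff}. For $\gamma^{\sigma}_{\tau^{(r)}}(\alpha_{|r},\alpha_{r|}^{t})$ we need $\operatorname{Des}\big((\tau^{(r)})_{|r}\big)=\operatorname{comp}_{r}^{-1}\alpha_{|r}$ and $\operatorname{Des}\big((\tau^{(r)})_{r|}\big)=\operatorname{comp}_{n-r}^{-1}\alpha_{r|}^{t}$, and this is exactly the content of Corollary \ref{tauforr}. Indeed, $\tau^{(r)}_{\leq r}=\tau_{\leq r}$ has descent set $T_{|r}$. The second part is $\tau_{>r}^{t}$, whose descent set is $(T_{r|})^{t}$ by Lemma \ref{opersDes}, applied to unstandardized words, which standardization does not affect. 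The same argument with $r+1$ handles $\tilde{\sigma}$ and $\tau^{(r+1)}$; note $r+1\leq n$. The degenerate cases $r=0$ or $r+1=n$ (empty parts) will be covered by the extension of Definition \ref{altshuff} mentioned after it, where the composition is just the other one. There I will check directly that $\tau^{(0)}=\tau^{t}$ and $\tau^{(n)}=\tau$ fit, since the word equality of Lemma \ref{forcanc} still holds.

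For the permutation part, I will use Remark \ref{trivtau}: $\psi^{\sigma}(\rho,\varphi)=w^{\sigma}(\rho\,(\varphi+n))$. With $\rho=\operatorname{Id}_{r}$ and $\varphi=w^{0}_{n-r}$, the word $\operatorname{Id}_{r}\,(w^{0}_{n-r}+r)$ is $1,2,\ldots,r,n,n-1,\ldots,r+1$. This is precisely $\pi^{(r)}$ in the sense of Definition \ref{taurdef}, for $\pi=\operatorname{Id}_{n}$. Likewise the word for $\tilde{\sigma}$ is $\operatorname{Id}_{n}^{(r+1)}$. Applying Lemma \ref{forcanc} to $\operatorname{Id}_{n}$ in place of $\tau$ then gives $\psi^{\sigma}(\operatorname{Id}_{r},w^{0}_{n-r})=\psi^{\tilde{\sigma}}(\operatorname{Id}_{r+1},w^{0}_{n-r-1})$.

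The only step needing care is the bookkeeping in the definedness check. This means matching $(\tau^{(r)})_{r|}$ with the standardization of $\tau_{>r}^{t}$, and confirming that the transposition $\alpha_{r|}^{t}$ is taken after cutting. I expect this is resolved by Corollary \ref{tauforr} together with Lemma \ref{opersDes}. The rest is immediate from Lemma \ref{forcanc}.
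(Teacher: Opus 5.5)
Your proposal is correct and follows essentially the same route as the paper. Definedness comes from Corollary \ref{tauforr}. The two compositions coincide because, by Lemma \ref{forcanc}, both equal $\operatorname{comp}_{n}$ of the descent set of the same word, with the boundary cases $r=0$ and $r+1=n$ checked against the extended definition. The permutation identity follows by identifying $\psi^{\sigma}(\operatorname{Id}_{r},w^{0}_{n-r})$ with $w^{\sigma}(\operatorname{Id}_{n}^{(r)})$ via Remark \ref{trivtau} and applying Lemma \ref{forcanc} to $\operatorname{Id}_{n}$.
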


\begin{proof}
Corollary \ref{tauforr} implies that the equalities $\operatorname{comp}_{r}\operatorname{Des}(\tau^{(r)}_{r|})=\alpha_{|r}$ and $\operatorname{comp}_{n-r}\operatorname{Des}(\tau^{(r)}_{r|})=\alpha_{|r}^{t}$ hold, and the same for $r+1$, which makes both sides defined (with the usual completions in case $r=0$ or $r+1=n$). As we get $w^{\sigma}(\tau^{(r)})=w^{\tilde{\sigma}}(\tau^{(r+1)})$ from Lemma \ref{forcanc}, applying Definition \ref{altshuff} shows that $R^{\sigma}_{\tau^{(r)}}(T_{|r},T_{r|})=R^{\tilde{\sigma}}_{\tau^{(r+1)}}(T_{|r+1},T_{r+1|})$ as these are the descent sets of both sides (at least when $0<r<n-1$), and we then get the desired equality from that definition (also in the external cases of $r$). This establishes the first assertion.

We now consider the case where $\tau=\operatorname{Id}_{n}$. Then $\tau^{(r)}$ is the concatenation of $\operatorname{Id}_{r}$ and $w^{0}_{n-r}+r$, and similarly for $\tau^{(r+1)}$. But then Remark \ref{trivtau} implies that $\psi^{\sigma}(\operatorname{Id}_{r},w^{0}_{n-r})$ is the word $w^{\sigma}(\tau^{(r)})$, and similarly $\psi^{\tilde{\sigma}}(\operatorname{Id}_{r+1},w^{0}_{n-r-1})$ is the same as $w^{\tilde{\sigma}}(\tau^{(r+1)})$. Applying Lemma \ref{forcanc} once again, the second assertion follows as well. This proves the corollary.
\end{proof}

\medskip

We can now prove all of our main results together.
\begin{proof}[Proof of Theorems \ref{SQSymF}, \ref{SQSymqF}, and \ref{SNCQSymF}]
For a unified proof, we write $\mathrm{F}_{\alpha}$ to denote either $F_{\alpha}\in\mathtt{QSym}$, $F_{\alpha}^{(q)}\in\mathtt{QSym}^{(q)}$, or $\mathrm{F}_{\alpha}^{\operatorname{Id}_{n}}\in\mathtt{NCQSym}$. Propositions \ref{coprodQSym}, \ref{coprodQSymq}, and \ref{coprodNCQSym} show that we have the equality $\Delta(\mathrm{F}_{\alpha})=\sum_{r=0}^{n}\mathrm{F}_{\alpha_{|r}}\otimes\mathrm{F}_{\alpha_{r|}}$ in all three cases (in the latter one we also use the fact that when $\rho=\operatorname{Id}_{n}$ we get $\rho_{|r}=\operatorname{Id}_{r}$ and $\rho_{r|}=\operatorname{Id}_{n-r}$). Thus $\{\mathrm{F}_{\alpha}\}_{\alpha}$ satisfy the condition for $\{g_{\alpha}\}_{\alpha}$ in Corollary \ref{detS}.

The element playing the role of $\tilde{g}_{\alpha}$ from this corollary is, when $\alpha \vDash n$ (which is the same as $n(\alpha)=n$ in the notation there), is $(-1)^{n}F_{\alpha^{t}}$ when we work with $\mathtt{QSym}$, $(-1)^{n}q^{\operatorname{inv}(\alpha^{t})}\widetilde{F}_{\alpha^{t}}^{(q)}$ for $\mathtt{QSym}^{(q)}$, and $(-1)^{n}\mathrm{F}_{\alpha^{t}}^{w^{0}_{n}}$ when the algebra is $\mathtt{NCQSym}$. We write this expression as $(-1)^{n}\widetilde{\mathrm{F}}_{\alpha^{t}}$ for our unified notation.

Working with the convention from Remark \ref{firstS}, and recalling that $\tilde{g}_{\alpha_{r|}}$ is $(-1)^{n-r}\widetilde{\mathrm{F}}_{\alpha_{r|}^{t}}$ because $\alpha_{r|}$ and $\alpha_{r|}^{t}$ are compositions of $n-r$, the result will follow from Corollary \ref{detS} once we establish the equality $\sum_{r=0}^{n}(-1)^{n-r}\mathrm{F}_{\alpha_{|r}}\widetilde{\mathrm{F}}_{\alpha_{r|}^{t}}=0$. Noting that the maps of algebras that are given in Propositions \ref{projQSym} and \ref{projQSymq} send $\mathrm{F}_{\beta}^{\operatorname{Id}_{m}}$ to $F_{\beta}\in\mathtt{QSym}$ and to $F_{\beta}^{(q)}\in\mathtt{QSym}^{(q)}$ for $\beta \vDash m$, as well as $\mathrm{F}_{\beta}^{w^{0}_{m}}$ to $F_{\beta}$ inside $\mathtt{QSym}$ and to $q^{\operatorname{inv}(\beta)}\widetilde{F}_{\beta}^{(q)}$ in the $q$-Hopf algebra $\mathtt{QSym}^{(q)}$, the vanishing result for $\mathtt{NCQSym}$ will imply those for the other two cases.

We thus need to prove the equality $\sum_{r=0}^{n}(-1)^{n-r}\mathrm{F}_{\alpha_{|r}}^{\operatorname{Id}_{r}}\mathrm{F}_{\alpha_{r|}^{t}}^{w^{0}_{n-r}}=0$ for every $\alpha \vDash n>0$ (when $n=0$ and $\alpha=\emptyset$, the equality from Corollary \ref{detS} is the trivial one $1\cdot1=1$). For this we write $\alpha$ as $\operatorname{comp}_{n}T$ for the appropriate $T\subseteq\mathbb{N}_{n-1}$, fix some $\tau \in S_{n}$ with descent set $T$, and consider $\rho:=\operatorname{Id}_{n}$. Recalling that $\rho_{|r}=\operatorname{Id}_{r}$ and $\rho_{r|}=\operatorname{Id}_{n-r}$ in this case, so that $\rho_{r|}^{t}=w^{0}_{n-r}$, we can apply Corollary \ref{tauforr}, and take $\tau^{(r)}$ from Definition \ref{taurdef} for evaluating the $r$th summand using Proposition \ref{prodFtau}.

The resulting expansion yields $\sum_{r=0}^{n}(-1)^{n-r}\sum_{\sigma\in\mathcal{S}_{r,n-r}}\mathrm{F}_{\gamma^{\sigma}_{\tau^{(r)}}(\alpha_{|r},\alpha_{r|})}^{\psi^{\sigma}(\operatorname{Id}_{r},w^{0}_{n-r})}$. For every $0 \leq r<n$, we write $\mathcal{S}_{r,n-r}^{+}$ for the set of elements of $\mathcal{S}_{r,n-r}$ that end with an r, and let $\mathcal{S}_{r,n-r}^{-}$ denote the complement. As an element of $\mathcal{S}_{r,n-r}^{-}$ ends with an l, we can replace this last l by an r, and get an element $\sigma\in\mathcal{S}_{r-1,n-r+1}^{+}$, and then our element of $\mathcal{S}_{r,n-r}^{-}$ is $\tilde{\sigma}$ in the notation from Lemma \ref{forcanc} (with the value of $r$ moved by 1). Since the signs for $r$ and for $r-1$ are inverses, our expression becomes $\sum_{r=0}^{n-1}(-1)^{n-r}\sum_{\sigma\in\mathcal{S}_{r,n-r}^{+}}[\mathrm{F}_{\gamma^{\sigma}_{\tau^{(r)}}(\alpha_{|r},\alpha_{r|}^{t})}^{\psi^{\sigma}(\operatorname{Id}_{r},w^{0}_{n-r})}- \mathrm{F}_{\gamma^{\tilde{\sigma}}_{\tau^{(r+1)}}(\alpha_{|r+1},\alpha_{r+1|}^{t})}^{\psi^{\tilde{\sigma}}(\operatorname{Id}_{r+1},w^{0}_{n-r-1})}]$.

So fix $0 \leq r \leq n-1$ and $\sigma\in\mathcal{S}_{r,n-r}^{+}$, and we invoke Corollary \ref{comppars}, which tells us that in the two expressions in the corresponding difference, the compositions in the two subscripts coincide, as do the permutations in the two superscripts. Therefore the terms cancel out in pairs in our expression, which establishes its vanishing, as desired. This completes the proofs of all three theorems.
\end{proof}
Given $0 \leq r \leq n-1$, the number of pairs of $\sigma\in\mathcal{S}_{r,n-r}^{+}$ and $\tilde{\sigma}\in\mathcal{S}_{r+1,n-r-1}^{-}$ in the proofs of Theorems \ref{SQSymF}, \ref{SQSymqF}, and \ref{SNCQSymF} is $\frac{(n-1)!}{r!(n-1-r)!}$. Indeed, this is the size of the set $\mathcal{S}_{r,n-r-1}$, elements of which produce $\mathcal{S}_{r,n-r}^{+}$ by adding an r at the end, and the corresponding counterparts from $\mathcal{S}_{r+1,n-r-1}^{-}$ by adding an l there (as is visible in Example \ref{cancex} via Remark \ref{numofpairs}).

We remark that for Theorem \ref{SQSymF} and $\mathtt{QSym}$, one can describe an algorithm for matching the pairs where all the products are taken with trivial $\tau$'s and Proposition \ref{prodF} rather than \ref{prodFtau}, but the current proof works for the more complicated Hopf algebras.

\medskip

We now present some examples for how these proofs work.
\begin{ex}
When $n=1$, the primitive element $\mathrm{F}_{1}^{1}$ produces $\mathrm{F}_{1}^{1}-\mathrm{F}_{1}^{1}=0$. If $n=2$, then there are the compositions 2 and 11, for which we must take $\tau$ to be 12 and 21 respectively, for both of which the term associated with $r=1$ is $(\mathrm{F}_{1}^{1})^{2}$. Expanding it using $\tau^{(1)}=\tau$, this gives $\mathrm{F}_{2}^{12}-(\mathrm{F}_{2}^{12}+\mathrm{F}_{11}^{21})+\mathrm{F}_{11}^{21}=0$ in the former case and $\mathrm{F}_{11}^{12}-(\mathrm{F}_{11}^{12}+\mathrm{F}_{2}^{21})+\mathrm{F}_{2}^{21}=0$ in the latter, with the pair consisting of the first two expressions cancel, as does the pair of the last two. Turning to $n=3$, the compositions 3 and 111 require the choices 123 and 321 for $\tau$ respectively, with which we get $\mathrm{F}_{3}^{123}-(\mathrm{F}_{3}^{123}+\mathrm{F}_{21}^{132}+\mathrm{F}_{12}^{312})+(\mathrm{F}_{21}^{132}+\mathrm{F}_{12}^{312}+\mathrm{F}_{111}^{321})-\mathrm{F}_{111}^{321}=0$ and $\mathrm{F}_{111}^{123}-(\mathrm{F}_{111}^{123}+\mathrm{F}_{12}^{132}+\mathrm{F}_{21}^{312})+(\mathrm{F}_{12}^{132}+\mathrm{F}_{21}^{312}+\mathrm{F}_{3}^{321})-\mathrm{F}_{3}^{321}=0$ when the elements of $\mathcal{S}_{2,1}$ are always ordered as llr, lrl, rll, and those of $\mathcal{S}_{1,2}$ are lrr, rlr, rrl. For the other compositions there is more than one choice of $\tau$, as both 132 and 231 have descent set $\{2\}$, while the descent set of both 213 and 312 is $\{1\}$. The first two produce the equalities $\mathrm{F}_{21}^{123}-(\mathrm{F}_{21}^{123}+\mathrm{F}_{3}^{132}+\mathrm{F}_{12}^{312})+(\mathrm{F}_{3}^{132}+\mathrm{F}_{12}^{312}+\mathrm{F}_{21}^{321})-\mathrm{F}_{21}^{321}=0$ and $\mathrm{F}_{21}^{123}-(\mathrm{F}_{21}^{123}+\mathrm{F}_{12}^{132}+\mathrm{F}_{3}^{312})+(\mathrm{F}_{12}^{132}+\mathrm{F}_{3}^{312}+\mathrm{F}_{21}^{321})-\mathrm{F}_{21}^{321}=0$ respectively, and we get $\mathrm{F}_{12}^{123}-(\mathrm{F}_{12}^{123}+\mathrm{F}_{21}^{132}+\mathrm{F}_{111}^{312})+(\mathrm{F}_{21}^{132}+\mathrm{F}_{111}^{312}+\mathrm{F}_{12}^{321})-\mathrm{F}_{12}^{321}=0$ and $\mathrm{F}_{12}^{123}-(\mathrm{F}_{12}^{123}+\mathrm{F}_{111}^{132}+\mathrm{F}_{21}^{312})+(\mathrm{F}_{111}^{132}+\mathrm{F}_{21}^{312}+\mathrm{F}_{12}^{321})-\mathrm{F}_{12}^{321}=0$ from the latter two (all in these orders on $\mathcal{S}_{2,1}$ and $\mathcal{S}_{1,2}$). \label{cancsmall}
\end{ex}

\medskip

Two special cases reduce to nicer formulae.
\begin{rmk}
As we saw for the small values of $n$ in Example \ref{cancsmall}, the composition $\alpha=n \vDash n$ is associated only with $\tau=\operatorname{Id}_{n}$, with which $\tau^{(r)}$ is always the trivial word (in the sense of Remark \ref{trivtau}) that is associated with $\operatorname{Id}_{r}$ and $w^{0}_{n-r}$. We saw this in the proof of Corollary \ref{comppars}, which also shows that $\gamma^{\sigma}_{\tau^{(r)}}(r,1^{n-r})=\gamma^{\sigma}(r,1^{n-r})=\psi^{\sigma}(\operatorname{Id}_{r},w^{0}_{n-r})$ for every $\sigma\in\mathcal{S}_{r,n-r}$ (note that for our $\alpha$ we have $\alpha_{|r}=r \vDash r$, $\alpha_{r|}=n-r \vDash n-r$, and thus $\alpha_{r|}^{t}=1^{n-r}$). Similarly, for $\alpha=1^{n} \vDash n$ (with $\alpha_{|r}=1^{r}$ and $\alpha_{r|}^{t}$ of length 1) we must have $\tau=w^{0}_{n}$, so that $\tau^{(r)}$ is always the opposite to the trivial case (as in Remark \ref{gammaconc}) for every $r$. In this case every $\sigma\in\mathcal{S}_{r,n-r}$ produces the equality $\gamma^{\sigma}_{\tau^{(r)}}(1^{r},n-r)=\psi^{\overline{\sigma}}(\operatorname{Id}_{n-r},w^{0}_{r})$ (note the of the indices inversion), where $\overline{\sigma}$ is the element of $\mathcal{S}_{n-r,r}$ that is obtained by inverting all the symbols of $\sigma$. \label{cancext}
\end{rmk}
We can now complete the proof of the remaining theorem.
\begin{proof}[Proof of Theorem \ref{SFalphaw0}]
We consider Proposition \ref{coprodNCQSym} with $\rho=w^{0}_{n}$, so that $\rho_{|r}=w^{0}_{r}$ and $\rho_{r|}=w^{0}_{n-r}$. We can thus again employ Corollary \ref{detS} (as it is stated, not in the convention from Remark \ref{firstS}), which would produce our result once we show that wherever $n>0$, the equalities $\sum_{r=0}^{n}(-1)^{r}\mathrm{F}_{r}^{\operatorname{Id}_{r}}\mathrm{F}_{1^{n-r}}^{w^{0}_{n-r}}=0$ and $\sum_{r=0}^{n}(-1)^{r}\mathrm{F}_{1^{r}}^{\operatorname{Id}_{r}}\mathrm{F}_{n-r}^{w^{0}_{n-r}}=0$ hold.

But Remark \ref{cancext} shows that when we consider the proof of Theorem \ref{SNCQSymF}, when $\ell(\alpha)=1$ and when $\alpha=1^{n}$, then we get precisely the desired equalities (up to the global sign $(-1)^{n}$). Hence the required equalities hold, and the desired antipode formulae follow. This proves the theorem.
\end{proof}
Indeed, in the smallest values of $n$, we need to show that the antipode of $\mathrm{F}_{1}^{1}$, $\mathrm{F}_{11}^{21}$, $\mathrm{F}_{2}^{21}$, $\mathrm{F}_{111}^{321}$, and $\mathrm{F}_{3}^{321}$, are $-\mathrm{F}_{1}^{1}$, $\mathrm{F}_{2}^{12}$, $\mathrm{F}_{11}^{12}$, $-\mathrm{F}_{3}^{123}$, and $-\mathrm{F}_{111}^{123}$ respectively. The equalities required for producing them are those appearing in Example \ref{cancsmall} for $\mathrm{F}_{1}^{1}$ (which is already covered in Theorem \ref{SNCQSymF}, and is easily verified by primitivity), $\mathrm{F}_{2}^{12}$, $\mathrm{F}_{11}^{12}$, $\mathrm{F}_{3}^{123}$, and $\mathrm{F}_{111}^{123}$ respectively (up to the sign $(-1)^{n}$).

\medskip

There are some symmetries in the vanishing equation arising from $\tau \in S_{n}$ in the proof of our main theorems, as hinted by the fact that we used the same equation for the proof of Theorem \ref{SFalphaw0}. To see them in full, we indicate how the proof using the convention from Corollary \ref{detS} (without Remark \ref{firstS}) works.
\begin{rmk}
For the vanishing of the expression $\sum_{r=0}^{n}S(\mathrm{F}_{\alpha_{|r}}^{\operatorname{Id}_{r}})\mathrm{F}_{\alpha_{r|}}^{\operatorname{Id}_{n-r}}$, we use the following analogues of Definition \ref{taurdef}, Corollary \ref{tauforr}, Lemma \ref{forcanc}, and Corollary \ref{comppars}. We write $\tau^{[r]}$ for the concatenation $\tau_{\leq r}^{t}\tau_{>r}$ (the transposition now on the first part), which will now be applicable for evaluating $\mathrm{F}_{\alpha_{|r}^{t}}^{\rho_{|r}}\mathrm{F}_{\alpha_{r|}}^{\rho_{r|}}$ in case $\alpha$ is $\operatorname{comp}_{n}\operatorname{Des}(\tau)$. If $1 \leq r \leq n$ and $\sigma\in\mathcal{S}_{r,n-r}$ \emph{begins} with an r, we denote by $\hat{\sigma}$ the element of $\sigma\in\mathcal{S}_{r-1,n-r+1}$ resulting from changing that first r to an l, and a similar argument proves that $w^{\sigma}(\tau^{[r]})=w^{\hat{\sigma}}(\tau^{[r-1]})$. We deduce, in this setting, that $\gamma^{\sigma}_{\tau^{[r]}}(\alpha_{|r}^{t},\alpha_{r|})$ and $\gamma^{\hat{\sigma}}_{\tau^{[r-1]}}(\alpha_{|r-1}^{t},\alpha_{r+1|}^{t})$ are both defined and equal, and that $\psi^{\sigma}(w^{0}_{r},\operatorname{Id}_{n-r})=\psi^{\hat{\sigma}}(w^{0}_{r-1},\operatorname{Id}_{n-r+1})$. By setting $\hat{\mathcal{S}}_{r,n-r}^{+}$ for $r\geq1$ the set elements of $\mathcal{S}_{r,n-r}$ beginning with an r, the expression $\sum_{r=0}^{n}(-1)^{r}\mathrm{F}_{\alpha_{|r}^{t}}^{w^{0}_{r}}\mathrm{F}_{\alpha_{r|}}^{\operatorname{Id}_{n-r}}$ becomes $\sum_{r=1}^{n}(-1)^{r} \sum_{\sigma\in\hat{\mathcal{S}}_{r,n-r}^{+}}[\mathrm{F}_{\gamma^{\sigma}_{\tau^{(r)}}(\alpha_{|r}^{t},\alpha_{r|})}^{\psi^{\sigma}(w^{0}_{r},\operatorname{Id}_{n-r})}- \mathrm{F}_{\gamma^{\hat{\sigma}}_{\tau^{(r+1)}}(\alpha_{|r+1},\alpha_{r+1|})}^{\psi^{\hat{\sigma}}(w^{0}_{r-1},\operatorname{Id}_{n-r+1})}]$, which for every $r$ produces the sum of $|\mathcal{S}_{r-1,n-r}|=\frac{(n-1)!}{(r-1)!(n-r)!}$ canceling pairs. \label{altproof}
\end{rmk}

Recalling Lemma \ref{opersDes} and the notation $w^{t}$, we will also write $w^{c}$ and $w^{r}$ for $w^{0}_{n}w$ and $w^{0}_{n}ww^{0}_{n}$ respectively. Combining that lemma with Lemma \ref{relsinv}, we obtain the following symmetry properties.
\begin{prop}
If $\tau \in S_{n}$ is such that $\operatorname{comp}_{n}\operatorname{Des}(\tau)=\alpha \vDash n$, $0 \leq r \leq n$, and $\sigma\in\mathcal{S}_{r,n-r}$, then we have $w^{\sigma}(\tau^{(r)})^{c}=w^{\sigma}\big((\tau^{c})^{(r)})\big)$. Recalling the element $\overline{\sigma}\in\mathcal{S}_{n-r,r}$ from Remark \ref{cancext}, we also get $w^{\sigma}(\tau^{(r)})^{t}=w^{\overline{\sigma}}\big((\tau^{t})^{[n-r]})\big)$ and $w^{\sigma}(\tau^{(r)})^{r}=w^{\overline{\sigma}}\big((\tau^{r})^{[n-r]})\big)$. In particular, the compositions appearing as subscripts in the equations yielding the formulae the antipode of $\mathrm{F}_{\alpha}^{\operatorname{Id}_{n}}$ (via $\tau$), of $\mathrm{F}_{\alpha^{c}}^{\operatorname{Id}_{n}}$ (using $\tau^{c}$), of $\mathrm{F}_{\alpha^{t}}^{\operatorname{Id}_{n}}$ (through $\tau^{t}$), and of $\mathrm{F}_{\alpha^{r}}^{\operatorname{Id}_{n}}$ (with the parameter $\tau^{r}$) are the same, provided that the first two are carried out in the convention from Remark \ref{firstS} and the latter two use Remark \ref{altproof}. \label{symeq}
\end{prop}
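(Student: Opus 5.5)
The plan is to reduce all three identities of Proposition \ref{symeq} to identities about words, where each operation is applied letter by letter, and then read off the statement about compositions through Lemma \ref{opersDes}. Recall that $w^{c}=w^{0}_{n}w$ replaces each entry $i$ by $n+1-i$ without moving positions. The word $w^{t}=ww^{0}_{n}$ reverses the order of positions. The word $w^{r}$ does both. Throughout, $w^{\sigma}(\cdot)$ only places the first $r$ letters of its argument into the l-slots of $\sigma$ and the remaining letters into the r-slots, in order.

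For the complement identity, note first that $(\tau^{c})_{\leq r}=(\tau_{\leq r})^{c}$ and $(\tau^{c})_{>r}=(\tau_{>r})^{c}$, with $c$ acting on values. Value-complementation also commutes with reversing a word. Hence $(\tau^{c})^{(r)}=(\tau^{(r)})^{c}$. Since $w^{\sigma}$ only permutes positions, it commutes with any map applied entrywise to values, and this gives the first equality.

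For the transpose identity, reversing $w^{\sigma}(\tau^{(r)})$ places the letters of $\tau_{\leq r}$ in the reversed l-positions and the letters of $\tau_{>r}^{t}$ in the reversed r-positions. After reversal, the reversed r-positions are exactly the l-positions of $\overline{\sigma}^{\mathrm{rev}}$, and they receive $(\tau_{>r}^{t})^{t}=\tau_{>r}$. Meanwhile the original l-positions now receive $\tau_{\leq r}^{t}$.

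Now $\tau^{t}$ has $(\tau^{t})_{\leq n-r}=\tau_{>r}^{t}$ and $(\tau^{t})_{>n-r}=\tau_{\leq r}^{t}$. Hence $(\tau^{t})^{[n-r]}=\tau_{>r}\,\tau_{\leq r}^{t}$, which is precisely the word being distributed. I expect the bookkeeping of whether the shuffle attached to this is $\overline{\sigma}$ or its reversal to be the main obstacle. I will settle it by checking on Example \ref{cancex}, and, if needed, by adjusting the convention for $\overline{\sigma}$ in Remark \ref{cancext}. The identity for $r$ then follows by composing the complement and transpose cases.

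Finally, I take descent sets of the three identities and apply Lemma \ref{opersDes}. This shows that the sets $R^{\sigma}_{\tau^{(r)}}$ are mapped by $c$, $t$, $r$ bijectively onto the corresponding sets for $\tau^{c}$, $\tau^{t}$, $\tau^{r}$. The vanishing equations of the proof of Theorem \ref{SNCQSymF} and of Remark \ref{altproof} therefore have their subscript compositions matched term by term under the involutions, after relabeling $r\leftrightarrow n-r$ and $\sigma\leftrightarrow\overline{\sigma}$. Here Lemma \ref{relsinv} identifies $\alpha^{c},\alpha^{t},\alpha^{r}$ with $\operatorname{comp}_{n}$ of the descent sets of $\tau^{c},\tau^{t},\tau^{r}$, and it also matches $\alpha_{|r},\alpha_{r|}^{t}$ with the corresponding cut pieces.
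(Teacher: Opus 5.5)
Your route is the one the paper intends (it only says the proposition follows by combining Lemmas \ref{opersDes} and \ref{relsinv}), and your complement step is correct. The gap is exactly the point you flagged and then deferred, and it is more than bookkeeping. Your own computation already settles it, against the statement as printed. So it cannot be resolved by checking Example \ref{cancex}, which involves no transposition, nor by an optional change of convention.

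Reversing $w^{\sigma}(\tau^{(r)})$ sends the l-positions of $\sigma$ to the l-positions of the reversed sequence $\sigma^{\mathrm{rev}}$. Hence $\tau_{\leq r}^{t}$ lands in the l-positions of $\sigma^{\mathrm{rev}}$, not in the ``original l-positions'' as you wrote; mixing the two is precisely what produces the wrong shuffle. Likewise $\tau_{>r}$ lands in the r-positions of $\sigma^{\mathrm{rev}}$. Therefore $w^{\sigma}(\tau^{(r)})^{t}=w^{\sigma'}\big((\tau^{t})^{[n-r]}\big)$ with $\sigma'=\overline{\sigma}^{\mathrm{rev}}$, that is, $\sigma$ read backwards with l and r interchanged.

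With $\overline{\sigma}$ as in Remark \ref{cancext} (interchange only), the identity fails. Take $n=2$, $r=1$, $\tau=12$, $\sigma=\mathrm{lr}$: then $w^{\mathrm{lr}}(12)^{t}=21$, whereas $(\tau^{t})^{[1]}=21$ and $w^{\mathrm{rl}}(21)=12$, so even the descent sets differ. The same data break the third identity: $w^{\mathrm{lr}}(12)^{r}=12$, but $w^{\mathrm{rl}}\big((\tau^{r})^{[1]}\big)=w^{\mathrm{rl}}(12)=21$.

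So you should commit to proving the second and third identities with $\overline{\sigma}^{\mathrm{rev}}$ in place of $\overline{\sigma}$, and say explicitly that this corrects the statement. Your deduction of the third identity from the first two, using $(\tau^{c})^{t}=w^{0}_{n}\tau w^{0}_{n}=\tau^{r}$, then goes through.

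The ``in particular'' part survives, because $\sigma\mapsto\overline{\sigma}^{\mathrm{rev}}$ is still a bijection $\mathcal{S}_{r,n-r}\to\mathcal{S}_{n-r,r}$. However, the relabeling in your last paragraph must be $r\leftrightarrow n-r$ together with $\sigma\leftrightarrow\overline{\sigma}^{\mathrm{rev}}$, not $\sigma\leftrightarrow\overline{\sigma}$.

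One citation also needs fixing. Identifying $\alpha^{c},\alpha^{t},\alpha^{r}$ with $\operatorname{comp}_{n}$ of the descent sets of $\tau^{c},\tau^{t},\tau^{r}$ is Lemma \ref{opersDes}. What Lemma \ref{relsinv} supplies is the matching of the factors, e.g.\ $(\alpha^{t})_{|n-r}=(\alpha_{r|})^{t}$ and $(\alpha^{t})_{n-r|}=(\alpha_{|r})^{t}$.
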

Note that the permutations in the superscripts of the elements containing the words from Proposition \ref{symeq} are $\psi^{\sigma}(\operatorname{Id}_{r},w^{0}_{n-r})$ for the former two and $\psi^{\sigma}(w^{0}_{r},\operatorname{Id}_{n-r})$ for the latter two. Similarly, using the formulae from Remark \ref{altproof} with $\alpha=1^{n}$ or with $\ell(\alpha)=1$ yield the formulae $\sum_{r=0}^{n}(-1)^{n-r}\mathrm{F}_{r}^{w^{0}_{r}}\mathrm{F}_{1^{n-r}}^{\operatorname{Id}_{n-r}}=0$ and $\sum_{r=0}^{n}(-1)^{n-r}\mathrm{F}_{1^{r}}^{w^{0}_{r}}\mathrm{F}_{n-r}^{\operatorname{Id}_{n-r}}=0$, producing the alternative proof of Theorem \ref{SFalphaw0}, via Remark \ref{firstS}.

\medskip

We have already seen that Theorems \ref{SNCQSymF} and \ref{SFalphaw0} cover all the possible cases when $n\geq2$. For larger values of $n$, it seems that the only cases where the antipode takes a fundamental non-commutative quasi-symmetric function to another single one (up to sign) are those covers in these theorems, and no other case. We exemplify this in the case $n=3$.
\begin{ex}
When we consider $\mathrm{F}_{21}^{321}$, Proposition \ref{coprodNCQSym} the next terms in the vanishing formula from the proof of Theorem \ref{SNCQSymF} is $\mathrm{F}_{1}^{1}\mathrm{F}_{11}^{21}$ (or $\mathrm{F}_{2}^{21}\mathrm{F}_{1}^{1}$ via Remark \ref{altproof}). The three options of $\tau$ for this product produce $\mathrm{F}_{21}^{132}+\mathrm{F}_{12}^{312}+\mathrm{F}_{111}^{321}$ from Example \ref{exprod}, as well as $\mathrm{F}_{21}^{132}+\mathrm{F}_{111}^{312}+\mathrm{F}_{12}^{321}$ and $\mathrm{F}_{111}^{132}+\mathrm{F}_{21}^{312}+\mathrm{F}_{12}^{321}$, none of which cancel with the original term $\mathrm{F}_{21}^{321}$ (this term does not appear in any of the three expressions for the other product, appearing in Examples \ref{exprod} and \ref{altprod}). When we take $\mathrm{F}_{12}^{213}$, the next product is $\mathrm{F}_{1}^{1}\mathrm{F}_{2}^{12}$, where the trivial case from Example \ref{exprod} is the only one with which the initial term cancels. But then none of the expressions for the product $\mathrm{F}_{2}^{12}\mathrm{F}_{1}^{1}$ cancels both of the terms $\mathrm{F}_{3}^{123}$ and $\mathrm{F}_{21}^{231}$ simultaneously. Similarly, in two of the forms for $\mathrm{F}_{11}^{21}\mathrm{F}_{1}^{1}$ that show up in Examples \ref{exprod} and \ref{altprod} a term cancels with the initial one $\mathrm{F}_{12}^{213}$, but no expression for $\mathrm{F}_{1}^{1}\mathrm{F}_{11}^{21}$ contains $\mathrm{F}_{21}^{231}$ or $\mathrm{F}_{111}^{231}$. \label{SFn=3}
\end{ex}
For evaluating the antipode on all fundamental non-commutative quasi-symmetric functions of degree 3, we note the sign $(-1)^{n}=(-1)$ appearing in Theorems \ref{SNCQSymF} and \ref{SFalphaw0}, and recall that $\alpha_{|1}=\alpha_{n-1|}=1$ and $\rho_{|1}=\rho_{n-1|}=1$. We can thus apply Lemma \ref{grconS} and write $-S(\mathrm{F}_{\alpha}^{\rho})$ with $\alpha\vDash3$ and $\rho \in S_{3}$ as either $\mathrm{F}_{\alpha}^{\rho}-\mathrm{F}_{\alpha_{|2}}^{\rho_{|2}}\mathrm{F}_{1}^{1}+\mathrm{F}_{1}^{1}S(\mathrm{F}_{\alpha_{1|}}^{\rho_{1|}})$ or $\mathrm{F}_{\alpha}^{\rho}-\mathrm{F}_{1}^{1}\mathrm{F}_{\alpha_{1|}}^{\rho_{1|}}+S(\mathrm{F}_{\alpha_{|2}}^{\rho_{|2}})\mathrm{F}_{1}^{1}$, and evaluate the antipodes in degree 2 by our theorems. These calculations show that the value of $-S(\mathrm{F}_{\alpha}^{\rho})$ when $n=3$ are as follows.
\begin{prop}
The expression $-S(\mathrm{F}_{\alpha}^{\rho})$, which is determined when $\rho=123$ or if $\rho=321$ and $\alpha$ is 3 or 111 by Theorems \ref{SNCQSymF} and \ref{SFalphaw0}, attains, for the latter value of $\rho$, the values $\mathrm{F}_{12}^{123}-\mathrm{F}_{12}^{321}+\mathrm{F}_{21}^{321}$ and $\mathrm{F}_{21}^{123}-\mathrm{F}_{21}^{321}+\mathrm{F}_{12}^{321}$ when $\alpha$ is 21 and 12 respectively. For the other choices of $\rho$, the rows of the following table are ordered by $\alpha$ being 3, 21, 12, and 111 respectively. \[\begin{tabular}{|c|c|c|c|} \hline $132$ & $213$ & $231$ & $312$ \\ \hline $\!\mathrm{F}_{111}^{231}\!+\!\mathrm{F}_{12}^{213}\!-\!\mathrm{F}_{12}^{312}\!$ & $\!\mathrm{F}_{111}^{312}\!+\!\mathrm{F}_{21}^{132}\!-\!\mathrm{F}_{21}^{231}\!$ & $\!\mathrm{F}_{12}^{213}\!-\!\mathrm{F}_{12}^{312}\!-\!\mathrm{F}_{3}^{132}\!+\!\mathrm{F}_{3}^{231}\!+\!\mathrm{F}_{111}^{231}\!$ & $\!\mathrm{F}_{21}^{132}\!-\!\mathrm{F}_{21}^{231}\!-\!\mathrm{F}_{3}^{213}\!+\!\mathrm{F}_{3}^{312}\!+\!\mathrm{F}_{111}^{312}\!$ \\ \hline $\!\mathrm{F}_{21}^{231}\!+\!\mathrm{F}_{12}^{213}\!-\!\mathrm{F}_{12}^{312}\!$ & $\!\mathrm{F}_{21}^{312}\!+\!\mathrm{F}_{12}^{132}\!-\!\mathrm{F}_{12}^{231}\!$ & $\!\mathrm{F}_{12}^{213}\!-\!\mathrm{F}_{12}^{312}\!-\!\mathrm{F}_{21}^{132}\!+\!2\mathrm{F}_{21}^{231}\!$ & $\!\mathrm{F}_{12}^{132}\!-\!\mathrm{F}_{12}^{231}\!-\!\mathrm{F}_{21}^{213}\!+\!2\mathrm{F}_{21}^{312}\!$ \\ \hline $\!\mathrm{F}_{12}^{231}\!+\!\mathrm{F}_{21}^{213}\!-\!\mathrm{F}_{21}^{312}\!$ & $\!\mathrm{F}_{12}^{312}\!+\!\mathrm{F}_{21}^{132}\!-\!\mathrm{F}_{21}^{231}\!$ & $\!\mathrm{F}_{21}^{213}\!-\!\mathrm{F}_{21}^{312}\!-\!\mathrm{F}_{12}^{132}\!+\!2\mathrm{F}_{12}^{231}\!$ & $\!\mathrm{F}_{21}^{132}\!-\!\mathrm{F}_{21}^{231}\!-\!\mathrm{F}_{12}^{213}\!+\!2\mathrm{F}_{12}^{312}\!$ \\ \hline $\!\mathrm{F}_{3}^{231}\!+\!\mathrm{F}_{21}^{213}\!-\!\mathrm{F}_{21}^{312}\!$ & $\!\mathrm{F}_{3}^{312}\!+\!\mathrm{F}_{12}^{132}\!-\!\mathrm{F}_{12}^{231}\!$ & $\!\mathrm{F}_{21}^{213}\!-\!\mathrm{F}_{21}^{312}\!-\!\mathrm{F}_{111}^{132}\!+\!\mathrm{F}_{111}^{231}\!+\!\mathrm{F}_{3}^{231}\!$ & $\!\mathrm{F}_{12}^{132}\!-\!\mathrm{F}_{12}^{231}\!-\!\mathrm{F}_{111}^{213}\!+\!\mathrm{F}_{111}^{312}\!+\!\mathrm{F}_{3}^{312}\!$ \\ \hline \end{tabular}\]
\label{antipode3}
\end{prop}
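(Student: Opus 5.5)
The plan is to compute each $S(\mathrm{F}_{\alpha}^{\rho})$ in degree 3 from the two recursions stated just before the proposition, both coming from Lemma \ref{grconS}. The reduced co-multiplication comes from Proposition \ref{coprodNCQSym}. For $n=3$ we have $\alpha_{|1}=\alpha_{2|}=1$ and $\rho_{|1}=\rho_{2|}=1$, so $\overline{\Delta}(\mathrm{F}_{\alpha}^{\rho})=\mathrm{F}_{1}^{1}\otimes\mathrm{F}_{\alpha_{1|}}^{\rho_{1|}}+\mathrm{F}_{\alpha_{|2}}^{\rho_{|2}}\otimes\mathrm{F}_{1}^{1}$. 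This gives
\[-S(\mathrm{F}_{\alpha}^{\rho})=\mathrm{F}_{\alpha}^{\rho}-\mathrm{F}_{\alpha_{|2}}^{\rho_{|2}}\mathrm{F}_{1}^{1}+\mathrm{F}_{1}^{1}S(\mathrm{F}_{\alpha_{1|}}^{\rho_{1|}}).\]
The degree-2 antipodes are fully known from Theorems \ref{SNCQSymF} and \ref{SFalphaw0}: $S(\mathrm{F}_{1}^{1})=-\mathrm{F}_{1}^{1}$, and $S$ swaps $\mathrm{F}_{11}^{21}\leftrightarrow\mathrm{F}_{2}^{12}$ and $\mathrm{F}_{11}^{12}\leftrightarrow\mathrm{F}_{2}^{21}$. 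So every $-S(\mathrm{F}_{\alpha}^{\rho})$ becomes $\mathrm{F}_{\alpha}^{\rho}$ minus one product of a degree-2 and a degree-1 fundamental element, plus another such product.

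First, for each of the 24 pairs $(\alpha,\rho)$ I will read off $\alpha_{|2},\alpha_{1|}$ via Definition \ref{cutatr} and $\rho_{|2},\rho_{1|}$ via Definition \ref{setwithr}. For example, $\rho=132$ gives $\rho_{|2}=12$ and $\rho_{1|}=21$; $\rho=231$ gives $\rho_{|2}=12$ and $\rho_{1|}=21$; $\rho=312$ gives $\rho_{|2}=21$ and $\rho_{1|}=12$. I will then substitute the degree-2 antipode values. Next, I will expand every product $\mathrm{F}_{\beta}^{\pi}\mathrm{F}_{1}^{1}$ and $\mathrm{F}_{1}^{1}\mathrm{F}_{\beta}^{\pi}$ using Proposition \ref{prodF} with the trivial $\tau$; all of these expansions are already listed in Example \ref{exprod}. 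The result is an explicit signed sum of $\mathrm{F}$'s of degree 3.

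The rows $\rho=123$ and the entries $\rho=321$ with $\alpha\in\{3,111\}$ should come out as the single terms predicted by the earlier theorems. Checking that they do is a useful consistency check on the bookkeeping.

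The main obstacle is normalization rather than the computation itself. The raw output typically contains terms with superscripts such as $\mathrm{F}_{21}^{321}$ that are not fundamental-basis elements $\mathrm{F}_{\mathbf{A}}$, and the fundamental elements are linearly dependent (Proposition \ref{basdim}). So the raw expression need not visibly agree with the tabulated form. To resolve this, I will compute both recursions, the left one above and the right one $\mathrm{F}_{\alpha}^{\rho}-\mathrm{F}_{1}^{1}\mathrm{F}_{\alpha_{1|}}^{\rho_{1|}}+S(\mathrm{F}_{\alpha_{|2}}^{\rho_{|2}})\mathrm{F}_{1}^{1}$, and use whichever matches the table directly. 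Where neither matches, I will use the alternative expansions from Proposition \ref{prodFtau} (Example \ref{altprod}), which are equalities between different fundamental expressions, to rewrite terms.

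As a final safeguard, I will expand both the computed value and the tabulated value in the monomial basis $\mathrm{M}_{\mathbf{A}}$ via Definition \ref{fundNCQSym} and Example \ref{exfundNCQ}, and compare coefficients over the 13 set compositions of $\mathbb{N}_{3}$. The entries with coefficient 2, for $\rho\in\{231,312\}$, are where this monomial check will most likely be needed.
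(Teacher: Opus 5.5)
Your proposal is correct and follows the paper's own route: the paper likewise expresses $-S(\mathrm{F}_{\alpha}^{\rho})$ through the two recursions of Lemma \ref{grconS}, using $\alpha_{|1}=\alpha_{2|}=1$ and $\rho_{|1}=\rho_{2|}=1$, then inserts the degree-2 antipodes from Theorems \ref{SNCQSymF} and \ref{SFalphaw0} and expands the products as in Example \ref{exprod}. Your final comparison in the monomial basis is the right way to certify each entry, because the $\mathrm{F}_{\alpha}^{\rho}$ are linearly dependent and the raw output can differ from the tabulated form by such a relation (for instance $\mathrm{F}_{3}^{213}-\mathrm{F}_{12}^{213}=\mathrm{F}_{3}^{123}-\mathrm{F}_{12}^{123}$ when $\rho=321$, $\alpha=21$); doing this check for every case confirms all the table entries.
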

Since each $\mathrm{F}_{\beta}^{\varphi}$ contains the monomial non-commutative quasi-symmetric function, and none of the canceling correction terms in Proposition \ref{antipode3} have the superscript $\rho^{t}$, we deduce that with this value of $n$, any expression that is not covered by our theorems produces an antipode that cannot be expressed as single non-commutative quasi-symmetric function of any of the sorts appearing in Definition \ref{fundNCQSym}.

Recall that Proposition \ref{SNCQSymM} produces the respective values $\mathrm{E}_{3}^{321}=\mathrm{M}_{(123)}$, $-\mathrm{E}_{12}^{321}=-\mathrm{M}_{(123)}-\mathrm{M}_{(3,12)}$, $-\mathrm{E}_{21}^{321}=-\mathrm{M}_{(123)}-\mathrm{M}_{(23,1)}$, and $\mathrm{E}_{111}^{321}=\mathrm{F}_{3}^{321}$ for $-S(\mathrm{M}_{\alpha}^{123})$ where $\alpha$ is 3, 21, 12, and 111 respectively (note the minus sign), namely for $-S(\mathrm{M}_{(123)})$, $-S(\mathrm{M}_{(12,3)})$, $-S(\mathrm{M}_{(12,3)})$, and $-S(\mathrm{M}_{(1,2,3)})$ respectively. The two intermediate values can also be obtained from Proposition \ref{antipode3}, as these are the $-S$-image of $\mathrm{F}_{21}^{213}-\mathrm{F}_{111}^{213}$ and $\mathrm{F}_{12}^{132}-\mathrm{F}_{111}^{132}$ respectively as well. The value of $-S(\mathrm{M}_{(3,2,1)})=-S(\mathrm{F}_{111}^{321})$ is $\mathrm{F}_{3}^{123}$ via Theorem \ref{SFalphaw0}, and for the other elements of the form $\mathrm{M}_{\mathbf{B}_{\rho}}$ with $\rho \in S_{3}$, they are given via the lower row in the table from Proposition \ref{antipode3}.

Similar consideration produce the following consequence.
\begin{cor}
We have $-S(\mathrm{M}_{(13,2)})=\mathrm{M}_{(13,2)}-\mathrm{M}_{(123)}-\mathrm{M}_{(3,12)}-\mathrm{M}_{(12,3)}$ and  $-S(\mathrm{M}_{(2,13)})=\mathrm{M}_{(2,13)}-\mathrm{M}_{(123)}-\mathrm{M}_{(23,1)}-\mathrm{M}_{(1,23)}$. In addition, we have the equality $-S(\mathrm{M}_{(23,1)})=\mathrm{M}_{(23,1)}-\mathrm{M}_{(123)}-\mathrm{M}_{(12,3)}-\mathrm{M}_{(3,12)}$, and finally $-S(\mathrm{M}_{(3,12)})=\mathrm{M}_{(3,12)}-\mathrm{M}_{(123)}-\mathrm{M}_{(1,23)}-\mathrm{M}_{(23,1)}$. \label{antiM3}
\end{cor}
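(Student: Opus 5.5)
The plan is to derive Corollary \ref{antiM3} from Proposition \ref{antipode3}, in the same way the paper derived the values of $-S(\mathrm{M}_{\alpha}^{123})$ just before it. The first step is to write each of the four monomials as a signed combination of fundamental elements. For $\mathbf{A}\vDash\mathbb{N}_{3}$ of size $21$ or $12$, write $\mathbf{A}=\mathbf{A}_{\alpha}^{\rho}$ with $\rho=\rho(\mathbf{A})$. The M\"obius inversion used in the proofs of Propositions \ref{SQSymM} and \ref{SNCQSymM}, valid for any superscript $\rho$, then gives $\mathrm{M}_{\alpha}^{\rho}=\mathrm{F}_{\alpha}^{\rho}-\mathrm{F}_{111}^{\rho}$. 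Concretely, $\rho((13,2))=132$, $\rho((2,13))=213$, $\rho((23,1))=231$ and $\rho((3,12))=312$, so that:
\begin{itemize}
\item $\mathrm{M}_{(13,2)}=\mathrm{F}_{21}^{132}-\mathrm{F}_{111}^{132}$;
\item $\mathrm{M}_{(2,13)}=\mathrm{F}_{12}^{213}-\mathrm{F}_{111}^{213}$;
\item $\mathrm{M}_{(23,1)}=\mathrm{F}_{21}^{231}-\mathrm{F}_{111}^{231}$;
\item $\mathrm{M}_{(3,12)}=\mathrm{F}_{12}^{312}-\mathrm{F}_{111}^{312}$.
\end{itemize}

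The second step is to read the values $-S(\mathrm{F}_{\alpha}^{\rho})$ from the table in Proposition \ref{antipode3}, for $\alpha$ equal to $21$ or $12$ and to $111$, and subtract. For example, for $(13,2)$ we take the $132$ column and subtract its fourth row from its second:
\[
\big(\mathrm{F}_{21}^{231}+\mathrm{F}_{12}^{213}-\mathrm{F}_{12}^{312}\big)-\big(\mathrm{F}_{3}^{231}+\mathrm{F}_{21}^{213}-\mathrm{F}_{21}^{312}\big).
\]
The other three monomials are handled the same way with their own columns.

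The third step is to expand every resulting $\mathrm{F}_{\beta}^{\varphi}$ in the monomial basis using Definition \ref{fundNCQSym} together with the explicit list of $\mathrm{M}_{\beta}^{\varphi}$ in Example \ref{exfundNCQ}. This gives $\mathrm{F}_{3}^{\varphi}=\mathrm{M}_{(123)}+\mathrm{M}_{21}^{\varphi}+\mathrm{M}_{12}^{\varphi}+\mathrm{M}_{\mathbf{B}_{\varphi}}$, and $\mathrm{F}_{21}^{\varphi}$ and $\mathrm{F}_{12}^{\varphi}$ are $\mathrm{M}_{21}^{\varphi}$ and $\mathrm{M}_{12}^{\varphi}$ respectively, each plus $\mathrm{M}_{\mathbf{B}_{\varphi}}$. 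Collecting terms should leave exactly four monomials with the stated signs, because all the $\mathrm{M}_{\mathbf{B}_{\varphi}}$ contributions cancel. In the $(13,2)$ example the singleton terms $\mathrm{M}_{\mathbf{B}_{231}}$, $\mathrm{M}_{\mathbf{B}_{213}}$ and $\mathrm{M}_{\mathbf{B}_{312}}$ each appear once positively and once negatively.

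As an independent check, and as a fallback if a table entry seems doubtful, I would apply Lemma \ref{grconS} directly. By Proposition \ref{coprodNCQSym}, each of these monomials has $\overline{\Delta}(\mathrm{M}_{\mathbf{A}})$ equal to a single tensor $\mathrm{M}_{\mathbf{A}_{|r}}\otimes\mathrm{M}_{\mathbf{A}_{r|}}$, where $r=2$ for size $21$ and $r=1$ for size $12$. The degree-two antipodes are known from Theorems \ref{SNCQSymF} and \ref{SFalphaw0}: $S(\mathrm{M}_{(12)})=-\mathrm{M}_{(12)}$, and $S(\mathrm{M}_{(1,2)})$ and $S(\mathrm{M}_{(2,1)})$ equal $\mathrm{M}_{(12)}+\mathrm{M}_{(2,1)}$ and $\mathrm{M}_{(12)}+\mathrm{M}_{(1,2)}$ respectively. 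The products in $-S(x)=x+\sum_{i}S(y_{i})z_{i}$ can then be expanded with Corollary \ref{prodNCMs} and Example \ref{pathNCex}. For instance, $\mathbf{A}=(13,2)$ gives $\mathbf{A}_{|2}=(12)$ and $\mathbf{A}_{2|}=(1)$, so $-S(\mathrm{M}_{(13,2)})=\mathrm{M}_{(13,2)}-\mathrm{M}_{(12)}\mathrm{M}_{(1)}$. The product expands to $\mathrm{M}_{(12,3)}+\mathrm{M}_{(123)}+\mathrm{M}_{(3,12)}$, which gives the claim immediately. The other three follow the same pattern.

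The main obstacle is bookkeeping. We must match each set composition to the correct $\mathrm{M}_{\beta}^{\varphi}$, keeping in mind that several $(\beta,\varphi)$ pairs label the same set composition. We must also check that the $\mathrm{M}_{\mathbf{B}_{\varphi}}$ terms cancel completely.
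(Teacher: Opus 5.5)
Your main route, writing each monomial as $\mathrm{F}_{\alpha}^{\rho}-\mathrm{F}_{111}^{\rho}$ and subtracting the corresponding rows of the table in Proposition \ref{antipode3}, is exactly the paper's proof, and with your choices $\rho=\rho(\mathbf{A})$ the subtractions do give the four stated identities. Your fallback via Lemma \ref{grconS} is also correct and is a quicker check that does not depend on the table: each of these monomials has reduced co-product $\mathrm{M}_{(12)}\otimes\mathrm{M}_{(1)}$ or $\mathrm{M}_{(1)}\otimes\mathrm{M}_{(12)}$, so $-S(\mathrm{M}_{\mathbf{A}})$ is $\mathrm{M}_{\mathbf{A}}$ minus $\mathrm{M}_{(12)}\mathrm{M}_{(1)}$ or $\mathrm{M}_{(1)}\mathrm{M}_{(12)}$, and Example \ref{pathNCex} expands that product into exactly the three required terms.
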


\begin{proof}
The parameters of $-S$ can be expressed as $\mathrm{F}_{21}^{312}-\mathrm{F}_{111}^{312}=\mathrm{F}_{21}^{132}-\mathrm{F}_{111}^{132}$, $\mathrm{F}_{12}^{231}-\mathrm{F}_{111}^{231}=\mathrm{F}_{12}^{213}-\mathrm{F}_{111}^{213}$, $\mathrm{F}_{21}^{321}-\mathrm{F}_{111}^{321}=\mathrm{F}_{21}^{231}-\mathrm{F}_{111}^{231}$, and  $\mathrm{F}_{12}^{321}-\mathrm{F}_{111}^{321}=\mathrm{F}_{12}^{132}-\mathrm{F}_{111}^{132}$ respectively, and we substitute the expressions arising from Proposition \ref{antipode3} to get the desired results. This proves the corollary.
\end{proof}
Note that the proof of Corollary \ref{antiM3} also verifies some of the linear relations in the degree 3 part of $\mathtt{NCQSym}$, like those appearing in Example \ref{FAex}, and their behavior with respect to (minus) the antipode. Other relations, like $\mathrm{F}_{3}^{213}-\mathrm{F}_{12}^{213}=\mathrm{F}_{3}^{123}-\mathrm{F}_{12}^{123}$ or $\mathrm{F}_{3}^{321}-\mathrm{F}_{21}^{321}=\mathrm{F}_{3}^{312}-\mathrm{F}_{21}^{312}$, produce the equalities resulting from Proposition \ref{SNCQSymM} and Corollary \ref{antiM3} combined with the fact that $-S(\mathrm{M}_{(123)})=\mathrm{M}_{(123)}$ by the primitivity of this element.

\medskip

We recall from Corollary \ref{deg2QSymq} that the square of the antipode on $\mathtt{QSym}^{(q)}$ has infinite order, by checking its action on the part of degree 2. For $\mathtt{NCQSym}$ the square is trivial on degree 2, but we obtain the following result using degree 3.
\begin{cor}
The operator $S^{2}-\operatorname{Id}$, when restricted to the degree 3 part of $\mathtt{NCQSym}$, is nilpotent of degree 2. In particular, when $\mathbb{Z}$ embeds into $R$, the order of $S^{2}$ on $\mathtt{NCQSym}$ is infinite. \label{deg3NCQSym}
\end{cor}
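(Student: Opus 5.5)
The plan is to compute the $13\times13$ matrix of $S$ on the degree 3 part of $\mathtt{NCQSym}$ in the basis $\{\mathrm{F}_{\mathbf{A}}\}_{\mathbf{A}\vDash\mathbb{N}_{3}}$ from Proposition \ref{basdim}. Then I will square it, and show that $N:=S^{2}-\operatorname{Id}$ satisfies $N\neq0$ and $N^{2}=0$. All the input is already available. By Proposition \ref{basdim}, each $\mathrm{F}_{\mathbf{A}}$ equals some $\mathrm{F}_{\alpha}^{\rho}$. Its $S$-image is given by Theorems \ref{SNCQSymF} and \ref{SFalphaw0} when $\rho\in\{123,321\}$, and otherwise by the table in Proposition \ref{antipode3}. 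The entries there involve fundamental elements $\mathrm{F}_{\beta}^{\varphi}$ that need not be basis elements. These I will re-express in the basis via the linear relations of Example \ref{FAex}, which come from Definition \ref{fundNCQSym} and unipotent inversion in the monomial basis.

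Rather than squaring all 13 columns blindly, I would exploit structure. Three kinds of basis elements need no work. The element $\mathrm{M}_{(123)}=\mathrm{F}_{(123)}-\ldots$ is primitive, and $S^{2}$ fixes it. The pairs exchanged by Theorems \ref{SNCQSymF} and \ref{SFalphaw0} (for instance $\mathrm{F}_{3}^{123}\leftrightarrow-\mathrm{F}_{111}^{321}$ and $\mathrm{F}_{111}^{123}\leftrightarrow-\mathrm{F}_{3}^{321}$) are fixed by $S^{2}$ directly. The elements with $\rho=123$ map to $\mathrm{F}^{321}_{\alpha^{t}}$, whose $S$-images are again in Proposition \ref{antipode3}.

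A cleaner route works in the monomial basis. Proposition \ref{SNCQSymM} and Corollary \ref{antiM3}, together with $S(\mathrm{M}_{(123)})=-\mathrm{M}_{(123)}$, give $S$ on all monomials of length $\le2$. In the ordered basis $\mathrm{M}_{(123)}$, then the six length-2 monomials, then the six $\mathrm{M}_{\mathbf{B}_{\rho}}$, the matrix of $S$ is block upper triangular. This follows because $S$ maps monomials of length $\ell$ into the span of monomials of length at most $\ell$, which can be read from the displayed formulas and the $E$-expansions. On the length-2 block, Corollary \ref{antiM3} and Proposition \ref{SNCQSymM} show that $-S$ acts modulo $\mathrm{M}_{(123)}$ by $\mathrm{M}_{(12,3)}\mapsto-\mathrm{M}_{(3,12)}$, $\mathrm{M}_{(13,2)}\mapsto\mathrm{M}_{(13,2)}-\mathrm{M}_{(3,12)}-\mathrm{M}_{(12,3)}$, and so on. Squaring this $6\times6$ block (plus the row for $\mathrm{M}_{(123)}$) is a short computation. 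I expect it to give $\operatorname{Id}+N_{2}$ with $N_{2}$ nonzero, as the asymmetric entries like $\mathrm{M}_{(13,2)}\mapsto\mathrm{M}_{(13,2)}-\ldots$ already suggest. For the length-3 block, the diagonal part modulo shorter monomials is a signed permutation, namely $\mathrm{M}_{\mathbf{B}_{\rho}}\mapsto-\mathrm{M}_{\mathbf{B}_{w^{0}_{3}\rho w^{0}_{3}\ldots}}$ up to lower terms, read off from Proposition \ref{antipode3}. Its square is the identity, so $N$ is strictly block upper triangular with respect to the filtration by length, plus the within-length-2 part.

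The main obstacle is verifying $N^{2}=0$ rather than merely nilpotence of order $\le3$, given the three-step filtration. I would first compute $N$ explicitly on the length-2 monomials and on the six $\mathrm{M}_{\mathbf{B}_{\rho}}$. The expected outcome is that the image of $N$ lies in a subspace (spanned by differences such as $\mathrm{M}_{(13,2)}-\mathrm{M}_{(2,13)}+\ldots$ and $\mathrm{M}_{(123)}$-terms) that $N$ kills. This is checked directly on those few vectors. Nilpotency of degree exactly 2 then follows from $N\neq0$. For the final claim, take $v$ with $Nv\neq0$; then $S^{2k}v=(\operatorname{Id}+N)^{k}v=v+kNv$. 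Since $Nv$ has integer coefficients not all zero and $\mathbb{Z}\hookrightarrow R$, the vectors $kNv$ are pairwise distinct, so $S^{2}$ has infinite order.
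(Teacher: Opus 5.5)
The overall plan is the same as the paper's: compute $S$ on the rank-13 degree 3 part in the monomial basis from Theorems \ref{SNCQSymF} and \ref{SFalphaw0}, Proposition \ref{antipode3} and Corollary \ref{antiM3}, check $N\neq0$ and $N^{2}=0$, and finish with $S^{2k}v=v+kNv$. That last step is fine. But the structural shortcut you use to organize the computation is false.

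The top-length part of $S$ on the six $\mathrm{M}_{\mathbf{B}_{\rho}}$ is not a signed permutation, and its square is not the identity. Take the $\alpha=111$, $\rho=132$ entry of the table in Proposition \ref{antipode3}. Alternatively, apply Lemma \ref{grconS} directly with $\overline{\Delta}(\mathrm{M}_{(1,3,2)})=\mathrm{M}_{(1)}\otimes\mathrm{M}_{(2,1)}+\mathrm{M}_{(1,2)}\otimes\mathrm{M}_{(1)}$. Either way you get $-S(\mathrm{M}_{(1,3,2)})\equiv\mathrm{M}_{(2,3,1)}+\mathrm{M}_{(2,1,3)}-\mathrm{M}_{(3,1,2)}$ modulo monomials of length at most 2. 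Squaring on this length-3 quotient gives $(S^{2}-\operatorname{Id})(\mathrm{M}_{(1,3,2)})\equiv2(\mathrm{M}_{(2,3,1)}-\mathrm{M}_{(1,3,2)}-\mathrm{M}_{(3,1,2)}+\mathrm{M}_{(2,1,3)})\neq0$.

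So $N$ is not ``strictly block upper triangular plus the length-2 part''. Its image is not confined to lengths $\leq2$, and nilpotence cannot be read off the length filtration. In $\mathtt{NCQSym}$ the top-length part of the product is a shifted shuffle and the co-product is a standardized deconcatenation. There is therefore no reason for the leading part of $S$ to be a signed reversal.

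You therefore have to compute $N$ honestly on the length-3 monomials. The paper organizes this computation as follows.
\begin{enumerate}[$(i)$]
\item $-S$ fixes $\mathrm{M}_{(123)}$, $\mathrm{M}_{(2,3,1)}-\mathrm{M}_{(1,3,2)}$ and $\mathrm{M}_{(3,1,2)}-\mathrm{M}_{(2,1,3)}$, since the two monomials in each difference have the same reduced co-product.
\item $-S$ interchanges $\mathrm{M}_{(1,2,3)}$ with $\mathrm{F}_{3}^{321}$, and $\mathrm{M}_{(3,2,1)}$ with $\mathrm{F}_{3}^{123}$.
\item Set $X:=\mathrm{M}_{(1,23)}+\mathrm{M}_{(23,1)}+\mathrm{M}_{(123)}$ and $Y:=\mathrm{M}_{(12,3)}+\mathrm{M}_{(3,12)}+\mathrm{M}_{(123)}$. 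Then $-S$ subtracts $X$ or $Y$ from each length-2 monomial and sends $X\mapsto-Y$, $Y\mapsto-X$.
\item $-S$ adds $X+Y$ to $\mathrm{M}_{(1,3,2)}+\mathrm{M}_{(2,1,3)}$.
\end{enumerate}

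Hence $N$ maps a rank-12 submodule into $R(X-Y)$, which $N$ kills. The remaining generator $\mathrm{M}_{(1,3,2)}$ goes to a vector whose length-3 part is the one computed above, and this vector is again $S^{2}$-fixed. This gives a second non-trivial Jordan block, living partly in length 3, which your filtration argument would have missed. Once your length-3 claim is replaced by this computation, the rest of your proposal goes through.
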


\begin{proof}
Theorems \ref{SNCQSymF} and \ref{SFalphaw0}, Proposition \ref{antipode3}, and Corollary \ref{antiM3} show that the operator $-S$ leaves $\mathrm{M}_{(123)}$, $\mathrm{M}_{(2,3,1)}-\mathrm{M}_{(1,3,2)}$, and $\mathrm{M}_{(3,1,2)}-\mathrm{M}_{(2,1,3)}$ invariant, and also interchanges $\mathrm{M}_{(1,2,3)}$ with $\mathrm{F}_{3}^{321}$ and $\mathrm{M}_{(3,2,1)}$ with $\mathrm{F}_{3}^{123}$. By setting $X:=\mathrm{M}_{(1,23)}+\mathrm{M}_{(23,1)}+\mathrm{M}_{(123)}$ and $Y:=\mathrm{M}_{(12,3)}+\mathrm{M}_{(3,12)}+\mathrm{M}_{(123)}$ and we get that $-S$ subtracts $X$ from $\mathrm{M}_{(1,23)}$, $\mathrm{M}_{(2,13)}$, and $\mathrm{M}_{(3,12)}$ and subtracts $Y$ from $\mathrm{M}_{(12,3)}$, $\mathrm{M}_{(13,2)}$, and $\mathrm{M}_{(23,1)}$, from which we deduce that it sends $X$ to $-Y$ and $Y$ to $-X$, and it also adds $X+Y$ to $\mathrm{M}_{(1,3,2)}+\mathrm{M}_{(2,1,3)}$.

It follows that $S^{2}=(-S)^{2}$ fixes $\mathrm{M}_{(123)}$, $\mathrm{M}_{(2,3,1)}-\mathrm{M}_{(1,3,2)}$, $\mathrm{M}_{(3,1,2)}-\mathrm{M}_{(2,1,3)}$, $\mathrm{M}_{(1,3,2)}+\mathrm{M}_{(2,1,3)}$, $\mathrm{M}_{(1,2,3)}$, $\mathrm{M}_{(3,2,1)}$, and it adds to each of $\mathrm{M}_{(1,23)}$, $\mathrm{M}_{(2,13)}$, $\mathrm{M}_{(3,12)}$, $\mathrm{M}_{(12,3)}$, $\mathrm{M}_{(13,2)}$, and $\mathrm{M}_{(23,1)}$ either $X-Y$ or $Y-X$. Therefore this submodule, of rank 12 inside our part of degree 3 (which has rank 13), is sent by the operator $S^{2}-\operatorname{Id}$ to its submodule that is generated by $X-Y$, and this element is invariant under $-S$ hence is annihilated by $S^{2}-\operatorname{Id}$.

It remains to consider one more generator, say $\mathrm{M}_{(1,3,2)}$ (or $\mathrm{M}_{(2,1,3)}$), to which the operator $-S$ adds the sum of the $-S$-invariant elements $\mathrm{M}_{(2,3,1)}-\mathrm{M}_{(1,3,2)}$, $-\mathrm{M}_{(3,1,2)}+\mathrm{M}_{(2,1,3)}$, and $\mathrm{M}_{(123)}$ (resp. $-\mathrm{M}_{(2,3,1)}+\mathrm{M}_{(1,3,2)}$, $\mathrm{M}_{(3,1,2)}-\mathrm{M}_{(2,1,3)}$, and $\mathrm{M}_{(123)}$), and the combination $\mathrm{M}_{(2,13)}+\mathrm{M}_{(12,3)}-\mathrm{M}_{(13,2)}+\mathrm{M}_{(23,1)}$ (resp. $\mathrm{M}_{(13,2)}+\mathrm{M}_{(1,23)}-\mathrm{M}_{(2,13)}+\mathrm{M}_{(3,12)}$). As another application of $-S$ subtracts $X+Y$ from that combination, the image of $\mathrm{M}_{(1,3,2)}$ (resp. $\mathrm{M}_{(2,1,3)}$) under $S^{2}-\operatorname{Id}$ is the sum of $2\mathrm{M}_{(2,3,1)}-2\mathrm{M}_{(1,3,2)}-2\mathrm{M}_{(3,1,2)}+2\mathrm{M}_{(2,1,3)}+2\mathrm{M}_{(2,13)}-2\mathrm{M}_{(13,2)}$ and $\mathrm{M}_{(12,3)}-\mathrm{M}_{(1,23)}+\mathrm{M}_{(23,1)}-\mathrm{M}_{(3,12)}$ (resp. their additive inverses), which altogether is $S^{2}$-invariant, the nilpotency of the asserted order follows.

As simple matrix power calculation shows that a non-trivial Jordan block of eigenvalue 1 has infinite order when $\mathbb{Z}$ embeds into $R$, the assertion about the order is also established. This proves the corollary.
\end{proof}
Note that the elements that is added to $\mathrm{M}_{(1,3,2)}$ (or $\mathrm{M}_{(2,1,3)}$) in the proof of Corollary \ref{deg3NCQSym} is linearly independent from $X-Y$ there. Hence the rank of the eigenspace of $S^{2}$ on this module of rank 13 is 11, and the matrix for $S^{2}$ in the basis resulting from the proof has two Jordan blocks of size 2 plus 9 trivial blocks of size 1 (all with the eigenvalue 1).

\begin{rmk}
Recall from Remark \ref{NCSym} that $\mathtt{NCQSym}$ contains $\mathtt{NCSym}$ as a Hopf subalgebra. We have $\mathrm{m}_{(1)}=\mathrm{M}_{(1)}$, $\mathrm{m}_{(12)}=\mathrm{M}_{(12)}$, and $\mathrm{m}_{(1,2)}=\mathrm{M}_{(1,2)}+\mathrm{M}_{(2,1)}$ in degrees $n\leq2$, the first two of which are primitive, and the antipode takes the latter to $\mathrm{m}_{(1,2)}+2\mathrm{m}_{(12)}$, as follows from Theorems \ref{SNCQSymF} and \ref{SFalphaw0}. When $n=3$ we have the primitive element $\mathrm{m}_{(123)}=\mathrm{M}_{(123)}$, the element $\mathrm{m}_{(1,2,3)}=\sum_{\rho \in S_{3}}\mathrm{M}_{\mathbf{B}_{\rho}}$, and the three sums $\mathrm{m}_{(12,3)}=\mathrm{M}_{(12,3)}+\mathrm{M}_{(3,12)}$, $\mathrm{m}_{(13,2)}=\mathrm{M}_{(13,2)}+\mathrm{M}_{(2,13)}$, and $\mathrm{m}_{(1,23)}=\mathrm{M}_{(1,23)}+\mathrm{M}_{(23,1)}$. In the proof of Corollary \ref{deg3NCQSym} we saw that $-S$ subtracts $X+Y=\mathrm{m}_{(1,23)}+\mathrm{m}_{(12,3)}+2\mathrm{m}_{(123)}$ from any of the latter three sums. We write the remaining expression, as the sum of $\mathrm{M}_{(2,3,1)}-\mathrm{M}_{(1,3,2)}$ and $\mathrm{M}_{(3,1,2)}-\mathrm{M}_{(2,1,3)}$ (both of which are $-S$-invariant), $2\mathrm{M}_{(1,3,2)}+2\mathrm{M}_{(2,1,3)}$ (to which $-S$ adds $2X+2Y$), and $\mathrm{M}_{(1,2,3)}+\mathrm{M}_{(3,2,1)}$. As $\mathrm{F}_{3}^{321}+\mathrm{F}_{3}^{123}$ is the latter plus $X+Y$, we get $-S(\mathrm{m}_{(1,2,3)})=\mathrm{m}_{(1,2,3)}+3X+3Y$. \label{NCSymS}
\end{rmk}
In particular, Remark \ref{NCSymS} and Corollary \ref{deg3NCQSym} imply that $S^{2}$ is the identity on the relevant parts of the subalgebra $\mathtt{NCSym}$, which should indeed be the case, via Proposition \ref{Sprop}, as the latter is co-commutative.

\noindent\textsc{Einstein Institute of Mathematics, the Hebrew University of Jerusalem, Edmund Safra Campus, Jerusalem 91904, Israel}

\noindent E-mail address: zemels@math.huji.ac.il

\end{document}